\documentclass[11pt]{amsart}

\usepackage{amsmath,amssymb,amsfonts,amsthm,amstext,verbatim}
\usepackage{enumerate,color,graphicx,psfrag} 
\usepackage{url}
\usepackage{mdwlist}   
\usepackage{mathtools} 
\usepackage{wasysym}   

\usepackage{epstopdf} 
\usepackage{mathrsfs}

\hoffset=-0.5cm
\allowdisplaybreaks


\newtheorem{thm}{Theorem}[section]
\newtheorem{lemma}[thm]{Lemma}
\newtheorem{prop}[thm]{Proposition}
\newtheorem{cor}[thm]{Corollary}

\newtheorem{definition}[thm]{Definition}

\newtheorem{assu}{Assumption}

\newtheorem{obs}[thm]{Observation}

\theoremstyle{remark} 
\newtheorem{rem}[thm]{Remark}

\theoremstyle{remark} 

\newtheorem{notrems}[thm]{Notational remarks}
\theoremstyle{remark} 
\newtheorem*{notrem*}{Notational remark}
\newtheorem*{notrems*}{Notational remarks}

\theoremstyle{plain}

\theoremstyle{plain}

\theoremstyle{plain}

\newcommand{\be}[1]{\begin{equation}\label{#1}}
\newcommand{\ee}{\end{equation}}

\newcommand{\ba}[1]{\begin{align}\label{#1}}
\newcommand{\ea}{\end{align}}

\newcommand{\ben}{\begin{equation*}}
\newcommand{\een}{\end{equation*}}

\newcommand{\ban}{\begin{align*}}
\newcommand{\ean}{\end{align*}}




\def\mik{1}
\newcommand\cpsfrag[2]{\ifnum\mik=1\psfrag{#1}{#2}\fi}




\newcommand{\calA}{\mathcal{A}}
\newcommand{\calB}{\mathcal{B}}
\newcommand{\calC}{\mathcal{C}}
\newcommand{\calD}{\mathcal{D}}
\newcommand{\calE}{\mathcal{E}}

\newcommand{\calM}{\mathcal{M}}
\newcommand{\calN}{\mathcal{N}}

\newcommand{\calQ}{\mathcal{Q}}

\newcommand{\calV}{\mathcal{V}}
\newcommand{\calW}{\mathcal{W}}


\newcommand{\frC}{\mathfrak{C}}

\newcommand{\frM}{\mathfrak{M}}


\newcommand{\bbC}{\mathbb{C}}

\newcommand{\bbL}{\mathbb{L}}

\newcommand{\bbP}{\mathbb{P}}

\newcommand{\bbT}{\mathbb{T}}

\newcommand{\bbZ}{\mathbb{Z}}




\newcommand\ve{\varepsilon}

\newcommand\om{\omega}       




\newcommand{\ra}{\rightarrow}
\newcommand{\supp}{\mathrm{supp}}



\newcommand{\Ball}{\La}  
\newcommand{\Piv}{\mathrm{Piv}} 

\newcommand\TT{\bbT}           




\newcommand{\PP}{\mathbb{P}}     
\newcommand{\EE}{\mathbb{E}}     
\DeclareMathOperator{\Prob}{Prob\!} 


\newcommand{\ind}{\mathbf{1}} 

\newcommand\dist{\mathrm{dist}}    
\newcommand\diam{\mathrm{diam}}    
\newcommand\imag{\mathbf{i}}



\newcommand\resp{respectively}


\newcommand\ul{\underline}
\newcommand\ol{\overline}

\newcommand\xlra{\xleftrightarrow}


\newcommand{\RR}{\mathbb{R}}     
\newcommand{\CC}{\mathbb{C}}     
\newcommand{\NN}{\mathbb{N}}     
\newcommand{\ZZ}{\mathbb{Z}}     

\newcommand{\cl}{\mathcal{C}} 	 
\newcommand{\clColl}{\mathscr{C}} 
\newcommand{\muColl}{\mathscr{M}} 

\newcommand{\RS}{\hat{\CC}}

\newcommand{\hide}[1]{}


\title[]
{Conformal measure ensembles for percolation and the FK-Ising model}
\date{\today}

\author[F.~Camia]{Federico Camia}
\thanks{NYU Abu Dhabi and VU University, Amsterdam}

\author[R.P.~Conijn]{Ren\'{e} Conijn}
\thanks{Utrecht University}

\author[D.~Kiss]{Demeter Kiss}
\thanks{Statistical Laboratory, University of Cambridge and AIMR, Tohoku University, Sendai}

\begin{document}

\begin{abstract}
Under some general assumptions, we construct the scaling limit of open clusters and their 
associated counting measures in a class of two dimensional percolation models. Our results
apply, in particular, to critical Bernoulli site percolation on the triangular lattice and to the
critical FK-Ising model on the square lattice. 
Fundamental properties of the scaling limit, such as conformal covariance, are explored.
As an application to Bernoulli percolation, we obtain the scaling limit of the largest cluster
in a bounded domain. We also apply our results to the critical, two-dimensional Ising model,
obtaining the existence and uniqueness of the scaling limit of the magnetization field, as well
as a geometric representation for the continuum magnetization field which can be seen as a
continuum analog of the FK representation.
\end{abstract}

\maketitle

\textit{MSC 2010 Classification: Primary: 60K35, Secondary: 82B43}\\
\textit{Keywords: percolation, critical cluster, scaling limit, Schramm-Smirnov topology, Ising model, magnetization field}

\tableofcontents

\section{Dedication and Synopsis}\label{sec:Synopsis}
This paper is dedicated to Chuck Newman on the occasion of his 70th birthday. The main goal of the paper is the study
of the continuum scaling limit of counting measures for percolation and FK-Ising clusters in two dimensions, leading to
the concept of \emph{conformal measure ensemble}. This is a fitting topic for a paper dedicated to Chuck since the concept
was originally discussed, in the context of the Ising model, by Chuck and the first author during a conversation in New York
about ten years ago. It was one of the early discussions in the project that led to \cite{CN09, C12, CGN15},
but somehow the term ``conformal measure ensemble'' never appeared
in print before this paper. Among other things, in this paper we complete the project started then and use the FK-Ising
conformal loop ensemble to express the continuum scaling limit of the critical Ising magnetization field as a sum of the
area measures of continuum FK-Ising clusters with fair, i.i.d.\ $\pm$ signs. This gives a geometric representation and
amounts to a continuum analog of the FK representation for the Ising magnetization. The key step consists in showing
that the collection of appropriately rescaled counting measures of Ising-FK clusters has a scaling limit;
with this result, we can give an alternative proof of the uniqueness and conformal covariance of the scaling limit of the critical
Ising magnetization, and also shows that the limiting magnetization is measurable with respect to the collection of loops
that describe the full scaling scaling limit of the FK-Ising process \cite{KS16}.

In the case of critical site percolation on the triangular lattice, we show that the collection of appropriately rescaled
counting measures of macroscopic clusters converges, in the scaling limit, to a collection of conformally covariant measures
whose supports are the scaling limits of the macroscopic clusters themselves. As a consequence, we obtain that the largest
percolation cluster in a bounded domain has a conformally covariant scaling limit. These results build on joint work of the
first author with Chuck \cite{Camia2006}.

Chuck's papers mentioned above are only a tiny and biased sample of his impressive production. His many contributions
to probability and statistical mechanics are both broad and profound. Through his work and teaching, Chuck has built an
important legacy, and one can only wish that he will continue to guide and inspire younger researchers for many years to come.

\subsection*{Acknowledgements}
The work of the first author was supported in part by the Netherlands Organization for Scientific Research (NWO) through grant Vidi 639.032.916.
The work of the second author is partly supported by NWO Top grant 613.001.403. When the research was carried out, the second author was at
VU University Amsterdam. The third author thanks NWO for its financial support and Centrum Wiskunde \& Informatica (CWI) for its hospitality
during the time when he was a PhD student, when the project was initiated. All three authors thank Rob van den Berg for fruitful discussions. The
first author thanks Chuck Newman for his friendship and invaluable guidance during many years, and for being a constant inspiration.

\section{Introduction}\label{sec:Introduction}
Several important models of statistical mechanics, such as percolation and the Ising and Potts models, can be
described in terms of clusters. In the last fifteen years, there has been tremendous progress in the study of the
geometric properties of such models in the scaling limit. Much of that work has focused on interfaces, that is, cluster
boundaries, taking advantage of the introduction of the Schramm-Loewner Evolution (SLE) by Oded Schramm in \cite{S00}.
In this paper, we are concerned with the scaling limit of the clusters themselves and their ``areas.'' More precisely, we
analyze the scaling limit of the collection of clusters and the associated counting measures (rescaled by an appropriate
power of the lattice spacing).

Our main results are valid under some general assumptions, which can be verified for Bernoulli site percolation on the
triangular lattice and for the FK-Ising model on the square lattice. Roughly speaking, our main results say that, under
suitable assumptions, in a general two-dimensional percolation model, the collection of clusters and their associated
counting measures, once appropriately rescaled, has a unique weak limit, in an appropriate topology, as the lattice
spacing tends to zero. The collection of clusters converges to a collection of closed sets (the ``continuum clusters''),
while the collection of rescaled counting measures converges to a collection of continuum measures whose supports
are the continuum clusters.

Our results are nontrivial at the critical point of the percolation model. For instance, in the case of critical site percolation on
the triangular lattice, where a scaling limit in terms of cluster boundaries is known to exist and to be conformally invariant
\cite{Camia2006} (it can be described in terms of SLE$_6$ curves), we show that the continuum clusters are also conformally
invariant, and that the associated measures are conformally covariant. The conformal covariance property of the collection of
measures is a consequence of the conformal invariance of the critical scaling limit. Because of this property, we use the expression
``conformal measure ensemble'' to denote the collection of measures arising in the scaling limit of a critical percolation model.

The scaling limit of the rescaled counting measures is in the spirit of \cite{Garban2010}, and indeed we rely heavily on techniques
and results from that paper. There is however a significant difference in that we distinguish between different clusters. In other words,
we don't obtain a single measure that gives the combined size of all clusters inside a domain, but rather a collection of measures, one
for each cluster. This is the main technical difficulty of the present paper. The reward is that handling individual clusters leads to new,
interesting applications to Bernoulli percolation and the Ising model, which represent the main motivation of the paper and are discussed
in detail in Section \ref{sec:app}.

In the case of Bernoulli percolation, our main application is the scaling limit of the largest clusters in a bounded domain.
The application to the Ising model requires a brief discussion. Consider a critical Ising model on the scaled lattice $\eta{\mathbb Z}^2$.
Using the FK representation, one can write the total magnetization in a domain $D$ as $\sum_i \sigma_i \nu^{\eta}_i(D)$,
where the $\sigma_i$'s are $(\pm 1)$-valued, symmetric random variables independent of each other and everything else, and
$\nu^{\eta}_i = \sum_{u \in {\mathcal C}_i}\delta_u$ is the counting measure associated to the $i$-th cluster ($\delta_u$
denotes the Dirac measure concentrated at $u$ and the order of the clusters is irrelevant) and
$\nu^{\eta}_i(D) = |{\mathcal C}_i \cap D|$, where ${\mathcal C}_i$ is the $i$-th cluster. The first author and Newman
\cite{CN09} noticed that the power of $\eta$ by which one should rescale the magnetization to obtain
a limit, as $\eta \to 0$, is the same as the power that should ensure the existence of a limit for the rescaled counting measures.
They then predicted that one should be able to give a meaning to the expression ``$\Phi^0=\sum_i \sigma_i \mu^{0}_i$,''
where $\Phi^0$ is the limiting magnetization field, obtained from the scaling limit of the renormalized lattice magnetization,
and $\{ \mu^{0}_i \}$ is the collection of measures obtained from the scaling limit of the collection of rescaled versions of the
counting measures $\{ \nu^{\eta}_i \}$. The existence and uniqueness of the limiting magnetization field was proved in \cite{CGN15};
here we complete the program put forward in \cite{CN09} for the two-dimensional critical Ising model by showing that the Ising
magnetization field can indeed be expressed in terms of cluster measures, thus providing a geometric representation (a sort of
continuum FK representation based on continuum clusters) for the limiting magnetization field. As a byproduct, we also obtain
a proof of the existence and uniqueness of the limiting magnetization field alternative to \cite{CGN15} and independent of \cite{CHI15}.

\subsection{Definitions and main results}
Let $\bbL$ denote a regular lattice with vertex set $V(\bbL)$ and edge set $E(\bbL)$. 
For $u$ and $v$ in $V(\bbL)$, we write $u \sim v$ if $(u,v) \in E(\bbL)$.
We are interested in Bernoulli percolation and FK-Ising percolation in $\mathbb{L}$ with parameter $p$. 
When we talk about FK-Ising percolation, $\bbL$ will be the square lattice ${\bbZ}^2$.
The FK clusters are defined as illustrated in Figure \ref{fig:FKClusters}, and we think of them as closed sets whose boundaries
are the loops in the medial lattice shown in Figure \ref{fig:FKClusters} (see \cite{G06} for an introduction to FK percolation).

\begin{figure}
	\centering 
	\includegraphics[width = 0.65\textwidth]{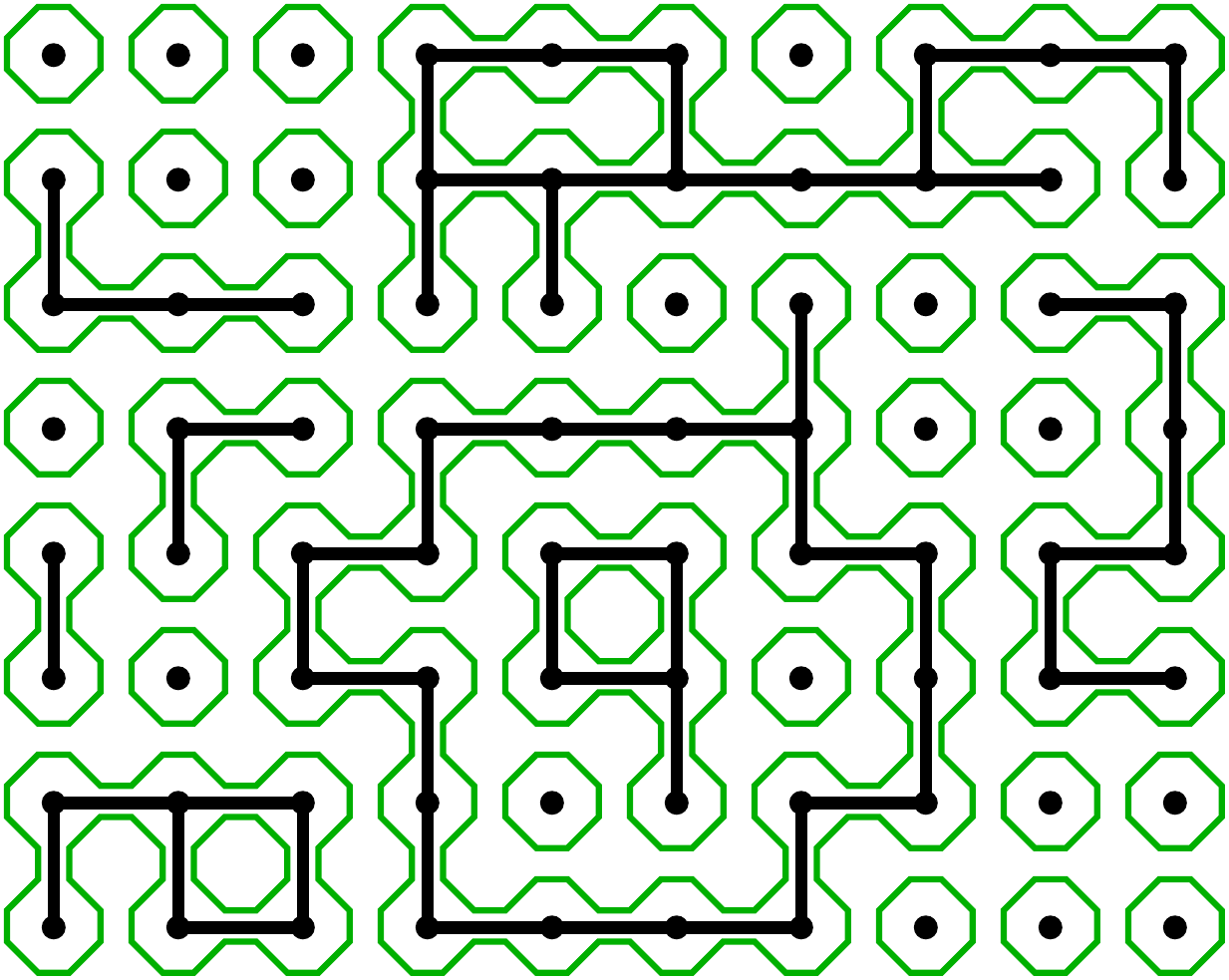}
	\caption{Illustration of FK clusters. Black dots represent vertices of ${\bbZ}^2$, black horizontal and vertical edges represent FK bonds.
The FK clusters are highlighted by lighter (green) loops on the medial lattice.}
	\label{fig:FKClusters}
\end{figure}

When dealing with Bernoulli percolation, $\bbL$ will be the triangular lattice $\TT$, with vertex set
\[ 
   V(\TT):=\left\{ x+y\epsilon\in\mathbb{C}\,\left|\, x,y\in\mathbb{Z}\right.\right\}, 
\]
where $\epsilon = e^{\pi \imag/3}$. The edge set $E(\TT)$ of $\TT$ consists of the pairs $u,v\in V$ 
for which $\left\Vert u-v\right\Vert _{2}=1$. Further, let $H_u$ denote the regular hexagon centered at
$u\in V(\TT)$ with side length $1/\sqrt{3}$ with two of its sides parallel to the 
imaginary axis. Clusters are connected components of open or closed hexagons
(see \cite{Grimmett1999} for an introduction to Bernoulli percolation).

Let $\eta>0$ and consider Bernoulli percolation on $\eta\mathbb{T}$ or the FK-Ising model on $\eta\mathbb{Z}^2$.
We think of open and closed clusters as compact sets. To distinguish between them, we will call open clusters `red'
and closed clusters `blue' (we deviate from the usual terminology of open and closed clusters on purpose:
we reserve the words `open' and `closed' to describe the topological properties of sets). Let $\sigma_\eta$
denote the union of the red clusters in $\eta\mathbb{L}$.

Further, let
\renewcommand{\Ball}{\Lambda}
\begin{align*}
  \Ball_r:=\{z\in \CC\,|\, |\Re z|\leq r,|\Im z|\leq r\}
\end{align*}
denote the ball of radius $r$ around the origin in the $L^\infty$ norm. We set $\Ball_r(u) = u + \Ball_r$.

Our aim is to understand the limit of the set $\sigma_\eta$ as $\eta$ tends to $0$. It is easy to see that
the limit of $\sigma_\eta$ in the Hausdorff topology as $\eta\rightarrow 0$ is trivial: it is the empty set when $p=0$
and a.s. $\CC$ for $p>0$. Hence we concentrate on the connected components, i.e. clusters, of $\sigma_\eta$ with diameter at least $\delta$ for some fixed $\delta >0$.
It is well-known (see for instance \cite{Aizenmann1987}) that, again, we get trivial limits unless $p = p_c$.
(For $p<p_c$ the limit of each of the clusters  is the empty set, while for $p>p_c$ the limit of the unique largest clusters is dense in $\CC$,
with the other clusters having the empty set as a limit.) Hence we consider $p = p_c$ in the following, and state informal versions of our main results
after some additional definitions. The precise versions of our results are postponed to later sections.

For a set $A\subset \CC$ and $u,v\in \CC$ we write $u\xleftrightarrow{A} v$ if there is a red path running in $A$ which connects
$u$ to $v$. When $A$ is omitted, it is assumed to be $\CC$. Let $\diam(A)$ denote the $L^\infty$ diameter of $A$.
For $u\in \eta V$ denote by $\cl^\eta(u)$ the connected component (i.e. cluster)
of $u$ in $\sigma_{\eta}$.
If $D$ is a simply connected domain with piecewise smooth boundary, we
let $\clColl_D^\eta(\delta)$ denote the collection of connected components of $\sigma_\eta$, 
which are contained in $D$ and have diameter larger than $\delta$. That is,
\begin{equation} \label{eq:def_clColl}
  \clColl_D^\eta(\delta) := \left\{\cl^\eta (u)\, |\, u\in \eta V,\,\cl^\eta (u) \subset D ,\, \diam(\cl^\eta(u)) \geq \delta \right\}.
\end{equation}
On many places $D$ is taken to be $\Ball_k$, in that case we simplify notation by
writing $\clColl_k^\eta(\delta) := \clColl_{\Ball_k}^\eta(\delta)$.
Finally let 
\begin{equation}\label{eq:def_full_clColl}
 \clColl^\eta(\delta) = \bigcup_{k\in \NN} \clColl_k^\eta(\delta)
\end{equation}
denote the collection of all connected components of $\sigma_\eta$
with diameter at least $\delta$.

In the following theorem, distances between subsets of $\CC$ will be measured by the Hausdorff distance built on the $L^\infty$ distance in $\CC$: For $A,B\subseteq \CC$,
\begin{align} \label{eq:def_Haus_top}
  d_H(A,B) := \inf \left\{ \varepsilon > 0 \,|\, A+\Ball_\varepsilon \supseteq B\text{ and } B+\Ball_\varepsilon \supseteq A\right\},
\end{align}
where $A+\Ball_{\varepsilon} := \{x+y\in \CC : x\in A,y\in \Ball_{\varepsilon} \}$. 

Let $\hat\CC$ be the one-point (Alexandroff) compactification of $\CC$, i.e. the Riemann sphere $\mathbb{\hat{C}}:=\mathbb{C}\cup\left\{ \infty\right\} .$
A distance between subsets of $\hat{\CC}$ which is equivalent to $d_H$ on bounded sets
is defined via the metric on $\CC$ with distance function
\begin{equation*}
 \Delta(u,v) := \inf_{\varphi} \int\frac{1}{1+|\varphi(s)|^2}ds,
\end{equation*}
where we take the infimum over all curves $\varphi(s)$ in $\CC$ from $u$ to $v$ and
$|\cdot|$ denotes the Euclidean norm.

The distance $D_H$ between sets is then defined by
\begin{equation}
 D_H(A,B) :=  \inf \left\{ \varepsilon > 0 \,|\, \forall u\in A: \exists v \in B: \Delta(u,v) \le \varepsilon \textrm{ and vice versa}\right\}.
\end{equation}

The distance between finite collections i.e., sets of subsets
of $\CC$, denoted by
$\mathscr{S},\mathscr{S}'$, is defined as
\begin{align} \label{eq:def_coll_dist}
  \min_{\phi} \max_{S\in\mathscr{S}} d_H(S,\phi(S))
\end{align}
where the infimum is taken over all bijections $\phi:\mathscr{S}\ra\mathscr{S}'$.
In case $|\mathscr{S}|\neq |\mathscr{S}'|$ we define the distance to be infinite.
To account for possibly infinite collections, $\mathscr{S}$ and $\mathscr{S}'$, of subsets of $\hat{\CC}$, we define
\begin{multline}
 dist(\mathscr{S},\mathscr{S}') \label{eq:def_coll_dist_hat_CC} \\
 \begin{aligned}
 \;\;\;\;\;\;\;\;\;\;&:= \inf \left\{ \varepsilon > 0 \,|\, \forall A\in \mathscr{S} \; \exists B \in \mathscr{S}': D_H(A,B) \le \varepsilon \textrm{ and vice versa}\right\}.
 \end{aligned}
\end{multline}
Convergence in the distance defined by \eqref{eq:def_coll_dist} implies
convergence in the distance $dist$, since the metrics $d_H$ and $D_H$ are
equivalent on bounded domains.

Our first result is the following, see Theorem \ref{thm:strong_Hausdorff} for a slightly stronger version.
\begin{thm}\label{thm:main_Hausdorff}
 Let $k > \delta>0$. Then, as $\eta \to 0$, $\clColl_k^\eta(\delta)$ converges in distribution, in the topology \eqref{eq:def_coll_dist},
 to a collection of closed sets which we denote by $\clColl_k^0(\delta)$. 
 Moreover, as $\delta \to 0$, $\clColl_k^0(\delta)$ has a limit in the metric \eqref{eq:def_coll_dist_hat_CC}, which we denote by $\clColl_k^0$.
\end{thm}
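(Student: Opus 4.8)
The plan is to prove the two assertions separately. For the first (convergence of $\clColl_k^\eta(\delta)$ as $\eta\to 0$), I would proceed by realizing the collection of clusters as a measurable function of the full scaling limit of interfaces. Concretely, critical site percolation on $\eta\TT$ has a scaling limit of its interfaces described by a conformal loop ensemble (CLE$_6$) in $\Ball_k$, by the work of Camia--Newman \cite{Camia2006}; one couples the discrete models for all small $\eta$ together with this limit on a common probability space so that the loop collection converges in the appropriate Schramm--Smirnov topology. A cluster of diameter at least $\delta$ contained in $D$ corresponds, up to boundary effects, to a finite nested family of loops, with the cluster being the closed region ``between'' the outer boundary loop and the loops immediately inside it. Thus I would first show that the map sending a configuration of loops to the collection of such ``macroscopic cluster regions'' is a.s.\ continuous at the limiting loop configuration, in the topology \eqref{eq:def_coll_dist}; the discrete clusters in $\clColl_k^\eta(\delta)$ then converge in $d_H$ to these limiting closed sets, call the limiting collection $\clColl_k^0(\delta)$. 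For the FK-Ising case one uses instead the CLE$_{16/3}$ description of FK-Ising loops \cite{KS16}, and the same scheme applies.

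The main obstacle in the first part is controlling boundary and near-boundary effects: a discrete cluster of diameter just above $\delta$ might, in the limit, degenerate, and a cluster close to $\partial D$ might cross in or out of $D$ under the limit. I would handle this by a two-sided approximation argument: using RSW-type estimates (available for both models at criticality) together with the a.s.\ absence of clusters with diameter \emph{exactly} $\delta$ in the continuum and a.s.\ absence of clusters tangent to $\partial D$, one shows that for a.e.\ realization of the limiting loop ensemble, all but finitely many clusters are irrelevant and the finitely many macroscopic ones are ``stable'' under perturbation. This makes the cluster-extraction map continuous off a null set, which suffices for convergence in distribution. One also needs that $\clColl_k^\eta(\delta)$ is a.s.\ a finite collection, which follows from a standard first-moment/RSW bound on the number of disjoint clusters of diameter $\geq\delta$ in a bounded region.

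For the second assertion (existence of the limit $\clColl_k^0 = \lim_{\delta\to 0}\clColl_k^0(\delta)$), the point is that the collections $\clColl_k^0(\delta)$ are consistent: for $\delta' < \delta$, every set in $\clColl_k^0(\delta)$ already appears in $\clColl_k^0(\delta')$ (a cluster of diameter $\geq\delta$ has diameter $\geq\delta'$), so as $\delta$ decreases we are adjoining more and more sets, and $\clColl_k^0(\delta)$ is monotone increasing in the inclusion order on collections. Hence I would define $\clColl_k^0 := \bigcup_{\delta>0}\clColl_k^0(\delta)$ and verify that the sequence $\clColl_k^0(\delta_n)$ converges to it in the metric \eqref{eq:def_coll_dist_hat_CC} for $\delta_n\downarrow 0$. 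Convergence in $dist$ requires: every $A\in\clColl_k^0(\delta_n)$ lies within $D_H$-distance $0$ of some element of the limit (trivial, since $A$ is itself in the limit); and conversely every $A$ in the limit is $D_H$-approximated by elements of $\clColl_k^0(\delta_n)$ for $n$ large. Since every $A$ in the limit lies in $\clColl_k^0(\delta)$ for some $\delta>0$, hence in $\clColl_k^0(\delta_n)$ as soon as $\delta_n\leq\delta$, the approximation is exact for $n$ large. The only subtlety is that the limit collection could be infinite — there are infinitely many microscopic clusters — but this is precisely why we pass to $\hat\CC$ and the metric \eqref{eq:def_coll_dist_hat_CC}: in that metric, a collection and the same collection with tiny sets removed are close, because the metric $\Delta$ contracts small sets near $\infty$; more precisely one shows that all but finitely many elements of $\clColl_k^0(\delta_n)$ have small $D_H$-diameter, so omitting them from $\clColl_k^0(\delta_n)$ changes the collection by little in $dist$, which gives the required Cauchy property. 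I expect the hard part here to be the quantitative estimate that, with high probability, only finitely many continuum clusters have diameter above any fixed threshold and that their total ``weight'' near $\infty$ under $\Delta$ is controlled — this again reduces to RSW/arm-exponent bounds transferred to the continuum via the scaling limit already established in the first part.
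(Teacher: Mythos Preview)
Your approach is genuinely different from the paper's, and the central step is missing. The paper does \emph{not} extract clusters from the loop ensemble. Instead it works in the quad-crossing (Schramm--Smirnov) topology and, for each scale $\varepsilon>0$, builds a combinatorial approximation of the large clusters using only arm events: one forms a graph $G_\varepsilon$ on the $\varepsilon$-boxes in $\Ball_k$, connects two boxes when the one-arm event between them holds, and singles out certain ``good'' complete subgraphs characterised by $3$-half-plane-arm conditions at the extremal boxes (Definition~\ref{def:good_sub}). Proposition~\ref{prop:good_sub} shows that, on an event $\calE(\varepsilon,\delta)$ of probability $1-O(\varepsilon^\lambda)$, these good subgraphs are exactly the $\varepsilon$-box neighbourhoods of the clusters in $\clColl_k^\eta(\delta)$. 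Since the good subgraphs are determined by finitely many arm events, and these converge as $\eta\to0$ (Lemma~\ref{lem:GPS}), the good subgraphs converge, and the continuum clusters are defined as the nested intersections $\bigcap_n U(H^0_{j,n})$.

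The gap in your proposal is precisely the step you pass over: continuity of the cluster-extraction map from loops. For $\kappa>4$ (here $\kappa=6$ or $16/3$) the CLE loops are non-simple and self-touching, and ``the closed region between a loop and its immediate inner loops'' is \emph{not} an obviously continuous function of the loop collection in the loop metric---two nearby non-simple curves can bound very different filled regions near their double points. You assert a.s.\ continuity but offer no mechanism beyond a generic appeal to RSW; this is exactly the difficulty the paper circumvents via the arm-event construction. (Incidentally, the ``Schramm--Smirnov topology'' is the quad-crossing topology, not the loop topology.)

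Your argument for the $\delta\to0$ limit is also confused. The reason one passes from the bijection metric \eqref{eq:def_coll_dist} to $dist$ in \eqref{eq:def_coll_dist_hat_CC} has nothing to do with ``$\Delta$ contracting small sets near $\infty$''---everything lives in the bounded set $\Ball_k$. The point is that $dist$ drops the bijection requirement: one only needs each tiny cluster $A\in\clColl_k^0\setminus\clColl_k^0(\delta)$ to be $D_H$-close to \emph{some} element of $\clColl_k^0(\delta)$, not to be matched bijectively. This still needs an input, namely that clusters of diameter comparable to $\delta$ are $\delta$-dense in $\Ball_k$ (from RSW). The paper avoids spelling this out by packaging all macroscopic clusters into a single infinite vector $\allCl_k^0$, ordered by decreasing diameter threshold, and proving coordinatewise convergence in the product metric $d_l$ of \eqref{def:dist_clVect}; the collection statements are then read off from that.
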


The next natural question is whether we can extract some more information from the scaling limit. In particular, can we
count the number of vertices in each of the clusters in $\clColl^\eta(\delta)$ in the limit as $\eta$ tends to $0$?
As we will see below, the number of vertices in the large clusters goes to infinity, hence we have to scale
this number to get a non-trivial result. The correct factor is $\eta^{-2}\pi_{1}^{\eta}(\eta,1),$ where $\pi_1^\eta(\eta,1)$
denotes the probability that $0$ is connected to $\partial \Ball_1$ in $\sigma_\eta$. We arrive at the informal formulation
of our next main result after some more notation.

For $S\subset \CC$ let $\mu^\eta_S$ denote the normalized counting measure of its vertices, that is,
\begin{equation} \label{eq:def_mu_C}
  \mu^\eta_S := \frac{\eta^2}{\pi_1^\eta(\eta,1)}\sum_{u\in S\cap \eta V}\delta_u,
\end{equation}
where $\delta_u$ denotes the Dirac measure concentrated at $u$. Further, let $\muColl_k^\eta(\delta)$ denote the collection of
normalized counting measures of the clusters in $\clColl_k^\eta(\delta)$. That is,
\begin{equation}
  \muColl_k^\eta(\delta) := \left\{\mu_\calC^\eta \,|\, \calC\in\clColl_k^\eta(\delta) \right\}.
\end{equation}
Similarly $ \muColl^\eta(\delta) := \{\mu_\calC^\eta \,|\, \calC\in\clColl^\eta(\delta) \}$.
We use the Prokhorov distance for the normalized counting measures. For finite Borel measures $\mu,\nu$ on $\CC$, it is defined as
\begin{multline}
  d_P(\mu,\nu)\nonumber\\
  \begin{aligned}
   \;\;\;&:= \inf\left\{\varepsilon > 0\,|\,\, \mu(S)\leq\nu(S^\varepsilon) + \varepsilon,\, \nu(S)\leq\mu(S^\varepsilon) +\varepsilon
  \text{ for all closed } S\subseteq \CC\right\}.
\end{aligned}
\end{multline}
where $S^\varepsilon = S+\Ball_\varepsilon$. Then we construct a metric on collections of Borel measures from $d_P$ similarly
to \eqref{eq:def_coll_dist}. We also introduce a distance $Dist$ between (infinite) collections of measures which is the same as
\eqref{eq:def_coll_dist_hat_CC} but with collections of sets replaced by collections of measures and with the distance $D_H$
replaced by the Prokhorov distance $d_P$.

We arrive at the following result (see Theorem \ref{thm:strong_meas} for a slightly stronger version).
\begin{thm}\label{thm:main_meas}
  Let $k > \delta>0$, then $\muColl_k^\eta(\delta)$ converges in distribution to a
  collection of finite measures which we denote by $\muColl_k^0(\delta)$. 
  Moreover, as $\delta \to 0$, $\muColl_k^0(\delta)$ has a limit in the metric $Dist$, which we denote by $\muColl_k^0$.
\end{thm}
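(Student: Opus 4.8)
The plan is to reduce the theorem to Theorem~\ref{thm:main_Hausdorff} together with the scaling limit of the \emph{combined} normalized counting measure
\ban
  \mu^\eta_{k,\delta}\;:=\;\sum_{\calC\in\clColl_k^\eta(\delta)}\mu^\eta_{\calC}\;=\;\frac{\eta^2}{\pi_1^\eta(\eta,1)}\sum_{u\in\bigcup\clColl_k^\eta(\delta)}\delta_u,
\ean
which records the total normalized size of all clusters in $\clColl_k^\eta(\delta)$ but forgets which cluster each vertex belongs to. The convergence in distribution of $\mu^\eta_{k,\delta}$ to a limit $\mu^0_{k,\delta}$, measurable with respect to the percolation scaling limit, is available from the methods of \cite{Garban2010}, where the area measure of the union of the macroscopic clusters is constructed; the two truncations (``contained in $\Ball_k$'' and ``diameter $\ge\delta$'') require only routine modifications. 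The individual limiting measures will then be recovered as the restrictions of $\mu^0_{k,\delta}$ to the corresponding continuum clusters.

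The first step is tightness of $\muColl_k^\eta(\delta)$ in the space of finite collections of finite Borel measures supported in the compact set $\Ball_k$: such a set of collections is precompact as soon as the cardinalities and the individual total masses are bounded. Now $|\clColl_k^\eta(\delta)|$ is tight by Theorem~\ref{thm:main_Hausdorff}, and $\max_{\calC\in\clColl_k^\eta(\delta)}\mu^\eta_{\calC}(\Ball_k)\le\mu^\eta_{k,\delta}(\Ball_k)$ is tight because a first-moment computation gives
\ban
  \EE\big[\mu^\eta_{k,\delta}(\Ball_k)\big]\;\lesssim\;k^2\,\frac{\pi_1^\eta(\eta,\delta)}{\pi_1^\eta(\eta,1)}\;\asymp\;\frac{k^2}{\pi_1^\eta(\delta,1)},
\ean
which is bounded uniformly in small $\eta$; here one uses that the probability that the cluster of the origin has diameter at least $\delta$ is of order $\pi_1^\eta(\eta,\delta)$, together with quasi-multiplicativity of the one-arm probability.

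Next I would identify the limit. Pass to a subsequence along which $\big(\clColl_k^\eta(\delta),\muColl_k^\eta(\delta),\mu^\eta_{k,\delta}\big)$ converges jointly in distribution and, by Skorokhod's representation theorem, realize the convergence almost surely: after relabelling, $\calC_i^\eta\to\gamma_i$ in the Hausdorff distance, $\mu^\eta_{\calC_i^\eta}\to\nu_i$ in the Prokhorov distance, and $\mu^\eta_{k,\delta}\to\mu^0_{k,\delta}$. Since $\mu^\eta_{\calC_i^\eta}$ is carried by $\calC_i^\eta$, weak convergence forces $\supp\nu_i\subseteq\gamma_i$, and summing over $i$ gives $\sum_i\nu_i=\mu^0_{k,\delta}$. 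The crucial observation is that such a decomposition is unique: for $i\ne j$ the set $\gamma_i\cap\gamma_j$ has Hausdorff dimension strictly smaller than the dimension $91/48$ carried by $\mu^0_{k,\delta}$ (it consists of points at which two distinct clusters are separated, at every scale, by arbitrarily thin dual connections), so a uniform Frostman-type bound $\mu^0_{k,\delta}(\Ball_r(z))\lesssim r^{91/48-\ve}$ --- obtained from the moment estimates of \cite{Garban2010} by a union bound over a net --- yields $\mu^0_{k,\delta}(\gamma_i\cap\gamma_j)=0$, hence $\nu_j(\gamma_i\cap\gamma_j)=0$ and therefore $\nu_i=\mu^0_{k,\delta}(\,\cdot\cap\gamma_i)$. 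The same bound, together with a standard volume lower bound for macroscopic clusters, shows that the $\nu_i$ are nonzero and pairwise distinct, so that cardinalities are respected in the limit. Consequently $\{\nu_i\}$ is a deterministic measurable function of the pair $\big(\clColl_k^0(\delta),\mu^0_{k,\delta}\big)$, which is itself measurable with respect to the (unique) percolation scaling limit; hence the law of the subsequential limit does not depend on the subsequence, which establishes the convergence in distribution of $\muColl_k^\eta(\delta)$ and identifies its limit $\muColl_k^0(\delta)$ as $\{\mu^0_{k,\delta}(\,\cdot\cap\gamma_i)\}_i$. I expect the main difficulty to be precisely this identification: establishing that distinct continuum clusters meet in a $\mu^0_{k,\delta}$-null set, and that the convergences in Theorem~\ref{thm:main_Hausdorff} and of the combined measure can be taken jointly. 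An alternative, closer to \cite{Garban2010} but more laborious, is to follow individual clusters directly through the quasi-multiplicativity and second-moment arguments, which avoids the disjointness question.

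For the statement as $\delta\to0$, choose couplings that are consistent in $\delta$, so that $\delta'<\delta$ gives $\clColl_k^0(\delta')\supseteq\clColl_k^0(\delta)$ and $\muColl_k^0(\delta')\supseteq\muColl_k^0(\delta)$, and set $\muColl_k^0:=\bigcup_{n\ge1}\muColl_k^0(1/n)$. It then suffices to bound $Dist\big(\muColl_k^0(\delta_1),\muColl_k^0(\delta_2)\big)$, for $\delta_2<\delta_1$, by a quantity tending to $0$ as $\delta_1\to0$, uniformly in $\delta_2$. Every measure of $\muColl_k^0(\delta_1)$ already lies in $\muColl_k^0(\delta_2)$, so only the ``new'' measures $\mu^0_{\gamma}$ with $\delta_2\le\diam\gamma<\delta_1$ must be matched; each has total mass $\lesssim\delta_1^{91/48-\ve}$ by the Frostman bound (as $\gamma\subset\Ball_{\delta_1}(z)$ for some $z$), and a Russo--Seymour--Welsh and multiscale argument shows that almost surely every point of $\Ball_k$ lies within a distance $\rho(\delta_1)\to0$ of a continuum cluster $\gamma'\in\clColl_k^0(\delta_1)$ of diameter comparable to $\delta_1$, whose measure therefore also has mass $\lesssim\delta_1^{91/48-\ve}$. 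Matching $\mu^0_{\gamma}$ with $\mu^0_{\gamma'}$ gives $d_P(\mu^0_{\gamma},\mu^0_{\gamma'})\lesssim\rho(\delta_1)+\delta_1^{91/48-\ve}\to0$, so $\{\muColl_k^0(\delta)\}_\delta$ is Cauchy in $Dist$ and converges; the boundary effects near $\partial\Ball_k$ (where $\gamma'$ must be found inside $\Ball_k$) are handled routinely.
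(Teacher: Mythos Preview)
Your approach differs substantially from the paper's, and there is a genuine gap at the step you treat as routine. You assume that the combined measure $\mu^\eta_{k,\delta}$ converges by a routine modification of \cite{Garban2010}, but the measure proved to converge there (Theorem~\ref{assu:conv_measures} in the present paper) is the one-arm measure in a \emph{fixed} annulus: a vertex contributes iff it has a red arm to a prescribed boundary. Your $\mu^\eta_{k,\delta}$ asks instead whether the vertex's cluster has diameter at least $\delta$ and stays inside $\Ball_k$, and neither condition is a local arm event---both require exploring the cluster away from the vertex. Converting cluster membership into arm data that survives the limit is exactly the content of the paper's key Lemma~\ref{lem:bound_err}, and once that lemma is proved the individual cluster measures fall out directly, with no need for a combined measure or a restriction argument. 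So the step you defer to \cite{Garban2010} is the step that actually does the work.

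Concretely, the paper approximates each $\mu^\eta_{\calB}$ by a finite sum $\mu^\eta_{\calB,n}$ of one-arm measures $\mu^\eta_{1,A}$ over annuli centered at $3^{-n}$-boxes meeting $\calB$; each summand converges by Theorem~\ref{assu:conv_measures}. Lemma~\ref{lem:bound_err} then shows that, uniformly over clusters, $\|\mu^\eta_{\calB}-\mu^\eta_{S,n}\|_{TV}$ is small with high probability whenever $d_H(S,\calB)$ is small: the discrepancy is carried by ``pivotal'' boxes with a $4$-arm event to a mesoscopic scale, plus a boundary contribution near $\partial\Ball_k$, and these are controlled via Lemmas~\ref{assu:1arm_exp_ubound}, \ref{assu:1arm_exp_less_4arm} and~\ref{lem:BCKS}. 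Your disjointness claim (distinct continuum clusters meet in a $\mu^0$-null set) is correct---touching points carry at least an alternating $4$-arm event, hence have dimension strictly below $2-\alpha_1$---and would indeed split a combined measure cleanly; but establishing convergence of the combined measure already requires the same pivotal-box estimate, so nothing is saved. Two smaller points: you should write $2-\alpha_1$ rather than $91/48$, since the theorem is stated under the general Assumptions~\ref{assu:markov}--\ref{assu:existence_full_confInvLimit}; and your uniform Frostman bound $\mu^0(\Ball_r(z))\lesssim r^{2-\alpha_1-\ve}$ does not follow from a first-moment computation alone---you need the exponential tail of Lemma~\ref{lem:BCKS} together with a union bound over a net.
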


The next theorem is a full-plane analogue of Theorems \ref{thm:main_Hausdorff} and \ref{thm:main_meas}.

\begin{thm}\label{thm:main_Hausdorff_fullPlane}
Let $\bbP_k$ denote the joint distribution of $(\clColl_k^0,\muColl_k^0)$. 
There exists a probability measure $\bbP$ on the space of collections of subsets of $\hat{\CC}$
and collections of measures, which is the full plane limit of the probability measures $\bbP_k$ in the sense that,
for every bounded domain $D$, the restriction $\bbP_k|_D$ of $\bbP_k$ to $(\clColl_D^0,\muColl_D^0)$ converges to the
restriction $\bbP|_D$ of $\bbP$ to $(\clColl_D^0,\muColl_D^0)$ as $k \to \infty$.
\end{thm}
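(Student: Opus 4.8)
The plan is to construct $\bbP$ as the limit of a consistent (Kolmogorov-type) family of measures $(\bbP_k)_{k \in \NN}$ and then invoke a standard projective-limit / tightness argument. First I would verify the consistency of the family: for $k < k'$, the restriction $\bbP_{k'}|_{\Ball_k}$ to clusters and measures that, roughly speaking, ``stay inside'' $\Ball_k$ should agree with $\bbP_k$. This is not quite literally true because a cluster of $\sigma_\eta$ contained in $\Ball_{k'}$ may protrude out of $\Ball_k$ even if it has a large component inside; so the correct statement must be phrased in terms of the restriction map that sends a collection of clusters/measures in $\Ball_{k'}$ to those pieces lying in $\Ball_k$, exactly as in the definitions of $\clColl_D^0$ and $\muColl_D^0$. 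At the discrete level, for a fixed $\eta$, this consistency is immediate from the definitions \eqref{eq:def_clColl} and \eqref{eq:def_mu_C}, since $\clColl_D^\eta(\delta)$ and $\muColl_D^\eta(\delta)$ are intrinsically defined from $\sigma_\eta$ restricted to $D$. Passing to the limit $\eta \to 0$ via Theorems \ref{thm:main_Hausdorff} and \ref{thm:main_meas}, together with the continuity of the restriction operation on the relevant topologies (the metrics $dist$ and $Dist$), transfers this consistency to the continuum objects $\bbP_k$.

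Next I would set up the target space. A configuration under $\bbP$ is a pair consisting of a collection of closed subsets of $\hat\CC$ and a collection of finite measures on $\CC$, and we equip this space with the topology induced by the family of restriction-and-compare maps to bounded domains $D$, using $dist$ on the set side and $Dist$ on the measure side. This is a Polish space (it is built from countably many Polish pieces indexed by, say, $D = \Ball_k$, $k \in \NN$, glued by consistent restriction maps), so Kolmogorov's extension theorem applies: the consistent family $(\bbP_k)_k$ extends uniquely to a Borel probability measure $\bbP$ on the projective limit, and by construction $\bbP_k = \bbP \circ (\text{restriction to } \Ball_k)^{-1}$, which is precisely the asserted convergence $\bbP_k|_D \to \bbP|_D$ as $k \to \infty$ for every bounded $D$ (take $k$ large enough that $D \subseteq \Ball_k$, and then $\bbP_k|_D = \bbP|_D$ exactly). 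If one prefers a direct tightness argument in place of Kolmogorov extension: fix a bounded domain $D$ and note that $\bbP_k|_D$ does not depend on $k$ once $\Ball_k \supseteq D$, hence trivially forms a tight (indeed eventually constant) sequence; the family of these marginals over all $D$ is consistent by the previous paragraph, so a standard argument produces the full-plane measure.

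I would then record the few points requiring genuine (if routine) care. One is that a cluster of the full-plane picture may be unbounded — in the percolation case at $p_c$ there is a.s. no infinite cluster, but continuum clusters can still be large, and the one-point compactification is exactly what makes ``collections of subsets of $\hat\CC$'' the right state space and keeps the relevant sets compact; here I would lean on the $\delta \to 0$ statements in Theorems \ref{thm:main_Hausdorff} and \ref{thm:main_meas} and on the a priori bounds coming from the general assumptions (polynomial decay of one- and multi-arm probabilities) to control the number and size of clusters meeting $D$. A second point is measurability of the restriction maps and the fact that restriction commutes with the $\delta \to 0$ limit, so that $\bbP_k$ as defined (as the law of $(\clColl_k^0, \muColl_k^0)$, i.e. the $\delta \to 0$ objects) is the same as the $\delta \to 0$ limit of the $\eta \to 0$ limits with domain $\Ball_k$ — this is where one must be slightly careful about exchanging the two limits, but monotonicity in $\delta$ of the collections plus the already-established convergences make this a soft argument.

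The main obstacle I anticipate is not the abstract extension step, which is essentially bookkeeping, but pinning down the precise notion of ``restriction'' $\bbP_k|_D$ so that the consistency statement is literally true: a cluster contained in $\Ball_{k'}$ need not be contained in $\Ball_k$, so one is really restricting the \emph{full}-plane collection $\clColl^0$ (and its measures) to $D$, i.e. keeping $\cl \in \clColl^0$ with $\cl \subseteq D$ and of diameter $\geq \delta$, and then letting $\delta \to 0$; the content is that this operation applied to $\bbP_{k'}$ and to $\bbP_k$ yields the same law whenever $D \subseteq \Ball_k \subseteq \Ball_{k'}$, which in turn reduces, via the $\eta \to 0$ and $\delta \to 0$ limits, to the trivial discrete fact that $\sigma_\eta \cap D$ is determined by $\sigma_\eta \cap \Ball_k$. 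Once that is set up correctly, the theorem follows.
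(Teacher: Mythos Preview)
Your proposal is correct and follows essentially the same approach as the paper: establish consistency of the family $(\bbP_k)_k$ by noting that for $D \subset \Ball_{k_2} \subset \Ball_{k_1}$ the restrictions of $\bbP_{k_1}$ and $\bbP_{k_2}$ to $(\clColl_D^0,\muColl_D^0)$ agree because both arise as the $\eta\to0$ limit of the \emph{same} full-plane lattice measure $\PP_\eta$, and then invoke Kolmogorov's extension theorem. The paper's proof is a brief sketch (deferring to the analogous argument in \cite{Camia2006}), while you spell out more of the bookkeeping, but the substance is identical.
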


The next theorem shows that the collections of clusters and measures from the previous theorem are invariant under
rotations and translations, and transform covariantly under scale transformations. (The theorem could be extended to
include more general fractal linear (M\"obius) transformations by restricting to the Riemann sphere minus a neighborhood
of infinity and its preimage. For simplicity, we restrict attention to linear transformations that map infinity to itself.)
The random variables with distribution $\PP$ introduced in the previous theorem are
denoted by $(\clColl^0,\muColl^0)$.
\begin{thm}\label{thm:invar}
  Let $f$ be a linear map from $\bbC$ to $\bbC$, that is $f(z) = rz+t$ with $r,t \in \CC$.
  Assume that
\begin{align*}
\lim_{\eta \to 0} \pi^{\eta}_1(a,b) = \left(\frac{a}{b}\right)^{\alpha_1+o(1)}
\end{align*}
for all $b>a>\eta$ and some $\alpha_1 \in [0,1]$, where $o(1)$ is understood as $b/a \to \infty$. We set
  \begin{align*}
    f(\clColl^0) & := \{f(\cl)\,:\, \cl\in\clColl^0\}, \text{ and}\\
    f(\muColl^0) & := \{\mu^{0*}\,:\,\mu^0\in\muColl^0\}
  \end{align*}
  where $\mu^{0*}$ is the modification of push-forward measure of $\mu^0$ along $f$ defined as 
  \begin{align*}
    \mu^{0*}(B) := |r|^{2-\alpha_1}\mu^{0}(f^{-1}(B))
  \end{align*}
  for Borel sets $B$.  
  Then the pairs $(f(\clColl^0), f(\muColl^0))$ and $(\clColl^0,\muColl^0)$ have the same distribution.
\end{thm}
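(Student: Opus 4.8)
The plan is to deduce the conformal covariance of the full-plane pair $(\clColl^0,\muColl^0)$ from the analogous (trivial) covariance properties of the discrete objects, combined with the convergence results of Theorems \ref{thm:main_Hausdorff}--\ref{thm:main_Hausdorff_fullPlane} and a careful bookkeeping of how the normalizing factor $\eta^2/\pi_1^\eta(\eta,1)$ transforms under rescaling. First I would recall that rotational and translational invariance are essentially built in: Bernoulli site percolation on $\TT$ and FK-Ising on $\ZZ^2$ are invariant (in the scaling limit) under the symmetries of the lattice, and by the conformal invariance of the percolation scaling limit (from \cite{Camia2006}, and from \cite{KS16} in the FK-Ising case) this upgrades to full rotational and translational invariance of $(\clColl^0,\muColl^0)$. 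The point to verify is that this invariance is \emph{joint} — i.e.\ that the limiting measures are not merely some measures supported on the limiting clusters but transform as a rigid package with them; this follows because $\muColl_k^\eta(\delta)$ is a deterministic function of $\clColl_k^\eta(\delta)$ (each cluster carries its own counting measure), and the joint convergence in Theorem \ref{thm:main_meas} preserves this coupling in the limit.

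The substantive part is the scaling behavior. Fix $r>0$ real for a moment and consider the map $z\mapsto rz$. On the lattice level, a cluster of $\sigma_\eta$ in $\eta\Lattice$ maps to a cluster of $\sigma_{r\eta}$ in $r\eta\Lattice$; the counting measure is unchanged as a measure on $\CC$ under this identification, but the normalization changes from $\eta^2/\pi_1^\eta(\eta,1)$ to $(r\eta)^2/\pi_1^{r\eta}(r\eta,1)$. Using quasi-multiplicativity of the one-arm probability together with the hypothesis $\pi_1^\eta(a,b) = (a/b)^{\alpha_1+o(1)}$, one gets
\begin{align*}
  \frac{(r\eta)^2/\pi_1^{r\eta}(r\eta,1)}{\eta^2/\pi_1^\eta(\eta,1)}
  \;=\; r^2\,\frac{\pi_1^\eta(\eta,1)}{\pi_1^{\eta}(r\eta,1)}\bigl(1+o(1)\bigr)
  \;=\; r^2\, r^{-\alpha_1+o(1)} \;=\; r^{2-\alpha_1+o(1)},
\end{align*}
where I have used $\pi_1^\eta(\eta,1)/\pi_1^\eta(r\eta,1)=\pi_1^\eta(\eta,r\eta)(1+o(1)) = r^{-\alpha_1+o(1)}$ and that $\pi_1^{r\eta}(r\eta,1)$ and $\pi_1^\eta(r\eta,1)$ agree to leading order. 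Hence the discrete measure in the rescaled model equals $|r|^{2-\alpha_1}$ times the push-forward of the discrete measure in the original model, up to a multiplicative $1+o(1)$. Taking $\eta\to 0$ and invoking Theorem \ref{thm:main_meas} (for $\muColl_k^0(\delta)$, then $\delta\to 0$, then Theorem \ref{thm:main_Hausdorff_fullPlane} for the full-plane limit), the $o(1)$ error disappears and we conclude that $r(\muColl^0)$, defined with the exponent $2-\alpha_1$ as in the statement, has the same law as $\muColl^0$, jointly with the clusters. General linear $f(z)=rz+t$ with complex $r$ is handled by composing the real-scaling argument with a rotation (which accounts for the $\arg r$ part of $|r|$) and a translation, both already covered by invariance; note $|r|$ is what enters the density because rotations leave both the measure and the one-arm exponent untouched.

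The main obstacle I anticipate is not the exponent computation but the interchange of limits: the relation ``discrete measure in $r\eta\Lattice$ $=|r|^{2-\alpha_1}(1+o(1))\times$ push-forward'' holds with an $o(1)$ that is uniform on compacts but must be shown to survive the weak limit in the Prokhorov-type metric $Dist$, and then the $\delta\to 0$ and $k\to\infty$ limits. Concretely one wants: if $\mu^\eta \Rightarrow \mu^0$ and $c_\eta\to c$, then $c_\eta\mu^\eta \Rightarrow c\mu^0$, together with the fact that push-forward along the fixed bi-Lipschitz map $f$ is continuous for the topology on collections of measures used here — both are routine but need the bounded-domain restriction and the equivalence of $d_H$ and $D_H$ on bounded sets, exactly the tools already set up before Theorem \ref{thm:main_Hausdorff}. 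A secondary technical point is that $r\eta\Lattice$ is a rescaled copy of the \emph{same} lattice, so $\pi_1^{r\eta}$ and $\pi_1^\eta$ are literally the same function of the ratio of radii; this is what makes the two one-arm probabilities comparable and is where the hypothesis on $\pi_1^\eta$ in the statement is used to kill the remaining $o(1)$.
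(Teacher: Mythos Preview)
Your route differs from the paper's. The paper does not work at the discrete level with the normalization factor; instead it transfers the (already-known, from \cite{Garban2010}) rotation/translation invariance and scaling covariance of the continuum $1$-arm measures $\mu_{1,A}^0$ to the cluster measures, using the fact (Section~\ref{sec:cont_meas}) that each $\mu_\calC^0$ is built as a limit of sums of $1$-arm measures. Your approach of tracking how $\eta^2/\pi_1^\eta(\eta,1)$ changes under $z\mapsto rz$ is conceptually cleaner, but the exponent computation as written has a real gap.

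The problem is the step
\[
  \frac{\pi_1^\eta(\eta,1)}{\pi_1^{\eta}(r\eta,1)}=\pi_1^\eta(\eta,r\eta)\bigl(1+o(1)\bigr) = r^{-\alpha_1+o(1)}.
\]
Quasi-multiplicativity (Lemma~\ref{assu:q_mult}) gives only a two-sided bound with a fixed constant $C>1$, not a $(1+o(1))$; and the hypothesis on $\pi_1^\eta$ in the statement has $o(1)\to 0$ as the \emph{ratio of radii} tends to infinity, not as $\eta\to 0$. In particular $\pi_1^\eta(\eta,r\eta)$ is, by exact lattice scaling, a number that does not depend on $\eta$ at all, so there is no $o(1)$ to ``disappear'' in the $\eta\to 0$ limit. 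After the exact identity $\pi_1^{r\eta}(r\eta,1)=\pi_1^\eta(\eta,1/r)$, what you actually need is
\[
  \lim_{\eta\to 0}\frac{\pi_1^\eta(\eta,1)}{\pi_1^\eta(\eta,1/r)} \;=\; r^{-\alpha_1}\quad\text{exactly, for each fixed }r>0,
\]
and this is a ratio limit theorem, not a consequence of quasi-multiplicativity plus the asymptotic hypothesis. The standard way to get it is: first show the limit exists (this is where one uses the existence of the full scaling limit and a coupling argument, as in the proof of Theorem~\ref{assu:conv_measures} and \cite[Proposition~4.7]{Garban2010}), then observe it is multiplicative in $r$, hence of the form $r^{-\gamma}$, and finally use the stated asymptotic to identify $\gamma=\alpha_1$. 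This is precisely what the paper is invoking when it cites ``the proof of Theorem~\ref{assu:conv_measures} (see also \cite[Equation~(6.1) and Proposition~6.4]{Garban2010})''. Once you plug that in, your argument goes through.
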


 \begin{rem}
 In the case of Bernoulli percolation,
 we will prove invariance/covariance under all conformal maps between any two bounded domains with
 piecewise smooth boundaries (see Theorems \ref{thm:cinvar}, \ref{thm:cinvar_prec} and \ref{thm:covar_prec}).
 \end{rem}

\subsection*{Organization of the paper}
In the next section we discuss some applications of our results. First we consider
applications to Bernoulli percolation on the triangular lattice.
Secondly we provide a geometric representation for the magnetization field of the critical Ising model
in terms of FK clusters.

In Section \ref{sec:preliminaries} we introduce the main tools and assumptions which we use throughout the paper,
namely the loop process, the quad-crossing topology,
arm events and the general assumptions
under which we prove our main results. We finish Section \ref{sec:preliminaries} with checking that
the assumptions hold for critical Bernoulli percolation on $\TT$
and for the critical FK-Ising model on the square lattice. 
In Sections \ref{sec:approx}-\ref{sec:cont_meas} we give precise versions
and proofs of Theorems \ref{thm:main_Hausdorff}, \ref{thm:main_meas} and
\ref{thm:main_Hausdorff_fullPlane}.

We investigate some fundamental properties of the continuum clusters
and their normalized counting measures in Section \ref{sec:props}. In particular,
we also discuss the conformal
invariance and covariance properties of the clusters in this section.
We finish the paper with Section \ref{sec:pf_of_conv_largest_clusters} where we prove the convergence of
the largest clusters for Bernoulli percolation in a bounded domain.

\section{Applications} \label{sec:app}

\subsection{Largest Bernoulli percolation clusters and conformal invariance/covariance} \label{ssec:pf_cor_largest}

Our first application concerns the scaling limit of the largest percolation clusters in a bounded domain with closed (blue) boundary condition.
Denote by $\calM_{(i)}^{\eta}$ the $i$-th largest cluster in $\Ball_{1}\cap \sigma_{\eta}$, where we measure clusters
according to the number of vertices they
contain. 

In a sequence of papers, the behavior of the normalized number of vertices,
\begin{align} \label{eq:largest_volume}
	\frac{|\calM_{(i)}^\eta|}{\eta^{-2}\pi_1^\eta(\eta,1)} = \mu_{\calM_{(i)}^\eta}^\eta\!\!\!(\Ball_1) ,
\end{align}
was investigated for $\eta>0$ and $i\geq1$. Probably the first such results appeared in \cite{BCKS99} and \cite{BCKS01}.
Using Theorems \ref{thm:main_Hausdorff} and \ref{thm:main_meas}
and results in Section \ref{sec:clustersinSubset} about convergence of clusters and portions of clusters in bounded domains,
we deduce the following theorem.

\begin{thm}\label{thm:largest_Clusters}
  For all $i\in \NN$, the cluster $\calM_{(i)}^\eta$ and its normalized counting
  measure $\mu_{\calM_{(i)}^{\eta}}^{\eta}$ converge in distribution to a closed
  set $\calM^0_{(i)}$ and a measure $\mu_{\calM_{(i)}^0}^0$, respectively, as $\eta\ra0$.
\end{thm}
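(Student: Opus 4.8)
\emph{Proof strategy.}
The plan is to recognise $\calM_{(i)}^\eta$, once $\delta$ is small, as the cluster of $\clColl_1^\eta(\delta)$ whose normalized counting measure has the $i$-th largest total mass, and then to let $\eta\to0$ for fixed $\delta$ and afterwards $\delta\to0$. The starting point is the identity already recorded in \eqref{eq:largest_volume}: for a red cluster $\calC\subset\Ball_1$ one has $\mu_\calC^\eta(\Ball_1)=\mu_\calC^\eta(\CC)=\tfrac{\eta^2}{\pi_1^\eta(\eta,1)}|\calC|$, so ranking the red clusters contained in $\Ball_1$ by their vertex count is the same as ranking their normalized counting measures by total mass. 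Under the closed (blue) boundary condition on $\Ball_1$ every red cluster is contained in $\Ball_1$, and the results of Section \ref{sec:clustersinSubset} on clusters and portions of clusters in bounded domains, together with the strong forms of Theorems \ref{thm:main_Hausdorff} and \ref{thm:main_meas} (Theorems \ref{thm:strong_Hausdorff} and \ref{thm:strong_meas}), provide the \emph{joint} convergence in distribution $(\clColl_1^\eta(\delta),\muColl_1^\eta(\delta))\Rightarrow(\clColl_1^0(\delta),\muColl_1^0(\delta))$ as $\eta\to0$, for each fixed $\delta>0$, with matched (cluster, measure) pairs in the product of the metric \eqref{eq:def_coll_dist} and its measure analogue; moreover $(\clColl_1^0(\delta),\muColl_1^0(\delta))\to(\clColl_1^0,\muColl_1^0)$ as $\delta\to0$.

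I would then introduce the selection map $\Psi_i$ sending a finite collection of (closed set, finite measure) pairs to the pair whose measure has the $i$-th largest total mass (and to a fixed null value if there are fewer than $i$ entries). Since $\mu\mapsto\mu(\CC)$ is $1$-Lipschitz for the Prokhorov distance and the entries are matched in the metrics above, $\Psi_i$ is continuous at every collection with at least $i$ entries whose measures have pairwise distinct total masses. It then remains to check that the law of $(\clColl_1^0(\delta),\muColl_1^0(\delta))$ charges only such configurations, for $\delta$ small (the event of having fewer than $i$ clusters has probability vanishing as $\delta\to0$ and does not affect the argument). That there are at least $i$ positive-mass measures in $\muColl_1^0(\delta)$ once $\delta$ is small follows from the convergence of $\muColl_1^\eta(\delta)$ in Theorem \ref{thm:main_meas} and the fact that $\muColl_1^0$ a.s.\ carries infinitely many positive-mass measures. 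The a.s.\ distinctness of the total masses among the largest clusters is the more delicate point: two distinct continuum clusters are closed and disjoint, hence at positive distance, so some small ball $B$ meets only one of them; conditionally on the configuration outside $B$ the mass of that cluster is a non-atomic random variable, so a Fubini argument excludes a tie. Granting this, the continuous mapping theorem gives, for each fixed $\delta>0$,
\begin{align*}
\Psi_i\bigl(\clColl_1^\eta(\delta),\muColl_1^\eta(\delta)\bigr)\ \Rightarrow\ \Psi_i\bigl(\clColl_1^0(\delta),\muColl_1^0(\delta)\bigr)=:\bigl(\calM_{(i)}^{0,\delta},\mu_{\calM_{(i)}^{0,\delta}}^{0}\bigr)\qquad(\eta\to0).
\end{align*}

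To remove the cut-off, note first that on the limit side each of the $i$ largest clusters of $\clColl_1^0$ --- having positive mass and therefore diameter bounded away from $0$ --- eventually lies in $\clColl_1^0(\delta)$ as $\delta\to0$, so $\Psi_i(\clColl_1^0(\delta),\muColl_1^0(\delta))$ is eventually $(\calM_{(i)}^0,\mu_{\calM_{(i)}^0}^0)$, the $i$-th largest cluster of $\clColl_1^0$ together with its normalized counting measure. On the lattice side, $\Psi_i(\clColl_1^\eta(\delta),\muColl_1^\eta(\delta))$ can differ from $(\calM_{(i)}^\eta,\mu_{\calM_{(i)}^\eta}^\eta)$ only when one of the $i$ largest red clusters in $\Ball_1$ has diameter $<\delta$, and that event lies in the union of $\{\exists\ \text{red }\calC\subset\Ball_1:\diam(\calC)<\delta,\ \mu_\calC^\eta(\Ball_1)\ge\epsilon\}$ and $\{\mu_{\calM_{(i)}^\eta}^\eta(\Ball_1)<\epsilon\}$. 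The probability of the latter tends to $0$ as $\epsilon\to0$, uniformly along $\eta\to0$, by the same comparison with Theorem \ref{thm:main_meas}. For the former, if a red cluster $\calC\subset\Ball_1$ has diameter $<\delta$ then $\calC\subset\Ball_\delta(v)$ for each of its vertices $v$, so writing $M=\epsilon\eta^{-2}\pi_1^\eta(\eta,1)$ a double application of Markov's inequality bounds its probability by
\begin{align*}
\frac{1}{M^{2}}\sum_{v\in\Ball_1\cap\eta V}\EE\bigl[\,|\cl^\eta(v)\cap\Ball_\delta(v)|\,\bigr]\ \le\ \frac{C\,\eta^{-4}\,\delta^{2}\,\pi_1^\eta(\eta,\delta)^2}{M^{2}}\ \asymp\ \frac{C}{\epsilon^{2}}\Bigl(\frac{\pi_1^\eta(\eta,\delta)}{\pi_1^\eta(\eta,1)}\Bigr)^{2}\delta^{2}\ \asymp\ \frac{C}{\epsilon^{2}}\,\delta^{\,2-2\alpha_1},
\end{align*}
where we used $\PP(u\leftrightarrow w)\le C\pi_1^\eta(\eta,|u-w|)^2$ and quasi-multiplicativity of one-arm probabilities; this is uniform in $\eta$ and tends to $0$ as $\delta\to0$ since $\alpha_1<1$. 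Hence $\lim_{\delta\to0}\limsup_{\eta\to0}\PP\bigl((\calM_{(i)}^\eta,\mu_{\calM_{(i)}^\eta}^\eta)\ne\Psi_i(\clColl_1^\eta(\delta),\muColl_1^\eta(\delta))\bigr)=0$, and the standard approximation lemma for convergence in distribution combines this with the fixed-$\delta$ convergences to yield $(\calM_{(i)}^\eta,\mu_{\calM_{(i)}^\eta}^\eta)\Rightarrow(\calM_{(i)}^0,\mu_{\calM_{(i)}^0}^0)$.

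The main obstacle is the uniform-in-$\eta$ control that a small-diameter red cluster carries small normalized volume, together with its continuum counterpart that a positive-mass continuum cluster has diameter bounded away from $0$; both are classical in spirit but rest on the arm-event estimates of Section \ref{sec:preliminaries}. The a.s.\ distinctness of the total masses of the largest continuum clusters is the other place where genuine work is needed; alternatively the required non-degeneracy can be extracted from the existing analysis of $|\calM_{(i)}^\eta|$ in \cite{BCKS99,BCKS01} and its successors.
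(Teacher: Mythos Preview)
Your overall architecture --- select the $i$-th largest mass among the clusters of diameter $\ge\delta$, apply a continuous-mapping argument for fixed $\delta$, then remove the cutoff by showing that large-volume clusters have large diameter --- is essentially the same scheme the paper follows in Theorem~\ref{thm:conv_Largest_clusters_precise}. Two points differ in execution. First, your double-Markov bound for $\PP(\exists\,\calC:\diam(\calC)<\delta,\ \mu_\calC^\eta(\Ball_1)\ge\epsilon)$ is a correct and more self-contained substitute for the paper's appeal to \cite[Lemma~4.4]{BCKS01} (Lemma~\ref{lem:BCKS2} here); the paper's cited bound is exponential and feeds a Borel--Cantelli to stabilise $\calM_{(i)}^0(3^{-m})$, whereas your polynomial bound suffices because you only need $\lim_{\delta\to0}\limsup_{\eta\to0}$ to vanish.

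The genuine gap is the no-ties step. Your proposed continuum argument --- find a ball $B$ meeting only one of two clusters and argue that ``conditionally on the configuration outside $B$ the mass of that cluster is a non-atomic random variable'' --- is not justified: in the scaling limit you have neither a spatial Markov property for $(\omega_0,L_0)$ in the required form nor any established non-atomicity of the conditional cluster mass. The paper does \emph{not} attempt a continuum resampling; it imports the discrete gap result of \cite{BC13} (Proposition~\ref{prop:BC_gaps_cl} here), which gives $\PP_\eta(\exists\,\calB\neq\calB'\in\mathscr{B}_{\Ball_1}^\eta(\delta):|\mu_\calB^\eta(\Ball_1)-\mu_{\calB'}^\eta(\Ball_1)|<\alpha)<\varphi$ uniformly in small $\eta$, and then passes to the limit via the measure convergence (their display~\eqref{eq:pf:largeCl}) to conclude that ties have probability zero in $\muColl^0_1(\delta)$. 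Your fallback ``extract from \cite{BCKS99,BCKS01} and its successors'' is pointing in the right direction, but the statement you actually need is precisely the \cite{BC13} gap estimate, and the passage to the continuum must go through the discrete as in the paper rather than by conditioning in the limit.
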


\begin{proof}
 The result follows directly from Theorem \ref{thm:conv_Largest_clusters_precise} in Section \ref{sec:pf_of_conv_largest_clusters}.
\end{proof}

Recently some of the results from \cite{BCKS99,BCKS01} where sharpened \cite{BC12,BC13,K13}.
These sharpened results, in combination with Theorem \ref{thm:largest_Clusters}, imply that the distribution
of $\mu_{\calM_{(i)}^0}^0\!\!\!(\Ball_1)$ has no atoms \cite{BC13}, that its support is $(0,\infty)$ \cite{BC12} and
that it has a stretched exponential upper tail \cite{K13}.  

It is a celebrated result of Smirnov \cite{Smirnov2001a} that critical site percolation on the triangular lattice is conformally
invariant in the limit as $\eta\ra0$. See also \cite{CaNe07,Camia2006}. As we will show, under certain technical conditions,
this implies that the collections of large clusters in the limit as $\eta\ra0$ are also conformally invariant, while their
normalized counting measures are conformally covariant by the results in \cite{Garban2010}.
We denote by $\mathscr{B}_{D}^{\eta}(\delta)$ the collection of clusters, with diameter
greater than $\delta>0$, in a domain $D$ with closed boundary condition.
In Section \ref{sec:clustersinSubset} we will see that, as $\eta \to 0$, this collection
converges in distribution to a limiting collection of clusters $\mathscr{B}_D^0(\delta)$.
The latter converges as $\delta$ tends to $0$ to the random collection $\mathscr{B}_D^0$. To indicate that we 
consider the measures of the clusters in $\mathscr{B}_D^0$ instead of the clusters in $\clColl_D^0$ we add
a tilde, for example the collection of measures of the clusters in $\mathscr{B}_{\Ball_1}^0$ is denoted by
$\tilde{\muColl}_{\Ball_1}^0$. We obtain the following result, which is stated in a slightly stronger form as
Theorems \ref{thm:cinvar_prec} and \ref{thm:covar_prec}. 

\begin{thm}\label{thm:cinvar}
  Let $f$ be a conformal map defined on an open neighbourhood of $\Ball_1$, and $D = f(\Ball_1)$. We set
  \begin{align*}
    f(\mathscr{B}_{\Ball_1}^0) & := \{f(\calB)\,:\, \calB\in\mathscr{B}_{\Ball_1}^0\}, \text{ and}\\
    f(\tilde{\muColl}_{\Ball_1}^0) & := \{\mu^{0*}\,:\,\mu^0\in\tilde{\muColl}_{\Ball_1}^0\}
  \end{align*}
  where $\mu^{0*}$ is the modification of the push-forward measure of $\mu^0$ along $f$ defined as 
  \begin{align*}
    \mu^{0*}(B) := \int_{f^{-1}(B)} |f'(z)|^{91/48}d\mu^{0}(z)
  \end{align*}
  for Borel sets $B$.  
  Then the pairs $(f(\mathscr{B}_{\Ball_1}^0), f(\tilde{\muColl}_{\Ball_1}^0))$ and
  $(\mathscr{B}^0_D,\tilde{\muColl}^0_D)$ have the same distribution.
\end{thm}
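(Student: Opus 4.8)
The plan is to deduce Theorem \ref{thm:cinvar} from three ingredients: the conformal invariance of the critical percolation scaling limit \cite{Smirnov2001a,CaNe07,Camia2006}, the identification (carried out in Sections \ref{sec:clustersinSubset} and \ref{sec:props}) of the pair $(\mathscr{B}_D^0,\tilde{\muColl}_D^0)$ as a measurable functional of the percolation loop process in $D$ with monochromatic (blue) boundary condition, and the conformal covariance of the natural cluster measure from \cite{Garban2010}. Since $f$ is conformal on a neighbourhood of $\Ball_1$, the image $D=f(\Ball_1)$ is a Jordan domain with piecewise smooth boundary, so the collections $(\mathscr{B}_D^0,\tilde{\muColl}_D^0)$ are well defined by the results of Section \ref{sec:clustersinSubset}. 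Throughout, $\alpha_1=5/48$ is the one-arm exponent for site percolation on $\TT$, so the hypothesis of Theorem \ref{thm:invar} holds with this value and $\eta^{-2}\pi_1^\eta(\eta,1)\asymp\eta^{-91/48}$; in particular $91/48=2-\alpha_1$.

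First I would treat the set-valued part. The loop process describing the scaling limit of critical site percolation in a Jordan domain with monochromatic boundary condition is conformally invariant, so the push-forward under $f$ of the loop process in $\Ball_1$ has the law of the loop process in $D$. The continuum clusters are recovered from the loops by filling in the regions enclosed by outermost loops and iterating over nested loops, an operation that commutes with any homeomorphism of the plane; hence $f(\mathscr{B}_{\Ball_1}^0)$ and $\mathscr{B}_D^0$ have the same law, and moreover the relevant $\sigma$-algebras are generated by the loop process.

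For the measures the point is to upgrade conformal invariance to conformal covariance with exponent $91/48$. I would argue by localization. Fix a small $\rho>0$ and cover $\Ball_{1-\rho}$ by squares $Q_j$ of side $\rho$; on $Q_j$ replace $f$ by the affine map $z\mapsto f(z_j)+f'(z_j)(z-z_j)$, which differs from $f$ by $O(\rho^2)$ uniformly on $Q_j$. Using the convergence of portions of clusters in bounded domains (Section \ref{sec:clustersinSubset}) together with Theorem \ref{thm:invar} applied to these affine maps, the portion of each $\mu^0\in\tilde{\muColl}_{\Ball_1}^0$ lying over $Q_j$ is transported, up to an error negligible as $\rho\to0$, by the linear factor $|f'(z_j)|^{2-\alpha_1}=|f'(z_j)|^{91/48}$. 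Summing over $j$, using continuity of $z\mapsto|f'(z)|^{91/48}$ and a Riemann-sum argument for the finite measures involved, gives $\mu^{0*}(B)=\int_{f^{-1}(B)}|f'(z)|^{91/48}\,d\mu^0(z)$ for Borel sets $B$. A separate one-arm estimate, of the type already needed in Section \ref{sec:cont_meas}, shows that the mass carried by clusters within distance $\rho$ of $\partial\Ball_1$ (and, symmetrically, within distance $O(\rho)$ of $\partial D$) tends to $0$ as $\rho\to0$, so restricting to $\Ball_{1-\rho}$ loses nothing in the limit. An alternative route is to quote directly the cluster-by-cluster conformal covariance of the natural percolation measure from \cite{Garban2010}; then the only thing to check is that our normalized counting measures $\mu_\calC^\eta$, with their normalization $\eta^{-2}\pi_1^\eta(\eta,1)$, converge to the measures constructed there, which follows from the uniqueness in Theorem \ref{thm:main_meas} and a comparison of discrete approximants.

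The main obstacle is the measure part, and more precisely the need to run the set-level and measure-level arguments jointly rather than separately. A macroscopic continuum cluster meets many of the squares $Q_j$, so to conclude that the bijection $\phi$ in the metric \eqref{eq:def_coll_dist} matches clusters to clusters and their \emph{full} measures to full measures, one must track, through the coupling given by the loop process, which portion of measure sits on which continuum cluster; this is exactly where the measurability statement from the set-level step is essential. In addition, the localization requires a quantitative near-independence between scales for the joint law of $(\mathscr{B}^0,\tilde{\muColl}^0)$, uniform over the cover and over $\rho$ — essentially the same estimates that enter the proofs of Theorems \ref{thm:main_meas} and \ref{thm:invar}, but now to be assembled uniformly. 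Handling the monochromatic boundary condition on $\partial\Ball_1$, which $f$ carries to $\partial D$, is routine given conformal invariance of the loop process in Jordan domains, but must be stated with care so that the boundary-touching clusters on the two sides are put in correspondence.
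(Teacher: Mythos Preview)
Your approach differs from the paper's in both halves, and the measure half carries a real gap.

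For the clusters, the paper does not use the loop-filling description; instead it reruns the box-approximation machinery of Sections~\ref{sec:approx}--\ref{sec:cont_clus_constr} with the $L^\infty$ metric replaced by the push-forward metric $d_f(x,y):=\|f^{-1}(x)-f^{-1}(y)\|_\infty$. Since $|f'|$ is bounded above and below on $\Ball_1$, the two metrics are equivalent, so the good-subgraph construction with transformed boxes $f(\Ball_{\varepsilon/2}(\varepsilon z))$ yields the same continuum clusters in $D$ as the construction with square boxes; and by conformal invariance of the quad-crossing scaling limit, the transformed-box construction in $D$ is exactly the image under $f$ of the square-box construction in $\Ball_1$. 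Your loop-filling argument is morally fine, but note that the identification of $\mathscr{B}_D^0$ in Section~\ref{sec:clustersinSubset} is in terms of excursions and interiors, which do commute with homeomorphisms, so this part can be made rigorous either way.

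For the measures, the paper's route is shorter and avoids your localization entirely. It quotes the conformal covariance of the \emph{one-arm} measures $\mu_{1,A}^0$ for proper annuli $A$ (this is Theorem~6.7 of \cite{Garban2010}, restated as Theorem~\ref{assu:covar}), not a cluster-by-cluster statement. Since each cluster measure $\mu_\calB^0$ is built in Section~\ref{sec:cont_meas} as a limit of sums of one-arm measures $\mu_{1,A(\varepsilon z;\varepsilon/2,\delta/2-\varepsilon)}^0$, and since the same approximation carried out with the transformed annuli $f(A(\varepsilon z;\cdot,\cdot))$ yields the same limit (equivalence of $d_f$ and $d_\infty$ again), the covariance of $\mu_{1,A}^0$ transfers directly to $\mu_\calB^0$ with the density $|f'(z)|^{2-\alpha_1}=|f'(z)|^{91/48}$.

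Your localization-by-affine-maps programme tries to deduce full conformal covariance from Theorem~\ref{thm:invar} alone, but Theorem~\ref{thm:invar} is a statement about the distribution of the full-plane collection under a single global linear map; it gives you nothing about the joint law of restrictions to many small squares $Q_j$ under \emph{different} affine maps, which is what your Riemann-sum step needs. Filling that gap is essentially reproving the hard direction of \cite[Theorem~6.7]{Garban2010}. Your ``alternative route'' is closer in spirit to the paper, but be aware that \cite{Garban2010} does not prove cluster-by-cluster covariance directly; the passage from one-arm-measure covariance to cluster-measure covariance is exactly the content of Theorem~\ref{thm:covar_prec}, and it uses the approximation scheme of Lemma~\ref{lem:bound_err} rerun in the $d_f$ metric.
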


\begin{proof}
 The result follows from a combination of Theorems \ref{thm:cinvar_prec} and \ref{thm:covar_prec}, stated
and proved in Section \ref{ssec:cinvar}.
\end{proof}

\subsection{Geometric representation of the critical Ising magnetization field} \label{ssec:crit_Ising_constr}

In this section we give an alternative proof of the existence and uniqueness of the limiting magnetization field obtained by
taking the critical scaling limit of the magnetization in the two-dimensional Ising model, a result first proved in \cite{CGN15}.
We also prove a geometric representation for the scaling limit of the critical Ising magnetization in two dimensions that was
first conjecture in \cite{CN09}. There, it was heuristically argued that the Ising magnetization field should be expressible in
terms of the limiting cluster measures of the FK-Ising clusters, giving a sort of continuum FK representation based on continuum
clusters;
here, we provide the proof of a precise statement to that effect (see Theorem~\ref{thm:Ising} below).

Consider a two-dimensional critical Ising model on $\eta {\mathbb Z}^2$ and its FK representation (see, e.g., \cite{G06}).

In what follows, we will assume Wu's celebrated result on the power law decay of the critical Ising two-point function
\cite{W66}. This assumption implies that, for critical FK-Ising percolation, $\eta^2/\pi^{\eta}(\eta,1)$ behaves like $\eta^{15/8}$
as $\eta \to 0$. (See Remark~1.5 of \cite{CGN15} for a discussion of this point.)
We denote by $\Phi^{\eta}$ the lattice magnetization field, defined as
\begin{equation*}
\Phi^{\eta}:= \eta^{15/8} \, \sum_{x \in \eta\, {\mathbb Z}^2} S_x \delta_x \, ,
\end{equation*}
where $S_x$ is the spin at $x$ and $\delta_x$ is the Dirac delta at $x$.
We also introduce the $\varepsilon$-cutoff magnetization $\Phi^{\eta}_{\varepsilon}$, define as
\begin{align*}
\Phi^{\eta}_{\varepsilon} := \sum_{\cl \in \clColl^\eta(\varepsilon)} {\mathcal S}_\cl \mu^{\eta}_{\cl} \, ,
\end{align*}
where the sum is over all FK-Ising clusters of diameter larger than $\varepsilon$ (the order of the sum is irrelevant), the ${\mathcal S}_\cl$'s
are i.i.d. symmetric $(\pm1)$-valued random variables, the $\mu^{\eta}_{\cl}$'s are the FK-Ising normalized counting
measures, and we think of $\Phi^{\eta}_{\varepsilon}$ as a random signed measure acting on the space $C^{\infty}_{0}$ of infinitely
differentiable functions with bounded support.
Note that, if ${\bf 1}_L$ denotes the indicator function of $[-L,L]^2$, $\left\langle \Phi^{\eta}_{\varepsilon}, {\bf 1}_L \right\rangle$
is the magnetization in $[-L,L]^2$ produced by FK clusters of diameter at least $\varepsilon$.

\begin{lemma}\label{lemma:Ising}
For each $f \in C^{\infty}_{0}$, as $\eta \to 0$, $\left\langle \Phi^{\eta}_{\varepsilon}, f \right\rangle$ converges in distribution
to the random variable
\begin{align*}
\Phi^{0}_{\varepsilon}(f) := \sum_{\cl \in \clColl^0(\varepsilon)} {\mathcal S}_\cl \langle \mu^{0}_{\cl},f \rangle \, .
\end{align*}
\end{lemma}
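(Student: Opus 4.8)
The plan is to reduce the statement to the already-established convergence of the collection of clusters and their measures (Theorems \ref{thm:main_Hausdorff} and \ref{thm:main_meas}), decorated with the i.i.d.\ signs, and then to pass from convergence of the decorated collection to convergence of the paired linear functional $f \mapsto \langle \Phi^\eta_\eps, f\rangle$. Fix $\eps > 0$ and $f \in C^\infty_0$, and choose $k$ large enough that $\supp f \subset \Ball_{k-1}$, say. The first observation is that only clusters meeting $\supp f$ contribute to $\langle \Phi^\eta_\eps, f\rangle$, and every cluster of diameter $\ge \eps$ meeting $\supp f$ is contained in $\Ball_k$ once $\eta$ is small; more care is needed for clusters that are large but only partially inside $\Ball_k$, but since $f$ is supported well inside $\Ball_k$ we may instead work with $\clColl_k^\eta(\eps)$ up to an error that is controlled by the probability that a cluster of diameter $\ge \eps$ crosses the annulus between $\supp f$ and $\partial\Ball_k$ --- and this can be absorbed into the limiting object by a standard argument, or circumvented by taking $k \to \infty$ at the end. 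So it suffices to show
\begin{align*}
\sum_{\cl \in \clColl_k^\eta(\eps)} \calS_\cl \langle \mu^\eta_\cl, f\rangle \;\xrightarrow{d}\; \sum_{\cl \in \clColl_k^0(\eps)} \calS_\cl \langle \mu^0_\cl, f\rangle .
\end{align*}

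The key step is to upgrade Theorem \ref{thm:main_meas} to joint convergence of the pair $(\clColl_k^\eta(\eps), \muColl_k^\eta(\eps))$ together with an i.i.d.\ sign attached to each cluster. Since the signs are independent of everything, this is immediate from Skorokhod's representation theorem: realize the convergence of the (matched) collections of clusters and measures on a common probability space so that $\clColl_k^\eta(\eps) \to \clColl_k^0(\eps)$ and $\muColl_k^\eta(\eps) \to \muColl_k^0(\eps)$ almost surely in the respective collection metrics, note that almost surely the number of clusters is eventually constant (this is part of the content of the collection topology, where the distance is infinite between collections of different cardinality) and a matching $\phi_\eta$ between $\clColl_k^\eta(\eps)$ and $\clColl_k^0(\eps)$ realizing the distance can be chosen, then attach to matched clusters the \emph{same} sign. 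Along this coupling, for each fixed limiting cluster $\cl$ with sign $\calS_\cl$ we have $\mu^\eta_{\phi_\eta^{-1}(\cl)} \to \mu^0_\cl$ in the Prokhorov metric, hence, since $f$ is bounded and uniformly continuous, $\langle \mu^\eta_{\phi_\eta^{-1}(\cl)}, f\rangle \to \langle \mu^0_\cl, f\rangle$ (weak convergence of finite measures tests against bounded continuous functions, and the total masses converge because Prokhorov convergence of finite --- not necessarily probability --- measures gives convergence of total mass). Summing the finitely many terms, $\sum_{\cl} \calS_{\phi_\eta^{-1}(\cl)} \langle \mu^\eta_{\phi_\eta^{-1}(\cl)}, f\rangle \to \sum_\cl \calS_\cl \langle \mu^0_\cl, f\rangle$ almost surely, which gives convergence in distribution of the original (non-coupled) quantities.

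The main obstacle is not any of the above soft steps but the tightness/control needed to legitimately restrict to a bounded window and to a fixed cutoff: one must ensure that the contribution to $\langle \Phi^\eta_\eps, f\rangle$ from clusters of diameter $\ge \eps$ that are \emph{not} contained in $\Ball_k$ (but still intersect $\supp f$) is negligible uniformly in $\eta$ as $k \to \infty$, and that the number of clusters of diameter $\ge \eps$ intersecting a bounded set, together with their measures $\mu^\eta_\cl(\CC)$, is tight. The first point follows because a cluster intersecting $\supp f$ but not contained in $\Ball_k$ must cross the annulus $\Ball_k \setminus \Ball_{k-1}$ (for $k$ large), an event of probability decaying in $k$ by the one-arm estimate, and on the complement $\clColl^\eta(\eps)$ restricted to clusters meeting $\supp f$ coincides with $\clColl_k^\eta(\eps)$ restricted the same way; the total measure of such an exceptional cluster is controlled by a first-moment bound $\EE[\eta^2 \pi_1^\eta(\eta,1)^{-1} \#(\cl \cap \eta V)] = O(\diam(\cl)^2)$-type estimate, which is exactly the kind of area estimate underlying the proof of Theorem \ref{thm:main_meas} and may be quoted from Section \ref{sec:cont_meas}. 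The second point --- tightness of the summed masses --- is likewise a moment estimate already available from the construction of the continuum measures. Granting these, the argument is complete; I would present it by first stating the reduction to $\clColl_k^\eta(\eps)$ with the error term, then the Skorokhod coupling argument, then letting $k \to \infty$.
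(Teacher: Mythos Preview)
Your proposal is correct and takes essentially the same approach as the paper. The paper's proof is a single sentence invoking Theorem~\ref{thm:main_Hausdorff_fullPlane} for any $k$ with $\supp f\subset\Ball_k$; you have simply unpacked the implicit content of that citation --- the reduction to a bounded window via the one-arm estimate, the joint convergence of clusters and measures in $\Ball_k$ from Theorems~\ref{thm:main_Hausdorff} and~\ref{thm:main_meas}, the Skorokhod coupling with i.i.d.\ signs attached along the matching, and the continuous mapping $\mu\mapsto\langle\mu,f\rangle$ on finitely many terms --- all of which is exactly what is being used.
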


\begin{proof}
The statement follows from Theorem \ref{thm:main_Hausdorff_fullPlane} by taking any $k$ such that the domain of $f$ is contained in $\Lambda_k$.
\end{proof}

\begin{thm}\label{thm:Ising}
If $f$ is a bounded function of bounded support, as $\eta \to 0$, then $\left\langle \Phi^{\eta}, f \right\rangle$ converges in
distribution to a  random variable $\Phi^{0}(f)$ measurable with respect to the collection of loops and signs, and such that
\begin{align*}
||\Phi^{0}(f) - \Phi^{0}_{\varepsilon}(f)||_2 \leq C \, ||f||_{\infty}^2 \, \varepsilon^{7/4}
\end{align*}
for any $\varepsilon>0$ and some positive constant $C<\infty$ independent of $f$.
Moreover, if $(f_n)_{n \in {\mathbb N}}$ is a sequence of bounded functions of bounded support converging to $f$ in the
sup-norm as $n \to \infty$, then $\Phi(f_n) \to \Phi(f)$ in $L_2$ as $n \to \infty$.
\end{thm}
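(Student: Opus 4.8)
The plan is to obtain $\Phi^0(f)$ as an $L^2$-limit of the cutoff fields $\Phi^0_\varepsilon(f)$ and to identify it with the limit of $\langle \Phi^\eta,f\rangle$ via a standard ``$\varepsilon$-$\eta$ diagonal argument''. First I would establish the key variance estimate: for $\varepsilon'<\varepsilon$,
\begin{align*}
\EE\big| \Phi^\eta_{\varepsilon'}(f) - \Phi^\eta_\varepsilon(f) \big|^2 \le C\, \|f\|_\infty^2\, \varepsilon^{7/4},
\end{align*}
uniformly in $\eta$. Because the signs $\mathcal S_\cl$ are i.i.d.\ symmetric and independent of the clusters, the cross terms vanish after conditioning on the percolation configuration, so the left-hand side equals $\EE \sum_{\cl} \langle \mu^\eta_\cl, f\rangle^2$, where the sum runs over FK clusters of diameter in $[\varepsilon',\varepsilon)$. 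This is bounded by $\|f\|_\infty^2 \,\EE \sum_{\cl} \mu^\eta_\cl(\mathrm{supp}\,f)^2$, and the latter is precisely the kind of second-moment quantity controlled by the one-arm and four-arm exponents for FK-Ising (as in \cite{Garban2010}); the relevant power of $\varepsilon$ comes from the scaling dimension, matching the exponent $7/4$ through the Wu relation $\eta^2/\pi^\eta(\eta,1)\asymp \eta^{15/8}$. Passing to the limit $\eta\to0$ using Lemma~\ref{lemma:Ising} (and uniform integrability from a matching fourth-moment bound, or the fact that Prokhorov convergence of the measure ensemble plus the above second-moment bound yields convergence of second moments) gives the same bound for $\Phi^0_{\varepsilon'}(f) - \Phi^0_\varepsilon(f)$.

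Next, the variance bound shows $(\Phi^0_{1/n}(f))_{n}$ is Cauchy in $L^2$; define $\Phi^0(f)$ as its limit. Letting $\varepsilon'\to0$ in the bound yields $\|\Phi^0(f)-\Phi^0_\varepsilon(f)\|_2 \le C\|f\|_\infty^2\varepsilon^{7/4}$, the displayed inequality. To show $\langle\Phi^\eta,f\rangle \to \Phi^0(f)$ in distribution, write
\begin{align*}
\langle \Phi^\eta, f\rangle - \Phi^0(f) = \big(\langle\Phi^\eta,f\rangle - \Phi^\eta_\varepsilon(f)\big) + \big(\Phi^\eta_\varepsilon(f) - \Phi^0_\varepsilon(f)\big) + \big(\Phi^0_\varepsilon(f) - \Phi^0(f)\big).
\end{align*}
The first term is small in $L^2$ uniformly in $\eta$ by the variance bound (here I use that $\langle\Phi^\eta,f\rangle - \Phi^\eta_\varepsilon(f) = \sum_{\diam(\cl)<\varepsilon}\mathcal S_\cl\langle\mu^\eta_\cl,f\rangle$, since the FK representation of the lattice magnetization splits exactly as a sum over all clusters with i.i.d.\ signs); the third term is $\le C\|f\|_\infty^2\varepsilon^{7/4}$; and for fixed $\varepsilon$ the middle term tends to $0$ in distribution by Lemma~\ref{lemma:Ising}. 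A standard $3\varepsilon$-argument then gives convergence in distribution, and in fact (upgrading via the $L^2$ bounds and a coupling) convergence in $L^2$. Measurability of $\Phi^0(f)$ with respect to the loops and signs follows because each $\Phi^0_\varepsilon(f)$ is measurable with respect to finitely many macroscopic loops (the cluster boundaries) and their signs — this uses that the continuum clusters, hence their measures $\mu^0_\cl$, are measurable functions of the loop ensemble, which is part of the content of Theorems~\ref{thm:main_Hausdorff}, \ref{thm:main_meas} and the full-scaling-limit results of \cite{KS16} — and $\Phi^0(f)$ is an $L^2$-limit of such.

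For the final claim, given $f_n\to f$ in sup-norm, apply the triangle inequality and the displayed bound to $\Phi^0(f_n)-\Phi^0(f)$: write $\Phi^0(f_n)-\Phi^0(f) = (\Phi^0(f_n)-\Phi^0_\varepsilon(f_n)) + (\Phi^0_\varepsilon(f_n)-\Phi^0_\varepsilon(f)) + (\Phi^0_\varepsilon(f)-\Phi^0(f))$. The outer two terms are $\le C(\|f_n\|_\infty^2 + \|f\|_\infty^2)\varepsilon^{7/4}$, bounded uniformly in $n$ since $\|f_n\|_\infty$ is bounded; the middle term equals $\sum_{\cl\in\clColl^0(\varepsilon)}\mathcal S_\cl\langle\mu^0_\cl, f_n - f\rangle$, which for fixed $\varepsilon$ involves only finitely many clusters (a.s.), each with a finite measure, so it is bounded in $L^2$ by $(\text{finite})\cdot\|f_n-f\|_\infty \to 0$. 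Choosing $\varepsilon$ small then $n$ large gives $\Phi^0(f_n)\to\Phi^0(f)$ in $L^2$.

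I expect the main obstacle to be the uniform (in $\eta$) second-moment estimate $\EE\sum_{\diam(\cl)\in[\varepsilon',\varepsilon)}\mu^\eta_\cl(\mathrm{supp}\,f)^2 \le C\varepsilon^{7/4}$ and its passage to the limit. Bounding the expected squared measure of a single cluster restricted to a region, summed over clusters in a diameter window, requires the FK-Ising quasi-multiplicativity and the one- and four-arm exponent inputs from Section~\ref{sec:preliminaries} (the general assumptions), essentially reprising the computations of \cite{Garban2010} but tracking individual clusters rather than the aggregate measure; and transferring the bound to the limit needs either uniform integrability of $\{\Phi^\eta_\varepsilon(f)^2\}$ or a direct second-moment convergence statement, which is exactly where the finer control on the measure ensemble from Theorem~\ref{thm:main_meas} (rather than mere weak convergence) is used. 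Everything else is soft: the $3\varepsilon$-argument, the Cauchy/completeness step, and the measurability claim are routine once the variance bound is in hand.
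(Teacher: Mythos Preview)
Your proposal is correct and follows essentially the same route as the paper: define $\Phi^0(f)$ as the $L^2$-limit of $\Phi^0_\varepsilon(f)$ via the Cauchy estimate coming from the i.i.d.\ signs and the cluster second-moment bound (the paper cites Proposition~6.2 of \cite{C12} and \cite{CN09} for this), then run the same three-term decomposition $\langle\Phi^\eta,f\rangle - \Phi^0(f) = (\langle\Phi^\eta,f\rangle - \Phi^\eta_\varepsilon(f)) + (\Phi^\eta_\varepsilon(f) - \Phi^0_\varepsilon(f)) + (\Phi^0_\varepsilon(f) - \Phi^0(f))$, and deduce measurability and $L^2$-continuity exactly as you outline. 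The only place where the paper is slightly more explicit is in handling the middle term: it works under the a.s.\ coupling $\omega_\eta\to\omega_0$ and invokes the $L_3$ bounds of Lemma~\ref{lem:L3bound} to upgrade the convergence in probability of the (finitely many) cluster measures to $L^2$ convergence of $\Phi^\eta_\varepsilon(f)-\Phi^0_\varepsilon(f)$, which is what your parenthetical ``uniform integrability from a matching moment bound'' is gesturing at.
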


\begin{proof}
We first identify a candidate for the limit $\Phi^{0}(f)$ of $\left\langle \Phi^{\eta}, f \right\rangle$.
To that end, we consider $\Phi^0_{\varepsilon}(f)$ as an element of $L_2(\Omega, {\mathbb P})$ and let $\varepsilon \to 0$.
The existence of the limit can be checked easily by considering sequences $\varepsilon_n \searrow 0$ and showing that
$(\Phi^0_{\varepsilon_n}(f))_n$ is a Cauchy sequence. For any $m>n$, 
denoting by $||\cdot||_2$ the $L_2$-norm and using ${\mathbb E}$ for expectation with respect to $\mathbb P$,
using the argument in the proof of Proposition 6.2 of \cite{C12}, we have that
\begin{eqnarray*}
|| \Phi^0_{\varepsilon_n}(f) - \Phi^0_{\varepsilon_m}(f) ||^2_2 & = &
{\mathbb E} \left( | \sum_{\cl \in \clColl^0(\varepsilon_m)\setminus \clColl^0(\varepsilon_n)} {\mathcal S}_\cl\, \mu^{0}_\cl(f) |^2 \right) \\
& \leq & \limsup_{\eta \to 0}{\mathbb E} \left( \sum_{\cl \in \clColl^\eta(\varepsilon_m)\setminus \clColl^\eta(\varepsilon_n)}
(\mu^{0}_\cl(f))^2 \right) \\ 
& \leq & \limsup_{\eta \to 0}
\mathbb{E}\left[ \sum_{\cl \in \clColl^\eta \setminus \clColl^\eta(\varepsilon_n)} (\mu^\eta_\cl(f))^2 \right] \\
& < & C \left(\sup_{x \in D}|f(x)|\right)^2 \varepsilon_n^{7/4} \, ,
\end{eqnarray*}
for some positive constant $C<\infty$ independent of $f$. Therefore, if $f$ is a bounded function of bounded support,
$\Phi^0_{\varepsilon}(f)$ converges, as $\varepsilon \to 0$ to an element $\Phi^0(f)$ of $L_2(\Omega,{\mathbb P})$;
moreover, for any $\varepsilon>0$,
\begin{align} \label{epsilon-upper-bound}
||\Phi^{0}(f) - \Phi^{0}_{\varepsilon}(f)||_2 \leq C \, ||f||_{\infty}^2 \, \varepsilon^{7/4} \, .
\end{align}

Using the triangle inequality, for any $\eta>0$, we can write
\begin{eqnarray*}
|| \Phi^0(f) - \left\langle \Phi^{\eta},f \right\rangle ||_{2} & \leq &
||\Phi^{0}(f) - \Phi^{0}_{\varepsilon}(f)||_2 \\
& + & ||\Phi^{0}_{\varepsilon}(f) - \Phi^{\eta}_{\varepsilon}(f)||_2 \\
& + & ||\Phi^{\eta}_{\varepsilon}(f) - \left\langle \Phi^{\eta},f \right\rangle||_2 \, .
\end{eqnarray*}
As $\eta \to 0$, the first term in the right hand side of the last inequality tends to zero because of \eqref{epsilon-upper-bound}.
The third term can be made arbitrarily small by letting $\eta \to 0$ and taking $\varepsilon$ small. Like \eqref{epsilon-upper-bound},
this follows from results and calculations in \cite{CN09} and from the proof of Proposition 6.2 of \cite{C12}.
For fixed $\varepsilon>0$, the remaining term can be expressed as a finite sum, containing the normalized counting measures of
clusters of diameter larger than $\varepsilon$ that intersect the support of $f$. As $\eta \to 0$, this term tends to zero because of
the convergence in probability of normalized counting measures proved in Theorem 7.2 under Assumption IV, and the $L_3$ bounds
provided by Lemma 3.15.

The $L_2$-continuity of $\Phi^0(\cdot)$ follows from
\begin{multline}
|| \Phi^0(f) - \Phi^0(f_n) ||^2_2\nonumber\\
\begin{aligned}
 & \leq & || \Phi^0(f) - \Phi^0_{\varepsilon}(f) ||^2_2 + || \Phi^0_{\varepsilon}(f) - \Phi^0_{\varepsilon}(f_n) ||^2_2
+  || \Phi^0_{\varepsilon}(f_n) - \Phi^0(f_n) ||^2_2 \nonumber\\
& \leq & || f-f_n ||^2_{\infty} \, {\mathbb E}\left(
\sum_{\cl \in \clColl^0(\varepsilon)} (\mu^{0}_\cl({\bf 1}_L))^2 \right) + C (|| f ||^2_{\infty} + || f_n ||^2_{\infty}) \varepsilon^{7/4}\nonumber \, ,
  \end{aligned}
\end{multline}
where ${\bf 1}_L$ denotes the indicator function of $[-L,L]^2$ and $L$ is such that supp$(f),\, $supp$(f_n) \subset [-L,L]^2$.
The fact that the term ${\mathbb E}\left(\sum_{\cl \in \clColl^0(\varepsilon)} (\mu^{0}_\cl({\bf 1}_L))^2 \right)$ is bounded follows,
for instance, from Proposition B.2 of \cite{CGN15}.

To conclude the proof of the theorem we note that, for every $\varepsilon>0$, the sum defining $\Phi^0_{\varepsilon}(f)$ is a.s.\
finite; therefore $\Phi^0_{\varepsilon}(f)$ is measurable with respect to the collections of loops and signs. Since $\Phi^0(f)$ is the
limit of $\Phi^0_{\varepsilon}(f)$ as $\varepsilon \to 0$, it is also measurable with respect to the collections of loops and signs.
\end{proof}

In the corollary below we consider the probability space $\Omega$ of continuum Ising-FK loops (CLE$_{16/3}$), clusters and area measures,
together with the random signs assigned to the clusters. An element of that space is denoted $\omega$ and the joint probability distribution
is denoted by $\mathbb P$.
We let $D$ be the space of infinitely differentiable functions with compact support equipped with the
topology of uniform convergence of all derivatives, whose topological dual $D'$ consists of all generalized functions. We remind the reader that,
according to Theorem \ref{thm:Ising}, the magnetization field $\Phi^0$ is measurable with respect to $\omega$.
\begin{cor}\label{cor:Ising}
There exists a random, continuous, linear functional $T \in D'$ with characteristic function
$\chi_T(f) = \int_{\Omega} e^{i\Phi^0(f)} d{\mathbb P}(\omega)$, for all $f \in D$.
\end{cor}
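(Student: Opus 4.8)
The plan is to construct $T$ as a (regularized) version of the random linear functional $f \mapsto \Phi^0(f)$ provided by Theorem~\ref{thm:Ising}, and then invoke the Bochner--Minlos theorem (or, more elementarily, a Kolmogorov-type construction) to realize it as a genuine element of $D'$ with the stated characteristic functional. First I would record what Theorem~\ref{thm:Ising} already gives: for each fixed $f \in D$, the random variable $\Phi^0(f) \in L_2(\Omega,\PP)$ is well defined and measurable with respect to $\omega$; moreover the map $f \mapsto \Phi^0(f)$ is linear on $D$ (linearity in $f$ passes to the limit from the manifest linearity of $\langle \Phi^\eta_\eps, f\rangle$ in $f$, using the $L_2$-convergence statements in the theorem) and is $L_2$-continuous in the sup-norm, hence a fortiori continuous with respect to the stronger topology of $D$. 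Thus $f \mapsto \Phi^0(f)$ is, for each fixed $\omega$ outside a set of measure zero \emph{depending on $f$}, a candidate linear functional; the issue is to produce a \emph{single} null set off of which $f \mapsto \Phi^0(f)(\omega)$ is simultaneously linear and continuous on all of $D$.

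The key steps, in order, are as follows. (i) Show that the family $\{\Phi^0(f) : f \in D\}$ defines a finitely additive, continuous random linear functional in the sense of the Bochner--Minlos / Gelfand--Vilenkin framework: one checks that $\EE[e^{i\Phi^0(f)}]$ is a well-defined function of $f$, that it is positive definite (immediate, being a pointwise limit of characteristic functions of honest random variables $\langle\Phi^\eta,f\rangle$, using Theorem~\ref{thm:Ising}), normalized to $1$ at $f=0$, and continuous on $D$ — the last point following from the $L_2$-bound $\|\Phi^0(f)-\Phi^0(f_n)\|_2 \to 0$ when $f_n \to f$ in sup-norm, together with $|\EE e^{i\Phi^0(f)} - \EE e^{i\Phi^0(g)}| \le \EE|e^{i(\Phi^0(f)-\Phi^0(g))}-1| \le \EE[|\Phi^0(f-g)| \wedge 2] \le \|\Phi^0(f-g)\|_2$. (ii) Invoke the Bochner--Minlos theorem on the nuclear space $D$: a continuous positive-definite normalized functional $\chi$ on $D$ is the characteristic functional of a unique Borel probability measure on the dual $D'$; call the resulting generalized random field $T$. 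This step delivers a random element $T \in D'$ whose law is prescribed by $\chi_T(f) = \int_\Omega e^{i\Phi^0(f)}\,d\PP(\omega)$, which is exactly the asserted formula. (iii) Finally, identify the law of $T(f)$ with the law of $\Phi^0(f)$ for each fixed $f$ — immediate since they have the same characteristic function — so that $T$ is genuinely a distributional realization of the magnetization field, and note that by Theorem~\ref{thm:Ising} the underlying field $\Phi^0$ is $\omega$-measurable, which (after identifying the Bochner--Minlos field with $\Phi^0$ via the $L_2$-limit procedure of Theorem~\ref{thm:Ising}) lets us take $T$ to be defined on the \emph{same} probability space $\Omega$ rather than an abstract copy of $D'$.

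The main obstacle is step (ii)/(iii): passing from the \emph{separately-in-$f$} construction of $\Phi^0(f)$ to a genuine continuous linear functional requires either (a) a direct Kolmogorov-continuity argument showing the cylinder-function family $\{\Phi^0(f)\}$ has a version that is a.s.\ a continuous linear functional on $D$, which in turn needs a quantitative modulus-of-continuity estimate on $\|\Phi^0(f)-\Phi^0(g)\|_p$ for $p>2$ in an appropriate Sobolev-type norm of $f-g$, or (b) the clean appeal to Bochner--Minlos, which sidesteps the pathwise construction at the cost of realizing $T$ a priori on the abstract space $D'$. I would follow route (b), since it is shortest and the hypotheses are all already in hand: nuclearity of $D$ is classical, positive-definiteness is free from Theorem~\ref{thm:Ising}, and continuity of $\chi_T$ follows from the $L_2$-estimates in that theorem. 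The only genuinely new thing to verify is that the continuity of $f \mapsto \|\Phi^0(f)\|_2$ in sup-norm (rather than in a nuclear seminorm of $D$) is enough; since $D \hookrightarrow C_0$ with $\|\cdot\|_\infty$ dominated by a seminorm of $D$, this is automatic. Hence the corollary follows, and I would conclude by remarking that, by construction, $T$ and $\Phi^0$ agree as generalized functions, so the geometric representation of $\Phi^0$ established above transfers to $T$.
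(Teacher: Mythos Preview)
Your proposal is correct and follows essentially the same route as the paper: verify that $\chi(f)=\EE[e^{i\Phi^0(f)}]$ is normalized, positive-definite, and continuous on the nuclear space $D$, then invoke Bochner--Minlos. The paper checks positive-definiteness directly via $\sum_{k,l} c_k\bar c_l\,\chi(f_k-f_l)=\|\sum_k c_k e^{i\Phi^0(f_k)}\|_2^2\ge 0$ (using linearity of $\Phi^0$) rather than via a limiting argument, and it stops at Bochner--Minlos without attempting your step~(iii) of realizing $T$ on the same probability space $\Omega$ --- the corollary as stated only asks for existence of $T$ with the prescribed characteristic functional, so that extra discussion, while interesting, is not needed for the result.
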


\begin{proof}
Since $D$ is a nuclear space, we can apply the Bochner-Minlos theorem (see, for example,
Theorem~3.4.2 on p.~52 of \cite{GF81}---a proof can be found in~\cite{GV64}). We define
\begin{equation} \label{eq-charac-func}
\chi(f) := \int_{\Omega} e^{i\Phi^0(f)} d{\mathbb P}(\omega)
\end{equation}
and check the following properties of $\chi$.
\begin{enumerate}
\item Normalization: $\chi(0)=1$.
\item Positivity: $\sum_{k,l=1}^m c_k \overline{c_l} \chi(f_k-f_l) \geq 0$
for every $m \in {\mathbb N}$, $f_1,\ldots,f_m \in D$ and
$c_1,\ldots,c_m \in {\mathbb C}$.
\item Continuity: $\chi(f) \to 0$ as $f \to 0$ (in the topology of $D$).
\end{enumerate}
The first property is clear. To establish the second property, let
$F = \sum_{k=1}^m c_k e^{i\Phi^0(f_k)}$ and note that the square of
the $L_2(\Omega,{\mathbb P})$-norm of $F$ is
\begin{eqnarray}
0 \leq ||F||_2^2 & = & \int_{\Omega} \sum_{k,l=1}^m c_k
\overline{c_l}
e^{i \Phi^0(f_k-f_l)} d{\mathbb P}(\omega) \\
& = & \sum_{k,l=1}^m c_k \overline{c_l} \chi(f_k-f_l).
\end{eqnarray}

The remaining step is to establish continuity of $\chi$. We think of
${\Phi^0(f)}$ as a sequence of random variables indexed by $f \in D$
as $f \to 0$ in the topology of $D$, which implies uniform
convergence of $f$ to zero. We have
\begin{equation} \label{eq-bound}
|| \Phi^0(f) ||^2_2 \leq || f ||^2_{\infty} {\mathbb E} \left(
\sum_\cl (\mu^{0}_\cl({\bf 1}_L))^2 \right),
\end{equation}
where ${\bf 1}_L$ denotes the indicator function of $[-L,L]^2$ and
$L$ is such that supp$(f) \subset [-L,L]^2$. This implies
convergence of $\Phi^0(f)$ to 0 in $L_2$ as $f \to 0$, which implies
convergence in probability, which implies convergence in
distribution, which is equivalent to pointwise convergence of
characteristic functions, which gives us the type of continuity we
need. Therefore, by an application of the Bochner-Minlos theorem,
there exists a random, continuous, linear functional $T \in D'$ with
characteristic function $\chi_{T}(f)=\chi(f)$.
\end{proof}

A result related to our Theorem \ref{thm:Ising} was recently proved by Miller, Sheffield and Werner \cite{MSW16}. They showed
(see Theorem 7.5 of \cite{MSW16}) that forming clusters of CLE$_{16/3}$ loops by a percolation process with parameter $p=1/2$
generates CLE$_{3}$, the Conformal Loop Ensemble with parameter $3$. CLE$_3$ describes the full scaling limit of Ising spin-cluster
boundaries \cite{BH16} while CLE$_{16/3}$, as already mentioned, describes the full scaling limit of Ising-FK cluster boundaries
\cite{KS16}. We note that, although the magnetization can obviously be expressed using Ising spin clusters, as a sum of their signed
areas, such a representation does not appear to be useful in the scaling limit because the area measures of spin clusters don't scale
like the magnetization. The usefulness of the representation in terms of FK clusters is due to the fact that both the FK clusters and the
magnetization need to be normalized by the same scale factor in the scaling limit. That is not true of the magnetization and the spin clusters.

\section{Further notation and preliminaries} \label{sec:preliminaries}

Above we interpreted the union of red hexagons in a percolation configuration $\sigma_\eta$, as a (random) subset of $\CC$. In
what follows, as an intermediate step, we will consider a percolation configuration as a (random) collection of loops.
These loops form the boundaries of the clusters. We will describe this space in Subsection \ref{ssec:def_loops}.
In order to define the clusters as subsets of the plane, we will also consider the (random) collection of
quads (`topological squares' with two marked opposing sides) which are crossed horizontally. This leads us to
the Schramm-Smirnov \cite{Smirnov2011} topological space, which we briefly recall in the second subsection.

\subsection{Space of nonsimple loops} \label{ssec:def_loops}
The random collection of loops will be denoted by $L_\eta$ for $\eta \ge 0$.
The distance between two curves $l, l'$ is defined as
\begin{equation} \label{eq:def_loopdist}
 d_c(l,l') := \inf \sup_{t \in [0,1]} \Delta(l(t), l'(t)),
\end{equation}
where the infimum is over all parametrizations of the curves.
The distance between closed sets of curves is defined similarly to the distance
$dist$ defined in \eqref{eq:def_coll_dist_hat_CC}
between collections of subsets of the Riemann sphere $\hat{\CC}$.
The space of closed sets of loops is a complete separable metric space.

For $\eta > 0$ the collection of (oriented) boundaries of the red clusters in $\sigma_\eta$
is the closed set of loops, denoted by $L_\eta$.
This set converges in distribution to $L_0$, called the \emph{continuum nonsimple loop process} \cite{Camia2006}.

\subsection{Space of quad-crossings}
We borrow the notation and definitions from \cite{Garban2010}. Let
$D\subset\RS$
be open. A quad $Q$ in $D$ is a homeomorphism $Q:\left[0,1\right]^{2} \ra Q([0,1]^2)\subseteq D$. Let $\calQ_{D}$ be the set of all quads, which we
equip with the supremum metric
\[
  d\left(Q_{1},Q_{2}\right)=\sup_{z\in\left[0,1\right]^{2}}\left|Q_{1}\left(z\right)-Q_{2}\left(z\right)\right|
\]
for $Q_{1},Q_{2}\in\calQ_{D}.$ 

A crossing of a quad $Q$ is a closed connected subset of $Q\left(\left[0,1\right]^{2}\right)$
which intersects $Q\left(\left\{ 0\right\} \times\left[0,1\right]\right)$
as well as $Q\left(\left\{ 1\right\} \times\left[0,1\right]\right).$
The crossings induce a natural partial order denoted by $\leq$ on
$\calQ_{D}.$ We write $Q_{1}\leq Q_{2}$ if all the crossings
of $Q_{2}$ contain a crossing of $Q_{1}.$ For technical reasons,
we also introduce a slightly less natural partial order on $\calQ_{D}:$
we write $Q_{1}<Q_{2}$ if there are open neighbourhoods $\calN_{i}$
of $Q_{i}$ such that for all $N_{i}\in\calN_{i},$ $i\in\left\{ 1,2\right\} ,$
$N_{1}\leq N_{2}.$ We consider the collection of all closed hereditary subsets
of $\calQ_{D}$ with respect to $<$ and denote it by$\mathscr{H}_{D}.$
It is the collection of the closed sets $\mathcal{S}\subset\calQ_{D}$
such that if $Q\in\mathcal{S}$ and $Q'\in\calQ_{D}$ with $Q'<Q$
then $Q'\in\mathcal{S}.$ 

For a quad $Q\in\calQ_{D}$ let $\boxminus_{Q}$ denote the
set
\begin{align*}
  \boxminus_{Q}: & =\left\{ \mathcal{S}\in\mathscr{H}_{D}\,\left|\, Q\in\mathcal{S}\right.\right\},
\end{align*}
which corresponds with the configurations where $Q$ is crossed.
For an open subset $\mathcal{U}\subset\calQ_{D}$ let $\boxdot_{\mathcal{U}}$
denote the set
\[
  \boxdot_{\mathcal{U}}:=\left\{ \mathcal{S\in\mathscr{H}_{D}\,\left|\,\mathcal{U}\cap\mathcal{S}=\emptyset\right.}\right\} ,
\]
which corresponds with the configurations where none of the quads of $\mathcal{U}$ is crossed.
We endow $\mathscr{H}_{D}$ with the topology $\mathscr{T}_{D}$ which
is the minimal topology containing the sets $\boxminus_{Q}^{c}$
and $\boxdot_{\mathcal{U}}^{c}$ as open sets for all $Q\in\calQ_{D}$
and $\mathcal{U}\subset\calQ_{D}$ open. We have:
\begin{thm} [Theorem 1.13 of \cite{Smirnov2011}]
  Let $D$ be an open subset
  of $\RS.$ Then the topological space $\left(\mathscr{H}_{D},\mathscr{T}_{D}\right)$
  is a compact metrizable Hausdorff space.
\end{thm}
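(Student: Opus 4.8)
Since this is Theorem~1.13 of \cite{Smirnov2011}, the plan is to reproduce the Schramm--Smirnov argument, whose backbone is three structural facts plus one key lemma, after which the four assertions (Hausdorff, second countable, compact, metrizable) follow by soft reasoning. First I would record that $\calQ_D$ with the metric $d$ is a separable metric space, being a subspace of the separable space $C([0,1]^2,\RS)$ with the supremum metric; that the strict order $<$ is transitive and \emph{open}, meaning $\{R : R<Q\}$ and $\{R : Q<R\}$ are open in $\calQ_D$ for every quad $Q$ (both immediate from the neighbourhood definition of $<$); and the one genuinely geometric input, an \emph{approximation lemma}: every quad $Q$ is a $d$-limit of quads $Q'<Q$, obtained by shrinking $Q$ slightly in the crossing direction. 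Combining the approximation lemma with the closedness of the elements of $\mathscr{H}_D$, I would then deduce the key lemma: for $\mathcal{S}\in\mathscr{H}_D$ and any quad $Q$, one has $Q\in\mathcal{S}$ \emph{if and only if} $Q'\in\mathcal{S}$ for every $Q'<Q$ (one direction is heredity; the other takes $Q_k<Q$ with $Q_k\to Q$ and uses that $\mathcal{S}$ is closed).

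For \emph{Hausdorff}, given $\mathcal{S}_1\ne\mathcal{S}_2$ with, say, $Q_0\in\mathcal{S}_1\setminus\mathcal{S}_2$, the key lemma supplies $Q_1<Q_0$ with $Q_1\notin\mathcal{S}_2$, while heredity gives $Q_1\in\mathcal{S}_1$. Then $\boxminus_{Q_1}^{c}$ is an open set containing $\mathcal{S}_2$, and $\boxdot_{\mathcal{V}}^{c}$ with $\mathcal{V}:=\{R : Q_1<R\}$ (open, and containing $Q_0$) is an open set containing $\mathcal{S}_1$; these two are disjoint, because any $\mathcal{S}$ meeting $\mathcal{V}$ contains some $R$ with $Q_1<R$, hence $Q_1\in\mathcal{S}$ by heredity, i.e. $\boxdot_{\mathcal{V}}^{c}\subseteq\boxminus_{Q_1}$. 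For \emph{second countability} I would fix a countable dense set $\{Q_n\}$ in $\calQ_D$ and a countable base $\{\calB_m\}$ for it, and check that $\boxminus_{Q}^{c}=\bigcup\{\boxminus_{Q_n}^{c} : Q_n<Q\}$ (using the key lemma, density, and openness of $<$) and $\boxdot_{\mathcal{U}}^{c}=\bigcup\{\boxdot_{\calB_m}^{c} : \calB_m\subseteq\mathcal{U}\}$; hence $\{\boxminus_{Q_n}^{c}\}_n\cup\{\boxdot_{\calB_m}^{c}\}_m$ is a countable subbasis for $\mathscr{T}_D$, and finite intersections of it form a countable base.

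For \emph{compactness} I would invoke the Alexander subbasis lemma, so that it suffices to extract a finite subcover from any cover by subbasic open sets $\{\boxminus_{Q_\alpha}^{c}\}_{\alpha\in A}\cup\{\boxdot_{\mathcal{U}_\beta}^{c}\}_{\beta\in B}$. Assuming there is none, I would form $\mathcal{S}^{\star}$, the smallest closed hereditary set containing $F:=\{Q_\alpha : \alpha\in A\}$ — concretely $\mathcal{S}^{\star}=\overline{F\cup\{Q' : \exists\alpha,\ Q'<Q_\alpha\}}$, using transitivity and openness of $<$ to see that this set is hereditary. Then $Q_\alpha\in\mathcal{S}^{\star}$ for every $\alpha$; and if some $R\in\mathcal{U}_{\beta_0}\cap\mathcal{S}^{\star}$, then approximating $R$ from inside $F\cup\{Q' : \exists\alpha,\ Q'<Q_\alpha\}$ (so the approximants eventually lie in the open set $\mathcal{U}_{\beta_0}$) and applying the no-finite-subcover hypothesis to the finite data $(\{\alpha\},\{\beta_0\})$, heredity forces some approximant into $\mathcal{U}_{\beta_0}\cap\mathcal{S}$ for an $\mathcal{S}$ with $\mathcal{U}_{\beta_0}\cap\mathcal{S}=\emptyset$, a contradiction; hence $\mathcal{U}_\beta\cap\mathcal{S}^{\star}=\emptyset$ for all $\beta$. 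Thus $\mathcal{S}^{\star}$ lies in no member of the cover, contradicting that it covers $\mathscr{H}_D$. Finally \emph{metrizability} is automatic: a compact Hausdorff second-countable space is metrizable by Urysohn's theorem.

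The only non-formal ingredient, and the step I expect to be the real obstacle, is the approximation lemma — that in the order $<$ every quad is a limit of strictly smaller quads; this is where the precise definitions of quads and of $<$ through neighbourhoods genuinely enter, and it is what powers the key lemma on which everything downstream rests (Hausdorffness, the subbasis reduction in second countability, and the structure of $\mathcal{S}^{\star}$ in compactness). Everything after the key lemma is routine point-set topology, modulo citing Alexander's subbasis lemma and Urysohn's metrization theorem.
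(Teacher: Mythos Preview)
The paper does not give its own proof of this statement; it simply quotes it as Theorem~1.13 of \cite{Smirnov2011} and moves on. Your outline is a faithful and essentially correct reproduction of the Schramm--Smirnov argument from that reference---the approximation/key lemma, Hausdorffness via $\boxminus_{Q_1}^{c}$ and $\boxdot_{\mathcal{V}}^{c}$, second countability from a countable dense family of quads, compactness via Alexander's subbasis lemma applied to the hereditary closure of $\{Q_\alpha\}$, and metrizability by Urysohn---so there is nothing to compare beyond noting that you have recovered the cited proof.
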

Using this topological structure, we construct the Borel $\sigma$-algebra
on $\mathscr{H}_{D}.$ We get:

\begin{cor} [Corollary 1.15 of \cite{Smirnov2011}] \label{cor:comp_prob_meas_space}
  $\Prob\left(\mathscr{H}_{D}\right),$ the space of Borel probability measures of $\left(\mathscr{H}_{D},\mathscr{T}_{D}\right)$,
  equipped with the weak{*} topology is a compact metrizable Hausdorff space.
\end{cor}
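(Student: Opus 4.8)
The plan is to derive the corollary from the cited Theorem~1.13 of \cite{Smirnov2011}, which already furnishes that $\left(\mathscr{H}_{D},\mathscr{T}_{D}\right)$ is a compact metrizable Hausdorff space, by invoking standard functional-analytic facts about probability measures on compact metric spaces.

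First, since $\mathscr{H}_{D}$ is compact and metrizable, the Banach space $C(\mathscr{H}_{D})$ of real continuous functions on $\mathscr{H}_{D}$ with the supremum norm is separable: a countable dense subset of $\mathscr{H}_{D}$ together with the metric produces a countable point-separating family, and the rational subalgebra it generates is dense by the Stone--Weierstrass theorem. By the Riesz representation theorem, $C(\mathscr{H}_{D})^{*}$ is the space of finite signed Radon measures on $\mathscr{H}_{D}$; as $\mathscr{H}_{D}$ is compact metrizable, every finite Borel measure is automatically regular, hence Radon, so $\Prob(\mathscr{H}_{D})$ is identified with the set of $\Lambda\in C(\mathscr{H}_{D})^{*}$ satisfying $\Lambda(f)\ge 0$ whenever $f\ge 0$ and $\Lambda(\mathbf{1})=1$, and under this identification the weak$^{*}$ topology on $\Prob(\mathscr{H}_{D})$ is the subspace topology inherited from the weak$^{*}$ topology of $C(\mathscr{H}_{D})^{*}$.

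Next, I would observe that a positive functional with $\Lambda(\mathbf{1})=1$ has operator norm $1$, so $\Prob(\mathscr{H}_{D})$ lies in the closed unit ball $B\subset C(\mathscr{H}_{D})^{*}$, and that $\Prob(\mathscr{H}_{D})$ is weak$^{*}$ closed in $B$, since positivity and the normalization $\Lambda(\mathbf{1})=1$ are each preserved under weak$^{*}$ limits (they cut out intersections of closed half-spaces and a closed hyperplane). By the Banach--Alaoglu theorem $B$ is weak$^{*}$ compact, and since $C(\mathscr{H}_{D})$ is separable the weak$^{*}$ topology on $B$ is metrizable; a closed subset of a compact metrizable space is compact and metrizable, and metrizable spaces are Hausdorff. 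Hence $\Prob(\mathscr{H}_{D})$ with the weak$^{*}$ topology is compact metrizable Hausdorff, as claimed. Alternatively, metrizability could be exhibited directly via the L\'evy--Prokhorov metric or a countable family of test functions drawn from a dense subset of $C(\mathscr{H}_{D})$.

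No step here is a genuine obstacle: the substantive input is Theorem~1.13 of \cite{Smirnov2011}, and everything else is a textbook combination of the Riesz representation theorem, the Banach--Alaoglu theorem, and separability of $C$ on a compact metric space. The only point deserving momentary care is the passage between Borel probability measures and norm-one positive linear functionals, where metrizability of $\mathscr{H}_{D}$ is used to ensure all finite Borel measures are regular.
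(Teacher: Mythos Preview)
Your argument is correct and is the standard derivation of this fact from Theorem~1.13 of \cite{Smirnov2011}. The paper itself does not supply a proof: the corollary is simply quoted from \cite{Smirnov2011}, so there is no in-paper argument to compare against, and your Riesz--Banach--Alaoglu route is exactly the expected justification.
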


\begin{notrems}
  \begin{enumerate}[i)]
    \item In the following we abuse the notation of a quad $Q$. When we refer to $Q$ as a subset of $\RS$, we consider its range $Q([0,1]^2)\subset\hat\CC$.
    \item \label{notrem:def_omega} Note that a percolation configuration $\sigma_\eta$, as defined in the
    introduction, naturally induces a quad-crossing configuration $\omega_{\eta}\in \mathscr{H}_{\hat\CC}$, namely
      \begin{align} \label{eq:def_omega}
        \omega_{\eta}:=\left\{Q\in \calQ_{\hat \CC}\,|\, \sigma_\eta \text{ contains a crossing of } Q \right\}.
      \end{align}
      Furthermore, $\PP_{\eta}$ will denote the law governing $(\omega_{\eta}\times L_\eta)$.
  \end{enumerate}
\end{notrems}

Further we will need the following definitions for restrictions of the configuration to a subset of the Riemann Sphere.
\begin{definition} \label{def:restr}
   Let $D\subseteq\hat{\CC}$ be an open set and $\om\in \mathscr{H}_{\hat{\CC}}$. Then $\om|_D$, the restriction of $\om$ to $D$,
   is defined as
   \begin{align*}
     \om|_D:=\{Q\in\om\,:\,Q\subset D\}.
   \end{align*}
   The image of $\om|_D$ under a conformal map $f:D\ra\hat{\CC}$ is defined as
   \begin{align*}
     f(\om|_D):=\{f(Q):\, Q\in\om|_D\}\in \mathscr{H}_{f(D)}.
   \end{align*}
   The restriction of the loop process to $D$ is defined as
   \begin{align*}
    L|_D := \{l\,:\,\exists \tilde{l} \in L \textrm{ s.t. } l \textrm{ is an excursion of } \tilde{l} \textrm{ in } D\}.
   \end{align*}
   The image of $L|_D$ under a conformal map $f:D\ra\hat{\CC}$ is defined as
   \begin{align*}
     f(L|_D):=\{f(l):\, l\in L|_D\}.
   \end{align*}
   Furthermore,  $\PP_{\eta,D}$ denotes the law of $(\om_{\eta,D}, L_{\eta,D}) := (\om_\eta|_D, L_\eta|_D)$ for $\eta\geq0$.

\end{definition}

\subsection{Assumptions} \label{ssec:assumptions}
Below we list the assumptions which are used throughout the article.

The edge set in the sublattice on $D \subset \CC$ of $\eta\bbL$ is
$(\eta E(\bbL))|_D := \{(u,v) \in \eta E(\bbL)\,:\,u,v \in \eta V(\bbL) \cap D\}$.
The discrete boundary of $D \subset \CC$ of the lattice $\eta\bbL$ is defined by:
\[
 \partial_{\eta} D := \{ u \in \eta V(\bbL) \cap D\,:\, \exists v \in \eta\bbL\,:\, u \sim v \textrm{ and }  v \in \eta\bbL \cap (\CC \setminus D)\} .
\]
A boundary condition $\xi$ is a partition of the discrete boundary of $D$.
A set in this partition denotes the vertices which are connected via red hexagons or edges
(depending on the model) in $\CC \setminus D$.
When $\xi$ is omitted, it means we are considering the full plane model and are not specifying any boundary conditions
on the discrete boundary of $D$.

\begin{assu}[Domain Markov Property]\label{assu:markov}
 Let $D\subset E\subset \CC$ be open sets. Further let $S\subset \overline{E\setminus D}$ and $T\subset \overline{D}$
 closed sets. Then
 \[
  \PP_{\eta}(\sigma_{D} = T\cap D \,|\, \sigma_{\overline{E\setminus D}} = S) = \PP_{\eta}(\sigma_{D} = T \,|\, \xi) =: \PP_{\eta}^{\xi}(\sigma_{D} = T)
 \]
where $\sigma_D = \sigma_{\eta} \cap D$ and $\xi$ is the discrete boundary condition on $D$ induced by $\sigma_{\overline{E\setminus D}} = S$.
\end{assu}

For some models the randomness is on the vertices (e.g. Bernoulli site percolation)
and for others on the edges (e.g. FK-Ising percolation). For the models
of the first form we define $\Omega_{\eta,D} := \eta V(\bbL)\cap D$ and for models
of the second form $\Omega_{\eta,D} := (\eta E(\bbL))|_D$. 

\begin{assu}[Strong positive association / FKG]\label{assu:strongPosAssoc}
 The finite measures are strongly positively-associated.
 More precisely, let $D\subset \CC$ be a bounded closed set. For every boundary condition $\xi$ on $\partial_{\eta}D$
 and increasing functions $f,g: \{\textrm{red}, \textrm{blue}\}^{\Omega_{\eta,D}} \to \RR$, we have
 \[
  \EE_{\eta}^{\xi}[f\cdot g] \ge \EE_{\eta}^{\xi}[f]\cdot \EE_{\eta}^{\xi}[g].
 \]
 Hence for increasing events $A, B$ and boundary condition $\xi$ on $\partial_{\eta}D$:
 \[
  \PP_{\eta}^{\xi}(A\cap B) \ge \PP_{\eta}^{\xi}(A)\PP_{\eta}^{\xi}(B).
 \]
\end{assu}
It is well known that monotonicity in the boundary condition
is equivalent to strongly positively-association, if the measure
is strictly positive (has the finite energy property), i.e. every configuration has
strictly positive probability. (See e.g. \cite[Theorem 2.24]{G06}.)
Furthermore it is well known that positive association survives the limit as the lattice grows towards infinity.
See for example \cite[Proposition 4.10]{G06}.

In the following assumption $l(Q)$ denotes the extremal length of $Q$,
that is, let $\phi: Q \to [0,a]\times [0,1]$ conformal such that $\phi(Q(\{0\} \times [0,1])) = \{0\} \times [0,1]$
and $\phi(Q(\{1\} \times [0,1])) = \{a\} \times [0,1]$, then $l(Q) = a$.
\begin{assu}[RSW]\label{assu:RSW}
 Let $M > 0$. There exist $\delta > 0$ such that, for every quad $Q$ with $l(Q) \le M$ and every boundary
 condition $\xi$ on the discrete boundary of $Q([0,1]^2)$:
 \[
  \PP_{\eta}^{\xi}(\omega_{\eta} \in \boxminus_{Q}) \ge \delta
 \]
 and for every quad $Q$ with $l(Q) \ge M$ and every boundary
 condition $\xi$ on the discrete boundary of $Q([0,1]^2)$:
 \[
  \PP_{\eta}^{\xi}(\omega_{\eta} \not\in \boxminus_{Q}) \le 1-\delta.
 \]
\end{assu}

\begin{assu}[Full Scaling Limit]\label{assu:existence_full_confInvLimit}
 As $\eta\rightarrow 0$, the law of $L_\eta$ converges weakly to a random infinite collection of loops
 $L_0$ in the induced Hausdorff metric on collections of loops induced by
 the distance \eqref{eq:def_loopdist} (similar to the metric $dist$
 defined in \eqref{eq:def_coll_dist_hat_CC}).
 Moreover, the limiting law is conformally invariant.
\end{assu}

\subsection{\label{sub:arm_events}Arm events}

For $S\subset\RS,$ let $\partial S,int\left(S\right),$ $\bar{S}$
denote the boundary, interior and the closure of $S$, respectively. We call the elements of $\{0,1\}^k$, $k\geq0$ as colour-sequences. For ease of
notation, we omit the commas in the notation of the colour sequences, e.g. we write $(101)$ for $(1,0,1)$.
\begin{definition} \label{def:arm_events}
  Let $l\in\mathbb{N},$ $\kappa\in\left\{ 0,1\right\} ^{l},$ $S\subseteq\RS$ and $D,E$ be two disjoint open,
  simply connected subsets
  of $\RS$ with piecewise smooth boundary. Let $D\xleftrightarrow{\kappa,S}E$ denote the
  event that there are $\delta>0$ and quads $Q_{i}\in\calQ_{S}$, $i=1,2,\ldots,l$ which
  satisfy the following conditions.
  \begin{enumerate}
    \item $\omega\in\boxminus_{Q_{i}}$ for $i\in\left\{ 1,2,\ldots,l\right\} $ with $\kappa_{i}=1$
    and $\omega\in\boxminus_{Q_{i}}^{c}$ for $i\in\left\{ 1,2,\ldots,l\right\}$ with $\kappa_{i}=0$.
    \item For all $i\neq j\in\left\{ 1,2,\ldots,l\right\} $ with $\kappa_{i}=\kappa_{j},$       the quads  $Q_{i}$ and $Q_{j}$, viewed as subsets of $\RS$,
    are disjoint, and are at distance at least $\delta$ from each other and from the boundary of $S$;
    \item $\Ball_\delta + Q_{i}\left(\left\{ 0\right\} \times\left[0,1\right]\right)\subset D$
    and $\Ball_\delta + Q_{i}\left(\left\{ 1\right\} \times\left[0,1\right]\right)\subset E$ for $i\in\left\{ 1,2,\ldots,l\right\} $ with $\kappa_{i}=1;$ 
    \item $\Ball_\delta + Q_{i}\left(\left[0,1\right]\times\left\{ 0\right\} \right)\subset D$
    and $\Ball_\delta + Q_{i}\left(\left[0,1\right]\times\left\{ 1\right\} \right)\subset E$ for $i\in\left\{ 1,2,\ldots,l\right\} $ with $\kappa_{i}=0;$
    \item The intersections $Q_{i} \cap D$, for $i = 1,2,\ldots, l$, are at distance at least $\delta$ from each other, the same holds for $Q_i \cap E$;
    \item A counterclockwise order of the quads $Q_{i}$ $i=1,2,\ldots,l$ is given by ordering counterclockwise
    the connected components of $Q_{i} \cap D$ containing $Q_{i}(0,0)$.
  \end{enumerate}
  When the subscript $S$ is omitted, it is assumed to be $\RS$.
\end{definition}

\begin{rem}\label{rem:MeasurabilityArmEvents} It is a simple exercise to show that the events $D\xleftrightarrow{\kappa,S}E$ are
$Borel(\mathscr{T}_{\hat\CC})$-measurable. See \cite[Lemma 2.9]{Garban2010} for more details.
\end{rem}

\medskip
In what follows we consider some special arm events. For $z\in\mathbb{C}, a > 0$ let $H_1(z,a),$ $H_2(z,a),$ $H_3(z,a),$ $H_4(z,a)$
denote the left, lower, right, and upper half planes which have the right, top, left and bottom
sides of $\Ball_a(z)$ on their boundary, \resp. For $z\in\CC,$ $0<a<b$ we set
\begin{align*}
  A(z;a,b) := \Ball_b(z)\setminus \Ball_a(z).
\end{align*}
Furthermore, for $i=1,2,3,4$, $\kappa\in \{0,1\}^l$ and $\kappa'\in \{0,1\}^{l'}$ with $l,l'\geq0$ we
define the event where there are $l+l'$ disjoint arms with
colour-sequence $\kappa\vee\kappa' := (\kappa_1,\ldots,\kappa_l,\kappa'_1,\ldots,\kappa'_{l'})$
in $A(z;a,b)$ so that the $l'$ arms, with colour-sequence $\kappa'$, are in the half-plane
$H_i(z,a)$. That is,
\begin{multline}
    \mathcal{A}^i_{\kappa,\kappa'}\left(z;a,b\right) :=\\
    \begin{aligned}
     & \left\{ \Ball_a(z)\xleftrightarrow{\kappa\vee\kappa'}\left(\RS\setminus \Ball_b(z)\right)\right\} \cap \left\{ \Ball_a(z)\xleftrightarrow{\kappa', H_i(z,a)}\left(\RS\setminus \Ball_b(z)\right)\right\} \label{eq:def_arm_event}
    \end{aligned}
\end{multline}
In the notation above, when $z$ is omitted, it is assumed to be $0$.
When $\kappa'=\emptyset$, both the subscript $\kappa'$ and the superscript $i$ will typically be omitted.
\begin{figure}
	\centering 
	\includegraphics[width = 0.60\textwidth]{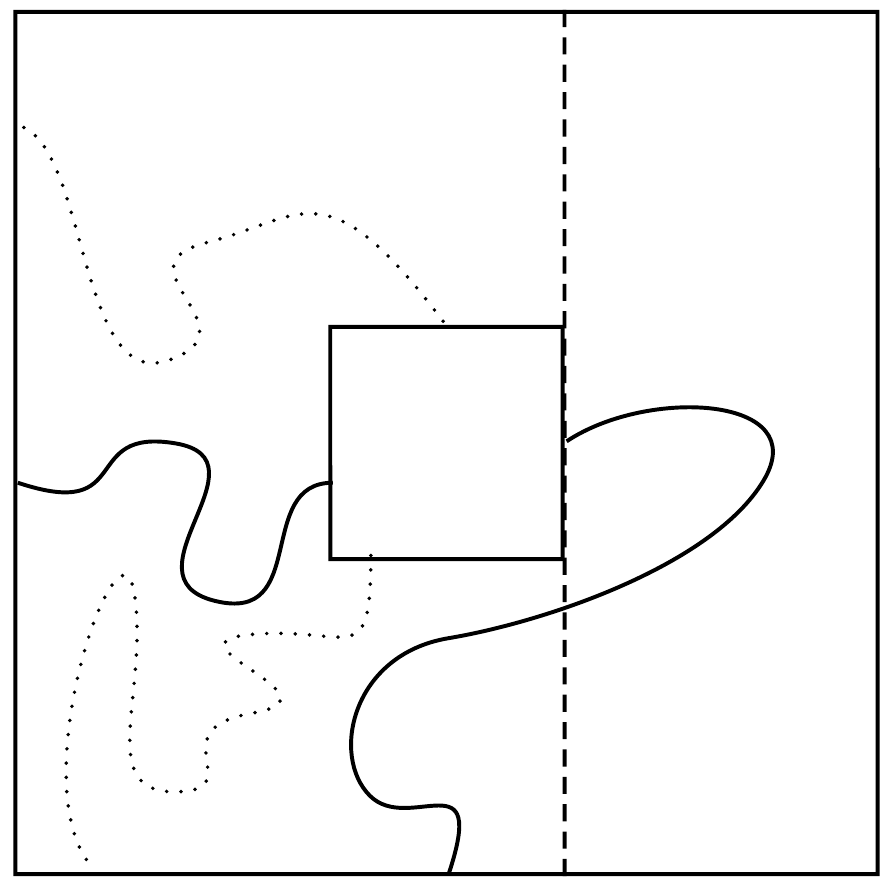}
	\caption{Illustration of the event $\mathcal{A}_{(1),(010)}^1(a,b)$.}
	\label{fig:armEvent}
\end{figure}
See Figure \ref{fig:armEvent} for an illustration of an arm event.

Finally, for $0<a<b$ and boundary condition $\xi$ on $\partial_{\eta}\Ball_{b}$ we set
\begin{align*}
  \pi_1^{\eta,\xi}(a,b) &:= \PP_\eta^{\xi}(\mathcal{A}_{(1)}\left(a,b\right)),  & \pi_{4}^{\eta,\xi}(a,b) &:= \PP_\eta^{\xi}(\mathcal{A}_{(1010),\emptyset}^{1}\left(a,b\right)),\\
  \pi_6^{\eta,\xi}(a,b) &:= \PP_\eta^{\xi}(\mathcal{A}_{(010101),\emptyset}^{1}\left(a,b\right)), & \pi_{0,3}^{\eta,\xi}(a,b) &:= \PP_\eta^{\xi}(\mathcal{A}_{\emptyset,(010)}^{1}\left(a,b\right)),\\
  \pi_{1,3}^{\eta,\xi}(a,b) &:= \PP_\eta^{\xi}(\mathcal{A}_{(1),(010)}^{1}\left(a,b\right)).
\end{align*}

\begin{rem}
 The (technical) reason to define $H_i(z,a)$ in this slightly unnatural way will become clear
 in the proof of Lemma \ref{lem:good_sub}.
\end{rem}

\subsection{Consequences of RSW}

\begin{lemma}[Quasi multiplicativity] \label{assu:q_mult}
 Suppose that Assumptions \ref{assu:markov}-\ref{assu:RSW} hold.
  There is a constant $C >0$ such that
  \begin{align*}
    \PP_\eta^{\xi}(\mathcal{A}_{(1)}\left(a,b\right)) \leq C \frac{\pi_1^{\eta,\xi}(a,c)}{\pi_1^{\eta,\xi}(b,c)}
  \end{align*}
  for all $a,b,c,\eta >0$ with $\eta < a < b < c$ and boundary condition $\xi$ on $\partial_{\eta}\Ball_{c}$.
\end{lemma}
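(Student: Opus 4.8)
The plan is to prove the stated quasi-multiplicativity bound
\[
  \PP_\eta^{\xi}(\mathcal{A}_{(1)}(a,b)) \leq C\,\frac{\pi_1^{\eta,\xi}(a,c)}{\pi_1^{\eta,\xi}(b,c)}
\]
by the classical conditioning/gluing argument adapted to the quad-crossing setting, with Assumption \ref{assu:RSW} (RSW) providing the uniform crossing probabilities and Assumption \ref{assu:strongPosAssoc} (FKG) providing the gluing step, while Assumption \ref{assu:markov} (the Domain Markov Property) is what allows us to work with the boundary condition $\xi$ on $\partial_\eta \Ball_c$ as an honest conditional measure. First I would write, for $\eta < a < b < c$,
\[
  \pi_1^{\eta,\xi}(a,c) \geq \PP_\eta^{\xi}\big(\mathcal{A}_{(1)}(a,2b)\cap\mathcal{A}_{(1)}(2b,c)\cap G\big),
\]
where $G$ is the (deterministic-probability, by RSW) event that there is a red crossing of the annulus $A(0;b,2b)$ together with red crossings of two overlapping "half-annuli" forcing any arm reaching $\partial\Ball_b$ and any arm reaching $\partial\Ball_{2b}$ to be joined. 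The point of $G$ is that on $\mathcal{A}_{(1)}(a,2b)\cap\mathcal{A}_{(1)}(2b,c)\cap G$ one actually has a single red arm from $\partial\Ball_a$ to $\partial\Ball_c$, i.e. $\mathcal{A}_{(1)}(a,c)$ holds. Crucially, $\mathcal{A}_{(1)}(a,2b)$ is measurable with respect to crossings of quads inside $\Ball_{2b}$, $G$ with respect to crossings inside the annulus $A(0;b,2b)$ (or a slight fattening), and $\mathcal{A}_{(1)}(2b,c)$ with respect to crossings in $\RS\setminus\Ball_{2b}$; so by FKG (Assumption \ref{assu:strongPosAssoc}, applied in $\Ball_c$ with boundary condition $\xi$, and using monotonicity in boundary conditions for the events that "live" near $\partial\Ball_{2b}$),
\[
  \pi_1^{\eta,\xi}(a,c) \geq \PP_\eta^{\xi}(\mathcal{A}_{(1)}(a,2b))\cdot \PP(G) \cdot \PP_\eta^{\xi}(\mathcal{A}_{(1)}(2b,c)).
\]
By RSW, $\PP(G)\geq \delta' > 0$ uniformly in $\eta$ and $b$ (the quads defining $G$ have extremal length bounded by a fixed constant, so Assumption \ref{assu:RSW} applies), and $\PP_\eta^\xi(\mathcal{A}_{(1)}(2b,c))\geq \delta'' \pi_1^{\eta,\xi}(b,c)$ by the same kind of one-arm gluing across $A(0;b,2b)$. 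Finally $\PP_\eta^\xi(\mathcal{A}_{(1)}(a,2b))\geq \delta''' \PP_\eta^\xi(\mathcal{A}_{(1)}(a,b))$, again by gluing. Combining, $\pi_1^{\eta,\xi}(a,c) \geq c_0\, \PP_\eta^\xi(\mathcal{A}_{(1)}(a,b))\, \pi_1^{\eta,\xi}(b,c)$ with $c_0>0$ absolute, which rearranges to the claim with $C = 1/c_0$.

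The main obstacle — and the reason this is stated as a lemma rather than quoted verbatim from \cite{Garban2010} — is that everything must be phrased in the quad-crossing language of Section \ref{sec:preliminaries} rather than in terms of actual lattice paths, and one must check that the "separation of scales" events $G$ can genuinely be realized by finitely many quads of bounded extremal length whose crossings, when combined with the arm quads, produce a crossing of a single quad realizing $\mathcal{A}_{(1)}(a,c)$. Concretely, one needs: (i) a gluing lemma stating that if there is a red crossing of a quad with one side near $\partial\Ball_a$ and the other near $\partial\Ball_b$, and a red crossing of an annulus-type quad around $A(0;b,2b)$ overlapping the first, then these combine into a red crossing of a quad from $\partial\Ball_a$ to $\partial\Ball_{2b}$; this is where the precise (slightly unnatural) definitions of the half-planes $H_i(z,a)$ and the $\Ball_\delta$-fattenings in Definition \ref{def:arm_events} pay off, ensuring the relevant quads have open neighbourhoods in which crossings persist, so the partial order $<$ behaves well. (ii) The measurability/independence-of-regions bookkeeping needed to legitimately apply FKG: the three events must be increasing and depend on disjoint (or appropriately nested) regions, and one invokes the fact, noted after Assumption \ref{assu:strongPosAssoc}, that strong positive association is equivalent to monotonicity in boundary conditions, to handle the boundary condition induced on $\partial_\eta\Ball_{2b}$ by the configuration inside. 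The remaining steps are routine and I would not grind through them.
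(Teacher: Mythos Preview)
Your argument is essentially correct: for the \emph{one-arm} event, quasi-multiplicativity is indeed just FKG + RSW, since every event in sight ($\mathcal{A}_{(1)}(a,2b)$, the circuit/gluing event $G$, and $\mathcal{A}_{(1)}(2b,c)$) is increasing, so Assumption~\ref{assu:strongPosAssoc} applies directly under $\PP_\eta^\xi$ without any arm-separation technology. Two small cleanups: (i) your decomposition uses $\mathcal{A}_{(1)}(2b,c)$, which tacitly assumes $2b<c$; the complementary case $b<c\le 2b$ must be handled separately (there $\pi_1^{\eta,\xi}(b,c)$ is bounded below by RSW, and the arm from $\partial\Ball_b$ to $\partial\Ball_c$ can be glued on by FKG at uniformly positive cost); (ii) the ``independence-of-regions'' bookkeeping and the appeal to monotonicity in boundary conditions in your item (ii) are unnecessary --- FKG is a \emph{global} correlation inequality on $\Ball_c$ under $\PP_\eta^\xi$, and you never need to condition on the configuration in $\Ball_{2b}$ or pass to an induced boundary condition.

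By contrast, the paper does not give a self-contained proof at all: it simply refers to \cite[Theorem~1.3]{CD-CH2013} and remarks that the boundary condition $\xi$ on $\partial_\eta\Ball_c$ is harmless because RSW is uniform in boundary conditions and no separation is needed at the outer scale. That reference proves quasi-multiplicativity for general (polychromatic) arm events in the FK setting, which genuinely requires the arm-separation machinery; your direct FKG argument is specific to the monochromatic one-arm case but is much more elementary, and in fact is all that this particular lemma needs.
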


\begin{lemma} \label{assu:1arm_exp_ubound}
 Suppose that Assumptions \ref{assu:markov}-\ref{assu:RSW} hold.
  There are constants $\lambda_1\in (0,1)$ and $C>0$ such that
  \begin{align*}
    \pi_{1}^{\eta,\xi}(\eta,b) \geq C\left(\frac{a}{b}\right)^{\lambda_1}
	\PP_\eta^{\xi}(\mathcal{A}_{(1)}\left(\eta,a\right))
  \end{align*}
  for all $b>a>\eta$ and boundary condition $\xi$ on $\partial_{\eta}\Ball_{b}$.
\end{lemma}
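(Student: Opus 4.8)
The plan is to prove Lemma \ref{assu:1arm_exp_ubound} by combining quasi-multiplicativity (Lemma \ref{assu:q_mult}) with a crude lower bound on the one-arm probability across an annulus of bounded modulus, iterated across dyadic scales. First I would observe that for a fixed boundary condition $\xi$ on $\partial_\eta \Ball_b$, quasi-multiplicativity gives
\[
  \PP_\eta^\xi(\calA_{(1)}(\eta,a)) \le C\,\frac{\pi_1^{\eta,\xi}(\eta,b)}{\pi_1^{\eta,\xi}(a,b)},
\]
so that it suffices to produce a constant $c>0$ and an exponent $\lambda_1\in(0,1)$ with
\[
  \pi_1^{\eta,\xi}(a,b) \ge c\,(a/b)^{\lambda_1}
\]
uniformly in $\eta<a<b$ and in the boundary condition $\xi$ on $\partial_\eta\Ball_b$ (the desired inequality then follows after adjusting constants, since $C$ here absorbs the $C$ in quasi-multiplicativity). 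So the heart of the matter is a \emph{uniform-in-boundary-condition} polynomial lower bound on the one-arm probability.

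For that lower bound I would use the RSW assumption (Assumption \ref{assu:RSW}) applied to a bounded-modulus annulus together with the FKG inequality (Assumption \ref{assu:strongPosAssoc}) and the domain Markov property (Assumption \ref{assu:markov}) to glue. Concretely, pick an integer $N\ge 1$ and consider the scales $a = r_0 < r_1 < \cdots < r_m = b$ with $r_j/r_{j-1}$ equal to a fixed ratio (roughly $2$), so $m \asymp \log(b/a)$. In each dyadic annulus $A(0;r_{j-1},r_j)$ one can produce a red crossing of the annulus "from inside to outside" by taking the intersection of a bounded number of red crossings of quads of extremal length bounded by some $M$ (four overlapping red "rectangle" crossings arranged around the annulus force a red circuit, and one radial crossing then connects the circuits of neighbouring annuli). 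By Assumption \ref{assu:RSW} each such crossing has probability at least $\delta=\delta(M)>0$ \emph{regardless of the boundary condition on the boundary of that quad}; since any boundary condition $\xi$ on $\partial_\eta\Ball_b$ only makes the relevant increasing crossing events more likely when we condition inside (this is where monotonicity in boundary conditions, equivalent to FKG by the remark after Assumption \ref{assu:strongPosAssoc}, is used), and since the events used in distinct annuli are increasing, FKG lets us multiply: the probability of a red path from $\partial\Ball_a$ to $\partial\Ball_b$ is at least $\delta^{c m} = (a/b)^{\lambda_1}$ for a suitable $\lambda_1 = \lambda_1(\delta)\in(0,1)$, up to a multiplicative constant coming from the non-dyadic leftover at the ends. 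One must check that $\lambda_1<1$; this holds because $\delta$ can be taken not too small for a fixed choice of $M$, but in any case the statement only asserts $\lambda_1\in(0,1)$, so if necessary one can increase $m$ by a bounded factor and absorb — the real content is just that $\lambda_1$ is a finite positive number, which any such gluing argument supplies.

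Finally I would assemble the two pieces: from quasi-multiplicativity,
\[
  \pi_1^{\eta,\xi}(\eta,b) \;\ge\; \frac{1}{C}\,\pi_1^{\eta,\xi}(a,b)\,\PP_\eta^\xi(\calA_{(1)}(\eta,a))
  \;\ge\; \frac{c}{C}\,\Big(\tfrac{a}{b}\Big)^{\lambda_1}\,\PP_\eta^\xi(\calA_{(1)}(\eta,a)),
\]
which is exactly the claimed inequality after renaming the constant. The main obstacle, and the point requiring the most care, is the uniformity over boundary conditions $\xi$: one has to be careful that the RSW estimate in Assumption \ref{assu:RSW} is stated for an arbitrary boundary condition on the boundary of the quad itself, and then argue via the domain Markov property and monotonicity that an external boundary condition on the larger ball $\partial_\eta\Ball_b$ can only help the increasing crossing events used in the construction — so that FKG may be applied conditionally, scale by scale, with the worst-case constant $\delta$ at each step. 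A secondary technical point is the geometric bookkeeping of which bounded-modulus quads to use so that their red crossings combine into a connected red path from $\Ball_a$ to $\RS\setminus\Ball_b$ realizing the event $\calA_{(1)}(a,b)$ in the sense of Definition \ref{def:arm_events}; this is routine but must be done with quads (topological rectangles) rather than lattice paths to match the quad-crossing formalism.
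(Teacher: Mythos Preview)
Your overall strategy is sound and matches the paper's indication that the lemma follows from ``standard RSW, FKG arguments.'' In fact, the detour through quasi-multiplicativity (Lemma~\ref{assu:q_mult}) is unnecessary: since $\calA_{(1)}(\eta,a)$ and any RSW-built connecting structure from $\partial\Ball_a$ to $\partial\Ball_b$ (radial crossings plus circuits in successive annuli) are both increasing events whose intersection implies $\calA_{(1)}(\eta,b)$, FKG applied directly under $\PP_\eta^\xi$ yields
\[
  \pi_1^{\eta,\xi}(\eta,b)\;\ge\;\PP_\eta^\xi\big(\calA_{(1)}(\eta,a)\big)\cdot\PP_\eta^\xi(\text{connecting structure}),
\]
and it only remains to lower bound the second factor by $c(a/b)^{\lambda_1}$. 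This is a minor stylistic point, not a gap.

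There is, however, a genuine gap in your treatment of the constraint $\lambda_1<1$. Your dyadic gluing produces $\lambda_1 \asymp -k\log_2\delta$, where $k$ is the number of bounded-modulus quad crossings needed per scale and $\delta$ is the RSW constant from Assumption~\ref{assu:RSW}; there is no reason for this quantity to be below $1$. Neither of your proposed fixes works: ``$\delta$ can be taken not too small'' is unjustified (Assumption~\ref{assu:RSW} gives no quantitative control on $\delta$), and ``increase $m$ by a bounded factor and absorb'' goes in the wrong direction. Your assertion that ``the real content is just that $\lambda_1$ is a finite positive number'' is incorrect: the paper uses $\lambda_1<1$ essentially, both to conclude $\alpha_1<1$ after~\eqref{eq:1arm_exp} and in the proof of Lemma~\ref{lem:bound_err}, where the exponent $\varphi=\tfrac{1}{4}\big(\lambda\wedge\beta(1-\lambda_1)\big)$ must be strictly positive. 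Establishing $\lambda_1<1$ requires an argument of a different flavour than naive gluing: one first obtains $\pi_1(a,b)\ge c\,a/b$ via a first-moment bound (an open crossing of an $n\times n$ box forces some site on one side to carry an arm of length $n$; summing over the $n$ candidate sites and using RSW for the crossing gives the bound), and then upgrades to a strictly smaller exponent by a separation-of-exponents argument --- e.g.\ comparing with the universal half-plane two-arm exponent (which equals $1$) and showing via RSW and FKG that removing the closed arm saves a positive power at every scale. This is still ``standard RSW, FKG'' in spirit, but it is not the construction you describe.
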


\begin{lemma} \label{assu:arm_exp}
 Suppose that Assumptions \ref{assu:markov}-\ref{assu:RSW} hold.
  There are positive constants $C,\lambda_6$ such that
\begin{align}
  \pi_6^{\eta,\xi}(a,b) &\leq C\left(\frac{a}{b}\right)^{2+\lambda_6}, & \pi_{0,3}^{\eta,\xi}(a,b) &\leq C\left(\frac{a}{b}\right)^{2}
\end{align}
for all $0<\eta<a<b$ and boundary condition $\xi$ on $\partial_{\eta}\Ball_{b}$.
\end{lemma}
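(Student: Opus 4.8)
The plan is to derive both bounds in Lemma~\ref{assu:arm_exp} from the RSW machinery (Assumptions~\ref{assu:markov}--\ref{assu:RSW}) together with Lemma~\ref{assu:q_mult}. The guiding principle is standard in two-dimensional percolation: an arm event in an annulus $A(z;a,b)$ can be decomposed into a product of arm events in a dyadic sequence of concentric sub-annuli, and in each sub-annulus one produces an \emph{obstructing circuit} of the opposite colour using an RSW-type estimate that holds \emph{uniformly over boundary conditions}. The novelty here, relative to the classical setting, is precisely that Assumption~\ref{assu:RSW} is phrased so as to give crossing probabilities bounded away from $0$ and $1$ uniformly in the boundary condition $\xi$, so the decomposition must be organized to keep every estimate boundary-condition-free.

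First I would set up the dyadic scales $r_j = 2^j a$ for $j = 0, 1, \ldots, N$ with $N = \lfloor \log_2(b/a) \rfloor$, so that $A(z;a,b) \supseteq \bigcup_{j} A(z;r_j, r_{j+1})$ with the sub-annuli having bounded modulus. For the second inequality $\pi_{0,3}^{\eta,\xi}(a,b) \le C(a/b)^2$, the key observation is that $\mathcal{A}_{\emptyset,(010)}^1(z;a,b)$ forces three arms (red, blue, red in cyclic order) within the half-plane $H_1(z,a)$; by a half-plane version of the RSW estimate there is a fixed $\rho < 1$ such that, in each sub-annulus and uniformly over the configuration outside it (hence uniformly over boundary conditions, via Assumption~\ref{assu:markov}), the probability of extending all three arms across that sub-annulus is at most $\rho$. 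Independence of disjoint sub-annuli is \emph{not} available, but the Domain Markov Property~\ref{assu:markov} lets me condition successively from the inside out, at each step bounding the conditional probability of the crossing in the next sub-annulus by $\rho$ regardless of the induced boundary condition; multiplying gives $\rho^N \le C(a/b)^{\log_2(1/\rho)}$. The exponent $2$ is then obtained because the three-arm half-plane exponent is known to be exactly $2$ — more precisely, one uses the well-known deterministic (combinatorial) identity that the three-arm half-plane event probability is comparable to $(a/b)^2$ up to constants, which in this axiomatic framework follows from a second-moment / summation argument: $\sum_z \PP(\text{$z$ has three half-plane arms to distance $b$}) \asymp$ the expected number of boundary pivotals, which is $\Theta((b/a)\cdot$ (length) $)$, forcing the per-point probability to scale like $(a/b)^2$. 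Alternatively, and more cleanly, one cites that under Assumptions~\ref{assu:markov}--\ref{assu:RSW} the three-arm half-plane exponent equals $2$; I would state this as the substantive input and spend the bulk of the argument on the uniformity-in-$\xi$ bookkeeping.

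For the first inequality $\pi_6^{\eta,\xi}(a,b) \le C(a/b)^{2+\lambda_6}$, I would argue that six alternating arms in the full plane is a strictly ``super-polynomial-of-degree-2'' event. The clean route is: six arms imply three arms (by monochromatic merging / forgetting arms), and also six arms imply ``four arms plus an extra disjoint arm'', so by quasi-multiplicativity (Lemma~\ref{assu:q_mult}, applied to each colour class separately) and the standard fact that the polychromatic $k$-arm exponent is strictly increasing in $k$ with the $5$-arm exponent equal to $2$, the $6$-arm exponent is $\ge 2 + \lambda_6$ for some $\lambda_6 > 0$. In this axiomatic setting I would make this quantitative as follows: in each sub-annulus $A(z;r_j,r_{j+1})$, conditionally on everything outside, the probability of six alternating crossings is at most $\rho_6 < \rho_3^2$ where $\rho_3$ is the per-scale three-arm bound, because an extra circuit of each colour costs an extra RSW factor; iterating the Domain-Markov conditioning yields $\pi_6^{\eta,\xi}(a,b) \le C \rho_6^N$, and one checks $\rho_6 < (a/b)^{2}$-rate by comparing with the (already established) fact that the three-arm-per-scale rate itself already sums to $(a/b)^2$, so any strictly smaller per-scale constant gives exponent $2 + \lambda_6$ with $\lambda_6 = \log_2(\rho_3^2/\rho_6) > 0$.

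The main obstacle I anticipate is \textbf{not} the polynomial-decay heuristic, which is classical, but rather threading the uniformity-in-boundary-condition requirement through the whole decomposition: one must be careful that each RSW input invoked (both the full-plane circuit estimate and the half-plane three-arm estimate) is available in the form ``probability bounded by $\rho < 1$ uniformly over $\xi$'', and that the Domain Markov Property is applied to a genuinely closed separating region so that Assumption~\ref{assu:markov} literally applies. A secondary subtlety is the passage from the ``bounded-modulus sub-annulus'' estimate to the exact exponents $2$ and $2+\lambda_6$: getting the \emph{additive} $2$ (rather than just \emph{some} positive exponent) requires either the known value of the three-arm half-plane exponent or a matching lower-bound construction, and I would flag this as the place where one genuinely uses more than soft RSW — in the stated generality, citing the universality of the half-plane three-arm exponent (as in \cite{Garban2010}) is the honest move.
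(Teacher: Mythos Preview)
The paper does not give a self-contained proof of this lemma: it simply cites \cite{Nolin2008,Grimmett1999} for Bernoulli percolation and \cite[Corollary~1.5 and Remark~1.6]{CD-CH2013} for FK--Ising, noting that the boundary conditions are harmless because Assumption~\ref{assu:RSW} is uniform in~$\xi$. Your proposal is therefore not so much an alternative proof as an attempt to unpack what those references actually do, and at that level your outline is broadly correct: dyadic decomposition plus Domain Markov plus uniform RSW gives \emph{some} polynomial decay, and the specific exponent~$2$ comes from the two ``universal'' exponents (half-plane three-arm and full-plane five-arm), which hold in any model satisfying Assumptions~\ref{assu:markov}--\ref{assu:RSW}.

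That said, your six-arm paragraph is muddled in a way worth fixing. The inequality $\rho_6 < \rho_3^2$ you write down is neither obviously true nor the right comparison: six full-plane alternating arms do not decompose into two independent copies of three half-plane arms, because the arms are not constrained to stay in a half-plane. The clean and standard route --- which you do mention in passing but then abandon --- is: (i)~the polychromatic five-arm full-plane exponent equals~$2$ universally (this is the second summation/pivotal argument, parallel to the half-plane three-arm one), and (ii)~six arms implies five arms, with the sixth arm costing a strictly positive extra exponent by a per-scale RSW obstruction and quasi-multiplicativity. That gives $\pi_6 \le C(a/b)^{2+\lambda_6}$ directly. I would drop the $\rho_3^2$ heuristic entirely and state the argument this way; it is exactly what the references the paper cites do.
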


\begin{lemma} \label{assu:3hp_plusone}
 Suppose that Assumptions \ref{assu:markov}-\ref{assu:RSW} hold.
  There are positive constants $C,\lambda_{1,3}$ such that
\begin{equation}
  \pi_{1,3}^{\eta,\xi}(a,b) \leq C\left(\frac{a}{b}\right)^{2+\lambda_{1,3}}
\end{equation}
for all $0<\eta<a<b$ and boundary condition $\xi$ on $\partial_{\eta}\Ball_{b}$.
\end{lemma}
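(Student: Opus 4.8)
The plan is to bound the event $\mathcal{A}_{(1),(010)}^{1}(a,b)$ by a combination of a bulk four-arm-type event and a half-plane three-arm event across well-chosen annuli, and then to invoke the a priori exponents from Lemmas \ref{assu:arm_exp} and a Reimer/van den Berg--Kesten-type decoupling together with quasi-multiplicativity. First I would fix an intermediate scale, say the radius $\sqrt{ab}$ (or more conveniently work on the dyadic scales between $a$ and $b$), and split the annulus $A(0;a,b)$ into $A(0;a,\sqrt{ab})$ and $A(0;\sqrt{ab},b)$. On the outer sub-annulus $A(0;\sqrt{ab},b)$ the event $\mathcal{A}_{(1),(010)}^{1}$ implies in particular the half-plane three-arm event $\mathcal{A}_{\emptyset,(010)}^{1}(\sqrt{ab},b)$, whose probability is at most $C(\sqrt{ab}/b)^{2} = C(a/b)$ by the second bound in Lemma \ref{assu:arm_exp}. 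This already gives the ``$2$'' in the exponent; the job is to harvest the extra $\lambda_{1,3}>0$.

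To get the strict improvement I would argue that on some \emph{fixed} sub-annulus — say the dyadic annulus $A(0;2^{j}a,2^{j+1}a)$ for a suitable fixed $j$, chosen so that $2^{j+1}a \le \sqrt{b}$, or simply the innermost annulus $A(0;a,2a)$ — the event forces a configuration that is strictly less likely than the pure half-plane three-arm event at that scale, namely one additional arm. Concretely, $\mathcal{A}_{(1),(010)}^{1}(a,b)$ requires, in addition to the half-plane $(010)$ arms, a red arm in the bulk that must go around the three half-plane arms to connect $\Ball_a$ to $\RS \setminus \Ball_b$; inside a single fixed annulus this produces a ``$1+3$ in the half-plane with a bulk arm'' picture, and by RSW (Assumption \ref{assu:RSW}) and the FKG/strong positive association (Assumption \ref{assu:strongPosAssoc}) one can show the conditional probability of extending through such a fixed annulus is bounded away from $1$ by a constant $\rho < 1$. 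Then one uses the Domain Markov Property (Assumption \ref{assu:markov}) to peel off that annulus and quasi-multiplicativity (Lemma \ref{assu:q_mult}) to glue the remaining pieces, obtaining
\[
  \pi_{1,3}^{\eta,\xi}(a,b) \;\le\; C\,\rho^{\,\lfloor \log_2(b/a)\rfloor}\;\pi_{0,3}^{\eta,\xi}(a,b)\;+\;(\text{boundary terms})\;\le\; C'\Big(\frac{a}{b}\Big)^{2+\lambda_{1,3}}
\]
with $\lambda_{1,3} = -\log_2 \rho > 0$. An alternative, cleaner route is to combine the three half-plane arms with the bulk arm using an independence-on-disjoint-annuli argument: the three half-plane arms on $A(0;a,\sqrt{ab})$ cost at most $C(a/b)^{1-o(1)}$ while the presence of a bulk red arm that has to avoid them on $A(0;\sqrt{ab},b)$, conditionally, still costs a further polynomial factor by RSW, since the half-plane already occupies a macroscopic portion of the interface near the boundary.

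The main obstacle will be the glueing step: making precise that the bulk red arm is genuinely ``extra'' relative to the half-plane three-arm event uniformly in the boundary condition $\xi$, and that the one-arm-type stability constant $\rho$ can be taken uniform over all scales and all $\xi$ on $\partial_\eta \Ball_b$. This is exactly the kind of separation-of-arms argument that is standard for site percolation on $\TT$ (à la Kesten, and as developed in \cite{Garban2010}), but here it must be run under only Assumptions \ref{assu:markov}--\ref{assu:RSW}; so the care is in using RSW to create the needed ``free'' circuits/defect paths that reposition the arm endpoints, and the FKG inequality to lower-bound the gluing probabilities, rather than any model-specific input. Once the separation lemmas are in place (they are the analogues of the ingredients already used to prove Lemmas \ref{assu:arm_exp} and \ref{assu:q_mult}), the exponent $2+\lambda_{1,3}$ follows by iterating the one-annulus bound as above. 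I would also note that the precise value of $\lambda_{1,3}$ is irrelevant for our purposes — only its strict positivity is used later (in the proof of Lemma \ref{lem:good_sub} and the measure-convergence arguments).
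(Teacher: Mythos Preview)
Your sketch is correct in outline and is essentially the standard argument. Note, however, that the paper does not actually carry out a proof of this lemma: it simply cites \cite{Nolin2008,Grimmett1999} for Bernoulli percolation and \cite[Corollary~1.5 and Remark~1.6]{CD-CH2013} for the FK--Ising model, observing that the arguments there go through under Assumptions~\ref{assu:markov}--\ref{assu:RSW}. What you have written is a sketch of what those references do --- iterate over dyadic annuli, use the universal half-plane three-arm bound $\pi_{0,3}\le C(a/b)^2$ for the exponent $2$, and harvest an extra factor $\rho<1$ per scale from the additional bulk arm via RSW and separation of arms.

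Two small remarks. First, your displayed inequality with the unspecified ``boundary terms'' is imprecise; the clean formulation is to prove a one-scale bound $\PP(\mathcal{A}^1_{(1),(010)}(r,2r)) \le \rho\cdot \pi_{0,3}(r,2r)$ uniformly in $r$ and in the boundary condition, and then combine this with quasi-multiplicativity for the half-plane three-arm event (which also holds under the stated assumptions). Second, you should avoid invoking Reimer/BK-type inequalities, since for the FK--Ising model these are not available; the argument must go through conditioning, FKG, and the RSW-based separation lemmas, exactly as you indicate in your final paragraph and as is done in \cite{CD-CH2013}.
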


\begin{lemma} \label{assu:1arm_exp_less_4arm}
 Suppose that Assumptions \ref{assu:markov}-\ref{assu:RSW} hold.
 There are constants $C,\lambda>0$ such that
  \begin{align*}
     \frac{\pi_{1}^{\eta,\xi}(a,b)}{\pi_{4}^{\eta,\xi}(a,b)}\geq C \left(\frac{b}{a}\right)^{\lambda}
  \end{align*}
  for all $b>a>\eta$ and boundary condition $\xi$ on $\partial_{\eta}\Ball_{b}$.
\end{lemma}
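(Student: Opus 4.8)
The plan is to prove the equivalent inequality $\pi_4^{\eta,\xi}(a,b)\le C'(a/b)^{\lambda}\,\pi_1^{\eta,\xi}(a,b)$ for suitable constants $C',\lambda>0$. The underlying idea: the four--arm event forces \emph{both} a red and a blue crossing of the annulus $A(a,b):=\Ball_b\setminus\Ball_a$; these two crossings are negatively correlated by FKG; and a blue crossing of $A(a,b)$ already has polynomially small probability by RSW, which produces the gain.

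First I would unwind the definitions. By Definition~\ref{def:arm_events} (in the specialised form of \eqref{eq:def_arm_event}), on $\mathcal{A}_{(1010)}(a,b)$ there are disjoint quads $Q_1,\dots,Q_4$ joining $\Ball_a$ to $\RS\setminus\Ball_b$, with $Q_1,Q_3$ crossed by $\sigma_\eta$ and $Q_2,Q_4$ not crossed. The crossed quad $Q_1$ yields a red (open) path running across $A(a,b)$, so $\mathcal{A}_{(1010)}(a,b)\subseteq\mathcal{A}_{(1)}(a,b)$; the uncrossed quad $Q_2$, by planar duality of the lattice (of $\TT$, resp.\ of the medial lattice of $\ZZ^2$ in the FK--Ising case), is crossed in the transverse direction by a blue (closed) path which, by the placement of its marked sides, also runs across $A(a,b)$. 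Writing $E_b$ for the event ``$\sigma_\eta$ contains a blue path joining $\partial\Ball_a$ to $\partial\Ball_b$ inside $A(a,b)$'', we obtain $\mathcal{A}_{(1010)}(a,b)\subseteq\mathcal{A}_{(1)}(a,b)\cap E_b$. Since $\mathcal{A}_{(1)}(a,b)$ is increasing and $E_b$ is decreasing in $\sigma_\eta$, Assumption~\ref{assu:strongPosAssoc} gives, for every boundary condition $\xi$ on $\partial_{\eta}\Ball_{b}$,
\[
 \pi_4^{\eta,\xi}(a,b)\ \le\ \PP_\eta^\xi\!\left(\mathcal{A}_{(1)}(a,b)\cap E_b\right)\ \le\ \pi_1^{\eta,\xi}(a,b)\;\PP_\eta^\xi(E_b).
\]

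Next I would bound $\PP_\eta^\xi(E_b)$ by a power of $a/b$. Consider the dyadic sub-annuli $A(2^{k}a,2^{k+1}a)$ of $A(a,b)$; discarding the $O(1)$ of them that are not macroscopic at scale $\eta$ (allowed because $a>\eta$) leaves $m\ge\log_2(b/a)-O(1)$ of them, which I may also take slightly separated so as to invoke the Domain Markov Property cleanly. On $E_b$ each such annulus is crossed radially by a blue path, hence contains no open circuit around its inner boundary; by RSW (Assumption~\ref{assu:RSW}) combined with FKG (Assumption~\ref{assu:strongPosAssoc}) — gluing a bounded number of crossings of bounded--extremal--length quads into a circuit — the probability that an open circuit \emph{does} exist in any one of them is at least some $\delta_0>0$, uniformly over its boundary condition. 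Peeling the annuli from the outside inwards and using the Domain Markov Property (Assumption~\ref{assu:markov}) at each step then gives
\[
 \PP_\eta^\xi(E_b)\ \le\ (1-\delta_0)^{m}\ \le\ C'\,(a/b)^{\lambda},\qquad\text{with } \lambda:=\log_2\tfrac{1}{1-\delta_0}>0 .
\]
Plugging this into the previous display yields $\pi_4^{\eta,\xi}(a,b)\le C'(a/b)^{\lambda}\pi_1^{\eta,\xi}(a,b)$, i.e.\ the lemma with $C=1/C'$; on the bounded range $b/a=O(1)$, where the annulus argument is vacuous, the trivial bound $\pi_4\le\pi_1$ together with a small enough $C$ suffices.

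I expect the only genuinely delicate step to be the duality bookkeeping in the second paragraph: one must verify that, inside the quad--crossing formalism (with its $\Ball_\delta$--collars and counterclockwise ordering), an uncrossed arm--quad in a discrete configuration really does produce an honest blue lattice path crossing the annulus, so that $E_b$ is legitimately a decreasing event and is genuinely blocked by open circuits. This is exactly the discrete planar--duality argument underpinning the identification of the quad--crossing arm events with the usual combinatorial ones; granting it, the proof uses nothing beyond Assumptions~\ref{assu:markov}, \ref{assu:strongPosAssoc} and \ref{assu:RSW}, and in particular neither the quasi-multiplicativity of Lemma~\ref{assu:q_mult} nor the arm--exponent bounds of Lemmas~\ref{assu:arm_exp}--\ref{assu:3hp_plusone}.
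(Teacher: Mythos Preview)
Your argument is correct and is precisely the ``standard RSW, FKG argument'' to which the paper appeals without giving details: contain the four--arm event in the intersection of the increasing one--arm event with the decreasing blue--crossing event, decouple them by FKG, and then kill the blue crossing by a geometric sequence of red circuits in dyadic sub-annuli via RSW and the Domain Markov Property. The duality check you flag (that a non-crossed arm--quad yields an honest blue path across the annulus) is indeed the only place requiring care, and your reading of conditions (3)--(4) in Definition~\ref{def:arm_events} handles it.
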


For the sake of generality, we have stated the bounds in the previous lemmas in the presence of boundary conditions.
However, in the rest of the paper only the full-plane versions of the bounds will appear, so the superscript $\xi$ will be dropped.
(The versions with boundary conditions are necessary to obtain results that we use in this paper, but whose proofs we do not reproduce.)
For the next lemma we need some additional notation.
\begin{definition}\label{def:V_a}
  For $\eta,a>0$ let
  \begin{align*}
    \calV_a^\eta := \{v\in \Ball_{a/2} \cap \eta V \,|\, v\xleftrightarrow{1} \partial\Ball_{a} \text{ in } \omega_\eta\}
  \end{align*}
  denote the number of vertices in $\Ball_{a/2}$ connected to $\partial \Ball_{a}$ in $\sigma_\eta$.
\end{definition}
\begin{lemma} \label{lem:BCKS}
  Suppose that Assumptions \ref{assu:markov}-\ref{assu:RSW} hold. Then there are positive constants $c,C$ such that
  \begin{align*}
    \PP_{\eta}(|\calV_a^\eta| \geq x(a/\eta)^2\pi_1^\eta(\eta,a)) \leq Ce^{-cx}
  \end{align*}
  for all $a > \eta$ and $x\geq0$.
\end{lemma}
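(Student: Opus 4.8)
The plan is to establish the exponential upper tail bound for $|\calV_a^\eta|$ by a standard second-moment-plus-submartingale argument in the spirit of \cite{BCKS99,BCKS01} (and adapted to the present setting, as in \cite{Garban2010}). By scaling we may take $a = 1$; write $N := |\calV_1^\eta|$ and $m := \eta^{-2}\pi_1^\eta(\eta,1)$, so that we want $\PP_\eta(N \ge xm) \le Ce^{-cx}$. First I would show that the first moment is of the right order: $\EE_\eta[N] = \sum_{v \in \Ball_{1/2}\cap\eta V} \PP_\eta(v \xlra{1} \partial\Ball_1) \asymp \eta^{-2}\pi_1^\eta(\eta,1) = m$, using quasi-multiplicativity (Lemma \ref{assu:q_mult}) together with Assumption \ref{assu:RSW} to compare the one-arm probability from a point $v$ at Euclidean distance of order $1$ from $\partial\Ball_1$ with $\pi_1^\eta(\eta,1)$ up to multiplicative constants (the RSW bounds let one go from $\Ball_{1/2}$ out to $\partial\Ball_1$ at bounded cost).

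The key step is a bound on conditional moments that upgrades the first-moment estimate to exponential decay. I would use the following mechanism: enumerate the vertices of $\Ball_{1/2}\cap\eta V$ in some order and reveal them one at a time, or better, condition on the event $\{v \xlra{1}\partial\Ball_1\}$ for a vertex $v$ and estimate $\EE_\eta[N \mid v \xlra{1}\partial\Ball_1]$. Using the FKG inequality (Assumption \ref{assu:strongPosAssoc}) and quasi-multiplicativity, one shows that conditionally on $v$ being connected to $\partial\Ball_1$, the expected number of \emph{other} vertices connected to $\partial\Ball_1$ is still $O(m)$ uniformly in $v$; this is essentially the statement that the one-arm event does not create an exceptionally large cluster "on average". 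Iterating this — i.e. bounding $\EE_\eta[N(N-1)\cdots(N-k+1)] \le k!\,(Cm)^k$ for all $k$, equivalently $\EE_\eta[N^k] \le k!\,(C'm)^k$ — gives, via Markov's inequality applied to $e^{N/(2C'm)}$ (whose expectation is finite and bounded by a constant because $\sum_k (C'm)^{-k}\EE_\eta[N^k]/k! \le \sum_k 2^{-k}$), the desired bound $\PP_\eta(N \ge xm) \le Ce^{-cx}$.

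Concretely, the factorial moment bound is obtained inductively: $\EE_\eta[N(N-1)\cdots(N-k+1)] = \sum_{v_1,\ldots,v_k \text{ distinct}} \PP_\eta(v_1,\ldots,v_k \xlra{1} \partial\Ball_1)$, and one factors the probability that all of $v_1,\ldots,v_k$ connect to $\partial\Ball_1$ by conditioning on $v_1 \xlra{1}\partial\Ball_1$ and using the positive association together with a union-of-arms / quasi-multiplicativity estimate to bound $\PP_\eta(v_1,\ldots,v_k\xlra{1}\partial\Ball_1) \le C\,\pi_1^\eta(\eta,1)\,\PP_\eta(v_2,\ldots,v_k \xlra{1}\partial\Ball_1 \text{ off a small ball around } v_1)$; the arm from each $v_j$ must pass through an annulus around $v_j$ of a fixed scale, contributing a factor $\pi_1^\eta$ per vertex, and the combinatorial sum over the remaining distinct vertices is handled by the induction hypothesis. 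Summing the telescoped bounds yields $\EE_\eta[N^{\underline k}] \le (Cm)^k$, and then $\EE_\eta[N^k] \le k!(C'm)^k$ by the standard relation between factorial moments and ordinary moments.

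The main obstacle I anticipate is making the conditional/factorial-moment step uniform in $\eta$ and in the positions of the vertices $v_j$, especially when several $v_j$ are close together or close to $\partial\Ball_{1/2}$: one must carefully choose the scales of the annuli around each $v_j$ (taking them comparable to the pairwise distances, or to the distance to $\partial\Ball_1$, whichever is smaller) so that quasi-multiplicativity applies cleanly and the product of one-arm factors telescopes correctly. This is exactly the kind of "BK-type decoupling for one-arm events" that appears in \cite{BCKS99,BCKS01}; here it goes through because Assumptions \ref{assu:markov}--\ref{assu:RSW} give us precisely quasi-multiplicativity (Lemma \ref{assu:q_mult}), RSW, and FKG, which are the only ingredients those arguments need.
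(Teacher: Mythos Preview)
The paper gives no self-contained argument for this lemma: it simply points to \cite{N88}, \cite[Lemma~6.1]{BCKS99} and \cite[Lemma~3.10]{CGN15} and remarks that those proofs go through once one has Assumptions \ref{assu:markov}--\ref{assu:RSW} (equivalently Lemmas \ref{assu:q_mult}--\ref{assu:1arm_exp_less_4arm}). Your strategy---control all moments of $N=|\calV_1^\eta|$ by $k!\,(Cm)^k$ and then apply Markov's inequality to the moment generating function---is exactly the approach of those references, so in outline you match the paper.

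There is, however, one concrete slip. You invoke FKG to show that $\EE_\eta[N\mid v\xlra{1}\partial\Ball_1]\le Cm$, but positive association points the \emph{other} way: both $N$ and the conditioning event are increasing, so Assumption \ref{assu:strongPosAssoc} gives only the lower bound $\EE_\eta[N\mid v\xlra{1}\partial\Ball_1]\ge\EE_\eta[N]$. The upper bound you need is true but comes from \emph{spatial} decoupling, not FKG. If $|u-v|=d$, then on $\{u\xlra{1}\partial\Ball_1\}\cap\{v\xlra{1}\partial\Ball_1\}$ there is an arm from each of $u,v$ to distance $d/3$ (these live in disjoint balls) and an arm from the $d$-box containing both to $\partial\Ball_1$; the domain Markov property (Assumption \ref{assu:markov}) together with the uniformity of the arm bounds in the boundary condition (built into Assumption \ref{assu:RSW} and hence into Lemmas \ref{assu:q_mult}--\ref{assu:1arm_exp_ubound}) then yields
\[
\PP_\eta\big(u\xlra{1}\partial\Ball_1,\ v\xlra{1}\partial\Ball_1\big)\ \le\ C\,\pi_1^\eta(\eta,d)^2\,\pi_1^\eta(d,1),
\]
and summing over $u$ using quasi-multiplicativity and Lemma \ref{assu:1arm_exp_ubound} gives the conditional first-moment bound. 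The same mechanism, iterated along a nearest-neighbour (minimum-spanning-tree) ordering of $v_1,\dots,v_k$, produces the $k$-point bound. Your final ``obstacle'' paragraph in fact describes this correctly (annuli at scales comparable to pairwise distances), so the fix is just to replace the appeal to FKG by domain Markov plus uniform-in-boundary-condition arm estimates; FKG enters only indirectly, inside the proof of quasi-multiplicativity. Likewise, your inductive factorisation ``$\PP(v_1,\dots,v_k\xlra{1}\partial\Ball_1)\le C\pi_1\cdot\PP(v_2,\dots,v_k\xlra{1}\partial\Ball_1\text{ off a ball around }v_1)$'' is not valid as written for the same reason; the correct induction peels off the vertex with the smallest nearest-neighbour distance and uses that its short arm sits in a region disjoint from all the others.
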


\begin{lemma}\label{lem:L3bound}
 Suppose that Assumptions \ref{assu:markov}-\ref{assu:RSW} hold. Then there is a constant $C>0$ such that
  \begin{align*}
    \EE_{\eta}[|\calW_a^\eta|^{3}] \leq C\eta^{-6}\pi_1^\eta(\eta,a)^{3}
  \end{align*}
  for all $0 < \eta < a < 1/2$, where
  \begin{align*}
    \calW_a^\eta := \{v\in \Ball_{1} \cap \eta V \,|\, v\xleftrightarrow{1} \partial\Ball_{a}(v) \text{ in } \omega_\eta\}.
  \end{align*}
\end{lemma}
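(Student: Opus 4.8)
The plan is to deduce this from the exponential tail estimate of Lemma~\ref{lem:BCKS} by a covering argument, converting the tail bound into an $L^3$ bound and summing with Minkowski's inequality.

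We may assume $a \ge C_0 \eta$ for a suitable absolute constant $C_0$: in the range $\eta < a < C_0\eta$ one has the deterministic bound $|\calW_a^\eta| \le \#(\Ball_1 \cap \eta V) \le C\eta^{-2}$, while $\pi_1^\eta(\eta,a)$ is bounded below by a positive constant (the one-arm event to distance $a \asymp \eta$ concerns a region of bounded size and has probability bounded away from $0$ by Assumption~\ref{assu:RSW}), so the asserted inequality is immediate there. For $a \ge C_0\eta$, cover $\Ball_1$ by $N \asymp a^{-2}$ balls $\Ball_{a/4}(z_i)$, $i = 1,\dots,N$, with centers $z_i \in \eta V$ (take an ideal grid of mesh $\asymp a$ and round to the lattice; since $C_0\eta \le a$ this rounding perturbs centers by at most $\eta \ll a$ and the radius-$a/4$ balls still cover $\Ball_1$). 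The key observation is purely geometric: if $v \in \calW_a^\eta$ and $v \in \Ball_{a/4}(z_i)$, then any red path realizing $v \xleftrightarrow{1} \partial\Ball_a(v)$ starts inside $\Ball_{a/4}(z_i) \subset \Ball_{a/2}(z_i)$ and ends at a point at distance $a$ from $v$, hence at distance at least $a - a/4 = 3a/4 > a/2$ from $z_i$; thus the path exits $\Ball_{a/2}(z_i)$ and $v \xleftrightarrow{1} \partial\Ball_{a/2}(z_i)$ in $\omega_\eta$. Writing $X_i := \#\{v \in \eta V \cap \Ball_{a/4}(z_i)\,:\, v \xleftrightarrow{1} \partial\Ball_{a/2}(z_i)\text{ in }\omega_\eta\}$, this gives the deterministic domination
\[
  |\calW_a^\eta| \;\le\; \sum_{i=1}^N X_i ,
\]
and by translation invariance of $\PP_\eta$ each $X_i$ is equidistributed with $|\calV_{a/2}^\eta|$, with $\calV$ as in Definition~\ref{def:V_a}.

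It remains to assemble three routine ingredients. By Minkowski's inequality in $L^3$, $\EE_\eta[|\calW_a^\eta|^3] \le N^3\,\EE_\eta[|\calV_{a/2}^\eta|^3]$. Integrating the tail bound of Lemma~\ref{lem:BCKS} applied at radius $a/2$,
\[
  \EE_\eta[|\calV_{a/2}^\eta|^3] = \int_0^\infty 3t^2\,\PP_\eta(|\calV_{a/2}^\eta| > t)\,dt \le C\big((a/\eta)^2\pi_1^\eta(\eta,a/2)\big)^3\int_0^\infty 3x^2 e^{-cx}\,dx \le C'(a/\eta)^6\,\pi_1^\eta(\eta,a/2)^3 .
\]
Finally, quasi-multiplicativity of the one-arm probability (Lemma~\ref{assu:q_mult} together with $\pi_1^\eta(a/2,a) \asymp 1$, which follows from Assumption~\ref{assu:RSW}) gives $\pi_1^\eta(\eta,a/2) \asymp \pi_1^\eta(\eta,a)$. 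Combining these with $N \asymp a^{-2}$ yields $\EE_\eta[|\calW_a^\eta|^3] \le C a^{-6}(a/\eta)^6\pi_1^\eta(\eta,a)^3 = C\eta^{-6}\pi_1^\eta(\eta,a)^3$, as claimed.

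Essentially all the probabilistic content is already carried by Lemma~\ref{lem:BCKS} (whose proof is a BCKS-type renormalization argument based on RSW and FKG); given that lemma, the present statement is a soft deduction. The one point deserving care is the matching of scales in the covering --- shrinking from ``distance $a$ around $v$'' to ``distance $a/2$ around a nearby lattice point'' so that the arm event at $v$ genuinely forces the arm event at $z_i$, while keeping $N \asymp a^{-2}$ so that all powers of $a$ cancel in the end. A self-contained alternative would bound $\EE_\eta[|\calW_a^\eta|^3] = \sum_{u,v,w}\PP_\eta(u,v,w \in \calW_a^\eta)$ directly, applying quasi-multiplicativity along a spanning tree of $\{u,v,w\}$; this is longer and effectively re-proves Lemma~\ref{lem:BCKS}.
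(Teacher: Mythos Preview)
Your proof is correct. The paper itself gives only a pointer---``can be proved easily using Lemma~\ref{assu:q_mult}; see \cite[Lemma~4.5]{Garban2010} or the proof of Lemma~\ref{lem:BCKS}''---which indicates the direct route you mention at the end: expand $\EE_\eta[|\calW_a^\eta|^3]=\sum_{u,v,w}\PP_\eta(u,v,w\in\calW_a^\eta)$ and bound each summand by quasi-multiplicativity along a tree on $\{u,v,w\}$.

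Your argument is a genuinely different (and tidy) packaging. Rather than reopening the three-point correlation bound, you reuse the \emph{statement} of Lemma~\ref{lem:BCKS} as a black box, cover $\Ball_1$ by $\asymp a^{-2}$ balls of radius $a/4$, and combine via Minkowski. This is more modular---all the quasi-multiplicativity work is hidden inside Lemma~\ref{lem:BCKS}, and only the comparison $\pi_1^\eta(\eta,a/2)\asymp\pi_1^\eta(\eta,a)$ is invoked explicitly. The direct moment expansion, by contrast, is self-contained (it does not need the exponential tail, only quasi-multiplicativity), and generalizes mechanically to any fixed moment order; your route does too, since the tail is exponential, but the constants you get track the $L^3$ norm of an exponential variable times $N^3$, whereas the three-point expansion makes the dependence on the combinatorics of $\{u,v,w\}$ explicit. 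Either way the content is the same and both sit squarely within the references the paper cites.
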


\begin{proof}[Proof of Lemmas \ref{assu:q_mult} - \ref{lem:L3bound}]
 Lemmas \ref{assu:arm_exp} and \ref{assu:3hp_plusone} follow from Assumptions \ref{assu:markov} - \ref{assu:RSW},
 as explained in e.g. \cite{Nolin2008,Grimmett1999} for the case of Bernoulli percolation and in
\cite[Corollary 1.5 and Remark 1.6]{CD-CH2013} for the case of FK-Ising percolation.
 (The additional boundary conditions, which are not present in the above mentioned corollary
 and remark in \cite{CD-CH2013},
 do not affect the results. This can easily be deduced from
 equation (5.1) in \cite{CD-CH2013}.)

Also Lemmas \ref{assu:1arm_exp_ubound} and \ref{assu:1arm_exp_less_4arm} follow from standard RSW, FKG arguments.

Lemma \ref{assu:q_mult} is similar to \cite[Theorem 1.3]{CD-CH2013},
which is shown to follow from our assumptions \ref{assu:markov}-\ref{assu:RSW}.
The boundary condition on $\partial_\eta \Ball_c$ has no effect on the proof, because
the RSW result is uniform in the boundary conditions. (Furthermore there is no need to ``make" the arms
well separated on $\partial_\eta \Ball_c$.)

An easy proof of Lemma \ref{lem:BCKS} for critical percolation can be found in \cite{N88}.
It is easy to see that the same proof can be modified in such a way that
the result follows from Lemmas \ref{assu:q_mult} - \ref{assu:1arm_exp_less_4arm},
and hence from Assumptions \ref{assu:markov}-\ref{assu:RSW}.
For percolation, Lemma \ref{lem:BCKS} can also be found in \cite[Lemma 6.1]{BCKS99}, and for 
FK-Ising percolation in \cite[Lemma 3.10]{CGN15}.

Finally Lemma \ref{lem:L3bound} can be proved easily using Lemma \ref{assu:q_mult}.
See for example \cite[Lemma 4.5]{Garban2010} or the proof of Lemma \ref{lem:BCKS}.
\end{proof}

\subsection{Additional preliminaries}

\begin{lemma}\label{lem:convergenceQuadCrossings}
 Suppose that Assumptions \ref{assu:markov}-\ref{assu:existence_full_confInvLimit} hold.
 The set of crossed quads is, almost surely, measurable with respect to the collection of loops.
\end{lemma}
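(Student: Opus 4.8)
The plan is to show that the quad-crossing configuration $\omega_\eta$ (for $\eta>0$) and its limit $\omega_0$ are determined by the loop collection $L_\eta$ (resp.\ $L_0$), and then pass to the limit. For $\eta>0$ this is essentially tautological: a quad $Q$ is crossed in $\sigma_\eta$ if and only if there is a red path crossing $Q$, which happens if and only if the loops in $L_\eta$ arrange themselves so as to permit such a crossing --- more precisely, one can read off from the loop configuration whether a crossing exists, since the loops are exactly the cluster boundaries and a red crossing of $Q$ exists precisely when no sequence of blue loops (closed-cluster boundaries) separates the two marked sides of $Q$ inside $Q$. Thus $\omega_\eta = \Psi(L_\eta)$ for a deterministic map $\Psi$ on the discrete level. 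The real work is to make sense of this in the scaling limit and to argue that the limiting coupling still has $\omega_0$ measurable with respect to $L_0$.

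First I would set up the coupling. By Assumption \ref{assu:existence_full_confInvLimit} (and the remarks in Subsection \ref{ssec:def_loops}), $L_\eta \to L_0$ in distribution. By tightness of $\omega_\eta$ in the compact space $\mathscr{H}_{\hat\CC}$ (Corollary \ref{cor:comp_prob_meas_space}) and of $L_\eta$, the joint laws $\PP_\eta$ of $(\omega_\eta, L_\eta)$ are tight, so along a subsequence $(\omega_\eta, L_\eta) \to (\omega_0, L_0)$ jointly; by Skorokhod I may realize this convergence almost surely on a common probability space. The goal is then to show that on this space $\omega_0$ is a.s.\ a measurable function of $L_0$. The key observation is a ``sandwiching'' statement: for a fixed quad $Q$, the event $\{Q \text{ is crossed}\}$ can be approximated from inside and outside by events depending only on finitely many macroscopic loops. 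Concretely, using the partial orders $<$ and $\le$ on quads and the RSW bound (Assumption \ref{assu:RSW}), one shows that for $Q' < Q < Q''$, if $Q'$ is crossed then one can find a red crossing, i.e.\ a path avoiding the blue loops, staying in $Q''$; and conversely a red crossing of $Q$ forces, for suitable nearby $Q'$, the absence of a blue loop (or chain of loops) in $L$ separating the marked sides of $Q'$. This translates ``$Q$ crossed'' into a statement purely about the loop picture, up to an error controlled by the probability that a loop passes very close to $\partial Q$ --- which tends to zero by RSW and the fact that the limiting loops are a.s.\ not space-filling near any fixed quad boundary.

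The main technical steps, in order, are: (i) define the candidate map sending a loop collection $L$ to the set of quads $Q$ for which no finite chain of loops of $L$ separates the two marked sides of $Q$ within $Q$, and verify it is Borel-measurable from the loop space to $\mathscr{H}_{\hat\CC}$ (this uses that $\boxminus_Q^c$ and $\boxdot_{\mathcal U}^c$ generate $\mathscr{T}_{\hat\CC}$, so it suffices to check measurability of preimages of these generators, which reduces to countably many conditions on finitely many loops); (ii) on the discrete level show $\omega_\eta$ equals (the appropriate discretization of) this map applied to $L_\eta$; (iii) pass to the limit along the a.s.-convergent coupling, using the sandwiching argument above together with the RSW-based estimate that for each fixed $Q$, $\PP_0(\omega_0 \in \boxminus_Q \,\triangle\, \Psi(L_0) \ni Q) = 0$; since $\mathscr{T}_{\hat\CC}$ is countably generated by the $\boxminus_Q$ with $Q$ ranging over a countable dense family of quads, this gives $\omega_0 = \Psi(L_0)$ a.s. I expect the main obstacle to be step (iii): carefully controlling the boundary behavior of loops near a fixed quad, i.e.\ ruling out that in the limit a loop ``sticks to'' the marked side of $Q$ in a way that makes the crossing status ambiguous. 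This is handled by a standard RSW/FKG estimate (of the type in Lemmas \ref{assu:q_mult}--\ref{assu:arm_exp}) showing the probability of a $6$-arm or half-plane $3$-arm type event near $\partial Q$ at scale $\delta$ is $o(1)$ as $\delta \to 0$, uniformly in $\eta$, combined with the continuity of the quad partial order $<$ which was introduced precisely to make such limiting arguments work; I would cite \cite[Section 2]{Garban2010} and the compactness statements of \cite{Smirnov2011} for the analogous facts in the Bernoulli case and note that they go through verbatim under Assumptions \ref{assu:markov}--\ref{assu:existence_full_confInvLimit}.
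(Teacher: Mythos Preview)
Your proposal is correct and essentially unpacks the argument that the paper defers to \cite[Section~2.3]{Garban2010} and \cite[Section~5.2]{Camia2006}: the paper's own proof is just a citation to those references, noting that the needed input is three RSW-based regularity properties of the loop process (the first three items of Theorem~3 in \cite{Camia2006}), and your sketch---the sandwiching of $\boxminus_Q$ by loop-measurable events together with the $6$-arm and half-plane $3$-arm estimates ruling out degenerate behaviour near $\partial Q$---is precisely how those references carry out the argument.
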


\begin{proof}[Proof of Lemma \ref{lem:convergenceQuadCrossings}]
A proof of this can be found in \cite[Section 2.3]{Garban2010} and follows
almost immediately from arguments given in \cite[Section 5.2]{Camia2006}.
The proof of the measurability of quad crossings with respect to the collection of 
loops makes use of three properties of the loop process, which all follow
from RSW techniques (see the first three items of
Theorem 3 in \cite[Section 5.2]{Camia2006}).
Because of this, the measurability is a simple consequence of
our Assumptions \ref{assu:markov}-\ref{assu:existence_full_confInvLimit}.
\end{proof}

\begin{rem}
  Assumption \ref{assu:existence_full_confInvLimit}, together with the separability of $\mathscr{H}_{\RS}$, implies that there is a coupling $\PP$ so that $\omega_\eta\rightarrow\omega_0$ a.s. as
  $\eta\rightarrow0$.
\end{rem}

Before we proceed to the next lemma, we recall the following result on the scaling limits of arm events. A slightly weaker version of the following lemma appeared as \cite[Lemma 2.9]{Garban2010}. Its proof extends immediately to the more general case.

\begin{lemma}[Lemma 2.9 of \cite{Garban2010}] \label{lem:GPS}
  Suppose that Assumptions \ref{assu:markov}-\ref{assu:existence_full_confInvLimit} hold. Then, under a coupling
  $\PP$ of $(\PP_{\eta})_{\eta \ge 0}$ such that $\omega_{\eta} \to \omega_{0}$ almost surely,
  we have for events $\calD \in \{\{A\xleftrightarrow{(1),S} B\}, \{A\xleftrightarrow{(010),S} B\}, \mathcal{A}^i_{\kappa,\kappa'}\left(z;a,b\right)\}$,
  \begin{align*}
   \ind_{\calD}(\omega_{\eta}) \to \ind_{\calD}(\omega_{0}) \qquad \textrm{in }\PP\textrm{-probability},
  \end{align*}
  for $(\kappa,\kappa') \in \{ ((1),\emptyset),((1010),\emptyset),((010101),\emptyset),(\emptyset,(010)),((1),(010)) \}$,
  rectangle $S \subseteq \CC$, $i\in\{1,2,3,4\}$, $0<a<b$ and $A,B$ disjoint open subsets of $\CC$ with
  piecewise smooth boundary.
\end{lemma}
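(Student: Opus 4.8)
The plan is to prove Lemma~\ref{lem:GPS} by reducing the convergence of the indicator of each arm event $\calD$ to an almost-sure equality $\ind_{\calD}(\om_0) = \ind_{\calD'}(\om_0)$ between two ``$\delta$-perturbed'' versions of the event (one slightly larger, one slightly smaller), and then using the almost-sure convergence $\om_\eta \to \om_0$ in the quad-crossing topology $\mathscr{T}_{\hat\CC}$ together with the RSW bounds of Assumption~\ref{assu:RSW} to squeeze. Concretely: each event $\calD$ is defined through the existence of finitely many quads $Q_i$ (with prescribed colours $\kappa_i$), which are crossed or not crossed, are $\delta$-separated from each other and from various boundaries, and are ordered in a prescribed cyclic way. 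Because the defining topology on $\mathscr{H}_{\hat\CC}$ is generated by the sets $\boxminus_Q^c$ and $\boxdot_{\calU}^c$, the event that a \emph{fixed} quad $Q$ is crossed is an open condition on $\omega$ in the sense that $\omega \in \boxminus_Q$ is a closed set, while $\omega \notin \overline{\boxminus_Q}$ (roughly, $Q$ is not crossed with some room to spare, i.e.\ $\exists\, \calU \ni Q$ open with $\om \in \boxdot_\calU$) is open. The key subtlety, already handled in \cite[Lemma~2.9]{Garban2010}, is that crossing is a \emph{lower semicontinuous}-type functional: if $\om_\eta \to \om_0$ and $\om_0$ crosses a quad $Q$ ``with room'', then $\om_\eta$ crosses $Q$ for all small $\eta$; conversely, if $\om_0$ does not cross $Q$ ``with room'' (there is an open family of nearby quads, none crossed by $\om_0$), then $\om_\eta$ does not cross $Q$ for small $\eta$. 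The only configurations where this squeeze can fail are the ``exactly on the boundary'' ones, and the point is that these have probability zero.

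The steps, in order, would be as follows. First, fix the coupling $\PP$ with $\om_\eta \to \om_0$ a.s.\ (this exists by separability of $\mathscr{H}_{\RS}$ and Assumption~\ref{assu:existence_full_confInvLimit}, as noted in the Remark preceding the lemma). Second, for each of the listed events $\calD$ and each small $\rho>0$, define $\calD^{+\rho}$ (resp.\ $\calD^{-\rho}$) to be the analogous event but with the separation parameter $\delta$ relaxed by $\rho$ (resp.\ tightened by $\rho$) and the arms/quads required to be crossed only in slightly shrunk (resp.\ enlarged) sub-quads, so that $\calD^{-\rho} \subseteq \calD \subseteq \calD^{+\rho}$ and $\bigcup_{\rho>0}\calD^{-\rho} = \calD = \bigcap_{\rho>0}\calD^{+\rho}$ up to the boundary configurations. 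Third, show the monotone/continuity statement: on the event $\calD^{-\rho}(\om_0)$ one has $\calD(\om_\eta)$ for all $\eta$ small (using that a crossing ``with $\rho$ of room'' in $\om_0$ forces a crossing of the slightly larger quad in $\om_\eta$, and that a non-crossing ``with $\rho$ of room'' persists), and symmetrically $\calD(\om_\eta) \Rightarrow \calD^{+\rho}(\om_0)$ for $\eta$ small along the subsequence. This gives $\limsup_\eta |\ind_\calD(\om_\eta) - \ind_\calD(\om_0)| \le \ind_{\calD^{+\rho}\setminus \calD^{-\rho}}(\om_0)$ a.s. Fourth, let $\rho \to 0$ and invoke a zero-probability statement: $\PP(\om_0 \in \calD^{+\rho}\setminus\calD^{-\rho}) \to 0$. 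For the arm events this follows from the RSW/quasi-multiplicativity machinery (Lemmas~\ref{assu:q_mult}--\ref{assu:1arm_exp_less_4arm}) in the scaling limit, exactly the quantitative input that forbids ``marginal'' crossings from carrying positive mass; that these bounds pass to the limit is part of what Assumption~\ref{assu:existence_full_confInvLimit} and the convergence $\om_\eta \to \om_0$ give. Finally, conclude convergence in $\PP$-probability (in fact a.s.\ along the coupling), which is the assertion of the lemma; since the family of events considered is exactly the finite list in the statement and the argument is uniform in the choice of $S$, $i$, $a<b$, $A$, $B$, $(\kappa,\kappa')$, nothing further is needed. I would remark that the extension from \cite[Lemma~2.9]{Garban2010} is immediate because none of the steps used anything about Bernoulli percolation beyond Assumptions~\ref{assu:markov}--\ref{assu:existence_full_confInvLimit}: the coupling comes from Assumption~\ref{assu:existence_full_confInvLimit}, the squeeze is purely topological in $\mathscr{T}_{\hat\CC}$, and the zero-probability input comes from the RSW consequences already established for both models.

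The main obstacle, I expect, is the bookkeeping in Step~3 for the full generality of $\mathcal{A}^i_{\kappa,\kappa'}(z;a,b)$: one must be careful that relaxing/tightening $\delta$ is compatible with (a) the half-plane constraints $H_i(z,a)$, (b) the cyclic ordering condition (item~6 of Definition~\ref{def:arm_events}), and (c) the requirement that $Q_i \cap D$ and $Q_i \cap E$ are $\delta$-separated — all simultaneously, so that $\calD^{\pm\rho}$ really do sandwich $\calD$ and really do converge to it as $\rho\to 0$. This is the ``simple exercise'' alluded to in Remark~\ref{rem:MeasurabilityArmEvents} and is genuinely routine but tedious; the conceptual content is entirely in the observation that crossing is a semicontinuous functional and that the scaling limit assigns no mass to degenerate crossing configurations. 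A secondary, minor point is to make sure the statement is about convergence in probability (not a.s.) as stated — this is automatic from a.s.\ convergence along the coupling, but one should phrase it so as not to over-claim; convergence in probability is all that is used downstream.
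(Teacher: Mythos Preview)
Your proposal is correct and is essentially the argument the paper has in mind: the paper does not give its own proof but simply states that ``its proof extends immediately to the more general case,'' referring to \cite[Lemma~2.9]{Garban2010}. Your sketch is a faithful reconstruction of that GPS argument --- the sandwich between $\rho$-relaxed and $\rho$-tightened versions of the event, the semicontinuity of crossing in the Schramm--Smirnov topology, and the RSW input to kill the probability of the boundary configurations --- and you correctly note at the end that the extension requires only Assumptions~\ref{assu:markov}--\ref{assu:existence_full_confInvLimit}, which is exactly the paper's point.
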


The lemma above implies that for all $a,b>0$ with $a<b$ the probability $\pi_1^\eta(a,b)$ converges as $\eta\ra 0$. We write $\pi^0_1(a,b)$ for the limit. General arguments \cite[Section 4]{beffara2011} using Lemma \ref{assu:q_mult} above show that
\begin{align} \label{eq:1arm_exp}
  \pi^0_1(a,b) = \left(\frac{a}{b}\right)^{\alpha_1+o(1)}
\end{align}
for some $\alpha_1\geq0$ where $o(1)$ is understood as $b/a \ra \infty$. Lemma \ref{assu:1arm_exp_ubound} shows that $\alpha_1 < 1 $.

We need some additional notation for the next theorems.
For $z\in\CC$ and $a>0$ let $\Ball'_a(z) := \{u\in \CC \,|\, \Re(u-z),\Im(u-z)\in[-a,a) \}.$ Note that $\Ball_a(z)$ and $\Ball'_a(z)$ differ only on their boundary.
For an annulus $A = A(z;a,b)$ let
\begin{align} \label{eq:def_1armMeas}
  \mu_{1,A}^{\eta} := \frac{\eta^2}{\pi_1^{\eta}(\eta,1)}\sum_{v\in \Ball'_a(z)\cap \eta V}\delta_v
  \ind_{\{v\leftrightarrow \partial \Ball_b(z) \text{ in } \omega_{\eta}\}}
\end{align}
denote the counting measure of the vertices in $\Ball'_a(z)$ with an arm to $\partial \Ball_b(z)$ at scale $\eta$.

\begin{thm} \label{assu:conv_measures}
  Suppose that Assumptions \ref{assu:markov}-\ref{assu:existence_full_confInvLimit} hold. Let $A = A(z;a,b)$ be an annulus, and $\PP$ be a coupling
  such that $\omega_\eta\ra\omega_0$ a.s. as $\eta\ra0$. Then the measures $\mu_{1,A}^{\eta}$ converge
  weakly to $\mu_{1,A}^{0}$ in probability under the coupling $\PP$ as $\eta$ tends to $0$. Furthermore, $\mu_{1,A}^0$ is a measurable function of $\omega_{0}$. In particular,
  the pair $(\omega_\eta,\mu_{1,A}^\eta)$ converges to $(\omega_0,\mu_{1,A}^0)$ in distribution as $\eta\ra0$.
\end{thm}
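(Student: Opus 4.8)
The plan is to prove the weak convergence of $\mu_{1,A}^\eta$ to $\mu_{1,A}^0$ in probability under the coupling $\PP$ by a second-moment / $L^2$ argument, combined with the measurability of the limiting arm event indicators from Lemma \ref{lem:GPS}. First I would fix a bounded continuous test function $g$ on $\CC$ supported near $\Ball'_a(z)$ and study the random variable $\langle \mu_{1,A}^\eta, g\rangle = \frac{\eta^2}{\pi_1^\eta(\eta,1)} \sum_{v} g(v)\ind_{\{v\leftrightarrow \partial\Ball_b(z)\}}$. The natural candidate for the limit is obtained by partitioning $\Ball'_a(z)$ into small dyadic squares $\{S_j\}$ of side $\rho$; on each square the event $\{v \leftrightarrow \partial\Ball_b(z)\}$ is, up to a boundary-effect error, comparable to the ``macroscopic'' arm event $\calA_{(1)}$ from the center of $S_j$ out to $\partial\Ball_b(z)$, whose indicator converges by Lemma \ref{lem:GPS}, while Lemma \ref{lem:BCKS} (the BCKS concentration bound) controls the normalized number of vertices inside $S_j$ that are connected to its own boundary. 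Quasi-multiplicativity (Lemma \ref{assu:q_mult}) is what lets us factor $\pi_1^\eta(\eta, b) \asymp \pi_1^\eta(\eta,\rho)\,\pi_1^\eta(\rho,b)$ so that the two pieces (microscopic volume $\sim (\rho/\eta)^2\pi_1^\eta(\eta,\rho)$ per square, and macroscopic arm probability $\pi_1^\eta(\rho,b)$) combine correctly after dividing by $\pi_1^\eta(\eta,1)$.

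The core estimate is a variance bound: I would show that for fixed mesh $\rho$, $\Var\big(\langle\mu_{1,A}^\eta,g\rangle\big)$ is bounded (uniformly in small $\eta$) by something that tends to $0$ as $\rho\to0$, plus a term handled by the arm-event convergence. Concretely, expanding the square and using FKG (Assumption \ref{assu:strongPosAssoc}) together with quasi-multiplicativity gives, for two vertices $u,v$ at distance $r$, a bound on $\PP_\eta(u,v \leftrightarrow \partial\Ball_b)$ of the order $\pi_1^\eta(\eta,r)\,\pi_2^\eta(r,\cdot)\,\pi_1^\eta(\cdot, b)$-type expressions; summing over pairs and normalizing, the diagonal-ish contribution is controlled by Lemma \ref{lem:L3bound}-style moment bounds (here really the second-moment analogue, which also follows from Lemma \ref{assu:q_mult}), and the off-diagonal contribution reduces to the convergence of the coupled macroscopic arm indicators. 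Once the variance is controlled, convergence in probability of $\langle\mu_{1,A}^\eta,g\rangle$ to a limit follows because the candidate limit $\langle\mu_{1,A}^0,g\rangle$ is, by construction, the $L^2$-limit (as $\rho\to0$) of the discretized approximations, each of which is a measurable function of $\omega_0$ by Lemma \ref{lem:GPS}. Taking a countable determining family of test functions $g$ and using the standard fact that weak convergence of finite measures is metrizable, one upgrades to weak convergence of the measures $\mu_{1,A}^\eta \to \mu_{1,A}^0$ in $\PP$-probability, with $\mu_{1,A}^0$ measurable with respect to $\omega_0$.

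Finally, the ``in particular'' statement is immediate: since $\omega_\eta \to \omega_0$ a.s. under $\PP$ and $\mu_{1,A}^\eta \to \mu_{1,A}^0$ in $\PP$-probability with $\mu_{1,A}^0$ a function of $\omega_0$, the pair $(\omega_\eta,\mu_{1,A}^\eta)$ converges to $(\omega_0,\mu_{1,A}^0)$ in $\PP$-probability, hence in distribution. The main obstacle I anticipate is the uniform-in-$\eta$ second-moment bound and, within it, correctly handling the boundary effects near $\partial\Ball_a(z)$ and near $\partial\Ball_b(z)$: vertices very close to $\partial\Ball'_a(z)$ require care so that the ``arm to $\partial\Ball_b(z)$'' event genuinely decouples into a microscopic piece and a macroscopic piece, and this is exactly where one needs quasi-multiplicativity (Lemma \ref{assu:q_mult}) together with the half-plane/three-arm exponent estimates (Lemmas \ref{assu:arm_exp}, \ref{assu:3hp_plusone}) to show that the vertices whose connection is ``pinned'' to the annulus boundary contribute negligibly after normalization. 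Everything else is a routine, if somewhat lengthy, interchange of limits — essentially the argument of \cite[Section 4]{Garban2010} adapted to a single annulus with the arm anchored on a fixed inner box rather than shrinking to a point.
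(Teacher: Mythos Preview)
Your overall strategy --- approximate $\mu_{1,A}^\eta$ in $L^2$ by a mesoscopic ``box'' version built from macroscopic arm indicators, then pass to the limit using Lemma~\ref{lem:GPS} --- matches the paper's sketch (which in turn follows \cite{Garban2010} and \cite{CGN15}). But there is a genuine gap in your plan, and it is precisely the step the paper singles out as the crux.

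The missing idea is the \emph{ratio limit}. Quasi-multiplicativity only gives
$c\,\pi_1^\eta(\eta,\rho)\,\pi_1^\eta(\rho,1)\leq\pi_1^\eta(\eta,1)\leq C\,\pi_1^\eta(\eta,\rho)\,\pi_1^\eta(\rho,1)$,
i.e.\ comparability up to multiplicative constants; it does \emph{not} tell you that the normalized expected mass of a single mesoscopic box converges as $\eta\to0$. Without that, your ``discretized approximation'' at fixed $\rho$ has no well-defined limit as $\eta\to0$, so you cannot define $\mu_{1,A}^0$ as a function of $\omega_0$. The paper's sketch invokes the existence of the one-arm exponent $\alpha_1$ (see \eqref{eq:1arm_exp}) together with a genuine ratio-limit argument to show that the contribution of each box is asymptotically a deterministic constant depending only on $\rho$; this is the role that quasi-multiplicativity alone cannot play.

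Two further points. First, your variance bound via FKG and pairwise arm estimates is not what the paper (following \cite{Garban2010,CGN15}) actually does: the decorrelation of far-apart boxes comes from a \emph{coupling} argument --- with high probability one finds a red circuit around a box (FKG helps because the box is conditioned to have a long arm), and then the Domain Markov Property (Assumption~\ref{assu:markov}) makes the box's contribution independent of the rest. Your FKG-only route may give usable two-point bounds, but the coupling argument is cleaner and is the one that generalizes to FK--Ising. Second, you should be aware that Lemma~\ref{lem:GPS} gives convergence of arm indicators only \emph{in probability}, not almost surely; the paper explicitly notes that this is where \cite{CGN15} slipped, and that one uses the $L^3$ bounds of Lemma~\ref{lem:L3bound} to upgrade to the needed $L^2$ convergence of the mesoscopic approximation.
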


Theorem \ref{assu:conv_measures} is proved for site percolation on the triangular lattice
in \cite{Garban2010} where it is Theorem 5.1.
Namely, it is easy to check that the proof of \cite[Theorem 5.1]{Garban2010} shows that the measures
$\mu_{1,A}^\eta\xrightarrow{p}\mu_{1,A}^0$ under the coupling $\PP$ converge weakly in probability as $\eta\ra0$.
For FK-Ising, a sketch proof for a theorem similar to this was given in \cite{CGN15}.
Unfortunately the proof contains a mistake, but luckily the mistake can be easily fixed.
Below we give an informal sketch of the proof of Theorem \ref{assu:conv_measures},
following the proof in \cite{CGN15} and briefly explaining how to fix it.

The strategy is to approximate, in the $L^2$-sense, the one-arm measure by the number of mesoscopic boxes
connected to $\partial \Ball_b(z)$, multiplied by a constant depending on the size of the boxes.
Here mesoscopic means much larger than the mesh size $\eta$ but much smaller than $a$.

In order to get $L^2$-bounds on the error terms, first we use a coupling argument to argue that the boxes which
are far away from each other are almost independent. Namely, with high probability one can draw 
a red circuit around one of the boxes, which is also conditioned on having a long
red arm (because of positive association, that event can only increase the probability of a red circuit).
This red circuit makes, via the Domain Markov Property, the contribution of the surrounded box independent of that of the other boxes.
The total contribution of the boxes which are close to each other is negligible.
Secondly we use a ratio limit argument, based on the existence of the one-arm exponent $\alpha_1$
from \eqref{eq:1arm_exp}, to show that the contribution of a
single box is approximately a constant, which only depends on the size of the mesoscopic box.

The small mistake in \cite{CGN15} mentioned above is in the assumption that the convergence in
Lemma \ref{lem:GPS} is almost sure, as claimed in an earlier version of \cite{Garban2010}. However, as noted
in the final version of \cite{Garban2010}, one can only prove convergence in probability. Luckily, arguments in \cite{Garban2010}
show that
convergence in probability, together with $L^3$ bounds from Lemma \ref{lem:L3bound},
is sufficient to prove convergence in $L^2$ of the number of mesoscopic boxes connected to $\partial \Ball_b(z)$ times
a constant depending on the size of these boxes.

\subsection{Validity of the assumptions}
\subsubsection{The case of critical percolation} \label{ssec:crit_perc_cond}
Now we check that the Assumptions above hold for critical site percolation on the triangular lattice.
\begin{thm}\label{thm:assu_perco}
  For critical site percolation on the triangular lattice, the Assumptions \ref{assu:markov}-\ref{assu:existence_full_confInvLimit} hold.
\end{thm}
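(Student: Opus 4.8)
The plan is to verify the five assumptions one by one for critical site percolation on the triangular lattice $\TT$, appealing to classical results where possible.

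First I would check Assumption~\ref{assu:markov}, the Domain Markov property. For Bernoulli site percolation this is immediate: the states of the hexagons are i.i.d., so conditioning on the configuration in $\overline{E \setminus D}$ has no effect on the configuration in $D$ beyond determining the induced boundary condition $\xi$ (which is in fact trivial here since distinct hexagons are independent). Next, Assumption~\ref{assu:strongPosAssoc}, strong positive association, follows from the Harris--FKG inequality for product measures, together with the standard observation recalled in the text that, for a measure with the finite-energy property, FKG is equivalent to monotonicity in boundary conditions; Bernoulli percolation trivially has finite energy since every hexagon is open with probability $p_c = 1/2 \in (0,1)$.

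For Assumption~\ref{assu:RSW}, the RSW bound, I would invoke the classical Russo--Seymour--Welsh theory for critical site percolation on $\TT$: crossing probabilities of quads of bounded extremal length are bounded away from $0$ and $1$ uniformly in $\eta$ and in the boundary condition. The uniformity in boundary conditions is precisely what makes this slightly stronger than the textbook statement, but it follows from the fact that the worst-case boundary condition can only be more favorable or less favorable by a bounded factor, obtained by forcing an open or closed circuit just inside the boundary (an event of probability bounded below by RSW), and one can cite \cite{Nolin2008,Grimmett1999} for the precise estimates. Finally, Assumption~\ref{assu:existence_full_confInvLimit}, the existence and conformal invariance of the full scaling limit of the collection of interface loops $L_\eta \to L_0$, is exactly the main theorem of Camia--Newman \cite{Camia2006} (building on Smirnov's conformal invariance \cite{Smirnov2001a}), where the continuum nonsimple loop process is constructed and shown to be conformally invariant; this is stated and used earlier in the excerpt in Subsection~\ref{ssec:def_loops}.

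The main obstacle, to the extent there is one, is not any single deep input but rather the bookkeeping needed to ensure that all the estimates hold \emph{uniformly in the discrete boundary conditions} $\xi$, as required by the precise formulations of Assumptions~\ref{assu:strongPosAssoc} and~\ref{assu:RSW}. For Bernoulli percolation this uniformity is essentially free because of the product structure and finite energy, but I would spell out the circuit-insertion argument that converts an arbitrary boundary condition into a free or wired one at bounded multiplicative cost, since that is the mechanism that makes the RSW estimates boundary-condition-uniform. With these four ingredients --- product-measure Domain Markov, Harris--FKG, boundary-uniform RSW, and the Camia--Newman full scaling limit --- Assumptions~\ref{assu:markov}--\ref{assu:existence_full_confInvLimit} are all established, completing the proof.
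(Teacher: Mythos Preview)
Your proposal is correct and follows essentially the same route as the paper's proof: verify each assumption by citing the standard references (independence/product structure for Domain Markov, Harris--FKG for positive association, classical RSW from \cite{Grimmett1999,Nolin2008}, and the Camia--Newman full scaling limit \cite{Camia2006} for Assumption~\ref{assu:existence_full_confInvLimit}). Two minor remarks: there are four assumptions, not five, and for Bernoulli site percolation the uniformity in boundary conditions is completely free (the product measure inside $D$ is literally unchanged by $\xi$), so your circuit-insertion argument, while not wrong, is unnecessary here --- it is the mechanism one needs for the FK-Ising case, not this one.
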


\begin{proof}[Proof of Theorem \ref{thm:assu_perco}]
 The Domain Markov Property, Assumption \ref{assu:markov}, is trivial
 for Bernoulli percolation. 
 Assumption \ref{assu:strongPosAssoc} is well known (see, e.g., \cite[Theorem 3.8]{G06}).
 RSW, Assumption \ref{assu:RSW}, is also well known (see, for example, \cite{Grimmett1999,Nolin2008}). 
 The existence of the full scaling limit in Assumption \ref{assu:existence_full_confInvLimit} is proved
 by the first author and Newman in \cite{Camia2006}.
 We note that the value of $\alpha_1$ for Bernoulli percolation is $5/48$, as proved in \cite{LSW02}.
\end{proof}

\subsubsection{The case of the critical FK-Ising model} \label{ssec:FK_cond}
The Domain Markov Property and strong positive association are standard and well known (see, e.g., \cite{G06}).
The recent development of the RSW theory for the FK-Ising model proves Assumption \ref{assu:RSW}. Namely,
Assumption \ref{assu:RSW} follows from Theorem 1.1 in \cite{CD-CH2013} combined with the fact that the
discrete extremal length, used in \cite{CD-CH2013}, is comparable to its continuous counterpart, used here
(see \cite[Proposition 6.2]{Chelkak12}).
Recently, a proof of the uniqueness of the full scaling limit for the critical FK-Ising model has
been completed by Kemppainen and Smirnov \cite{KS16}. Theorem 1.7 in their paper implies
Assumption \ref{assu:existence_full_confInvLimit}.
We note that the value of $\alpha_1$ for the Ising model is $1/8$. As shown in \cite{CN09},
this can be seen from the behavior of the Ising two-point function at criticality \cite{W66}. 

\section{Approximations of large clusters} \label{sec:approx}

In what follows we give two approximations of open clusters with diameter at least $\delta > 0$,
which are completely contained in $\Ball_k$.
The first one relies solely on the arm events described in the previous section, while the other is `the natural' one, namely it is simply the union of $\varepsilon$-boxes
which intersect the cluster.  The advantage
of the first approximation is that it can also be defined in the limit as the mesh size goes to $0$.
First we prove Proposition \ref{prop:good_sub} below, which shows that, on a
certain event, these two approximations coincide. Then in Section \ref{ssec:error_bounds} we give a lower bound for the probability of that event.

For simplicity, we set $k=1$ from now on. The constructions and proofs for different values of $k$ are analogous. 
Let $\ZZ[\imag] = \{a+b\imag\,|\, a,b\in\ZZ\}$. For $\varepsilon > 0,$ let $B_\varepsilon$ be the following collection of squares of side length $\varepsilon$:
\begin{align*}
  B_\varepsilon := \left\{\Ball_{\varepsilon/2}(\varepsilon z) \,|\, z \in \Ball_{\lceil 1/\varepsilon\rceil}\cap \ZZ[\imag]\right\}.
\end{align*}

Fix $\omega \in \mathscr{H}_{\RS}$. We define the graph $G_\varepsilon = G_\varepsilon(\omega)$ as follows. Its vertex set is $B_\varepsilon$.
The boxes $\Ball_{\varepsilon/2}(\varepsilon z),\Ball_{\varepsilon/2}(\varepsilon z')\in B_\ve$ are connected by an edge if
$||z-z'||_\infty = 1$ or if $\omega \in \{\Ball_{\varepsilon/2}(\varepsilon z)\xleftrightarrow{(1)}\Ball_{\varepsilon/2}(\varepsilon z')\}$.
For a graph $H$ with $V(H)\subseteq B_\varepsilon$ we set 
\begin{align}\label{eq:def_U}
  U(H) := \bigcup_{\Ball\in V(H)}\Ball \subseteq \Ball_{1+2\varepsilon}.
\end{align}
Let $L(H)$ denote the set of leftmost vertices of $H$. That is,
\begin{align*}
  L(H) := \{\Ball_{\varepsilon/2}(\varepsilon z) \in V(H) \, |\, \forall z'\in \ZZ[\imag] \text{ with } \Ball_{\varepsilon/2}(\varepsilon z') \in V(H), \Re z \leq \Re z'\}.
\end{align*}
Similarly, we define $R(H), T(H), B(H)$ as the rightmost, top and bottom sets of vertices of $H$, respectively.
Let $SH(H)$ (resp. $SV(H)$) denote the narrowest double-infinite horizontal (resp. vertical) strip containing $U(H)$.
Finally, let $SR(H)$ denote the smallest rectangle containing $U(H)$ with sides parallel to one of the axes.
Thus $SR(H) = SH(H) \cap SV(H)$.

\begin{definition}
  For $z,z'\in\CC$, we set $\dist_1(z,z') = |\Re(z-z')|$ and $\dist_2(z,z') = |\Im(z-z')|$.
  We call $\dist_1$ (resp. $\dist_2$) the distance in the horizontal (resp. vertical) direction.
  We also use the notation $d_\infty(z,z'):=||z-z'||_\infty = \dist_1(z,z')\vee \dist_2(z,z')$ for the $L^\infty$ 
  distance.
  
  For disjoint sets $A,B \subset \RS$ we set $\dist_{i}(A,B) := \inf \{\dist_{i}(z,z'): z \in A, z' \in B \}$ for $i=1,2$.
\end{definition}

Let $\eta > 0$, $\Ball = \Ball_{\varepsilon/2}(z)\in B_\varepsilon$ and $\Ball' = \Ball_{\varepsilon/2}(z')\in B_\varepsilon$.
Suppose there is a cluster which is completely contained in $\Ball_1$, such that
$\Ball$ contains a leftmost vertex of this cluster and $\Ball'$ a rightmost vertex.
Then $\Ball$ and $\Ball'$ are connected by 2 blue arms and one red arm in between them.

This leads us to the following definition, which gives us a way to characterize the clusters using only arm events.
\begin{definition} \label{def:good_sub}
  Let $\omega \in \mathscr{H}_{\RS}$ and $G_\varepsilon = G_\varepsilon(\omega)$ the graph defined above.
  Let $H$ be a subgraph of $G_\varepsilon(\omega)$. We say that $H$ is good, if it satisfies the following conditions:
  \begin{enumerate}
    \item $H$ is complete,
    \item \label{cond:contained} $U(H) \subseteq \Ball_{1}$,
    \item $H$ is \label{cond:max} maximal, that is, if $\Ball \in V(G_\varepsilon)$ and $(\Ball,\Ball') \in E(G_\varepsilon)$ for all $\Ball' \in V(H)$, then $\Ball\in V(H)$,
    \item \label{cond:diam} $\diam(U(H)) \geq \delta$,
    \item for all $\Ball \in L(H)$ and $\Ball' \in R(H)$ we have $\omega \in \{\Ball\xleftrightarrow{(010), SV(H)} \Ball'\}$,
    a similar condition holds for $\Lambda \in T(H)$ and $\Lambda'\in B(H)$, with $SV(H)$ replaced by $SH(H)$.
  \end{enumerate}
\end{definition}

For a set $S\subseteq \CC$ and $\varepsilon > 0$ let $K_\varepsilon(S)$ denote the complete graph on the vertex set
\begin{align*}
  \left\{\Ball_{\varepsilon/2}(\varepsilon z) \,|\, z\in\ZZ[\imag] \text{ and } \Ball_{\varepsilon/2}(\varepsilon z)\cap S \neq \emptyset \right\}.
\end{align*}
Further, we introduce the shorthand notation
\begin{align*}
  U_\varepsilon(S) := U(K_\varepsilon(S)) = \bigcup_{z\in \ZZ[\imag]: \Ball_{\varepsilon/2}(\varepsilon z)\cap S\neq \emptyset} \Ball_{\varepsilon/2}(\varepsilon z).
\end{align*}
For $\cl_\eta \in \clColl_1^{\eta}(\delta)$, the graph $K_{\varepsilon}(\cl_{\eta})$ approximates $\cl_{\eta}$
in the sense that $d_{H}(\cl_{\eta}, U_{\varepsilon}(\cl_{\eta})) < \varepsilon$. This is the second approximation
of large clusters we referred to in the beginning of this section. Our next aim is to find an event where the two
approximations coincide.

In what follows we use the quantities defined above in the case where $\omega = \omega_\eta$ for some $\eta\geq 0$.
We denote the particular choice of $\eta$ in the superscript, for example $G^\eta_\varepsilon := G_\varepsilon(\omega_\eta)$. We shall prove:

\begin{prop}\label{prop:good_sub}
  Let $\eta,\varepsilon,\delta>0$ with $1/10 > \delta > 10\varepsilon$. Suppose that $\omega_\eta\in \calE(\varepsilon,\delta)$, where $\calE(\varepsilon,\delta)$ is defined in \eqref{eq:def_E} below.
  \begin{enumerate}[i)]
	\item Then for each good subgraph $H$ of $G_\varepsilon^\eta$ there is a unique cluster $\cl^\eta\in\clColl_1^{\eta}(\delta)$
  such that $H = K_\varepsilon(\cl^\eta)$.
	\item Conversely, if $\cl^\eta\in\clColl_1^{\eta}(\delta)$, then $K_\varepsilon(\cl^\eta)$ is a good subgraph of $G_\varepsilon^\eta$.
  \end{enumerate}
\end{prop}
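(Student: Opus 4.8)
The plan is to prove the two directions of Proposition \ref{prop:good_sub} by going carefully through the definition of a good subgraph and the structure of the graph $G_\varepsilon^\eta$, using the arm-event conditions in Definition \ref{def:good_sub} to match up good subgraphs with the $\varepsilon$-box approximations $K_\varepsilon(\cl^\eta)$ of clusters. The key point is that membership of a box $\Ball = \Ball_{\varepsilon/2}(\varepsilon z)$ in $V(H)$ for a \emph{complete} subgraph $H$ should be equivalent to $\Ball$ intersecting a single, fixed cluster $\cl^\eta$; the conditions in Definition \ref{def:good_sub} are precisely designed to force this. The event $\calE(\varepsilon,\delta)$ (to be introduced in \eqref{eq:def_E}) will encode the ``regularity'' statements one needs: that no two distinct macroscopic clusters are $\varepsilon$-close in a way that would let their box-neighbourhoods interact, that every macroscopic cluster has a well-separated leftmost/rightmost (and top/bottom) box, and that the $\varepsilon$-box neighbourhood of a cluster of diameter $\geq \delta$ does not poke outside $\Ball_1$ when the cluster itself lies inside $\Ball_1$. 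I would treat the precise contents of $\calE(\varepsilon,\delta)$ as a ``black box'' at this stage, citing \eqref{eq:def_E}, and only use the qualitative consequences just listed.

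For direction (ii) — the easier one — I start with $\cl^\eta\in\clColl_1^\eta(\delta)$ and verify that $K_\varepsilon(\cl^\eta)$ satisfies conditions (1)--(5) of Definition \ref{def:good_sub}. Completeness is immediate from the definition of $K_\varepsilon$. Condition (2), $U(K_\varepsilon(\cl^\eta))\subseteq\Ball_1$, follows since $\cl^\eta\subseteq\Ball_1$ with room to spare on the event $\calE(\varepsilon,\delta)$ (here $\delta>10\varepsilon$ and the assumption $\delta<1/10$ ensure the box-neighbourhood stays inside). Condition (4) is clear because $\diam(U_\varepsilon(\cl^\eta))\geq\diam(\cl^\eta)\geq\delta$. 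For condition (3), maximality: if a box $\Ball$ is $G_\varepsilon$-adjacent to every box of $K_\varepsilon(\cl^\eta)$, then in particular it is adjacent to two boxes lying at opposite ends of $\cl^\eta$ (which are at distance $\geq\delta-2\varepsilon>0$), and adjacency means either lattice-adjacency or a red connection $\xleftrightarrow{(1)}$; a short topological argument using the planarity of the loop configuration $\omega_\eta$ shows this forces $\Ball$ to intersect $\cl^\eta$ itself, so $\Ball\in V(K_\varepsilon(\cl^\eta))$. Condition (5) is the arm-event condition: a leftmost box $\Ball$ and a rightmost box $\Ball'$ of $\cl^\eta$ are joined by a red crossing of $\cl^\eta$ and, since $\cl^\eta$ is a maximal red cluster contained in the vertical strip $SV(K_\varepsilon(\cl^\eta))$, it is flanked above and below by blue, giving the $(010)$ colour sequence in that strip; this is exactly the heuristic stated just before Definition \ref{def:good_sub}, and I would make it precise using the loop process.

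For direction (i), given a good subgraph $H$ of $G_\varepsilon^\eta$, I want to produce the cluster. Condition (5) gives a red arm in $SV(H)$ between $L(H)$ and $R(H)$ and between $T(H)$ and $B(H)$ in $SH(H)$; taking the red cluster $\cl^\eta$ containing (the red portion near) a leftmost box, this red arm together with the fact that $U(H)\subseteq\Ball_1$ and $\diam(U(H))\geq\delta$ shows $\cl^\eta\in\clColl_1^\eta(\delta)$ — one has to check $\cl^\eta$ does not escape $\Ball_1$, which follows from the $(010)$-separation combined with $U(H)\subseteq\Ball_1$ and the event $\calE(\varepsilon,\delta)$. Then one shows $H = K_\varepsilon(\cl^\eta)$: the inclusion $V(K_\varepsilon(\cl^\eta))\subseteq V(H)$ uses completeness of $H$ plus maximality (every box meeting $\cl^\eta$ is $G_\varepsilon$-adjacent to all of $H$, hence in $H$), and the reverse inclusion $V(H)\subseteq V(K_\varepsilon(\cl^\eta))$ uses that $H$ is complete: every two boxes of $H$ are joined by a red connection or are lattice-adjacent, and chaining these along the red arm from condition (5) forces each box of $H$ to meet the single cluster $\cl^\eta$. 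Uniqueness of $\cl^\eta$ with $H=K_\varepsilon(\cl^\eta)$ is then automatic, since $K_\varepsilon$ is injective on $\clColl_1^\eta(\delta)$ on the event $\calE(\varepsilon,\delta)$ (distinct macroscopic clusters have distinct, indeed disjoint, box-neighbourhoods there).

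\textbf{Main obstacle.} The delicate point is the planarity/topology argument tying $G_\varepsilon$-adjacency (which is defined purely via the two-arm-type crossing events $\xleftrightarrow{(1)}$ and lattice adjacency of boxes) back to genuine membership in one fixed cluster, i.e. ruling out the possibility that a complete subgraph $H$ secretly ``bridges'' two distinct macroscopic clusters or picks up a spurious box via a red connection that does not actually meet the cluster. This is exactly where the precise definition of $\calE(\varepsilon,\delta)$ must do its work — controlling near-touching of clusters and of cluster boundaries at scale $\varepsilon$ — and where condition (5) with its strip-localized $(010)$ arms is essential, since it is the strip confinement $SV(H)$, $SH(H)$ that prevents the red arm from wandering off into a neighbouring cluster. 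I expect the bulk of the real work is organizing these separation statements cleanly; the combinatorial bookkeeping around $L(H), R(H), T(H), B(H)$ and the inclusions between graphs is routine once the topological input is in place.
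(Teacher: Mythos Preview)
Your overall strategy is right, and you have correctly located the main obstacle, but you have not found the mechanism that resolves it, and several of your sub-arguments would not go through as stated.

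The central missing idea is this: the way $\calE(\varepsilon,\delta)$ rules out ``spurious boxes'' is not via a generic separation or planarity statement, but via a very specific arm event. Part of $\calE$ (the event $\NA_1$) is the non-occurrence of $\calA^j_{(1),(010)}(\varepsilon z;\varepsilon/2,\delta/2-3\varepsilon)$ at every $\varepsilon$-box centre: one full-plane red arm together with three arms of colour $(010)$ confined to a half-plane. The point is that if a box $\Ball$ is $G_\varepsilon^\eta$-adjacent to every box of $K_\varepsilon(\cl^\eta)$ but not itself in $K_\varepsilon(\cl^\eta)$, then at a \emph{leftmost} box $L$ of $\cl^\eta$ one sees exactly this configuration: the cluster $\cl^\eta$ provides the three half-plane arms (one red arm along $\cl^\eta$ and two blue boundary arms, all lying to the right of $L$ since $L$ is leftmost), and the red connection from $\Ball$ to $L$ provides the extra full-plane red arm. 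This is the engine both for maximality (your condition (3) in direction (ii)) and for the inclusion $K_\varepsilon(\cl^\eta(\gamma))\subseteq H$ in direction (i); the reverse inclusion then comes for free from maximality of $K_\varepsilon(\cl^\eta(\gamma))$, which one already has from direction (ii). Your ``short topological argument using planarity'' and your ``chaining'' argument do not capture this. In particular, chaining adjacencies in $G_\varepsilon^\eta$ does \emph{not} by itself force boxes into a single cluster, since the red connections realising different edges of $H$ may lie in different clusters.

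Two smaller gaps. First, your justification of condition (2) is wrong: a cluster $\cl^\eta\subset\Ball_1$ may touch $\partial\Ball_1$, and then $U_\varepsilon(\cl^\eta)$ need not lie in $\Ball_1$; the parameter inequalities alone do nothing here. What $\calE$ actually supplies (via $\NA_2$) is the non-occurrence of three half-plane $(010)$ arms from boxes within $\varepsilon$ of $\partial\Ball_1$, which forces large clusters to stay $\varepsilon$-away from the boundary. Second, in direction (i) the red arm from condition (5) only guarantees that the resulting cluster has diameter at least $\delta-2\varepsilon$, not $\delta$. The upgrade to $\clColl_1^\eta(\delta)$ is a separate ingredient of $\calE$ (the event $\noClu$), which rules out clusters with diameter in the window $[\delta-2\varepsilon,\delta]$. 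The paper accordingly splits the proof into two lemmas: one (on $\NA$) giving the bijection between good subgraphs and $\clColl_1^\eta(\delta-2\varepsilon)$, and one (on $\noClu$) closing the diameter gap.
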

\begin{proof}[Proof of Proposition \ref{prop:good_sub}]
   Proposition \ref{prop:good_sub} follows from the combination of Lemmas \ref{lem:diam_almost_delta} and
   \ref{lem:good_sub} with the definition \eqref{eq:def_E} below.
\end{proof}

For $\varepsilon, \delta>0$ we define the event
\newcommand{\NA}{\mathcal{NA}}
\newcommand{\noClu}{\mathcal{NC}}
\begin{align}
  \calE(\varepsilon,\delta) := \NA\left(\varepsilon,\delta\right) \cap \noClu(\varepsilon,\delta). \label{eq:def_E}
\end{align}
First we define the event $\noClu(\varepsilon,\delta)$ below, then we introduce $\NA(\varepsilon,\delta)$ in Definition \ref{def:NA}.

\begin{definition} \label{def:def_NC}
  Let  $0<10\varepsilon < \delta < 1$. We write $\noClu(\varepsilon, \delta)^c$ for the union of events
  \begin{align}\label{eq:cl_diam_almost_delta2}
    \calA_{\emptyset,(010)}^{j}(z;\varepsilon/2,\delta/2-3\varepsilon) \cap \calA_{\emptyset,(010)}^{j+2}(z';\varepsilon/2,\delta/2-3\varepsilon)
  \end{align}
  for $j=1,2$, and $z,z' \in \Ball_{\lceil 1/\varepsilon\rceil}\cap \ZZ[\imag]$
  with $\dist_j(z,z') \in (\delta - 3\varepsilon, \delta + 3\varepsilon)$.
\end{definition}
Definition \ref{def:def_NC} implies the following lemma, which explains the choice of the
event $\noClu(\varepsilon, \delta)$.
\begin{lemma} \label{lem:diam_almost_delta}
  Let $0 < 10\varepsilon<\delta<1$. On $\omega_\eta\in\noClu(\varepsilon, \delta)$ there is no cluster $\cl^\eta$,
  which is completely contained in $\Ball_1$ with
  diameter between $\delta-2\varepsilon$ and $\delta$.
\end{lemma}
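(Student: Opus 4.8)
The plan is to argue by contradiction: suppose $\omega_\eta \in \noClu(\varepsilon,\delta)$ and that there is a cluster $\cl^\eta \in \clColl_1^\eta(\delta')$ for some $\delta - 2\varepsilon \le \delta' < \delta$ which is completely contained in $\Ball_1$. The idea is that such a cluster, having diameter in a narrow window around $\delta$, must produce one of the configurations appearing in the definition of $\noClu(\varepsilon,\delta)^c$, i.e.\ a pair of three-arm half-plane events of the form \eqref{eq:cl_diam_almost_delta2} anchored at two grid points $z, z'$ at the right separation. Since $\diam(\cl^\eta) \in [\delta - 2\varepsilon, \delta)$, the cluster realizes its diameter in either the horizontal or the vertical direction up to an additive error of order $\varepsilon$ (by the equivalence $d_\infty = \dist_1 \vee \dist_2$); say the horizontal diameter $\dist_1$ of $\cl^\eta$ lies in $[\delta - 2\varepsilon, \delta)$ (the vertical case is symmetric, using $SH$ in place of $SV$ and the indices $j+2$ accordingly).

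First I would locate the extremal points of $\cl^\eta$: a leftmost vertex $u$ and a rightmost vertex $v$ of the cluster, with $|\Re(u - v)|$ equal to the horizontal diameter. Then I would choose grid points $z, z' \in \Ball_{\lceil 1/\varepsilon \rceil} \cap \ZZ[\imag]$ such that the $\varepsilon$-boxes $\Ball_{\varepsilon/2}(\varepsilon z)$ and $\Ball_{\varepsilon/2}(\varepsilon z')$ contain $u$ and $v$ respectively; since the cluster lies in $\Ball_1$ such boxes exist inside the box grid. By the triangle inequality and the fact that each box has side $\varepsilon$, the horizontal separation $\dist_1(z,z')$ differs from $\dist_1(u,v)$ by at most $2\varepsilon$, hence $\dist_1(z,z') \in (\delta - 4\varepsilon, \delta + 2\varepsilon) \subseteq (\delta - 3\varepsilon, \delta + 3\varepsilon)$ provided the constants are adjusted; here one uses $\delta > 10\varepsilon$ to keep the window sane.

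Next I would show that the cluster forces the two half-plane three-arm events. Since $u$ is a leftmost vertex of $\cl^\eta$ and the cluster has diameter $\ge \delta - 2\varepsilon$, from the box around $u$ there emanate, inside the half-plane to the right of $u$, one red arm (inside the cluster, reaching distance of order $\delta/2$) and two blue arms (on either side of the red one, since $u$ is leftmost there is no red connection to the left); this is exactly a half-plane $(010)$ three-arm event from $\Ball_{\varepsilon/2}(\varepsilon z)$ to $\partial \Ball_{\delta/2 - 3\varepsilon}(\varepsilon z)$ with the appropriate half-plane index $j \in \{1,2\}$ — I would pick $j$ according to which side ("left half-plane" $H_1$) is the correct one given the orientation conventions in Definition 2.11 and the displayed event in Definition 1.26. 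The symmetric statement at $v$ (rightmost vertex) gives the half-plane three-arm event with index $j+2$. Combining, $\omega_\eta$ lies in the event \eqref{eq:cl_diam_almost_delta2} for this $j$ and this pair $z, z'$, contradicting $\omega_\eta \in \noClu(\varepsilon,\delta)$.

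The main obstacle I expect is bookkeeping the geometry carefully enough: checking that the inner and outer radii $\varepsilon/2$ and $\delta/2 - 3\varepsilon$ are the right ones so that the arms genuinely exist (the red arm must reach at least to distance comparable to half the diameter, which is $\gtrsim \delta/2 - \varepsilon$, and one loses a little because the extremal point $u$ need not be near the center of the cluster's bounding box — but since $u$ is extremal in the horizontal direction it is at horizontal distance $\ge \delta - 2\varepsilon$ from $v$, so one of $u$ or $v$ is at horizontal distance $\ge (\delta - 2\varepsilon)/2 \ge \delta/2 - \varepsilon > \delta/2 - 3\varepsilon$ from the other extremity, giving the red arm its range), and that the half-plane indices $j$ and $j+2$ line up with the "left/lower/right/upper half-plane" conventions of $H_i(z,a)$. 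None of this is deep; it is purely a matter of matching the definitions, and the slack built into the constants (the gaps $3\varepsilon$ versus $2\varepsilon$, and $\delta > 10\varepsilon$) is precisely what makes the matching go through. I would also remark that the "similar condition" for the vertical direction is handled verbatim with $\dist_2$, $SH$, and indices shifted so that the half-planes are the upper/lower ones, which is why the definition of $\noClu(\varepsilon,\delta)^c$ ranges over $j = 1, 2$.
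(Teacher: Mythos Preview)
Your approach is correct and is precisely the argument the paper has in mind: the paper gives no separate proof, simply stating that ``Definition~\ref{def:def_NC} implies the following lemma,'' and your contradiction argument---locating extremal vertices of a cluster with diameter in $[\delta-2\varepsilon,\delta)$, covering them by $\varepsilon$-boxes, and reading off a pair of half-plane $(010)$ three-arm events at the correct separation---is exactly the intended unpacking.

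One minor bookkeeping slip: your claimed containment $(\delta-4\varepsilon,\delta+2\varepsilon)\subseteq(\delta-3\varepsilon,\delta+3\varepsilon)$ is false as written. In fact, since a point and the center of its covering $\varepsilon$-box differ by at most $\varepsilon/2$ in each coordinate, the box-center separation differs from the extremal-vertex separation by at most $\varepsilon$ (not $2\varepsilon$), giving $\dist_1(\varepsilon z,\varepsilon z')\in[\delta-3\varepsilon,\delta+\varepsilon)$, which (up to the harmless open-versus-closed endpoint) does land in the required window. Also, your last paragraph's aside about ``one of $u$ or $v$ is at horizontal distance $\ge(\delta-2\varepsilon)/2$'' is muddled and unnecessary: since $u$ is leftmost and the cluster lies entirely in the right half-plane $H_3(\varepsilon z',\varepsilon/2)$, the red arm (and its bordering blue arms) from the box at $u$ automatically reach distance $\ge\delta-2\varepsilon-\varepsilon/2>\delta/2-3\varepsilon$, which is all you need.
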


We define the event $\NA(\varepsilon, \delta)$ which will be crucial in what follows.
\begin{definition} \label{def:NA}
Let $\varepsilon,\delta$ with $0<10\varepsilon<\delta<1$. We set $\NA_1\left(\varepsilon,\delta\right)$ for the complement of the event
\[
  \bigcup_{z\in \Ball_{\lceil 1/\varepsilon\rceil}\cap \ZZ[\imag]}\bigcup_{j=1}^4 \calA^j_{1,(010)}(\varepsilon z;\varepsilon/2,\delta/2 - 3\varepsilon).
\]
We write $\NA_2(\varepsilon, \delta)^c$ for the union of events
  \begin{align}\label{eq:cl_diam_almost_delta}
    \calA_{\emptyset,(010)}^{j}(z;\varepsilon/2,\delta/2-3\varepsilon)
  \end{align}
  for $j=1,2,3,4$, and $z \in \Ball_{\lceil 1/\varepsilon\rceil}\cap \ZZ[\imag]$
  with $\min_{i\in \{1,2\}}\dist_{i}(\Ball_{\varepsilon/2}(z),\partial \Ball_1) \le \varepsilon$.
  We define $\NA(\varepsilon,\delta) := \NA_1(\varepsilon, \delta) \cap \NA_2(\varepsilon, \delta)$.
\end{definition}

\begin{lemma}\label{lem:good_sub}
  Let $\eta,\varepsilon,\delta> 0$ with $0 < 10\varepsilon < \delta < 1$ and suppose that $\omega_\eta\in \mathcal{NA}(\varepsilon,\delta)$.
    \begin{enumerate}[i)]
     \item \label{part:cluster_then_good} If $\cl^\eta\in\clColl_1^\eta(\delta),$ then $K_\varepsilon(\cl^\eta)$ is
     a good subgraph of $G_\varepsilon^\eta$.
    \item \label{part:good_then_cluster} Conversely, for any good subgraph $H$ of $G_\varepsilon^\eta$,
    there is a unique cluster $\cl^\eta\in\clColl_1^\eta(\delta -2\varepsilon)$ 
    such that $H = K_\varepsilon(\cl^\eta)$.
  \end{enumerate}
\end{lemma}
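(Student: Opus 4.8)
\emph{Plan.} The lemma is a purely deterministic statement about a fixed configuration $\omega_\eta\in\NA(\varepsilon,\delta)$, so I would argue by planar combinatorics of the clusters of $\sigma_\eta$, the boxes of $B_\varepsilon$, and the graph $G_\varepsilon^\eta$, using only two consequences of the definition of $\NA$ (Definition~\ref{def:NA}). The first, from $\NA_1(\varepsilon,\delta)$, is a \textbf{separation property}: for every box $\Ball=\Ball_{\varepsilon/2}(\varepsilon z)\in B_\varepsilon$ there is \emph{at most one} cluster of $\sigma_\eta$ that meets $\Ball$ and contains a red crossing of the annulus $A(\varepsilon z;\varepsilon/2,\delta/2-3\varepsilon)$. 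Indeed, if two distinct clusters did so, one of them provides the red arm (the $\kappa=(1)$ arm), while the other --- being a \emph{different} component, hence flanked near $\Ball$ on both sides by blue sites/medial edges --- provides a blue--red--blue triple of arms which, since that second cluster occupies only part of the angular picture around $\Ball$, can be placed inside a half-plane $H_j(\varepsilon z,\varepsilon/2)$; this realises the event $\calA^j_{(1),(010)}(\varepsilon z;\varepsilon/2,\delta/2-3\varepsilon)$ forbidden by $\NA_1$. The second, from $\NA_2(\varepsilon,\delta)$, is that no cluster of diameter $\ge\delta-2\varepsilon$ can meet a box touching $\partial\Ball_1$ while remaining inside $\Ball_1$: the outer boundary loop of such a cluster would yield a blue--red--blue triple out to radius $\delta/2-3\varepsilon$ inside the half- (or quarter-) plane facing into $\Ball_1$, i.e.\ an event $\calA^j_{\emptyset,(010)}$ forbidden by $\NA_2$. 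The hypothesis $10\varepsilon<\delta<1$ is exactly what keeps all the radii occurring below in genuine annuli.

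\emph{Part i).} Given $\cl^\eta\in\clColl_1^\eta(\delta)$, set $H:=K_\varepsilon(\cl^\eta)$ and check the five conditions of Definition~\ref{def:good_sub}. Completeness is immediate: any two boxes meeting the connected set $\cl^\eta$ are joined by a red path, hence by an edge of $G_\varepsilon^\eta$; and $\diam(U(H))\ge\diam(\cl^\eta)\ge\delta$ gives (4). For (2): a box of $V(H)$ not contained in $\Ball_1$ would touch $\partial\Ball_1$ while meeting the cluster $\cl^\eta\subseteq\Ball_1$ of diameter $\ge\delta$, contradicting the second consequence of $\NA$. For maximality (3): if $\Ball_0$ is $G_\varepsilon^\eta$-adjacent to every box of $H$, then, $U(H)$ having $L^\infty$-diameter $\ge\delta>10\varepsilon$, there is $\Ball_1\in V(H)$ with $d_\infty(\Ball_0,\Ball_1)>\varepsilon$; hence $\Ball_0$ is not lattice-adjacent to $\Ball_1$ and so is red-connected to it through a cluster which meets $\Ball_1$ with a red crossing of $A(\varepsilon z_1;\varepsilon/2,\delta/2-3\varepsilon)$; since $\cl^\eta$ also meets $\Ball_1$ with such a crossing, the separation property identifies the two, so $\Ball_0$ meets $\cl^\eta$ and lies in $V(H)$. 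Finally (5): for $\Ball\in L(H)$, $\Ball'\in R(H)$, a red path in $\cl^\eta\subseteq U(H)\subseteq SV(H)$ joins $\Ball$ to $\Ball'$, and the outer boundary loop of $\cl^\eta$, which stays close to $\cl^\eta$ and hence inside $SV(H)$, supplies the two flanking blue crossings in the correct cyclic order; the case of $T(H),B(H),SH(H)$ is identical.

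\emph{Part ii).} Let $H$ be a good subgraph; rotating by $\pi/2$ if needed, assume $U(H)$ has horizontal extent $\ge\delta$, so $\dist_1(\Ball,\Ball')\ge\delta-2\varepsilon>8\varepsilon$ for $\Ball\in L(H)$, $\Ball'\in R(H)$. Condition (5) gives a red path from $\Ball$ to $\Ball'$; let $\cl^\eta$ be the cluster containing it, so $\diam(\cl^\eta)\ge\delta-2\varepsilon$. Applying (5) to all pairs in $L(H)\times R(H)$ and using the separation property at the endpoint boxes, all those red paths lie in $\cl^\eta$, so $\cl^\eta$ meets every box of $L(H)\cup R(H)$. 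Now take any $\Ball_0\in V(H)$ and pick $\Ball_1\in L(H)\cup R(H)$ with $d_\infty(\Ball_0,\Ball_1)>\varepsilon$ (possible because $L(H),R(H)$ sit on opposite sides of $U(H)$ at horizontal distance $\ge\delta-2\varepsilon$); then $\Ball_0$ is red-connected to $\Ball_1$ through a cluster which, by the separation property at $\Ball_1$, equals $\cl^\eta$, so $\cl^\eta$ meets $\Ball_0$. Thus $V(H)\subseteq V(K_\varepsilon(\cl^\eta))$; as $K_\varepsilon(\cl^\eta)$ is a complete subgraph of $G_\varepsilon^\eta$ containing $H$, maximality of $H$ forces $K_\varepsilon(\cl^\eta)=H$, whence $\cl^\eta\subseteq U(H)\subseteq\Ball_1$ and $\cl^\eta\in\clColl_1^\eta(\delta-2\varepsilon)$. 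Uniqueness: two distinct clusters with $K_\varepsilon=H$ would both meet every box of $H$ and both have diameter $\ge\diam(U(H))-2\varepsilon\ge\delta-2\varepsilon$, so the separation property at any box of $V(H)$ would identify them.

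\emph{Main obstacle.} The only non-routine ingredient is the first paragraph: converting the geometric pictures ``two distinct large clusters through one $\varepsilon$-box'' and ``a large cluster leaving $\Ball_1$ through a boundary box'' into the precise quad-crossing arm events $\calA^j_{(1),(010)}$ and $\calA^j_{\emptyset,(010)}$. One has to read off the flanking blue arms from the cluster boundary loop and, crucially, localise the blue--red--blue triple inside the correct half-plane $H_j(\varepsilon z,\varepsilon/2)$ --- this is precisely the technical point for which the half-planes $H_i(z,a)$ were defined the way they are. Everything else is bookkeeping around the maximality of $H$, the connectedness of $\cl^\eta$, and the scales $\varepsilon/2$, $\delta/2-3\varepsilon$, $\delta-2\varepsilon$ (kept consistent by $10\varepsilon<\delta<1$).
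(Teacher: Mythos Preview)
Your overall plan is right and mirrors the paper's, but the central ``separation property'' you state is where the argument breaks down. You claim that if two distinct clusters both meet a box $\Lambda_{\varepsilon/2}(\varepsilon z)$ and both cross the annulus $A(\varepsilon z;\varepsilon/2,\delta/2-3\varepsilon)$, then one of them furnishes a blue--red--blue triple lying in some half-plane $H_j(\varepsilon z,\varepsilon/2)$. The justification (``that second cluster occupies only part of the angular picture'') is insufficient: nothing prevents \emph{both} clusters from sending arms into several quadrants, so that neither cluster's $(010)$ is confined to a single $H_j$. The paper avoids this issue entirely by applying the argument only at an \emph{extremal} box --- a leftmost box $L$ of the cluster $\cl^\eta$ (or of $H$). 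At $L$, the cluster $\cl^\eta$ lies entirely to the right of the left edge of $L$, so its red arm and flanking blue arms are automatically in the right half-plane; the foreign red connection to $L$ supplies the extra $(1)$-arm, and you land exactly on $\calA^j_{(1),(010)}$.

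There is a second, related gap. In both parts you pick $\Lambda_1\in V(H)$ with $d_\infty(\Lambda_0,\Lambda_1)>\varepsilon$ and then assert that the cluster realising the red connection from $\Lambda_0$ to $\Lambda_1$ crosses the big annulus $A(\varepsilon z_1;\varepsilon/2,\delta/2-3\varepsilon)$. But $d_\infty>\varepsilon$ only guarantees the connection reaches distance $O(\varepsilon)$ from $\Lambda_1$, not $\delta/2-3\varepsilon$. You need $d_\infty(\Lambda_0,\Lambda_1)\gtrsim\delta/2$, which you can get by choosing $\Lambda_1$ as one of the two boxes realising $\diam(U(H))\ge\delta$ --- i.e.\ again an extremal box. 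Once you make that choice, the half-plane localisation in the previous paragraph also becomes automatic, and your argument collapses into the paper's: work at $L\in L(H)$ (or $R(H)$), use that $\cl^\eta$ extends only to the right of $L$, and read off the $(010)$ half-plane arms plus the extra red arm directly. The rest of your sketch (conditions (1), (2), (4), (5), uniqueness via maximality) is fine.
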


\begin{proof}[Proof of Lemma \ref{lem:good_sub}] 
Let $\varepsilon,\delta$ as in the lemma, and $\omega_\eta\in \NA(\varepsilon,\delta)$.
First we prove part \ref{part:cluster_then_good}) above. Apart from conditions
\eqref{cond:contained} and \eqref{cond:max}, the conditions in Definition \ref{def:good_sub} are trivially
satisfied. The fact that $\omega_{\eta} \in \NA_2(\varepsilon, \delta)$ implies that condition \eqref{cond:contained}
is satisfied.
We prove condition \eqref{cond:max} by contradiction.

Suppose that condition \eqref{cond:max} is violated. Then there is $\Ball\in V(G_\varepsilon^\eta)
\setminus V(K_\varepsilon(\cl^\eta))$ such that $(\Ball,\Ball') \in E(G_\varepsilon^\eta)$ for all
$\Ball'\in V(K_\varepsilon(\cl^\eta))$.

We can assume that the diameter of $\cl^\eta$ is realized in the horizontal direction.
Take $L\in L(K_\varepsilon(\cl^\eta))$ and $R\in R(K_\varepsilon(\cl^\eta))$. Let $\gamma$ denote a path in $\cl^\eta$
connecting $L$ and $R$. We can further assume that $\dist_{1}(\Ball, L) > \delta/2 - \varepsilon$. Note that $\gamma$ is not
connected to $\Ball$. However, $\Ball$ is connected to $L$. Hence the blue boundary of $\cl^\eta$
separates $\gamma$ from the connection between $\Ball$ and $L$.
We get, from $L$ to distance $\delta/2-\varepsilon$, three half plane arms with colour sequence $(010)$, and a fourth red arm from the
connection between $\Ball$ and $L$. In particular, $\omega_\eta\in \NA_1(\varepsilon, \delta)^c$,
 giving a contradiction and proving part \ref{part:cluster_then_good}) of Lemma \ref{lem:good_sub}.

\medskip
Now we proceed to the proof of part \ref{part:good_then_cluster}). We may assume that the diameter of $U(H)$ is realized between a leftmost and a rightmost point of it.
Let $L\in L(H)$, $R\in R(H)$ and $\gamma$ be a path in $SR(H)$ connecting $L$ and $R$.
Furthermore, let $\Ball'\in V(G_\varepsilon^\eta)$ be such that $\gamma$ is connected to $\Ball'$ by
a path in $\sigma_\eta \cap \Ball_{1}$.

We show that $(\Ball,\Ball')\in E(G_\varepsilon^\eta)$ for all
$\Ball\in V(H)$. Suppose the contrary, i.e. there is $\Ball\in V(H)$ such that $(\Ball,\Ball')\notin E(G_\varepsilon^\eta)$.
Then $\Ball$ is not connected to $\gamma$. Furthermore, we may assume that
$\dist_{1}(\Ball, L) > \delta/2 - \varepsilon$. Then as above, we find three half plane arms with colour
sequence $(010)$ and a fourth red arm starting at $L$ to distance $\delta/2 - \varepsilon$.
In particular, $\omega_\eta\in \mathcal{NA}_1(\varepsilon,\delta)^c$,
which contradicts the assumption on $\omega_\eta$ above.

Hence $\Ball'\in V(H)$ since $H$ is maximal. Thus $K_\varepsilon(\cl^\eta(\gamma))$ is a subgraph of $H$,
where $\cl^\eta(\gamma)$ denotes the connected component of $\gamma$ in $\sigma_\eta$.
Note that $K_{\varepsilon}(\cl^\eta(\gamma))$ is a good subgraph because it satisfies condition
\eqref{cond:diam}, since $\dist_{1}(L,R)>\delta$, and condition \eqref{cond:max}, by part \ref{part:cluster_then_good})
of Lemma \ref{lem:good_sub}.
This completes the proof of part \ref{part:good_then_cluster}) and that of Lemma \ref{lem:good_sub}.
\end{proof}

The proof above implies the following useful property of the event $\mathcal{NA}(\varepsilon,\delta)$.
\begin{lemma} \label{lem:bound_cluster_count}
    Let $\eta,\varepsilon,\delta >0$ with $0 < 10\varepsilon  < \delta< 1$. If  $\omega_\eta \in \mathcal{NA}(\varepsilon,\delta)$, then we have $|\clColl_1^\eta(\delta)| \le 32\varepsilon^{-2}$.
\end{lemma}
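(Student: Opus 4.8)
The plan is to work on the event $\omega_\eta\in\mathcal{NA}(\varepsilon,\delta)$, match each cluster of $\clColl_1^\eta(\delta)$ with its box approximation, and then simply count boxes. First I would invoke Lemma~\ref{lem:good_sub}(i): on $\mathcal{NA}(\varepsilon,\delta)$, for every $\cl^\eta\in\clColl_1^\eta(\delta)$ the graph $K_\varepsilon(\cl^\eta)$ is a good subgraph of $G_\varepsilon^\eta$, in particular a subgraph, so its vertex set $V(K_\varepsilon(\cl^\eta))=\{\Lambda_{\varepsilon/2}(\varepsilon z)\in B_\varepsilon:\Lambda_{\varepsilon/2}(\varepsilon z)\cap\cl^\eta\neq\emptyset\}$ is a subset of $B_\varepsilon$. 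The two facts I need are: (a) these vertex sets, as $\cl^\eta$ ranges over $\clColl_1^\eta(\delta)$, are pairwise disjoint; and (b) each of them has at least $\delta/\varepsilon$ elements. Granting (a) and (b), I get $|\clColl_1^\eta(\delta)|\cdot(\delta/\varepsilon)\le\sum_{\cl^\eta}|V(K_\varepsilon(\cl^\eta))|\le|B_\varepsilon|=(2\lceil 1/\varepsilon\rceil+1)^2$, and since $0<10\varepsilon<\delta<1$ forces $\varepsilon<1/10$, the crude estimates $|B_\varepsilon|<(2/\varepsilon+3)^2<(2.3/\varepsilon)^2$ and $\delta>10\varepsilon$ give $|\clColl_1^\eta(\delta)|\le|B_\varepsilon|\,\varepsilon/\delta<32\varepsilon^{-2}$, with a large margin.

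For (b): if $\cl^\eta\in\clColl_1^\eta(\delta)$ then $U(K_\varepsilon(\cl^\eta))=U_\varepsilon(\cl^\eta)$ is a union of $N:=|V(K_\varepsilon(\cl^\eta))|$ axis-parallel squares of side $\varepsilon$; since it contains the connected set $\cl^\eta$ it is itself connected, and its $L^\infty$-diameter is at least $\diam(\cl^\eta)\ge\delta$. A connected union of $N$ side-$\varepsilon$ squares has $L^\infty$-diameter at most $N\varepsilon$ (pass through a chain of at most $N$ pairwise touching squares, each step moving the centre by at most $\varepsilon$), so $N\ge\delta/\varepsilon$. (Alternatively this is condition~\eqref{cond:diam} in Definition~\ref{def:good_sub} applied to the good subgraph $K_\varepsilon(\cl^\eta)$, which already gives $\diam(U_\varepsilon(\cl^\eta))\ge\delta$.)

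For (a) — the only genuinely non-routine step — suppose some box $\Lambda_{\varepsilon/2}(\varepsilon z)\in B_\varepsilon$ lies in $V(K_\varepsilon(\cl_1))\cap V(K_\varepsilon(\cl_2))$ for two distinct clusters $\cl_1,\cl_2\in\clColl_1^\eta(\delta)$; I will contradict $\omega_\eta\in\mathcal{NA}(\varepsilon,\delta)$, in fact its component $\mathcal{NA}_1(\varepsilon,\delta)$. By definition of $K_\varepsilon(\cdot)$ the box $\Lambda_{\varepsilon/2}(\varepsilon z)$ meets both $\cl_1$ and $\cl_2$. Each $\cl_i$ is red, connected, and of diameter at least $\delta$, so starting from a point of $\cl_i$ inside $\Lambda_{\varepsilon/2}(\varepsilon z)$ it reaches $L^\infty$-distance at least $\delta/2-\varepsilon/2>\delta/2-3\varepsilon$ from $\varepsilon z$; hence $\cl_i$ contains a red crossing of the annulus $A(\varepsilon z;\varepsilon/2,\delta/2-3\varepsilon)$ (well-defined since $\varepsilon/2<\delta/2-3\varepsilon$ as $\delta>10\varepsilon$). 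As $\cl_1$ and $\cl_2$ are distinct connected components of $\sigma_\eta$ they are disjoint, so we have two disjoint red crossings of this annulus, and by planar duality there are also two disjoint blue (dual) crossings of it, interleaved with the red ones. The resulting four arms, with cyclic colour pattern $(1,0,1,0)$, can be reorganised — exactly in the flavour of the arm extraction in the proof of Lemma~\ref{lem:good_sub} — so that three consecutive arms of colour sequence $(010)$ lie in one of the half-planes $H_j(\varepsilon z,\varepsilon/2)$, $j\in\{1,2,3,4\}$, with the remaining red arm playing the role of the extra $(1)$-arm; this exhibits the event $\calA^j_{1,(010)}(\varepsilon z;\varepsilon/2,\delta/2-3\varepsilon)$, contradicting $\omega_\eta\in\mathcal{NA}_1(\varepsilon,\delta)$. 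I expect this last planar reorganisation into one of the four admissible half-plane arm events to be the main obstacle; it is, however, routine and parallel to what is already done in the proof of Lemma~\ref{lem:good_sub}, so I would only sketch it. This establishes (a) and completes the proof.
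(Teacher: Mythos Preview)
Your counting scheme via (a) and (b) is natural, but claim (a) is where the argument breaks. On $\mathcal{NA}(\varepsilon,\delta)$ it is \emph{not} true that a box $\Lambda_{\varepsilon/2}(\varepsilon z)$ can meet at most one cluster of $\clColl_1^\eta(\delta)$, and your ``reorganisation'' of the four alternating arms into the event $\calA^j_{1,(010)}(\varepsilon z;\varepsilon/2,\delta/2-3\varepsilon)$ does not go through. The event $\mathcal{NA}_1$ excludes $\calA^j_{1,(010)}$, which is strictly stronger than the plain four-arm event $\calA_{(1010)}$: it requires three consecutive arms of colours $(010)$ to lie entirely in one of the half-planes $H_j(\varepsilon z,\varepsilon/2)$, i.e.\ to avoid one of the four slabs of the annulus. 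Four alternating arms in an annulus do \emph{not} in general admit such a selection --- for instance, if each arm winds part of the way around the annulus it may enter every slab. Indeed, the expected number of $\varepsilon$-boxes in $\Ball_1$ carrying a four-arm event to scale $\delta$ is of order $\varepsilon^{-2}\pi_4^\eta(\varepsilon,\delta)\to\infty$, whereas $\PP_\eta(\mathcal{NA}^c)\to 0$; so $\mathcal{NA}$ cannot forbid all such boxes. The appeal to ``the flavour of Lemma~\ref{lem:good_sub}'' is misleading: in that proof the half-plane constraint comes precisely from the box being \emph{leftmost}, which forces the cluster and its blue boundary to lie in $H_3$.

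The paper's proof exploits exactly this extremal structure. It splits the clusters into those whose diameter is realised horizontally and those realised vertically; for the horizontal ones it shows, using the argument in Lemma~\ref{lem:good_sub}, that the \emph{leftmost} box sets $L(K_\varepsilon(\cl))$ are pairwise disjoint (if a box were leftmost for two clusters, both clusters and the blue boundary of one of them would lie in $H_3$, producing $\calA^3_{1,(010)}$). Since each $L(K_\varepsilon(\cl))$ is nonempty and contained in $B_\varepsilon$, the number of horizontal clusters is at most $|B_\varepsilon|$, and similarly for vertical ones, giving $|\clColl_1^\eta(\delta)|\le 2|B_\varepsilon|\le 32\varepsilon^{-2}$. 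Your step (b) is then unnecessary.
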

\begin{proof}[Proof of Lemma \ref{lem:bound_cluster_count}]
   Let $\cl,\cl'\in \clColl_1^\eta(\delta)$ be clusters with diameter at least $\delta$ in the horizontal direction.
   The proof of Lemma \ref{lem:good_sub} shows that  on the event $\NA(\varepsilon,\delta)$, $L(K_{\varepsilon}(\cl))$
   and $L(K_{\varepsilon}(\cl'))$ are disjoint. The same holds for pairs of clusters with vertical diameter at least $\delta$.
   Thus $|\clColl_1^\eta(\delta)| \leq 2 (2\lceil 1/\varepsilon\rceil)^2) \leq 32\varepsilon^{-2}$.
\end{proof}

\subsection{Bounds on the probability of the events $\noClu(\varepsilon,\delta)$ and $\NA(\varepsilon, \delta)$} \label{ssec:error_bounds}

Our aim in this section is to prove the following bound on the probability of the complement of $\calE(\varepsilon,\delta)$, defined in \eqref{eq:def_E}.

\begin{prop} \label{prop:ubound_calE}
  Let $\varepsilon,\delta$ with $0<10\varepsilon<\delta<1$. Suppose that Assumptions \ref{assu:markov}-\ref{assu:RSW} hold.
  Then there are positive constants $C=C(\delta),\lambda$ such that for all $\eta\in(0,\varepsilon)$ we have
  \begin{align*}
    \PP_\eta\left(\calE(\varepsilon,\delta)^{c}\right) \leq C\varepsilon^\lambda.
  \end{align*}
\end{prop}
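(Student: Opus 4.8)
The plan is to bound $\PP_\eta(\calE(\varepsilon,\delta)^c) \le \PP_\eta(\noClu(\varepsilon,\delta)^c) + \PP_\eta(\NA(\varepsilon,\delta)^c)$ and estimate the three pieces $\noClu(\varepsilon,\delta)^c$, $\NA_1(\varepsilon,\delta)^c$, $\NA_2(\varepsilon,\delta)^c$ separately by a union bound over the $O(\varepsilon^{-2})$ lattice sites $z \in \Ball_{\lceil 1/\varepsilon\rceil}\cap\ZZ[\imag]$ involved in each definition, controlling the probability of each individual arm event by the bounds from Lemmas \ref{assu:arm_exp} and \ref{assu:3hp_plusone}. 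In all three cases the arm events are of one-sided (half-plane) type with colour sequence $(010)$ run from scale $\varepsilon/2$ out to scale of order $\delta$, so the key input is the half-plane three-arm estimate $\pi_{0,3}^{\eta}(a,b) \le C(a/b)^2$, possibly upgraded to $(a/b)^{2+\lambda_{1,3}}$ or with a product of two such events, which yields enough room to beat the $\varepsilon^{-2}$ from the union bound and still leave a net factor $\varepsilon^\lambda$.

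Concretely, I would proceed as follows. \emph{Step 1 ($\NA_2$).} The event $\NA_2(\varepsilon,\delta)^c$ is a union over sites $z$ within horizontal or vertical distance $\varepsilon$ of $\partial\Ball_1$ of a single half-plane $(010)$ three-arm event from $\varepsilon/2$ to $\delta/2-3\varepsilon$. There are only $O(\varepsilon^{-1})$ such boundary sites, and each event has probability at most $C(\varepsilon/\delta)^2$ by Lemma \ref{assu:arm_exp}; so $\PP_\eta(\NA_2(\varepsilon,\delta)^c) \le C(\delta)\,\varepsilon^{-1}\varepsilon^2 = C(\delta)\varepsilon$. \emph{Step 2 ($\NA_1$).} Here the event is a union over all $O(\varepsilon^{-2})$ sites $z$ of a four-arm event $\calA^j_{1,(010)}(\varepsilon z;\varepsilon/2,\delta/2-3\varepsilon)$: three half-plane arms of colours $(010)$ plus one extra (red) arm. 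This is exactly the event whose probability is bounded by $\pi_{1,3}^{\eta}(\varepsilon/2,\delta/2-3\varepsilon)$, and Lemma \ref{assu:3hp_plusone} gives the bound $C(\varepsilon/\delta)^{2+\lambda_{1,3}}$. Summing over the $O(\varepsilon^{-2})$ sites (and the four half-planes) yields $\PP_\eta(\NA_1(\varepsilon,\delta)^c) \le C(\delta)\,\varepsilon^{-2}\varepsilon^{2+\lambda_{1,3}} = C(\delta)\varepsilon^{\lambda_{1,3}}$. \emph{Step 3 ($\noClu$).} The event $\noClu(\varepsilon,\delta)^c$ is a union over \emph{pairs} $(z,z')$ with $\dist_j(z,z') \in (\delta-3\varepsilon,\delta+3\varepsilon)$ of the intersection of two half-plane $(010)$ three-arm events, one centred at $z$ (in half-plane $j$) and one centred at $z'$ (in the opposite half-plane $j+2$). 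For a fixed direction $j$, once $z$ is chosen (say $z$ in the left half of $\Ball_1$, giving $O(\varepsilon^{-2})$ choices), the constraint $\dist_j(z,z')\approx\delta$ pins one coordinate of $z'$ to an interval of length $O(\varepsilon)$, leaving $O(\varepsilon^{-1})$ choices of $z'$; moreover the two three-arm events are measurable with respect to disjoint regions (once $|z-z'| \asymp \delta \gg \varepsilon$, the two annuli $A(z;\varepsilon/2,\delta/2-3\varepsilon)$ and $A(z';\varepsilon/2,\delta/2-3\varepsilon)$ are disjoint), so by independence of the restrictions to disjoint regions (full-plane model, or FKG if one prefers a one-sided bound) the probability of the intersection is at most $\big[C(\varepsilon/\delta)^2\big]^2 = C(\delta)\varepsilon^4$. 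Hence $\PP_\eta(\noClu(\varepsilon,\delta)^c) \le C(\delta)\,\varepsilon^{-2}\cdot\varepsilon^{-1}\cdot\varepsilon^4 = C(\delta)\varepsilon$. Combining the three steps gives the claim with $\lambda = \min(1,\lambda_{1,3})$.

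The one point that needs a little care — and which I expect to be the main obstacle — is the independence (or FKG) argument in Step 3: one must verify that on the relevant scales the two annular arm events really are supported on disjoint subsets of the plane, which forces the condition $\delta > 10\varepsilon$ (already hypothesised) to guarantee $\delta/2-3\varepsilon < \delta/2 < \dist_j(z,z')/2 - O(\varepsilon)$, so the annuli of radii up to $\delta/2-3\varepsilon$ about $z$ and $z'$ do not overlap. A secondary technical point is that the arm events of Definition \ref{def:arm_events} are defined via quads and an auxiliary $\delta$-separation parameter, so one should either invoke Lemma \ref{lem:GPS}-style reasoning or simply note that the quad-crossing arm event at scales $(\varepsilon/2,\,\delta/2-3\varepsilon)$ is contained in the corresponding discrete arm event in the annulus $A(z;\varepsilon/2,\delta/2-3\varepsilon)$, whose probability is exactly the $\pi_{\bullet}^{\eta}$ quantity bounded in the lemmas above; uniformity in $\eta \in (0,\varepsilon)$ is automatic since those bounds hold for all $\eta < a$. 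Everything else is a routine union bound, and the explicit dependence of the constant on $\delta$ (coming from replacing $\delta/2-3\varepsilon$ by a constant multiple of $\delta$ inside the ratio $\varepsilon/(\delta/2-3\varepsilon) \le C(\delta)\varepsilon$, valid since $\delta > 10\varepsilon$) is harmless.
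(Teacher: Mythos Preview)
Your overall strategy is exactly the paper's: split $\calE(\varepsilon,\delta)^c$ into $\NA_1(\varepsilon,\delta)^c$, $\NA_2(\varepsilon,\delta)^c$ and $\noClu(\varepsilon,\delta)^c$, then union--bound each piece over the $\varepsilon$-grid using the arm estimates of Lemmas~\ref{assu:arm_exp} and~\ref{assu:3hp_plusone}. Steps~1 and~2 reproduce the paper's Lemma~\ref{lem:ubound_NA} verbatim, with the same exponents $\varepsilon$ and $\varepsilon^{\lambda_{1,3}}$.

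The only issue is in Step~3. To beat the $O(\varepsilon^{-3})$ count of pairs $(z,z')$ you need the product bound $\PP_\eta(\calA_z\cap\calA_{z'})\le C(\varepsilon/\delta)^4$, and you justify this by ``independence of the restrictions to disjoint regions (full-plane model, or FKG if one prefers a one-sided bound)''. Neither option works in the generality of the proposition. Spatial independence on disjoint regions holds for Bernoulli site percolation but fails for the FK--Ising model, which is also covered by Assumptions~\ref{assu:markov}--\ref{assu:RSW}. And FKG cannot be applied here: the half-plane $(010)$ three-arm event involves both a red and two blue arms, so it is neither increasing nor decreasing, and the FKG inequality gives no control on the intersection of two such events.

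The repair is to use the ingredients you already have available. Since the two annuli $\Ball_{\delta/2-3\varepsilon}(\varepsilon z)$ and $\Ball_{\delta/2-3\varepsilon}(\varepsilon z')$ are disjoint (your own check), condition on the configuration outside $\Ball_{\delta/2-3\varepsilon}(\varepsilon z')$. By the Domain Markov Property (Assumption~\ref{assu:markov}) this fixes $\ind_{\calA_z}$ and turns the conditional probability of $\calA_{z'}$ into $\pi_{0,3}^{\eta,\xi}(\varepsilon/2,\delta/2-3\varepsilon)$ for some boundary condition $\xi$ on $\partial_\eta\Ball_{\delta/2-3\varepsilon}(\varepsilon z')$. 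The point of stating Lemma~\ref{assu:arm_exp} with an arbitrary boundary condition $\xi$ is precisely that this is still bounded by $C(\varepsilon/\delta)^2$, uniformly in $\xi$. This yields the product bound you claimed and the rest of your computation goes through; the paper's terse ``simple union bound combined with Lemma~\ref{assu:arm_exp}'' for Lemma~\ref{lem:ubound_NC} is presumably shorthand for the same manoeuvre.
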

The proof of the proposition above follows from Lemma \ref{lem:ubound_NA} and \ref{lem:ubound_NC} below.
We start with an upper bound on the probability of the complement of $\NA(\varepsilon, \delta)$.

\begin{lemma} \label{lem:ubound_NA}
  Suppose that Assumptions \ref{assu:markov}-\ref{assu:RSW} hold.
  Let $\varepsilon,\delta$ with $0<10\varepsilon<\delta<1$. Then there are constants $C = C(\delta),\lambda >0$ such that
\begin{align} \label{eq:bound_NA}
  \PP_\eta(\NA\left(\varepsilon,\delta\right)^c) \leq C\varepsilon^\lambda
\end{align} for all $\eta <\varepsilon$. 
In particular, $|\clColl^\eta_1(\delta)|$ is tight in $\eta$ for all fixed $\delta > 0$.
\end{lemma}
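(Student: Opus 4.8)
The plan is to bound $\PP_\eta(\NA(\varepsilon,\delta)^c)$ by a union bound over the events making up $\NA_1(\varepsilon,\delta)^c$ and $\NA_2(\varepsilon,\delta)^c$, and then control each individual term using the arm-event estimates from Lemmas \ref{assu:arm_exp} and \ref{assu:3hp_plusone}. First I would recall that $\NA(\varepsilon,\delta)^c = \NA_1(\varepsilon,\delta)^c \cup \NA_2(\varepsilon,\delta)^c$, so it suffices to bound each piece separately. For $\NA_1(\varepsilon,\delta)^c$, which is the union over $z\in\Ball_{\lceil 1/\varepsilon\rceil}\cap\ZZ[\imag]$ and $j\in\{1,2,3,4\}$ of the events $\calA^j_{1,(010)}(\varepsilon z;\varepsilon/2,\delta/2-3\varepsilon)$, a union bound gives at most $4(2\lceil 1/\varepsilon\rceil+1)^2 \le C\varepsilon^{-2}$ terms, and each term has probability $\pi_{1,3}^\eta(\varepsilon/2,\delta/2-3\varepsilon) \le C(\varepsilon/(\delta-6\varepsilon))^{2+\lambda_{1,3}}$ by Lemma \ref{assu:3hp_plusone} (using that the superscript $\xi$ may be dropped for the full-plane bounds, and that $\pi_{1,3}^\eta$ is translation invariant). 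Since $\delta$ is fixed and $\varepsilon < \delta/10$, this is at most $C(\delta)\varepsilon^{2+\lambda_{1,3}}$, so the full union is bounded by $C(\delta)\varepsilon^{-2}\cdot\varepsilon^{2+\lambda_{1,3}} = C(\delta)\varepsilon^{\lambda_{1,3}}$.

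For $\NA_2(\varepsilon,\delta)^c$, which is the union over $j\in\{1,2,3,4\}$ and $z\in\Ball_{\lceil 1/\varepsilon\rceil}\cap\ZZ[\imag]$ with $\min_{i}\dist_i(\Ball_{\varepsilon/2}(z),\partial\Ball_1)\le\varepsilon$ of the three-arm-in-a-half-plane events $\calA^j_{\emptyset,(010)}(z;\varepsilon/2,\delta/2-3\varepsilon)$, the key observation is that the constraint forces $z$ to lie within $O(\varepsilon)$ of the boundary $\partial\Ball_1$, so there are only $O(\varepsilon^{-1})$ such lattice points (a one-dimensional strip rather than a two-dimensional region), hence at most $C\varepsilon^{-1}$ terms after accounting for the factor of $4$. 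Each such term has probability $\pi_{0,3}^\eta(\varepsilon/2,\delta/2-3\varepsilon)\le C(\varepsilon/(\delta-6\varepsilon))^2\le C(\delta)\varepsilon^2$ by Lemma \ref{assu:arm_exp}. Thus $\PP_\eta(\NA_2(\varepsilon,\delta)^c)\le C(\delta)\varepsilon^{-1}\cdot\varepsilon^2 = C(\delta)\varepsilon$. Combining the two bounds with $\lambda := \min(\lambda_{1,3},1) > 0$ yields \eqref{eq:bound_NA}.

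Finally, for the tightness statement: by Lemma \ref{lem:bound_cluster_count}, on the event $\NA(\varepsilon,\delta)$ we have $|\clColl_1^\eta(\delta)| \le 32\varepsilon^{-2}$, a deterministic bound independent of $\eta$. Given $\theta > 0$, choose $\varepsilon = \varepsilon(\theta,\delta) > 0$ small enough (and $< \delta/10$) that $C(\delta)\varepsilon^\lambda < \theta$; then for all $\eta < \varepsilon$ we have $\PP_\eta(|\clColl_1^\eta(\delta)| > 32\varepsilon^{-2}) \le \PP_\eta(\NA(\varepsilon,\delta)^c) < \theta$. Since there are only finitely many values of $\eta$ in $[\varepsilon,\infty)$ for which $\clColl_1^\eta(\delta)$ is nonempty in any relevant sense (or one simply notes the bound $|\clColl_1^\eta(\delta)| \le |\eta V\cap\Ball_1| < \infty$ handles large $\eta$ trivially), this gives tightness of $|\clColl_1^\eta(\delta)|$ in $\eta$.

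I expect the main obstacle to be purely bookkeeping: correctly counting the number of lattice points $z$ appearing in each union (the two-dimensional count $O(\varepsilon^{-2})$ for $\NA_1$ versus the one-dimensional boundary count $O(\varepsilon^{-1})$ for $\NA_2$) and tracking the dependence of the constant on $\delta$ through the ratios $\varepsilon/(\delta/2-3\varepsilon)$. The probabilistic content is entirely supplied by Lemmas \ref{assu:arm_exp} and \ref{assu:3hp_plusone}, whose exponents $2$ and $2+\lambda_{1,3}$ beat the $\varepsilon^{-2}$ and $\varepsilon^{-1}$ combinatorial factors respectively — the $\NA_2$ case is comfortable because of the gain from the lower-dimensional index set, while the $\NA_1$ case is exactly the margin provided by the extra exponent $\lambda_{1,3} > 0$ in the three-arms-in-a-half-plane-plus-one-arm estimate.
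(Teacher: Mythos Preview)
Your proposal is correct and follows essentially the same approach as the paper: a union bound over the defining arm events of $\NA_1(\varepsilon,\delta)^c$ and $\NA_2(\varepsilon,\delta)^c$, controlled respectively by Lemma~\ref{assu:3hp_plusone} (giving $C(\delta)\varepsilon^{\lambda_{1,3}}$) and Lemma~\ref{assu:arm_exp} (giving $C(\delta)\varepsilon$), with tightness deduced from Lemma~\ref{lem:bound_cluster_count}. The paper's proof is the same computation with slightly less detail on the counting and the tightness step.
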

\begin{proof}[Proof of Lemma \ref{lem:ubound_NA}]
  For $\varepsilon,\delta$ with $0<10\varepsilon<\delta<1$ simple union bounds together with Lemmas \ref{assu:arm_exp} and \ref{assu:3hp_plusone} give
  \begin{align*}
    \PP_\eta(\NA_{1}\left(\varepsilon,\delta\right)^c) & \leq 10 \varepsilon^{-2}\left(\frac{\varepsilon}{\delta}\right)^{2+\lambda_{1,3}} = 10\frac{\varepsilon^{\lambda_{1,3}}}{\delta^{2+\lambda_{1,3}}},\\
    \PP_\eta(\NA_{2}\left(\varepsilon,\delta\right)^c) & \leq 40 \varepsilon^{-1}\left(\frac{\varepsilon}{\delta}\right)^{2}  = 40\frac{\varepsilon}{\delta^{2}}.
  \end{align*}
  This, combined with the definition of the event $\NA(\varepsilon,\delta)$, provides the desired upper bound.
  The tightness of $|\clColl^\eta_1(\delta)|$ follows from the combination of Lemma \ref{lem:bound_cluster_count} and \eqref{eq:bound_NA}.
\end{proof}

\begin{lemma} \label{lem:ubound_NC}
  Suppose that Assumptions \ref{assu:markov}-\ref{assu:RSW} hold.
  Let $\varepsilon,\delta$ with $0<10\varepsilon< \delta < 1$. Then there is a constant $C > 0$ such
  that for all $\eta \in(0, \varepsilon)$ we have
  \[
    \PP_{\eta}(\noClu(\varepsilon,\delta)^{c}) \leq C \frac{\varepsilon}{\delta^2}.
  \]
\end{lemma}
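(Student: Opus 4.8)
The plan is to bound $\PP_\eta(\noClu(\varepsilon,\delta)^c)$ by a union bound over all the events listed in Definition \ref{def:def_NC}, and to estimate the probability of each individual event using the half-plane three-arm estimate from Lemma \ref{assu:arm_exp}. Recall that $\noClu(\varepsilon,\delta)^c$ is the union, over $j \in \{1,2\}$ and over pairs $z,z' \in \Ball_{\lceil 1/\varepsilon\rceil}\cap\ZZ[\imag]$ with $\dist_j(z,z') \in (\delta-3\varepsilon,\delta+3\varepsilon)$, of the events $\calA_{\emptyset,(010)}^{j}(z;\varepsilon/2,\delta/2-3\varepsilon)\cap\calA_{\emptyset,(010)}^{j+2}(z';\varepsilon/2,\delta/2-3\varepsilon)$. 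So the first step is simply to write
\begin{align*}
  \PP_\eta(\noClu(\varepsilon,\delta)^c) \le \sum_{j=1}^2 \sum_{\substack{z,z':\ \dist_j(z,z')\\ \in(\delta-3\varepsilon,\delta+3\varepsilon)}} \PP_\eta\!\left(\calA_{\emptyset,(010)}^{j}(z;\varepsilon/2,\delta/2-3\varepsilon)\cap\calA_{\emptyset,(010)}^{j+2}(z';\varepsilon/2,\delta/2-3\varepsilon)\right).
\end{align*}

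Second, I would bound each summand. Here the key observation is that the two annuli $A(z;\varepsilon/2,\delta/2-3\varepsilon)$ and $A(z';\varepsilon/2,\delta/2-3\varepsilon)$ need not be disjoint (since $\dist_j(z,z')\approx\delta$ and each annulus has outer radius $\approx\delta/2$), so I cannot just multiply the two probabilities. Instead I drop one of the two factors and simply bound the summand by $\PP_\eta(\calA_{\emptyset,(010)}^{j}(z;\varepsilon/2,\delta/2-3\varepsilon)) = \pi_{0,3}^{\eta}(\varepsilon/2,\delta/2-3\varepsilon)$, which by Lemma \ref{assu:arm_exp} is at most $C\big(\tfrac{\varepsilon/2}{\delta/2-3\varepsilon}\big)^2 \le C'(\varepsilon/\delta)^2$ using $10\varepsilon<\delta$ (so that $\delta/2-3\varepsilon \ge \delta/5$, say). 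This is the half-plane three-arm estimate, whose exponent $2$ is exactly what will make the whole sum of order $\varepsilon/\delta^2$.

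Third, I would count the terms. For each fixed $j$ and each fixed $z \in \Ball_{\lceil 1/\varepsilon\rceil}\cap\ZZ[\imag]$, the number of $z'$ with $\dist_j(z,z')\in(\delta-3\varepsilon,\delta+3\varepsilon)$ is $O(\varepsilon^{-1})$: the coordinate of $z'$ in direction $j$ is confined to an interval of length $6\varepsilon$ (hence $O(1)$ lattice choices), while the other coordinate ranges over $O(\varepsilon^{-1})$ lattice values inside $\Ball_{\lceil 1/\varepsilon\rceil}$. Summing the per-term bound $C'(\varepsilon/\delta)^2$ over these $O(\varepsilon^{-1})$ pairs $(z,z')$ — and over the $O(1)$ choices of $j$ — gives a total of order $\varepsilon^{-1}\cdot(\varepsilon/\delta)^2 = \varepsilon/\delta^2$, which is the claimed bound; the constant $C$ can be taken absolute (it does not even depend on $\delta$, matching the statement).

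I do not anticipate a serious obstacle here: the only point requiring a little care is realizing that one should \emph{not} try to use independence/disjointness of the two annuli (which would fail) but instead discard one three-arm event and rely on the single-annulus three-arm estimate, and then to check that the combinatorial factor is only $O(\varepsilon^{-1})$ rather than $O(\varepsilon^{-2})$ — this is because pinning $\dist_j(z,z')$ to a narrow band kills one degree of freedom. Everything else is a routine union bound using Lemma \ref{assu:arm_exp}.
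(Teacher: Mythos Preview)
Your overall plan --- a union bound over the pairs $(z,z')$ in Definition~\ref{def:def_NC} combined with the half-plane three-arm estimate of Lemma~\ref{assu:arm_exp} --- is exactly what the paper does. However, two of your steps are incorrect and they accidentally cancel.

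First, the counting is off by a factor of $\varepsilon^{-2}$. You correctly argue that for each fixed $z$ there are $O(\varepsilon^{-1})$ admissible $z'$, but then you sum over ``these $O(\varepsilon^{-1})$ pairs'' without ever summing over $z$. Since $z$ itself ranges over $\Ball_{\lceil 1/\varepsilon\rceil}\cap\ZZ[\imag]$, there are $O(\varepsilon^{-2})$ choices for $z$ and hence $O(\varepsilon^{-3})$ pairs in total, not $O(\varepsilon^{-1})$. With your per-term bound $C'(\varepsilon/\delta)^2$ this gives $O(\varepsilon^{-3})\cdot(\varepsilon/\delta)^2 = O(\varepsilon^{-1}\delta^{-2})$, which blows up as $\varepsilon\to 0$ and is useless.

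Second, your reason for dropping one of the two arm events is based on a false premise. The two annuli \emph{are} disjoint: the constraint $\dist_j(z,z')>\delta-3\varepsilon$ together with the outer radius $\delta/2-3\varepsilon$ gives
\[
   d_\infty(z,z') \ge \dist_j(z,z') > \delta-3\varepsilon > 2\bigl(\tfrac{\delta}{2}-3\varepsilon\bigr),
\]
so $\Ball_{\delta/2-3\varepsilon}(z)$ and $\Ball_{\delta/2-3\varepsilon}(z')$ do not meet. You should exploit this: by the domain Markov property (Assumption~\ref{assu:markov}) one can condition on the configuration outside $\Ball_{\delta/2-3\varepsilon}(z)$; the event $\calA^{j+2}_{\emptyset,(010)}(z';\varepsilon/2,\delta/2-3\varepsilon)$ is measurable with respect to that conditioning, and the conditional probability of $\calA^{j}_{\emptyset,(010)}(z;\varepsilon/2,\delta/2-3\varepsilon)$ is at most $\pi_{0,3}^{\eta,\xi}(\varepsilon/2,\delta/2-3\varepsilon)\le C(\varepsilon/\delta)^2$ by Lemma~\ref{assu:arm_exp}, uniformly in the induced boundary condition $\xi$. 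This gives the product bound $C(\varepsilon/\delta)^4$ per pair, and then the union bound yields $O(\varepsilon^{-3})\cdot(\varepsilon/\delta)^4 = O(\varepsilon/\delta^4)$. (The power of $\delta$ is immaterial for Proposition~\ref{prop:ubound_calE}, where the constant is allowed to depend on $\delta$.)
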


\begin{proof}[Proof of Lemma \ref{lem:ubound_NC}]
  A simple union bound combined with Lemma \ref{assu:arm_exp} provides the desired result.
\end{proof}

\section{Construction of the set of large clusters in the scaling limit}\label{sec:cont_clus_constr}

Now we are ready to construct the limiting object from Theorem \ref{thm:main_Hausdorff}. Before we do so, 
we note that Corollary \ref{cor:comp_prob_meas_space}, combined with Assumption \ref{assu:existence_full_confInvLimit}
and Lemma \ref{lem:convergenceQuadCrossings}, implies that there is a coupling of $\omega_\eta$'s for $\eta\geq0$,
denoted by $\PP$, such that
\begin{align*}
  \PP(\omega_\eta\rightarrow \omega_0 \text{ as } \eta\rightarrow 0) = 1,
\end{align*}
where $\omega_0$ has law $\PP_0$.

Fix some $\delta>0$. Let $\omega\in \mathscr{H_{1}}$ be a quad-crossing configuration. We define
\begin{align*}
  n_0(\omega) := \inf \left\{n \geq 0 \, | \, \omega\in \calE(3^{-n'},\delta) \text{ for all } n'\geq n\right\},
\end{align*}
where we use the convention that the infimum of the empty set is $\infty$ and the event $\calE(\varepsilon,\delta)$
is defined in \eqref{eq:def_E}. It is clear that $\calE(3^{-n},\delta) \in Borel(\mathscr{T}_{\hat\CC})$, hence the function $n_0$ is $Borel(\mathscr{T}_{\hat\CC})$ measurable.
Note that $\omega_\eta\in \calE(\eta/10,\delta)$ for $0<\eta<10\delta$. Hence $n_0(\omega_\eta) <\infty$ for all $0<\eta<10\delta$.
Furthermore, we write $g_n(\omega,\delta)$ for the number of good subgraphs in $G_{3^{-n}}(\omega)$.

Let $\eta>0$, $n \geq n_0(\omega_\eta)$, and $H^\eta$ be a good subgraph in  $G^\eta_{3^{-n}} = G_{3^{-n}}(\omega_\eta)$.
Proposition \ref{prop:good_sub} shows that for all $n'\geq n$, there is
a unique good subgraph $H'^\eta$ of $G^\eta_{3^{-n'}}$ such that $U(H^\eta)\supseteq U(H'^\eta)$.

Let $g_n^\eta = g_n(\omega_\eta,\delta)$.
For each $n\geq 0$, we fix an ordering of the graphs with vertex sets in $B_{3^{-n}}$.
For $j=1,2,\ldots,g_{n_0}^\eta$, let $H^\eta_{j,n_0} := H_{j,n_0(\omega_\eta)}(\omega_\eta)$ denote the $j$th good subgraph of $G^\eta_{3^{-n_0}}$.
Then for $n\geq n_0(\omega_\eta)$, let $H^\eta_{j,n}$ denote the unique good subgraph of $G_{3^{-n}}^\eta$ such that $U(H^\eta_{j,n_0})
\supseteq U(H^\eta_{j,n})$.

For $\eta\geq0$ and $j=1,2,\ldots,g_{n_0}^\eta$ we set 
\begin{align} \label{eq:def_contCl}
  \cl^\eta_j(\delta) := \bigcap_{n\geq n_0(\omega_\eta)} U(H^\eta_{j,n})
\end{align}
on the event $n_0(\omega_\eta)<\infty$, while on the event $n_0(\omega_\eta) = \infty$ we set $\cl_j^{\eta}(\delta) = \{-1/2,1/2\}$ for all $j\geq1$. (Note that we can replace $\{-1/2,1/2\}$ by any disconnected subset of $\Ball_1$.)
Since the sequence of compact sets $U(H^\eta_{j,n})$ is decreasing, the intersection in \eqref{eq:def_contCl} is non-empty on the event $n_0(\omega_\eta)<\infty$.
Proposition \ref{prop:good_sub} shows that for $\eta>0$, we get the collection of clusters $\clColl_1^{\eta}(\delta)$, that is,
\begin{align*}
  \clColl_1^{\eta}(\delta) = \{\cl^\eta_j(\delta) \, :\, 1\leq j\leq g_{n_0}^\eta\}.
\end{align*}

Before we state and prove the following precise version of Theorem \ref{thm:main_Hausdorff}, let us comment on the topology used there. We employ
a slightly different topology than the one in \eqref{eq:def_coll_dist}, defined as follows.

Let $\frC$ denote the set of non-empty closed subsets of $\Ball_1$ endowed with the Hausdorff distance $d_H$ as defined in \eqref{eq:def_Haus_top}. Let $l(\frC)$
denote the space of sequences in $\frC$. We endow it with the metric $d_l$ defined as
\begin{align} \label{def:dist_clVect}
  d_l(\underline{C},\underline{C}') := \sum_{j=1}^\infty d_H(C_j,C_j')2^{-j}
\end{align}
for $\underline{C} = (C_j)_{j\geq1},\underline{C}' = (C'_j)_{j\geq1}$.
Note that convergence in $d_l$ is equivalent with coordinate-wise convergence. Furthermore, $l^{\infty}(\frC)$ inherits the compactness from $\frC$.

For $\eta\geq0$, we extend the definition \eqref{eq:def_contCl} by setting $\calC^\eta_j(\delta) := \{-1/2,1/2\}$
for $j>g_{n_0}^\eta$. We write $\underline{\calC}_1^\eta(\omega_{\eta},\delta) := (\calC^\eta_j(\delta))_{j\geq1}$.

\newcommand{\allCl}{\underline{\calC}}
For a quad-crossing configuration $\omega$, $\allCl_{1}^{\eta} = \allCl_{1}^{\eta}(\omega)$ denotes the vector of all (macroscopic) clusters in $\omega$ defined as follows.
The first $g_{n_0}(\omega,3^{-1})$ entries of $\allCl_{1}^{\eta}(\omega)$ coincide with those of $\underline\calC_{1}^{\eta}(\omega,3^{-1})$. For $m\geq4$,
the next $g_{n_0}(\omega,m^{-1})-g_{n_0}(\omega,(m-1)^{-1})$
entries coincide with those elements in $\underline\calC_{1}^{\eta}(\omega,m^{-1})$
which are not listed earlier in $\allCl_{1}^{\eta}(\omega)$, with their relative order.
  
Now we are ready to state the following precise and slightly stronger version of Theorem \ref{thm:main_Hausdorff}.
\begin{thm} \label{thm:strong_Hausdorff}
  Suppose that Assumptions \ref{assu:markov}-\ref{assu:existence_full_confInvLimit} hold.
  Let $\delta>0$ and let $\PP$ be a coupling such that $\omega_\eta \rightarrow \omega_0$ a.s. as $\eta \rightarrow0$.
  Then $\underline{\calC}_1^\eta(\delta) \rightarrow \underline{\calC}_1^0(\delta)$ in probability in the metric $d_l$ as $\eta \rightarrow 0$.
  In particular, the pair $(\omega_\eta, \underline{\calC}_1^\eta(\delta))$ converges in distribution to $(\omega_0,\underline{\calC}_1^0(\delta))$ as $\eta\rightarrow 0$.
  Moreover, the same convergence result holds for $\underline{\calC}_1^\eta$. Furthermore, $\underline{\calC}_1^0(\delta)$ and $\allCl_1^0$ are measurable
  functions of $\omega_0$.
\end{thm}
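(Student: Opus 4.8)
The plan is to deduce the claimed convergence $\underline{\calC}_1^\eta(\delta) \to \underline{\calC}_1^0(\delta)$ from the two approximation results already in hand: Proposition \ref{prop:good_sub}, which says that on the event $\calE(\varepsilon,\delta)$ the cluster collection $\clColl_1^\eta(\delta)$ is exactly recovered by the good subgraphs of $G_\varepsilon^\eta$; and Proposition \ref{prop:ubound_calE}, which controls the probability of $\calE(\varepsilon,\delta)^c$ uniformly in $\eta$. The bridge between the discrete scale $\eta$ and the continuum is Lemma \ref{lem:GPS}: under the coupling $\PP$ with $\omega_\eta \to \omega_0$ a.s., the indicators of the relevant arm events converge in $\PP$-probability. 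Since a good subgraph $H$ of $G_{3^{-n}}(\omega)$ is defined purely through finitely many arm events of the form appearing in Lemma \ref{lem:GPS} (the connectivity edges $\{\Ball \xleftrightarrow{(1)} \Ball'\}$ and the $(010)$ half-plane arms in condition (5) of Definition \ref{def:good_sub}), we obtain that for each fixed $n$, both $g_n(\omega_\eta,\delta)$ and the list of good subgraphs $H^\eta_{j,n}$ of $G^\eta_{3^{-n}}$ converge, in $\PP$-probability, to the corresponding quantities for $\omega_0$, on the event $n \ge n_0(\omega_0)$. (Here one uses that $n_0(\omega_0) < \infty$ a.s., which follows from Proposition \ref{prop:ubound_calE} via Borel--Cantelli along $\varepsilon = 3^{-n}$.)

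First I would fix $\nu > 0$ and choose $n$ large enough that, by Proposition \ref{prop:ubound_calE} and a union bound over $n' \ge n$, $\PP(n_0(\omega_\eta) > n) < \nu$ for all $\eta$ small, and simultaneously $\PP(n_0(\omega_0) > n) < \nu$; since each $\cl^\eta_j(\delta)$ lies within Hausdorff distance $3^{-n}$ of $U(H^\eta_{j,n})$ (the sets $U(H^\eta_{j,n})$ are decreasing in $n'$ with mesh $3^{-n}$, so the nested intersection is $3^{-n}$-close), it suffices to show $U(H^\eta_{j,n}) \to U(H^0_{j,n})$ in $d_H$ for each $j$ and then pass $n \to \infty$. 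On the event $\{n \ge n_0(\omega_\eta)\} \cap \{n \ge n_0(\omega_0)\}$, the equality $g^\eta_{n_0} = g_n(\omega_\eta,\delta)$ combined with the $\PP$-probability convergence of the finitely many arm-event indicators defining $G^\eta_{3^{-n}}$ and the goodness conditions gives: with $\PP$-probability tending to $1$ as $\eta \to 0$, the graph $G^\eta_{3^{-n}}$, the number of good subgraphs, and the vertex sets of the good subgraphs all coincide with those for $\omega_0$. On that further event $U(H^\eta_{j,n}) = U(H^0_{j,n})$ exactly (these are finite unions of fixed $3^{-n}$-boxes, determined by the same combinatorial data), hence $d_H(U(H^\eta_{j,n}), U(H^0_{j,n})) = 0$, and the padding convention ($\cl^\eta_j(\delta) = \{-1/2,1/2\}$ for $j > g^\eta_{n_0}$) matches too. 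This yields $d_l(\underline{\calC}_1^\eta(\delta), \underline{\calC}_1^0(\delta)) \le 2 \cdot 3^{-n} + (\text{diam bound}) \cdot \ind_{\text{bad event}}$, and letting first $\eta \to 0$ and then $n \to \infty$ gives convergence in $\PP$-probability in $d_l$. The convergence in distribution of the pair $(\omega_\eta, \underline{\calC}_1^\eta(\delta))$ follows immediately, and measurability of $\underline{\calC}_1^0(\delta)$ with respect to $\omega_0$ follows because each $H^0_{j,n}$ is a measurable function of $\omega_0$ (finitely many Borel arm events, cf.\ Remark \ref{rem:MeasurabilityArmEvents}) and $\cl^0_j(\delta)$ is a countable intersection of such.

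For the statement about $\allCl_1^\eta$: this is a diagonal argument over the countably many thresholds $\delta = m^{-1}$. Since $\underline{\calC}_1^\eta(m^{-1}) \to \underline{\calC}_1^0(m^{-1})$ in $\PP$-probability for each $m$, and $\allCl_1^\eta$ is assembled from these by concatenating, for each $m \ge 4$, the new entries appearing at threshold $m^{-1}$ but not at $(m-1)^{-1}$ (a measurable operation, being a comparison of finitely many box-unions on the events $n_0 < \infty$), the joint convergence $\big(\underline{\calC}_1^\eta(m^{-1})\big)_{m \ge 1} \to \big(\underline{\calC}_1^0(m^{-1})\big)_{m \ge 1}$ in the product topology — which holds since $\PP$-probability convergence is stable under countable collections — transfers to $\allCl_1^\eta \to \allCl_1^0$ in the metric $d_l$ on $l(\frC)$. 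Measurability of $\allCl_1^0$ with respect to $\omega_0$ is then inherited from that of each $\underline{\calC}_1^0(m^{-1})$ and the measurability of the concatenation rule.

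\textbf{Main obstacle.} The genuinely delicate point is the passage through the approximation at a \emph{fixed} scale $3^{-n}$: one must argue that, off an event of $\PP$-probability $o(1) + O(\nu)$, the \emph{entire} combinatorial structure (graph $G^\eta_{3^{-n}}$, its good subgraphs, their count, their ordering) agrees for $\omega_\eta$ and $\omega_0$. This requires that the relevant arm events be "stable" under the perturbation $\omega_\eta \rightsquigarrow \omega_0$ — i.e.\ that no arm event is borderline — and here the two-sided nature of the RSW-derived definitions (the use of the strict partial order $<$ and the open-neighbourhood condition in Definition \ref{def:arm_events}, together with the continuity provided by Lemma \ref{lem:GPS}) is exactly what makes the finitely many indicators converge in probability rather than merely along subsequences. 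Handling the event $n_0(\omega_\eta) = \infty$ versus $n_0(\omega_0) = \infty$ cleanly — so that the padding value $\{-1/2,1/2\}$ does not spoil the metric estimate — is a minor but necessary bookkeeping step, dispatched by the uniform bound of Proposition \ref{prop:ubound_calE}.
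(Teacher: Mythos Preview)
Your proposal is correct and follows essentially the same approach as the paper's own proof: both argue that the good-subgraph structure at each fixed scale $3^{-n}$ is determined by finitely many arm-event indicators, invoke Lemma \ref{lem:GPS} to get convergence in $\PP$-probability of that combinatorial data, use the nesting bound $d_H(\cl_j^\eta, U(H_{j,n}^\eta)) \le 3^{-n}$, and then let $\eta \to 0$ followed by $n \to \infty$; the extension to $\allCl_1^\eta$ and the measurability claim are handled the same way. The only cosmetic difference is that the paper packages the control of $n_0(\omega_\eta)$ into a separate Lemma \ref{lem:n_0} (showing $n_0(\omega_\eta) \to n_0(\omega_0)$ in probability), whereas you work directly with the tail bound $\PP(n_0 > n) < \nu$, but the content is identical.
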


\begin{rem}
  Note that the connected sets of $\Ball_1$ form a compact subspace of $\frC$. Hence $\{-1/2,1/2\}$ is
  separated from the clusters $\calC_j^\eta$ for $j=1,\ldots,g_{n_0}^\eta$. Thus the convergence of the vectors
  $\ul\calC_1^\eta(\delta)$ in the metric $d_l$ implies the convergence of $\clColl_1^\eta(\delta)$ in the topology
  \eqref{eq:def_coll_dist}. Namely, the bijection is given by the ordering of the entries in the corresponding vectors, 
  while the proof of Lemma \ref{lem:bound_cluster_count} implies that, in the sequence, there
  is no pair of clusters converging to the same closed set. The convergence in the metric \eqref{eq:def_coll_dist_hat_CC}
  follows from the equivalence of the metrics $d_H$ and $D_H$.
\end{rem}

Before we turn to the proof of Theorem \ref{thm:strong_Hausdorff}, we prove the following lemma.
\begin{lemma} \label{lem:n_0}
  Suppose that Assumptions \ref{assu:markov}-\ref{assu:existence_full_confInvLimit} hold.
  Let $\PP$ be a coupling such that $\omega_\eta\ra\omega_0$ $\PP$-a.s. as $\eta\ra0$. Then 
  \begin{align*}
    \PP(n_0(\omega_0) = \infty) = 0.
  \end{align*}
  Moreover, $n_0(\omega_\eta)\ra n_0(\omega_0)$ in probability under $\PP$ as $\eta\ra0$.
\end{lemma}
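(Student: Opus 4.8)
The plan is to deduce both statements from the almost-sure convergence $\omega_\eta \to \omega_0$ and the probability estimate on $\calE(\varepsilon,\delta)^c$ provided by Proposition~\ref{prop:ubound_calE}, together with the convergence of indicators of arm events from Lemma~\ref{lem:GPS}. Recall that $n_0(\omega) < \infty$ precisely when $\omega \in \calE(3^{-n'},\delta)$ for all sufficiently large $n'$, and that $\calE(3^{-n},\delta) = \NA(3^{-n},\delta) \cap \noClu(3^{-n},\delta)$ is defined in terms of finitely many arm events in fixed annuli.

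First I would prove $\PP(n_0(\omega_0) = \infty) = 0$. By definition,
\begin{align*}
  \{n_0(\omega_0) = \infty\} = \bigcap_{n \ge 0} \bigcup_{n' \ge n} \{\omega_0 \notin \calE(3^{-n'},\delta)\},
\end{align*}
so it suffices to show $\sum_{n} \PP(\omega_0 \notin \calE(3^{-n},\delta)) < \infty$ and invoke Borel--Cantelli. For each fixed $n$, the event $\calE(3^{-n},\delta)^c$ is a finite union of arm events in fixed annuli, so by Lemma~\ref{lem:GPS} the indicator $\ind_{\calE(3^{-n},\delta)^c}(\omega_\eta)$ converges in $\PP$-probability to $\ind_{\calE(3^{-n},\delta)^c}(\omega_0)$ as $\eta \to 0$; since indicators are bounded, this gives $\PP(\omega_0 \in \calE(3^{-n},\delta)^c) = \lim_{\eta \to 0} \PP_\eta(\calE(3^{-n},\delta)^c) \le C(\delta) 3^{-n\lambda}$ by Proposition~\ref{prop:ubound_calE}. (One must be slightly careful that $\calE(3^{-n},\delta)^c$, as a set, might not be a continuity set for the limiting law; but Lemma~\ref{lem:GPS} gives convergence of the indicators directly, which is stronger than weak convergence and bypasses this issue. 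Alternatively, one passes to the boundary-perturbed versions $\calE^\pm$ and uses monotonicity.) Summing the geometric bound over $n$ gives finiteness, hence $\PP(n_0(\omega_0) = \infty) = 0$.

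For the convergence $n_0(\omega_\eta) \to n_0(\omega_0)$ in probability, fix $\varepsilon' > 0$; I would show $\PP(|n_0(\omega_\eta) - n_0(\omega_0)| > 0)$ — or rather $\PP(n_0(\omega_\eta) \ne n_0(\omega_0))$ — is small. Fix a large $N$. On the event $\{n_0(\omega_0) \le N\}$, which has probability close to $1$ by the first part for $N$ large, we have $\omega_0 \in \calE(3^{-n'},\delta)$ for all $n' \ge n_0(\omega_0)$, and moreover $\omega_0 \notin \calE(3^{-(n_0(\omega_0)-1)},\delta)$ if $n_0(\omega_0) \ge 1$. For each of the finitely many values $n' \in \{0,1,\dots,N\}$, Lemma~\ref{lem:GPS} gives $\ind_{\calE(3^{-n'},\delta)}(\omega_\eta) \to \ind_{\calE(3^{-n'},\delta)}(\omega_0)$ in probability; since this is a finite collection, with probability tending to $1$ we have $\ind_{\calE(3^{-n'},\delta)}(\omega_\eta) = \ind_{\calE(3^{-n'},\delta)}(\omega_0)$ simultaneously for all $n' \le N$. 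On that event intersected with $\{n_0(\omega_0) \le N\}$, the values of $n_0$ agree: $\omega_\eta$ lies in $\calE(3^{-n'},\delta)$ for $n_0(\omega_0) \le n' \le N$ and fails to for $n' = n_0(\omega_0)-1$, which pins $n_0(\omega_\eta) = n_0(\omega_0)$ — this uses that once $\omega \in \calE$ at scale $N$ it stays in $\calE$ at all finer scales $n' > N$ with probability close to $1$ by the tail bound of the first part applied uniformly (or one simply notes $n_0(\omega_\eta) \le 10\delta$-type a priori bounds are not available, so instead one bounds $\PP(n_0(\omega_\eta) > N)$ directly via $\PP_\eta(\calE(3^{-N},\delta)^c) \le C 3^{-N\lambda}$, uniform in $\eta$). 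Letting first $\eta \to 0$ and then $N \to \infty$ gives the claim.

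The main obstacle I anticipate is the ``tail uniformity'' needed to rule out $n_0(\omega_\eta)$ being large for small $\eta$: Lemma~\ref{lem:GPS} only controls finitely many scales at once, so one cannot directly conclude $n_0(\omega_\eta)$ is close to $n_0(\omega_0)$ without a uniform-in-$\eta$ estimate that $n_0(\omega_\eta) > N$ is unlikely. Fortunately Proposition~\ref{prop:ubound_calE} is exactly such a uniform estimate ($\PP_\eta(\calE(3^{-N},\delta)^c) \le C 3^{-N\lambda}$ with constants independent of $\eta \in (0, 3^{-N})$), and $n_0(\omega_\eta) > N$ implies $\omega_\eta \in \calE(3^{-n'},\delta)^c$ for some $n' \ge N$, whose probability is at most $\sum_{n' \ge N} C 3^{-n'\lambda} \to 0$. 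Combining this tail control with the finite-scale convergence from Lemma~\ref{lem:GPS} closes the argument.
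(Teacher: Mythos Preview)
Your proposal is correct and follows essentially the same approach as the paper: Borel--Cantelli via the uniform bound $\PP_\eta(\calE(3^{-n},\delta)^c)\le C3^{-n\lambda}$ for the first claim, and for the second claim a split into a tail part (controlling $\PP(n_0(\omega_\eta)>N)$ and $\PP(n_0(\omega_0)>N)$ by the same summable bound) plus a finite-scale part (matching the indicators $\ind_{\calE(3^{-n'},\delta)}$ for $n'\le N$ via Lemma~\ref{lem:GPS}), then sending $\eta\to0$ and $N\to\infty$. The paper's write-up is just slightly more streamlined in that it bounds $\PP(|n_0(\omega_\eta)-n_0(\omega_0)|\ge 1)$ directly by the sum of the two tail probabilities and the finite-scale mismatch probability, avoiding the small detour you take before arriving at the correct tail estimate $\sum_{n'\ge N}\PP_\eta(\calE(3^{-n'},\delta)^c)$.
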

\begin{proof}[Proof of Lemma \ref{lem:n_0}]
  For each fixed $\varepsilon,\delta>0$ the event $\calE(\varepsilon,\delta)$ can be
  written as a finite union of intersections of some events appearing in Lemma
  \ref{lem:GPS}. Thus
  \begin{align*}
    \PP_0(\calE(\varepsilon,\delta)^{c}) &= \lim_{\eta\rightarrow0}\PP_\eta(\calE(\varepsilon,\delta)^{c})
    \leq C\varepsilon^\lambda
  \end{align*}
  with $C$ and $\lambda$ as in Proposition \ref{prop:ubound_calE}. Hence
  \begin{align*}
    \sum_{n=1}^\infty\PP_0(\calE(3^{-n},\delta)^{c})<\infty.
  \end{align*}
  Thus the Borel-Cantelli lemma shows that $\PP(n_0(\omega_0) = \infty) = 0$.

  Let $k\geq1$. Lemma \ref{lem:GPS} and Proposition \ref{prop:ubound_calE} imply that
  \begin{multline} \label{eq:pf_lem_n_0_1}
    \PP(|n_0(\omega_\eta)-n_0(\omega_0)| \geq 1)\\
    \begin{aligned}
      & \leq \PP(n_0(\omega_\eta) > k) + \PP(n_0(\omega_0)>k) \\
      & \quad+ \PP(|n_0(\omega_\eta)-n_0(\omega_0)| \geq 1, n_0(\omega_0)\vee n_0(\omega_\eta) \leq k) \\
      & \leq \sum_{l\geq k + 1}\left(\PP_\eta(\calE(3^{-l},\delta)^{c})+ \PP_0(\calE(3^{-l},\delta)^{c})\right) \\
      & \quad + \PP\big(\exists l\leq k \text{ s.t. } \ind_{\{\omega_\eta\in\calE(3^{-l}, \delta)\}}\neq \ind_{\{\omega_0\in\calE(3^{-l}, \delta)\}}\big) \\
      & \leq C \sum_{l\geq k + 1}3^{-\lambda l} + \sum_{l=1}^k\PP\big(\ind_{\{\omega_\eta\in\calE(3^{-l}, \delta)\}}\neq \ind_{\{\omega_0\in\calE(3^{-l}, \delta)\}}\big)
    \end{aligned}
  \end{multline}
  with some constant $C>0$. Taking $\eta\ra0$ in \eqref{eq:pf_lem_n_0_1} with a suitable constant $C'$ we get
  \begin{align*}
    \lim_{\eta\ra0}\PP(|n_0(\omega_\eta)-n_0(\omega_0)| \geq 1) & \leq C'3^{-\lambda k}
  \end{align*}
  for all $k>0$. This shows that $n_0(\omega_\eta)\ra n_0(\omega_0)$ in probability as $\eta\ra0$, and concludes the proof of Lemma \ref{lem:n_0}.
\end{proof}

\begin{proof}[Proof of Theorem \ref{thm:strong_Hausdorff}]
  Let $\delta>0$ and let $\PP$ be a coupling such that $\omega_\eta\ra\omega_0$ a.s. We will work under $\PP$ in what follows.
  Note that for each $n\in \NN$, the event $\calE(3^{-n}, \delta)$, the graph $G_{3^{-n}}(\omega)$ and the good subgraphs of $G_{3^{-n}}(\omega)$
  are functions of the outcomes of finitely many arm events appearing in Lemma \ref{lem:GPS}. Thus, as $\eta \to 0$, each of
  \begin{itemize}
    \item $\ind_{\left\{\omega_{\eta}\in\calE(3^{-n}, \delta)\right\}}$,
    \item $G_{3^{-n}}(\omega_{\eta})$, and
    \item the ordered set of good subgraphs of $G_{3^{-n}}(\omega_{\eta})$
  \end{itemize}
  converges in probability to the same quantity with $\omega_{\eta}$ replaced by $\omega_0$. This implies the following convergence
  statements in probability as $\eta \to 0$:
  \begin{enumerate}[1)]
    \item \label{enu:pf_strong_Hausdorff_1} by Lemma \ref{lem:n_0}, $n_0(\omega_{\eta})\rightarrow n_0(\omega_0)<\infty$,
    \item \label{enu:pf_strong_Hausdorff_2} $g_{n}^{\eta} \rightarrow g_{n}^{0}$ for all $n\geq1$, in particular, $g_{n_0(\omega_{\eta})}^{\eta} \rightarrow g_{n_0(\omega_0)}^{0}$,
    \item \label{enu:pf_strong_Hausdorff_3} $H^{\eta}_{j,n}\rightarrow H^{0}_{j,n}$ for $j = 1,2,\ldots,g_{n_0(\omega_0)}$ and $n\geq n_0(\omega_0)$.
  \end{enumerate}
  Let $n\geq n_0(\omega_{\eta})\vee n_0(\omega_0)$, then
  \begin{align} \label{eq:pf_strong_Hausdorff_1}
    d_H(\calC_j^{\eta},\calC_j^0) & \leq d_H(\calC_j^{\eta},U(H_{j,n}^{\eta})) + d_H(U(H_{j,n}^{\eta}),U(H_{j,n}^{0})) + d_H(U(H_{j,n}^{0}),\calC_j^0) \nonumber\\
    & \leq 3^{-n} + d_H(U(H_{j,n}^{\eta}),U(H_{j,n}^{0})) + 3^{-n} 
  \end{align}
  for $j = 1,2,\ldots,g^{\eta}_{n_0}\wedge g^{0}_{n_0}$. Thus taking the limit $\eta\ra0$ in \eqref{eq:pf_strong_Hausdorff_1},
  by \ref{enu:pf_strong_Hausdorff_1})-\ref{enu:pf_strong_Hausdorff_3}) above, we get
  \begin{align} \label{eq:pf_strong_Hausdorff_2}
    \lim_{\eta\rightarrow 0} \PP(d_H(\calC_j^{\eta},\calC_j^0) > 3\cdot3^{-n}, n\geq n_0(\omega_0)\vee n_0(\omega_\eta)) = 0 
  \end{align}
  for $j\geq1$. Then taking the limit $n\ra \infty$, Lemma \ref{lem:n_0} shows that $\calC_j^{\eta}\ra\calC_j^0$ in
probability in the Hausdorff metric as $\eta\ra0$ for all $j\geq1$. 
  Since convergence in $l^{\infty}(\frC)$ coincides with coordinate-wise convergence, we get
  that $\lim_{\eta\rightarrow 0}\underline{\calC}_{1}^{\eta}(\delta) = \underline{\calC}_1^0(\delta)$ in probability, as required.
  
  The proof of the claims of Theorem \ref{thm:strong_Hausdorff} for $\allCl_1^\eta$ is analogous. It follows from the convergence of $\underline{\calC}_1^\eta(\delta)$ with
  $\delta=3^{-m}$ for $m\geq1$.
  The measurability of $\underline{\calC}_1^0(\delta)$ and $\allCl_1^0$ with respect to $\omega_0$ follows easily from
  their definition involving arm events (see Remark \ref{rem:MeasurabilityArmEvents}). Thus the proof of Theorem \ref{thm:strong_Hausdorff}
  is complete.
\end{proof}

\section{Scaling limit in a bounded domain}\label{sec:clustersinSubset}
\begin{figure}
	\centering 
	\includegraphics[width = 0.80\textwidth]{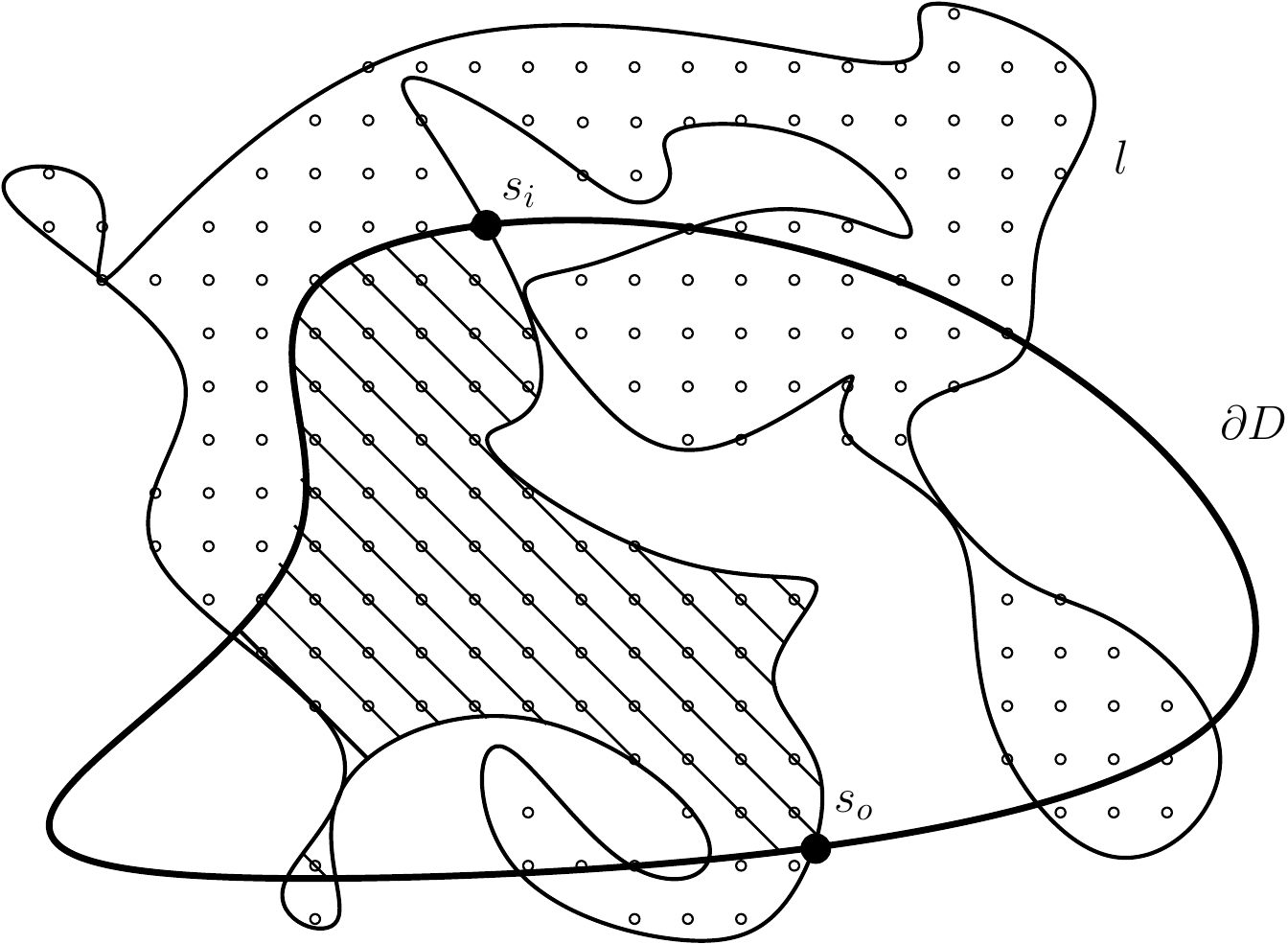}
	\caption{Illustration of a cluster in $D$. The small open circles denote the interior of the loop $l$.
	The shaded area intersected with the cluster of the loop is equal to $\mathcal{B}(\mathcal{E})$.}
	\label{fig:portionsOfClusters}
\end{figure}
In this section we will deduce the convergence of all clusters and ``pieces'' of clusters contained in a bounded domain $D$
from the convergence of clusters and loops completely contained in $\Ball_k \supset D$, for some $k$ sufficiently large.
We call $\mathscr{B}_{D}^{\eta}(\delta)$ the collection of all clusters or portions of clusters of diameter at least $\delta$
contained in $D^{\eta}$, where $D^{\eta}$ denotes an appropriate discretization of $D$. In the case of ${\ZZ}^2$, the boundary of $D^{\eta}$
is a circuit in the medial lattice that surrounds all the vertices of ${\ZZ}^2$ contained in $D$ and minimizes the distance to $\partial D$.
Analogously, in the case of the triangular lattice, $\TT$, the boundary of $D^{\eta}$ is a circuit in the dual (hexagonal) lattice that surrounds
all the vertices of $\TT$ contained in $D$ and minimizes the distance to $\partial D$.
More precisely, for every cluster $\cl \in \clColl^{\eta}(\delta)$ that intersect $D^{\eta}$, consider the set of all connected
components $\mathcal{B}$ of $\cl \cap D^{\eta}$ with diameter at least $\delta>0$.
For every $\eta, \delta >0$, we let $\mathscr{B}_{D}^{\eta}(\delta)$ denote the union of $\clColl_{D}^{\eta}(\delta)$
with the set of all such connected components $\mathcal{B}$.
(Note that clusters contained in $\Ball_k$ but not completely contained in $D^{\eta}$ are split
into different elements of $\mathscr{B}_{D}^{\eta}(\delta)$ (see Figure \ref{fig:portionsOfClusters}).
For the case of Bernoulli percolation, the collection $\mathscr{B}_{D}^{\eta}(\delta)$ is precisely the set of all clusters in $D^{\eta}$
with closed boundary condition.

As in Section \ref{sec:cont_clus_constr}, instead of the collection $\mathscr{B}_{D}^{\eta}(\delta)$, we consider
the sequence $\underline{\mathcal{B}}_{D}^{\eta}(\delta)$ of clusters with diameter at least $\delta$, with the metric $d_{l}$.
Now we are ready to state the theorem on the convergence of all portions of clusters in
$\sigma_\eta \cap D$ for a bounded domain $D$.
\begin{thm} \label{thm:strong_Hausdorff_in_domain}
  Suppose that Assumptions \ref{assu:markov}-\ref{assu:existence_full_confInvLimit} hold.
  Let $D$ be a simply connected bounded domain with piecewise smooth boundary.
  Let $\PP$ be a coupling where $(\omega_\eta,L_\eta) \rightarrow (\omega_0,L_0)$ a.s. as $\eta \rightarrow0$.
  Then, for any $\delta>0$, $\underline{\calB}_D^\eta(\delta) \rightarrow \underline{\calB}_D^0(\delta)$ in probability in
  the metric $d_l$ as $\eta \rightarrow 0$.
  In particular, the triple $(\omega_\eta,L_\eta, \underline{\calB}_D^\eta(\delta))$ converges in
  distribution to $(\omega_0,L_0,\underline{\calB}_D^0(\delta))$ as $\eta\rightarrow 0$.
  Moreover, the same convergence result holds for $\underline{\calB}_D^\eta$. Furthermore,
  $\underline{\calB}_D^0(\delta)$ and $\mathscr{B}_{D}^{0}$ are measurable
  functions of the pair $(\omega_0, L_0)$.
\end{thm}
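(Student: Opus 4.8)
The plan is to exhibit $\underline{\calB}_D^\eta(\delta)$ as an almost surely continuous functional of data whose convergence is already known, and then to conclude by a continuous--mapping argument under the given coupling. Fix $k$ with $\bar D\subset \Ball_{k-1}$, chosen moreover so that a.s.\ no loop of $L_0$ is tangent to $\partial\Ball_k$ (all but countably many $k$ work, since a fixed loop is tangent to at most two of the nested squares $\partial\Ball_{k}$). For $\eta$ small we have $D^\eta\subset\Ball_k$, so every element of $\mathscr{B}_D^\eta(\delta)$ is a connected component, of diameter $\ge\delta$, of $\sigma_\eta\cap D^\eta$, and this collection is intrinsic to $D^\eta$: it equals the collection of diameter-$\ge\delta$ components of $(\cl\cap\Ball_k)\cap D^\eta$ over clusters $\cl$, regardless of whether $\cl$ exits $\Ball_k$. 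Each such component $\mathcal B$ is bounded by finitely many arcs, each of which is either a sub-arc of a loop of $L_\eta$ contained in $\overline{D^\eta}$ or a sub-arc of $\partial D^\eta$, and conversely the filled red region enclosed by these arcs (with the red side of each loop-arc read off from $\omega_\eta$) recovers $\mathcal B$. Hence $\mathscr{B}_D^\eta(\delta)=F_D(\omega_\eta|_{\Ball_k},L_\eta|_{\Ball_k})$ for a fixed measurable map $F_D$; after ordering the components by the rule used for $\underline{\calC}_k^\eta$ this gives the vector $\underline{\calB}_D^\eta(\delta)$ in the metric $d_l$ of \eqref{def:dist_clVect}. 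The pair $(\omega_\eta|_{\Ball_k},L_\eta|_{\Ball_k})$ converges a.s.\ under the coupling by Theorem~\ref{thm:strong_Hausdorff}, Lemma~\ref{lem:convergenceQuadCrossings} and Assumption~\ref{assu:existence_full_confInvLimit} (restriction to $\Ball_k$ being continuous at $(\omega_0,L_0)$ because $\partial\Ball_k$ is avoided).

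The candidate limit is $\mathscr{B}_D^0(\delta):=F_D(\omega_0|_{\Ball_k},L_0|_{\Ball_k})$: since $L_0$ is a.s.\ locally finite (only finitely many loops of diameter $\ge\epsilon$ for each $\epsilon$), applying the same construction to the limiting data produces an a.s.\ finite collection of closed subsets of $\bar D$, measurable with respect to $(\omega_0,L_0)$; together with the ordering this defines $\underline{\calB}_D^0(\delta)$ and, letting $\delta=3^{-m}\downarrow 0$ as in Theorem~\ref{thm:strong_Hausdorff}, the full collections $\mathscr{B}_D^0$ and $\underline{\calB}_D^0$.

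The crux is to prove that $F_D$ is a.s.\ continuous at $(\omega_0,L_0)$, i.e.\ that small perturbations of the quad-crossing and loop configurations, and the replacement of $D^\eta$ by $D$, do not alter the set of diameter-$\ge\delta$ portions. For this I would establish two genericity facts about the limiting configuration, both from RSW (Assumptions~\ref{assu:markov}--\ref{assu:RSW} and Lemmas~\ref{assu:arm_exp}--\ref{lem:ubound_NC}). First, \emph{no macroscopic red piece lies along $\partial D$}: writing $\partial D$ as a finite union of smooth arcs plus finitely many corners, a crossing estimate in thin curved rectangles along each smooth arc gives, uniformly in $\eta$, $\PP_\eta(\exists$ a connected red piece of diameter $\ge\epsilon$ inside the $\rho$-neighbourhood of $\partial D)\le\psi(\rho)$ with $\psi(\rho)\to0$; combined with $\omega_\eta\to\omega_0$ this forces a.s.\ that no loop of $L_0$ touches $\partial D$ without crossing it and that each corner point is avoided by $L_0$ (a fixed point lies on no loop, by the four-arm bound). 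Second, \emph{no portion has diameter close to $\delta$}: exactly as for the event $\noClu(\varepsilon,\delta)$ of Definition~\ref{def:def_NC}, the half-plane three-arm estimate of Lemma~\ref{assu:arm_exp} bounds the probability that $D^\eta$ contains a connected red piece of diameter in $(\delta-2\varepsilon,\delta)$ by $C(\delta)\,\varepsilon$, uniformly in $\eta$. On the complement of these (small-probability, once the mesoscopic scale $\varepsilon$ is sent to $0$) bad events, the refinement argument of Section~\ref{sec:approx} shows the discrete construction of portions stabilises, and then $\mathscr{B}_D^\eta(\delta)\to\mathscr{B}_D^0(\delta)$ in Hausdorff distance: every bounding loop-arc converges because $L_\eta\to L_0$, every bounding sub-arc of $\partial D^\eta$ converges because $\partial D^\eta\to\partial D$ (this is where piecewise smoothness of $\partial D$ enters), and by the first genericity fact the arcs cannot be re-matched in the limit, so the enclosed regions converge.

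Assembling: the continuous mapping of $F_D$ applied to the a.s.\ convergent data yields $\underline{\calB}_D^\eta(\delta)\to\underline{\calB}_D^0(\delta)$ in probability in $d_l$, hence convergence in distribution of $(\omega_\eta,L_\eta,\underline{\calB}_D^\eta(\delta))$; the statement for $\underline{\calB}_D^\eta$ follows by taking $\delta=3^{-m}\downarrow0$ as in the proof of Theorem~\ref{thm:strong_Hausdorff}, and the measurability of $\underline{\calB}_D^0(\delta)$ and $\mathscr{B}_D^0$ with respect to $(\omega_0,L_0)$ is built into the definition of $F_D$ (and of the genericity events, via arm events). I expect the main obstacle to be the first genericity fact: controlling macroscopic clusters in an arbitrarily thin neighbourhood of the fixed curve $\partial D$ uniformly in $\eta$, so that ``cutting by $\partial D$'' is continuous. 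This is precisely where piecewise smoothness of $\partial D$ is needed, and it is handled by RSW crossing estimates in thin curved tubes along the finitely many smooth arcs of $\partial D$, together with the observation that corner points are fixed and hence avoided.
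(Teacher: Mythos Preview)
Your approach is essentially the same as the paper's: both identify the portions $\mathcal{B}$ through excursions of loops inside $D$, both rely on the half-plane three-arm estimate (Lemma~\ref{assu:arm_exp}) to show that excursions cannot come close to $\partial D$ without actually touching it (your ``first genericity fact''), and both conclude by matching lattice and continuum excursions under the coupling.

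The paper is, however, more explicit at the point you leave vague. Rather than saying ``the filled red region enclosed by these arcs recovers $\mathcal B$,'' the paper gives a concrete formula: for each clockwise excursion $\mathcal E$ of an outer-boundary loop, concatenate $\mathcal E$ with its \emph{base} (the counterclockwise arc of $\partial D$ between its endpoints), take the interior via winding number, subtract the interiors of all sub-excursions with nested bases, and then intersect with the full cluster $\cl$. The last step is where the convergence of clusters from Theorem~\ref{thm:strong_Hausdorff} actually enters --- it disposes of the holes and the inner boundaries that your description of ``finitely many bounding arcs'' does not fully address. Your reference to ``the refinement argument of Section~\ref{sec:approx}'' is slightly misplaced: the paper does not re-run the box approximation for portions but instead uses the already-established convergence of the full clusters $\cl$ together with the convergence of the excursions. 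Otherwise your outline is correct and matches the paper's argument.
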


\begin{proof}[Proof of Theorem \ref{thm:strong_Hausdorff_in_domain}]
 Let $(\omega_{\eta},L_{\eta})$ and $(\omega_{0},L_{0})$ be as in the statement of Theorem \ref{thm:strong_Hausdorff_in_domain}.
The probability that all the clusters that intersect $D$ are completely contained in $\Ball_{k}$
is at least one minus the probability of having a red arm from the boundary of $D$ to $\partial \Ball_{k}$.
The latter probability goes to zero as $k \to \infty$, hence there is a finite $k \in \NN$ such that
there is no red arm from $D$ to $\partial \Ball_{k-1}$ in $\omega_0$. We take the smallest such $k$. With this choice,
all clusters in $\clColl^{\eta}$ that intersect $D$ are contained in $\Ball_{k}$.

We first give an orientation to the loops contained in $\Ball_k$ in such a way that clockwise loops are the outer boundaries of red clusters
and counterclockwise loops are the outer boundaries of blue clusters. For each clockwise loop $\ell$ intersecting $\partial D$, we consider all excursions
$\mathcal{E}$ inside $D$ of diameter at least $\delta$. Each excursion $\mathcal{E}$ runs from a point $s_{in}$ on $\partial D$ to a point
$s_{out}$ on $\partial D$. We call the counterclockwise segment of $\partial D$ from $s_{in}$ to $s_{out}$ the base of $\mathcal{E}$.
We call $\overline{\mathcal{E}}$ the concatenation of $\mathcal{E}$ with its base. We define the interior ${\text I}(\overline{\mathcal{E}})$ of
$\overline{\mathcal{E}}$ to be the closure of the set of points with nonzero winding number for the curve $\overline{\mathcal{E}}$.

We call $\mathscr{E}_{\mathcal{E}}$ the collection of all clockwise excursions in $D$ of the same loop $\ell$ with base contained inside the base of $\mathcal{E}$.
If $\cl$ is the cluster whose outer boundary is the loop $\ell$, we define $\mathcal{B}(\mathcal{E})$ as follows:
\begin{equation} \nonumber
\mathcal{B}(\mathcal{E}) := \overline{ {\text I}(\overline{\mathcal{E}}) \setminus \{ \cup_{\mathcal{E'} \in \mathscr{E}_{\mathcal{E}}} {\text I}(\overline{\mathcal{E'}}) \}} \cap \cl ,
\end{equation}
where by $\cup_{\mathcal{E'} \in \mathscr{E}_{\mathcal{E}}} {\text I}(\overline{\mathcal{E'}})$ we mean
$\lim_{\xi \to 0} \cup_{\mathcal{E'} \in \mathscr{E}_{\mathcal{E}}, \diam{\mathcal{E'}}>\xi} {\text I}(\overline{\mathcal{E'}})$, and the limit exists
because it is the limit of an increasing sequence of closed sets.

For any $\delta>0$, $\mathscr{B}_{D}^{0}(\delta)$ is the collection of all sets $\mathcal{B}(\mathcal{E})$ defined above, for all clockwise excursions
$\mathcal{E}$ in $D$ of diameter at least $\delta$.

For any $\eta>0$, the collection $\mathscr{B}_{D}^{\eta}(\delta)$ contains all clusters completely contained in $D$ plus all the connected components
of the intersections of clusters in $\Ball_{k}$ with $D$. $\mathscr{B}_{D}^{\eta}(\delta)$ can be obtained with the following construction which
mimics the continuum construction given earlier. We first give an orientation to the loops contained in $\Ball_k$ in such a way
that loops that have red in their immediate interior are oriented clockwise and loops that have blue in their immediate interior are
oriented counterclockwise. For each clockwise loop $\ell^{\eta}$ intersecting $\partial D^{\eta}$, we consider all excursions $\mathcal{E}^{\eta}$ inside $D^{\eta}$ of
diameter at least $\delta$. Each excursion $\mathcal{E}^{\eta}$ runs from a point $s^{\eta}_{in}$ on $\partial D^{\eta}$ to a point
$s^{\eta}_{out}$ on $\partial D^{\eta}$. We call the counterclockwise segment of $\partial D^{\eta}$ from $s^{\eta}_{in}$ to $s^{\eta}_{out}$ the base of $\mathcal{E}^{\eta}$.
We call $\overline{\mathcal{E}^{\eta}}$ the concatenation of $\mathcal{E}^{\eta}$ with its base. We define the interior ${\text I}\left(\overline{\mathcal{E}^{\eta}}\right)$ of
$\overline{\mathcal{E}^{\eta}}$ to be the set of hexagons contained inside $\overline{\mathcal{E}^{\eta}}$.

We call $\mathscr{E}^{\eta}_{\mathcal{E}^{\eta}}$ the collection of all clockwise excursions in $D^{\eta}$ of the same loop $\ell^{\eta}$ with base contained inside the base of
$\mathcal{E}^{\eta}$. If $\cl^{\eta}$ is the cluster whose outer boundary is the loop $\ell^{\eta}$, we define $\mathcal{B}^{\eta}(\mathcal{E}^{\eta})$ as follows:
\begin{equation} \nonumber
\mathcal{B}^{\eta}(\mathcal{E}^{\eta}) := \overline{ {\text I}\left(\overline{\mathcal{E}^{\eta}}\right) \setminus \left\{ \cup_{(\mathcal{E}^{\eta})' \in \mathscr{E}^{\eta}_{\mathcal{E}^{\eta}}} {\text I}\left(\overline{(\mathcal{E}^{\eta})'}\right) \right\}} \cap \cl^{\eta} .
\end{equation}

We now note that the almost sure convergence
$(\omega_\eta,L_\eta) \rightarrow (\omega_0,L_0)$, combined with Lemma \ref{assu:arm_exp},
implies the same for the excursions in $D$. (Lemma \ref{assu:arm_exp} insures, via
standard arguments, that an excursion cannot come close to the boundary of $D$ without
touching it, so that large lattice and continuum
excursions will match with high probability for $\eta$ sufficiently small.
For more details on how to use Lemma \ref{assu:arm_exp}, the interested reader is
referred to Lemma 6.1 of \cite{Camia2006}.) Together with the convergence of
the clusters, this implies that $(\omega_\eta,L_\eta, \underline{\calB}_D^\eta(\delta))$ converges
in distribution to $(\omega_0,L_0,\underline{\calB}_D^0(\delta))$ as $\eta\rightarrow 0$,
the ordering is simply given by the ordering of the clusters completely contained in $D$ and
a clockwise ordering of the points $s_{in}$ ($s_{in}^{\eta}$).
The above result is valid for any $\delta>0$, so letting $\delta \to 0$ gives the second part of the theorem.
\end{proof}

\section{Limits of counting measures of clusters} \label{sec:cont_meas}
In this section we state and prove Theorem \ref{thm:strong_meas}, a precise and slightly
stronger version of Theorem \ref{thm:main_meas}.
We do this for the more general case of (portions of) clusters $\mathscr{B}_{D}^{\eta}(\delta)$ in a
domain with piecewise smooth boundary $D$.
The convergence of measures of the clusters which are completely contained in $\Ball_k$
follows immediately. For ease of notation we assume $D$ to be $\Ball_1$. 


Let $\frM$ denote the set of finite Borel measures on $\Ball_1$ endowed with the Prokhorov metric. Recall that $\frM$ is a separable metric space.

For $\eta \ge 0$, $n \in \NN$ and $S\subseteq \Ball_1$, we define
\begin{align} \label{eq:def_mu_S,ve}
  \mu_{S,n}^\eta := \sum_{z\in\ZZ[\imag]:\Ball_{3\cdot 3^{-n}/2}(3^{-n} z)\cap S\neq \emptyset}\mu^\eta_{1,A(3^{-n} z;3^{-n}/2,\delta/2-3^{-n})}.
\end{align}
This is the sum of counting measures $\mu_{1,A(z;3^{-n}/2,\delta/2-3^{-n})}^\eta$ such that $z \in 3^{-n}\ZZ[\imag]$ and
the inner box $\Ball_{3^{-n}/2}(z)$ or one of its neighbors has nonempty intersection with $S$.

Simple arguments show the following:
\newcommand{\allClMe}{\underline{\mu}}
\begin{obs} \label{obs:monotone_meas}
  Let $B$ be a Borel subset of $\CC$ and
  $S \subseteq \Ball_1$. Then, for fixed $\eta>0$,
  $\mu^\eta_{S,n}(B)\geq \mu^\eta_{S,n'}(B)$ for 
  $n'\geq n$ with probability $1$.
\end{obs}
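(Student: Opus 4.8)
The plan is to prove the stronger, purely deterministic statement that the inequality holds for \emph{every} realization of $\omega_\eta$, from which the almost sure version follows at once. The underlying reason is that $\mu^\eta_{S,n}$ is, up to a fixed positive prefactor, the counting measure of a finite set of vertices, and passing from scale $n$ to a finer scale $n'\ge n$ can only remove vertices from that set.

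First I would unwind the definitions. Since the half-open boxes $\Ball'_{3^{-n}/2}(3^{-n}z)$, $z\in\ZZ[\imag]$, tile $\CC$, every $v\in\eta V$ lies in exactly one of them; write $c_n(v)$ for its centre, so that $d_\infty(v,c_n(v))\le 3^{-n}/2$. Because the prefactor in \eqref{eq:def_1armMeas} is $\eta^2/\pi_1^\eta(\eta,1)$ irrespective of the annulus, combining \eqref{eq:def_mu_S,ve} and \eqref{eq:def_1armMeas} gives
\begin{align*}
  \mu^\eta_{S,n}=\frac{\eta^2}{\pi_1^\eta(\eta,1)}\sum_{v\in\eta V}\ind_{G_n(v)}\,\delta_v ,
\end{align*}
where $G_n(v)$ is the event that both (i) $\Ball_{3\cdot 3^{-n}/2}(c_n(v))\cap S\neq\es$ and (ii) $v\xleftrightarrow{1}\pd\Ball_{\delta/2-3^{-n}}(c_n(v))$ in $\omega_\eta$; here and below $n$ (hence $n'$) is tacitly in the only regime in which $\mu^\eta_{S,n}$ is used, namely $3^{-n}<\delta/10$, so that the annulus $A(3^{-n}z;3^{-n}/2,\delta/2-3^{-n})$ is nondegenerate. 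Consequently, for every Borel $B\subseteq\CC$,
\begin{align*}
  \mu^\eta_{S,n}(B)=\frac{\eta^2}{\pi_1^\eta(\eta,1)}\,\#\bigl\{v\in\eta V\cap B:\, G_n(v)\text{ holds}\bigr\},
\end{align*}
and the statement reduces to the pointwise inclusion of events $G_{n'}(v)\subseteq G_n(v)$ for all $v\in\eta V$ and all $n'\ge n$.

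The inclusion is elementary $L^\infty$-geometry. The case $n'=n$ is trivial, so suppose $n'\ge n+1$; write $c=c_n(v)$, $c'=c_{n'}(v)$, and note $d_\infty(c,c')\le 3^{-n}/2+3^{-n'}/2\le 3^{-n}$ by the triangle inequality. For condition (i): any $s\in S$ realising $d_\infty(s,c')\le 3\cdot 3^{-n'}/2$ then satisfies $d_\infty(s,c)\le 2\cdot 3^{-n'}+3^{-n}/2\le 3\cdot 3^{-n}/2$ (using $2\cdot 3^{-n'}\le 3^{-n}$), so the enlarged scale-$n$ box around $c$ still meets $S$. For condition (ii): if $v$ lies in a red cluster $\cl^\eta(v)\subseteq\sigma_\eta$ containing a point $w$ with $d_\infty(w,c')\ge\delta/2-3^{-n'}$, then $d_\infty(w,c)\ge\delta/2-3\cdot 3^{-n'}/2-3^{-n}/2\ge\delta/2-3^{-n}$ (using $3\cdot 3^{-n'}\le 3^{-n}$), while $d_\infty(v,c)\le 3^{-n}/2<\delta/2-3^{-n}$; since $\cl^\eta(v)$ is connected, the continuous function $x\mapsto d_\infty(x,c)$ attains the value $\delta/2-3^{-n}$ on it, i.e.\ $\cl^\eta(v)$ meets $\pd\Ball_{\delta/2-3^{-n}}(c)$. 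Thus $G_{n'}(v)\subseteq G_n(v)$, so $\#\{v\in\eta V\cap B:G_{n'}(v)\}\le\#\{v\in\eta V\cap B:G_n(v)\}$ for every Borel $B$, and multiplying by the common positive prefactor gives $\mu^\eta_{S,n'}(B)\le\mu^\eta_{S,n}(B)$ surely, in particular with probability $1$. I expect the only point requiring care to be this last piece of bookkeeping: one must verify that the factor $3/2$ built into the enlarged boxes $\Ball_{3\cdot 3^{-n}/2}$ and the $-3^{-n}$ correction in the outer radius $\delta/2-3^{-n}$ are exactly what is needed to absorb the displacement, of size at most $3^{-n}$, of the reference centre when the scale is coarsened; nothing probabilistic enters.
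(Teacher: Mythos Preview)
Your argument is correct and is exactly the kind of elementary bookkeeping the paper has in mind when it writes ``Simple arguments show the following'' without further detail. You unpack $\mu^\eta_{S,n}$ as a weighted counting measure on the vertex set $\{v\in\eta V:G_n(v)\}$ and verify deterministically that $G_{n'}(v)\subseteq G_n(v)$ for $n'\ge n$; the two $L^\infty$ estimates needed for (i) and (ii) are checked cleanly, and your remark that the enlargement factor $3/2$ and the outer-radius correction $-3^{-n}$ are precisely what absorbs the centre displacement $d_\infty(c,c')\le 3^{-n}/2+3^{-n'}/2$ identifies the only nontrivial point.
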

It is easy to check that, for all fixed $\eta > 0$ and $\calB \in \mathscr{B}_{\Ball_1}^{\eta}(\delta)$, the following limit exists
\begin{align} \label{eq:def_mu_j}
  \lim_{n\rightarrow\infty} \mu^\eta_{\calB,n}
\end{align}
and is actually equal to $\mu_\calB^\eta$ as defined in \eqref{eq:def_mu_C}.
This motivates us to define, for any cluster $\calB \in \mathscr{B}_{\Ball_1}^{0}(\delta)$, $\mu_\calB^0$ by \eqref{eq:def_mu_j}
with $\eta = 0$ if the limit exists, and set $\mu_\calB^0 = 0$ when it does not.

Let $l(\frM)$ denote the set of infinite sequences in $\frM$ with bounded distance from the empty measure. Similarly to \eqref{def:dist_clVect}, we set 
\begin{align*}
  d_l(\underline{\nu},\underline{\phi}) := \sum_{j=1}^\infty \frac{d_P(\nu_j,\phi_j)}{1+d_P(\nu_j,\phi_j)}2^{-j}
\end{align*}
for $\underline{\nu},\underline{\phi}\in l(\frM)$. It is easy to check that $l(\frM)$ is separable,
but not compact. Let $h^{\eta}(\delta) := |\mathscr{B}_{\Ball_1}^{\eta}(\delta)|$, for $\eta \ge 0$.
It follows from Lemma \ref{lem:n_0}, together with the tightness of the number of excursions of diameter
at least $\delta$ in $\Ball_{1}$, that $h^{0}(\delta)$ is a.s. finite.
For $\eta\geq0$, we define $\underline{\mu}^{\eta} = (\mu_j^{\eta})_{j\geq1}$,
the vector of measures $\mu_j^{\eta} := \mu_{\calB_j}^{\eta}$ for $\calB_j \in \mathscr{B}_{\Ball_1}^{\eta}(\delta)$
and $j = 1,2,\ldots,h^{\eta}(\delta)$, and we set $\mu_j^\eta = 0$ for $j > h^{\eta}(\delta)$.
We define $\allClMe^\eta$ similarly to $\allCl^\eta$.

Now we are ready to state the main result from this section.
\begin{thm}\label{thm:strong_meas}
  Suppose that Assumptions \ref{assu:markov}-\ref{assu:existence_full_confInvLimit} hold.
  Let $D$ be a simply connected bounded domain with piecewise smooth boundary.
  Let $\PP$ be a coupling such that $(\omega_\eta,L_\eta) \rightarrow (\omega_0,L_0)$ a.s. as $\eta \rightarrow0$.
  Then $\underline{\mu}_D^\eta(\delta)\rightarrow \underline{\mu}_D^0(\delta)$ in
  probability as $\eta \rightarrow 0$, where $\underline{\mu}_D^0(\delta)$ is a measurable function of the pair
  $(\omega_0,L_0)$. In particular, the triple $(\omega_\eta,L_\eta, \underline{\mu}_D^\eta(\delta))$ converges in
  distribution to $(\omega_0,L_0,\underline{\mu}_D^0(\delta))$ as $\eta\rightarrow 0$.
  The same convergence result holds when $\underline{\mu}_D^\eta(\delta)$ is replaced by $\allClMe_D^\eta$.
\end{thm}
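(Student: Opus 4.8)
The plan is to follow the paper's reduction to $D=\Ball_1$ and to coordinate-wise convergence, and then to compare, cluster by cluster, the discrete normalized counting measure with its continuum counterpart by squeezing both between the mesoscopic approximations $\mu^\eta_{S,n}$ of \eqref{eq:def_mu_S,ve}. First I would work under a coupling $\PP$ with $(\omega_\eta,L_\eta)\ra(\omega_0,L_0)$ almost surely and invoke Theorem \ref{thm:strong_Hausdorff_in_domain}: $\underline{\calB}^\eta_{\Ball_1}(\delta)\ra\underline{\calB}^0_{\Ball_1}(\delta)$ in probability in $d_l$, the numbers $h^\eta(\delta)=|\mathscr{B}^\eta_{\Ball_1}(\delta)|$ converge in probability to the a.s.\ finite $h^0(\delta)$ (a genuine cluster stays at positive Hausdorff distance from the padding element used beyond index $h^\eta(\delta)$), and each continuum cluster $\calB^0_j$ is a measurable function of $(\omega_0,L_0)$. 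Since convergence in $(l(\frM),d_l)$ is coordinate-wise, it suffices to show that, for each fixed $j$, $\mu^\eta_j\ra\mu^0_j$ in probability in the Prokhorov metric $d_P$; in particular this forces us to establish that the limit defining $\mu^0_{\calB^0_j}$ in \eqref{eq:def_mu_j} with $\eta=0$ exists almost surely, so that $\mu^0_j$ equals this limit rather than the default $0$, and is measurable with respect to $(\omega_0,L_0)$.

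The bridge is $\mu^\eta_{\calB^\eta_j,n}$, i.e.\ \eqref{eq:def_mu_S,ve} with $S=\calB^\eta_j$. For $\eta>0$ and every Borel set $B$ one has $\mu^\eta_j(B)\le\mu^\eta_{\calB^\eta_j,n'}(B)\le\mu^\eta_{\calB^\eta_j,n}(B)$ for $n'\ge n$: monotonicity in $n$ is Observation \ref{obs:monotone_meas}, and $\mu^\eta_j\le\mu^\eta_{\calB^\eta_j,n}$ holds because every vertex of $\calB^\eta_j$ lies in the inner box of one of the mesoscopic boxes summed in \eqref{eq:def_mu_S,ve} and, $\calB^\eta_j$ being connected of diameter at least $\delta$, such a vertex automatically carries the one-arm to distance $\delta/2-3^{-n}$ required in \eqref{eq:def_1armMeas}. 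I would then write, for each $n$, $d_P(\mu^\eta_j,\mu^0_j)\le d_P(\mu^\eta_j,\mu^\eta_{\calB^\eta_j,n})+d_P(\mu^\eta_{\calB^\eta_j,n},\mu^0_{\calB^0_j,n})+d_P(\mu^0_{\calB^0_j,n},\mu^0_j)$. For fixed $n$ the middle term is a difference of two finite sums of one-arm measures $\mu^\eta_{1,A}$ indexed by the finite sets of mesoscopic boxes meeting the enlarged boxes $\Ball_{3\cdot3^{-n}/2}(3^{-n}z)$ of $\calB^\eta_j$ resp.\ $\calB^0_j$; the Hausdorff convergence $\calB^\eta_j\ra\calB^0_j$ of Theorem \ref{thm:strong_Hausdorff_in_domain} makes these index sets coincide with probability tending to $1$ (the one borderline case, a continuum cluster meeting an enlarged box only along its boundary, has probability zero, which is standard to check), while Theorem \ref{assu:conv_measures} gives $\mu^\eta_{1,A}\ra\mu^0_{1,A}$ weakly in probability for each fixed annulus; hence the middle term tends to $0$ in $\PP$-probability as $\eta\ra0$, for every fixed $n$. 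A diagonal argument then closes the proof, provided the first and third terms can be made uniformly (in small $\eta$, resp.\ for the continuum) small by taking $n$ large.

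This last point is the crux and the step I expect to be the main obstacle. I would prove that there are $C=C(\delta)$ and $\lambda'>0$ such that $\EE[\mu^\eta_{\calB^\eta_j,n}(\Ball_1)-\mu^\eta_j(\Ball_1)]\le C\,3^{-\lambda'n}$ for all $n$ and all $\eta\in[0,3^{-n})$, together with the matching second-moment statement (which follows from this bound, the nonnegativity of the difference, and the $L^3$ estimate of Lemma \ref{lem:L3bound}); since the difference is a nonnegative measure, its total mass dominates $d_P$, so this controls the first — and, in its $\eta=0$ form, the third — term. The difference counts renormalized $\eta$-vertices $v\notin\calB^\eta_j$ lying in the inner box of some mesoscopic box touched by $\calB^\eta_j$ and themselves carrying a one-arm to scale $\asymp\delta$. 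For such $v$: (i) $\calB^\eta_j$, being connected of diameter at least $\delta$, passes within $O(3^{-n})$ of $v$ only if the mesoscopic box of $v$ carries a one-arm from scale $3^{-n}$ to scale $\asymp\delta$, of probability $\le C(\delta)\,3^{-\lambda'n}$ by the polynomial decay of the one-arm probability (a standard consequence of Assumption \ref{assu:RSW}); (ii) each of the $\asymp(3^{-n}/\eta)^2$ vertices of such a box carries its own one-arm to scale $\asymp\delta$ with probability $\asymp\pi^\eta_1(\eta,\delta)$. Reassembling via quasi-multiplicativity (Lemma \ref{assu:q_mult}) and the renormalization $\eta^2/\pi^\eta_1(\eta,1)$ yields the claimed bound. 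The genuinely new difficulty, relative to the single-measure theorem of \cite{Garban2010}, is that the random index set of touched mesoscopic boxes is positively correlated with the one-arm events of the vertices it carries, so its first moment cannot be factored naively; I would handle this either by conditioning on a red circuit around each box carrying $\calB^\eta_j$'s arm and invoking the Domain Markov Property (Assumption \ref{assu:markov}) and FKG (Assumption \ref{assu:strongPosAssoc}), as in the coupling argument sketched after Theorem \ref{assu:conv_measures}, or by H\"older's inequality against the $L^3$ bound of Lemma \ref{lem:L3bound}.

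Finally, the $\eta=0$ analogues of the monotonicity and of the error bound — needed for the existence of $\mu^0_{\calB^0_j}=\lim_{n}\mu^0_{\calB^0_j,n}$ and for the third term above — follow by passing the discrete statements to the limit: Theorem \ref{assu:conv_measures} gives $\mu^\eta_{\calB^\eta_j,n}\ra\mu^0_{\calB^0_j,n}$ weakly in probability for fixed $n$, and the uniform $L^3$ bound of Lemma \ref{lem:L3bound} supplies the uniform integrability needed to pass the total masses, hence the monotonicity in $n$ and the bound $\mu^0_{\calB^0_j,n}(\Ball_1)-\mu^0_{\calB^0_j}(\Ball_1)\le C\,3^{-\lambda'n}$, to $\eta=0$. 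Measurability of $\mu^0_j=\mu^0_{\calB^0_j}$ is then immediate, since each $\mu^0_{\calB^0_j,n}$ is a finite sum of the measures $\mu^0_{1,A}$ — measurable with respect to $\omega_0$ by Theorem \ref{assu:conv_measures} — over an index set that is a measurable function of $(\omega_0,L_0)$ because $\calB^0_j$ is, and $\mu^0_j$ is their limit. The statement with $\allClMe^\eta_D$ in place of $\underline{\mu}^\eta_D(\delta)$ follows by applying the above with $\delta=3^{-m}$, $m\ge1$, and assembling, exactly as $\allCl^\eta_1$ was treated in the proof of Theorem \ref{thm:strong_Hausdorff}.
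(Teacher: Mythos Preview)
Your high-level architecture matches the paper's: reduce to $D=\Ball_1$, work coordinate-wise under the coupling, sandwich each cluster measure between the mesoscopic approximations $\mu^\eta_{S,n}$ of \eqref{eq:def_mu_S,ve}, pass the middle term to the limit via Theorem~\ref{assu:conv_measures}, and use Observation~\ref{obs:monotone_meas} to define $\mu^0_{\calB}$. The gap is in the crux you yourself flag: the bound on the error $\mu^\eta_{\calB^\eta_j,n}-\mu^\eta_j$. Your proposed first-moment estimate via one-arm decay alone does not work. The event ``the mesoscopic box of $v$ carries a one-arm to scale $\asymp\delta$'' in (i) is already implied by the event ``$v$ carries a one-arm to scale $\asymp\delta$'' in (ii); both are increasing, so FKG gives the wrong inequality for factoring, and if you simply drop the constraint $v\notin\calB^\eta_j$ you are bounding $\EE[\mu^\eta_{\calB^\eta_j,n}(\Ball_1)]$, which is of order one, not $O(3^{-\lambda'n})$. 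Your circuit fix does not rescue the factoring of one-arms: on the event of a red circuit in $A(v;3\varepsilon,R)$, any red arm from $v$ merges with $\calB^\eta_j$ and $v$ drops out of the excess, so the excess lives on the \emph{absence} of a red circuit (a blue crossing), which together with the red arm of $v$ and the \emph{separate} red arm of $\calB^\eta_j$ forces two red and two blue arms across the annulus --- a four-arm event, not a product of one-arms. The H\"older suggestion against Lemma~\ref{lem:L3bound} is not spelled out enough to carry weight, and no obvious pairing produces the needed decay.

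The paper's approach, Lemma~\ref{lem:bound_err}, makes this four-arm geometry explicit and gives a pathwise (not first-moment) bound: whenever $d_H(S,\calB)<\varepsilon/2$, the total variation $\|\mu^\eta_{S,n}-\mu^\eta_\calB\|_{TV}$ is dominated by the one-arm mass carried by the pivotal boxes $\Piv^\eta(\varepsilon,\varepsilon^\beta)=\{\Ball:\omega_\eta\in\calA_{(1010)}(\cdot;3\varepsilon/2,\varepsilon^\beta)\}$, plus a boundary correction $\nu^\eta_{\varepsilon^\beta}$ for boxes near $\partial\Ball_1$ where two pieces of a full-plane cluster may approach without a genuine four-arm. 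The size of $\Piv^\eta$ is then controlled by Markov's inequality, the mass in each box by the exponential tail of Lemma~\ref{lem:BCKS}, and the whole estimate closes via Lemma~\ref{assu:1arm_exp_less_4arm}, i.e.\ the polynomial growth of $\pi_1/\pi_4$, together with quasi-multiplicativity. This is the missing input in your argument; once the four-arm event is identified, a first-moment version along your lines can also be pushed through, but the one-arm probability alone cannot deliver the decay.
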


The same conclusions hold for the measures of the clusters in $\hat{\CC}$
which intersect a bounded domain $D$, that is, keeping the information of connections outside $D$.

\begin{rem}
    Lemma \ref{lem:big_then_thick} below shows that clusters whose diameter is at least $\delta>0$ have nonzero mass.
    Thus the convergence in Theorem \ref{thm:strong_meas} implies convergence in the metric analogous to \eqref{eq:def_coll_dist}
    based on the Prokhorov metric $d_P$, and so Theorem \ref{thm:main_meas} is proved.
\end{rem}

Let us first show that Theorem \ref{thm:main_Hausdorff_fullPlane} follows easily
from Theorems \ref{thm:strong_Hausdorff} and \ref{thm:strong_meas}.
\begin{proof}[Proof of Theorem \ref{thm:main_Hausdorff_fullPlane}]
The proof is analogous to the proof of Theorem 6 of \cite{Camia2006}, so we only give a sketch.
Let $D$ be any bounded subset of ${\CC}$ and $k_1>k_2$ be such that $D \subset \Ball_{k_2}$.
The measures $\PP_{k_1}$ and $\PP_{k_2}$ can be coupled in such a way that they coincide inside $D$,
in the sense that they induced the same marginal distribution on $(\clColl_D^0,\muColl_D^0)$.
This is because they are obtained from the scaling limit of the same full-plane lattice measure $\PP_{\eta}$.
The consistency relations needed to apply Kolmogorov's extension theorem are then satisfied, which insures
the existence of a limit $\PP$.
\end{proof}

The following lemma plays an important role in the proof of Theorem \ref{thm:strong_meas}. Let $||\nu||_{TV}$ denote the total variation of a signed measure
$\nu$.
\begin{lemma} \label{lem:bound_err}
  Suppose that Assumptions \ref{assu:markov}-\ref{assu:RSW} hold. Let $\delta > 0$. Then there are positive
  constants $C = C(\delta),\varphi$ such that, for $n \in \NN$ and $\eta > 0$ with $0< 10\eta < 3^{-n} < \delta/10$,
  \begin{align*}
    \PP_{\eta}(\exists \calB\in\mathscr{B}_{\Ball_1}^{\eta}(\delta), S\subseteq \Ball_1 \text{ s.t.}\,\, d_{H}(\calB,S) <\varepsilon/2, ||\mu_\calB^\eta - \mu_{S,n}^{\eta} ||_{TV} \geq \varepsilon^{\varphi})\leq C\cdot \varepsilon^{\varphi}
  \end{align*}
  where $\varepsilon = 3^{-n}$.
\end{lemma}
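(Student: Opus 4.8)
The plan is to compare $\mu_{\calB}^{\eta}$ and $\mu_{S,n}^{\eta}$ vertex by vertex; throughout write $\varepsilon=3^{-n}$ and fix $\calB\in\mathscr{B}_{\Ball_{1}}^{\eta}(\delta)$. For the easy inclusion I would first note that on $\{d_{H}(\calB,S)<\varepsilon/2\}$ every vertex $v\in\calB\cap\eta V$ is counted by $\mu_{S,n}^{\eta}$, with weight $\eta^{2}/\pi_{1}^{\eta}(\eta,1)$: such a $v$ lies in the half-open box $\Ball'_{\varepsilon/2}(\varepsilon z)$ of a unique $z$, the block $\Ball_{3\varepsilon/2}(\varepsilon z)$ around it meets $S$ (because $v$ is within $\varepsilon/2$ of $S$), so $z$ appears in the sum \eqref{eq:def_mu_S,ve}, and since $\diam\calB\geq\delta$ there is a red path inside $\calB$ from $v$ to distance $\geq\delta/2$, which in particular exits $\Ball_{\delta/2-\varepsilon}(\varepsilon z)$, realising the arm event in the definition of $\mu^{\eta}_{1,A(\varepsilon z;\varepsilon/2,\delta/2-\varepsilon)}$. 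Hence $\mu_{\calB}^{\eta}\leq\mu_{S,n}^{\eta}$ and $||\mu_{\calB}^{\eta}-\mu_{S,n}^{\eta}||_{TV}=\big(\eta^{2}/\pi_{1}^{\eta}(\eta,1)\big)\,|\calN|$, where $\calN=\calN(\calB,S)$ is the finite set of vertices counted by $\mu_{S,n}^{\eta}$ that do not lie in $\calB$. Every $v\in\calN$ has a red arm to distance $\geq 3\delta/10$ from itself and satisfies $\dist(v,\calB)\leq C\varepsilon$ for an absolute constant $C$.

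The core of the proof is a deterministic, planar classification of $\calN$ that does not refer to $S$. Fix $v\in\calN\cap\Ball_{1}$, put $r:=\dist(v,\calB)\in[\eta,C\varepsilon]$ and $d_{v}:=\dist(v,\partial\Ball_{1})$, and set $\rho_{v}:=\min(\delta/4,\,d_{v}/2)$. If $d_{v}\leq c_{0}\varepsilon$ (with $c_{0}:=100C$) put $v$ in the family of \emph{boundary vertices}; this family depends on neither $\calB$ nor $S$, only on the red arm from $v$ and on $\dist(\cdot,\partial\Ball_{1})$. Otherwise $d_{v}>c_{0}\varepsilon\geq 100r$, and I claim $v$ is then the centre of a polychromatic four-arm event $\calA_{(1010)}(v;2r,\rho_{v})$ (of probability $\pi_{4}^{\eta}(2r,\rho_{v})$) together with a red one-arm event $\calA_{(1)}(v;\eta,2r)$; call these \emph{bulk vertices}. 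To prove the claim, let $\calB_{v}$ be the connected component of $v$ in (its cluster)$\,\cap\,\Ball_{1}$; since $v\notin\calB$ this component is distinct from, hence disjoint from, $\calB$. Both $\calB$ and $\calB_{v}$ meet $\Ball_{2r}(v)$ and reach $\partial\Ball_{\rho_{v}}(v)$ while staying inside $\Ball_{1}$ — $\calB$ because $\diam\calB\geq\delta$ and $r,\rho_{v}\ll\delta$, and $\calB_{v}$ because the red arm out of $v$, stopped at its first exit of $\Ball_{1}$ or at distance $3\delta/10$, stays inside $\overline{\Ball_{\rho_{v}}(v)}\subset\Ball_{1}$. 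Thus $\calB$ and $\calB_{v}$ furnish two disjoint red crossings of the annulus $A(v;2r,\rho_{v})$; these cut the annulus into two topological quadrilaterals, in each of which a red path joining the two red crossings cannot exist (it would connect $\calB$ and $\calB_{v}$ inside $\Ball_{1}$, contradicting that they are distinct components), so by planar duality each quadrilateral carries a blue crossing of the annulus, producing colour sequence $(1010)$; the inner red arm is the truncation of $v$'s arm to $\Ball_{2r}(v)$. I expect this step to be the main obstacle — one must extract a genuine four-arm event uniformly in all admissible $(\calB,S,v)$ and in particular must control clusters that are split by $\partial\Ball_{1}$ and then run alongside it, for which one invokes Lemma \ref{assu:arm_exp} as in Section \ref{sec:clustersinSubset} (the mechanism being the same as in the proof of Lemma \ref{lem:good_sub}).

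It then remains to bound the two families, using that $\calN(\calB,S)$ is contained in their union (the union being $S$-free, and the boundary family even $\calB$-free). For the boundary family, a first-moment computation combined with quasi-multiplicativity (Lemma \ref{assu:q_mult}) and an RSW lower bound on $\pi_{1}^{\eta}(3\delta/10,1)$ gives $\big(\eta^{2}/\pi_{1}^{\eta}(\eta,1)\big)\,\EE\big[\,\#(\text{boundary vertices})\,\big]\leq C(\delta)\,\varepsilon$ (one may also quote Lemma \ref{lem:BCKS} directly). For the bulk family, I would sum over the $O(\eta^{-2})$ vertices of $\Ball_{1}\cap\eta V$ and over the $O(\log(1/\eta))$ dyadic values of $r\in[\eta,C\varepsilon]$ and of $d_{v}$, bounding $\PP\big(\calA_{(1)}(v;\eta,2r)\cap\calA_{(1010)}(v;2r,\rho_{v})\big)\leq C\,\pi_{1}^{\eta}(\eta,2r)\,\pi_{4}^{\eta}(2r,\rho_{v})$ by a mixed-arm version of quasi-multiplicativity (which follows from Assumptions \ref{assu:markov}--\ref{assu:RSW} exactly as Lemma \ref{assu:q_mult}, cf.\ \cite{Garban2010}); substituting $\pi_{4}^{\eta}(a,b)\leq C\,\pi_{1}^{\eta}(a,b)(a/b)^{\lambda}$ (Lemma \ref{assu:1arm_exp_less_4arm}) and $\pi_{1}^{\eta}(\eta,b)/\pi_{1}^{\eta}(\eta,1)\leq C\,b^{-\lambda_{1}}$ with $\lambda_{1}<1$ (Lemma \ref{assu:1arm_exp_ubound}), the resulting multiple geometric sum is bounded by $C(\delta)\,\varepsilon^{\varphi_{0}}$ with $\varphi_{0}:=\min(\lambda,\,1-\lambda_{1})>0$. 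Combining, $\big(\eta^{2}/\pi_{1}^{\eta}(\eta,1)\big)\,\EE|\calN(\calB,S)|\leq C(\delta)\big(\varepsilon+\varepsilon^{\varphi_{0}}\big)$ uniformly in admissible $\calB,S$, so Markov's inequality applied to the $S$-free dominating set yields the assertion with $\varphi:=\min(1,\varphi_{0})/2$ — the finitely many $n$ with $3^{-n}\in[\delta/100,\delta/10)$ being absorbed into $C(\delta)$ by the trivial bound $\PP(\cdot)\leq 1$.
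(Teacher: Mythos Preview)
Your approach is correct and takes a genuinely different route from the paper's. Both proofs start from the easy inclusion $\mu_{\calB}^{\eta}\le\mu_{S,n}^{\eta}$ and then locate four-arm events near over-counted vertices, but the paper works at the \emph{box} level with a single auxiliary mesoscopic scale $\varepsilon^{\beta}$, $\beta=\lambda/(2(\lambda+\lambda_{1}))$: it introduces the set $\Piv^{\eta}(\varepsilon,\varepsilon^{\beta})$ of $\varepsilon$-boxes carrying a four-arm event to distance $\varepsilon^{\beta}$ and a boundary strip of width $\varepsilon^{\beta}$, and bounds
\[
\|\mu_{S,n}^{\eta}-\mu_{\calB}^{\eta}\|_{TV}\le\|\nu_{\varepsilon^{\beta}}^{\eta}\|_{TV}+|\Piv^{\eta}(\varepsilon,\varepsilon^{\beta})|\cdot\sup_{z}\|\mu_{1,A(\varepsilon z;3\varepsilon/2,3\varepsilon)}^{\eta}\|_{TV},
\]
controlling $|\Piv^{\eta}|$ by a first moment and the $\sup$ of box masses via the exponential tail of Lemma~\ref{lem:BCKS}. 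You instead work at the \emph{vertex} level with variable radii $2r=2\dist(v,\calB)$ and $\rho_{v}=\min(\delta/4,d_{v}/2)$, take the boundary strip of width only $c_{0}\varepsilon$, and close everything with a single first-moment computation using mixed $1$-arm/$4$-arm quasi-multiplicativity; no intermediate scale and no concentration input beyond Markov's inequality is needed. Your argument is thus more elementary and yields the cleaner exponent $\varphi_{0}=\min(\lambda,1-\lambda_{1})$, while the paper's is more modular (it separates ``how many pivotal boxes'' from ``how much mass per box''). One remark: the worry you flag about pieces split by $\partial\Ball_{1}$ is not actually an obstacle --- since the annulus $A(v;2r,\rho_{v})$ lies strictly inside $\Ball_{1}$ and $\calB,\calB_{v}$ are distinct connected components of $\sigma_{\eta}\cap\Ball_{1}$, planar duality in the annulus gives the two blue arms deterministically, with no appeal to Lemma~\ref{assu:arm_exp}.
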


\begin{proof}[Proof of Theorem \ref{thm:strong_meas} given Lemma \ref{lem:bound_err}]
Let $\PP$ be as in Theorem \ref{thm:strong_meas}, $\delta>0$.
It follows from Theorem \ref{thm:strong_Hausdorff_in_domain} that the clusters
in $\mathscr{B}_{\Ball_1}^{\eta}(\delta)$ converge in probability as $\eta \to 0$.

Moreover, Theorem \ref{assu:conv_measures} shows that each of the measures
\begin{align*}
  \mu^{\eta}_{1,A(3^{-n} z;3^{-n}/2,\delta/2-3^{-n})} \quad \text{for } n\geq1 \text{ and } z\in\ZZ[\imag] \text{ with } 3^{-n}z\in\Ball_1 .
\end{align*}
converges in probability in the Prokhorov metric, as $\eta\ra0$, to the analogous measure where $\eta$ is replaced by $0$.

This
implies that, for all fixed $n$ and $S \subset \Ball_1$, $\mu_{S,n}^\eta \ra \mu_{S,n}^0$ weakly in probability as $\eta\ra 0$.
The monotonicity of the measures $\mu_{S,n}^\eta$ in $n$ for a fixed subset $S$ and fixed $\eta$ (Observation \ref{obs:monotone_meas})
carries through to the limit as $\eta\ra0$, thus the weak limit $\mu_S^0 = \lim_{n\ra\infty}\mu_{S,n}^0$ exists almost surely.
Furthermore, since each of the measures $\mu_{S,n}^0$ is a function of $(\omega_0,L_0)$ and is a.s. finite, we conclude that $\mu_S^0$
is a.s. finite and is a function of $(\omega_0,L_0)$.

Let $\calB$ be the $j$-th element of $\underline{\calB}_{\Ball_1}^{0}(\delta)$ and
 let $\calB_{j}^{\eta}$ be the $j$-th element of $\underline{\calB}_{\Ball_1}^{\eta}(\delta)$,
where $\underline{\calB}_{\Ball_1}^{0}(\delta)$ and $\underline{\calB}_{\Ball_1}^{0}$ are the sequences
of clusters that appear in Theorem \ref{thm:strong_Hausdorff_in_domain}.
Fix $\kappa >0$. Lemma \ref{lem:bound_err} implies that, for some constants $\varphi,C=C(\delta)$, for
$\kappa > \varepsilon^{\varphi}$, $\eta < \varepsilon/10$ and $3^{-n} = \varepsilon$, we have 
\begin{multline} \label{eq:pf_str_meas}
  \PP(d_P(\mu_{\calB}^0,\mu_{\calB_{j}^{\eta}}^{\eta}) > 3\kappa) \\
  \begin{aligned}
    \leq & \,\, \PP(d_P(\mu_\calB^0,\mu_{\calB,n}^0) > \kappa) + \PP(d_P(\mu_{\calB,n}^{0},\mu_{\calB,n}^{\eta}) > \kappa)\\
         & \quad + \PP(||\mu_{\calB,n}^{\eta} - \mu_{\calB_j^{\eta}}^{\eta} ||_{TV} > \kappa, d_{H}(\calB, \calB_{j}^{\eta}) < \varepsilon/2) + \PP(d_{H}(\calB, \calB_{j}^{\eta}) \ge \varepsilon/2) \\
    \leq & \,\, \PP(d_P(\mu_\calB^0,\mu_{\calB,n}^0) > \kappa) + \PP(d_P(\mu_{\calB,n}^{0},\mu_{\calB,n}^{\eta}) > \kappa) \\
	 & \quad + C\kappa + \PP(d_{H}(\calB, \calB_{j}^{\eta}) \ge \varepsilon/2)\\
  \end{aligned}  
\end{multline}
where
$d_P$ denotes the Prokhorov distance of Borel measures.

Now we take the limit first as $\eta\ra0$ then as $n\ra\infty$ in \eqref{eq:pf_str_meas}. From the arguments above and Theorem \ref{thm:strong_Hausdorff_in_domain} we deduce that
\begin{align*}
  \lim_{\eta\ra0}\PP(d_P(\mu_\calB^0,\mu_{\calB_{j}^{\eta}}^{\eta}) > 3\kappa) \leq C\kappa
\end{align*}
for all $\kappa>0$. Thus the measures $\mu_{\calB_{j}^{\eta}}^{\eta}$ tend to $\mu_\calB^0$ weakly in probability as $\eta\ra0$.

Recall that the convergence in $l^\infty(\frM)$ is equivalent to coordinate-wise convergence. Thus $\underline{\mu}^{\eta}(\delta)\ra\underline{\mu}^0(\delta)$ in\
probability
as $\eta\ra0$. We have already proved in the lines above that $\underline{\mu}^0(\delta)$ is a measurable function of $(\omega_0,L_0)$,
thus we deduced the results in Theorem \ref{thm:strong_meas} for $\underline{\mu}^\eta(\delta)$.

The results for $\allClMe^\eta$ follow from the lines above by arguments similar to those at the end of the proof of Theorem \ref{thm:strong_Hausdorff}.
This concludes the proof of Theorem \ref{thm:strong_meas}.
\end{proof}

We finish this section by proving Lemma \ref{lem:bound_err} above. Its proof relies on Lemma \ref{lem:BCKS}.

\begin{proof}[Proof of Lemma \ref{lem:bound_err}]
  Let $\eta,n,\delta$ as in Lemma \ref{lem:bound_err}. To simplify the notation, we set $\varepsilon := 3^{-n}$, 
  $\delta':=\delta/2-3\varepsilon$ and $\beta :=\frac{\lambda}{2(\lambda+\lambda_{1})}$, with  $\lambda_{1}$
  as in Lemma \ref{assu:1arm_exp_ubound} and $\lambda$ as in Lemma \ref{assu:1arm_exp_less_4arm}.

We define the following collection of `pivotal' boxes:
  \begin{align*}
    \Piv^\eta(\varepsilon,\varepsilon^{\beta}) :=  \{\Ball_{\varepsilon/2}(\varepsilon z)\,|\, z\in\ZZ[\imag]\cap\Ball_{\varepsilon^{-1}+1}; \omega_\eta\in\calA_{(1010),\emptyset}(\varepsilon z; 3\varepsilon/2,\varepsilon^{\beta})\}.
  \end{align*}
Furthermore, we let $\nu_{\varepsilon^\beta}^\eta$ denote the normalized counting measure of the vertices close to the boundary of $\Ball_1$ which have an open arm to distance $5\varepsilon^{\beta}$:
  \begin{align}\label{eq:def:nu_eta_eps}
   \nu^\eta_{\varepsilon^{\beta}} := \frac{\eta^2}{\pi_{1}^{\eta}(\eta,1)} \sum_{v\in A(0;1-\varepsilon^{\beta},1) \cap \eta V} \delta_{v}\ind\{v\xlra{(1)} \partial \Ball_{5\varepsilon^{\beta}}(v)\}.
  \end{align}
Roughly speaking, $\nu_{\varepsilon^\beta}^\eta$ is introduced to account for boxes near $\partial\Lambda_1$ where two large pieces
of a cluster come close to each other. Such boxes are not necessarily `pivotal' since the two large pieces may connect just outside $\Lambda_1$,
in which case the boxes are not counted in $\Piv^\eta$.
  
  Take $\calB\in\mathscr{B}_{\Ball_1}^{\eta}(\delta)$ and $S\subseteq \Ball_1$ such
  that $d_{H}(S,\calB)<\varepsilon/2$.
  Note that $d_{H}(S,\calB)<\varepsilon/2$ implies that the counting measure $\mu^\eta_{S,n}$
  is larger than or equal to the counting measure $\mu^\eta_{\calB}$.
  As a consequence it is easy to check that, for these $\calB$ and $S$, we have
  \begin{multline} \label{eq:pf_lem_bound_err_1}
    ||\mu^\eta_{S,n} - \mu^\eta_{\calB}||_{TV} \\
    \begin{aligned}
    & \leq ||\nu^{\eta}_{\varepsilon^{\beta}}||_{TV} + \sum_{z\in \ZZ[\imag] \,:\, \Ball_{\varepsilon/2}(\varepsilon z)\in \Piv^\eta(\varepsilon,\varepsilon^{\beta})} ||\mu^\eta_{1,A(\varepsilon z;3\varepsilon/2,\delta')}||_{TV} \\
    & \leq  ||\nu^{\eta}_{\varepsilon^{\beta}}||_{TV} + |\Piv^\eta(\varepsilon,\varepsilon^{\beta})|
      \sup_{z\in\ZZ[\imag]\cap\Ball_{\varepsilon^{-1}+1}} ||\mu^\eta_{1,A(\varepsilon z;3\varepsilon/2,3\varepsilon)}||_{TV}.
    \end{aligned}
  \end{multline}
  Letting $a^\eta_\varepsilon := \varepsilon^{-(2+\varphi)}\pi^\eta_4(3\varepsilon/2,\varepsilon^{\beta})$,
  from \eqref{eq:pf_lem_bound_err_1} we deduce that
  \begin{multline} \label{eq:pf_lem_bound_err_2}
    \PP_\eta(\exists \calB\in\mathscr{B}_{\Ball_1}^{\eta}(\delta), S \subseteq \Ball_1 \text{ s.t. }
    d_{H}(S,\calB)<\varepsilon/2, ||\mu_\calB^\eta - \mu_{S,n}^\eta ||_{TV} \geq \varepsilon^{\varphi}) \\
      \begin{aligned}
      &\leq\PP_{\eta}(||\nu^{\eta}_{\varepsilon^{\beta}}||_{TV} \ge \frac{1}{2}\varepsilon^{\varphi}) + \PP_\eta(|\Piv^\eta(\varepsilon,\varepsilon^{\beta})| \geq a^\eta_\varepsilon)\\
      & + \quad \PP_\eta(\sup_{z\in \Ball_{\varepsilon^{-1}+1}\cap \ZZ[\imag]} ||\mu^\eta_{1,A(\varepsilon z;3\varepsilon/2,3\varepsilon)}||_{TV} > \varepsilon^{\varphi}/2a_{\varepsilon}^{\eta}),
      \end{aligned}
  \end{multline}
for some $\varphi$ to be fixed later.
  By the Markov inequality, we have
  \begin{align} \label{eq:pf_lem_bound_err_3}
    \PP_\eta(|\Piv^\eta(\varepsilon,\varepsilon^{\beta})| \geq a^\eta_\varepsilon) \leq C_1 \varepsilon^{\varphi}
  \end{align}
  for some positive constant $C_1 = C_1(\delta)$ for all $\varphi>0$.

  Now we bound the third term in \eqref{eq:pf_lem_bound_err_2}. With some positive constants $C_2,C_3,C_4$ depending on $\delta$,
  and recalling Definition \ref{def:V_a}, we have that
  \begin{multline}\label{eq:pf_lem_bound_err_6} 
    \PP_\eta(\sup_{z\in \Ball_{\varepsilon^{-1}+1}\cap \ZZ[\imag]} ||\mu^\eta_{1,A(\varepsilon z;3\varepsilon/2,3\varepsilon)}||_{TV} > \varepsilon^{\varphi}/ 2a^\eta_\varepsilon) \\
    \begin{aligned}
      &\leq C_2 \varepsilon^{-2} \PP_\eta(||\mu^\eta_{1,A(3\varepsilon/2,3\varepsilon)}||_{TV} > \varepsilon^{\varphi} / 2a^\eta_\varepsilon) \\
      &= C_2 \varepsilon^{-2} \PP_\eta(|\calV^\eta_{3\varepsilon}|\geq \varepsilon^{\varphi} \eta^{-2}\pi_1^\eta(\eta,1) / 2a^\eta_\varepsilon)\\
      &\leq C_2 \varepsilon^{-2} \exp\left( -C_3 \varepsilon^{2\varphi}\frac{\pi_1^\eta(\eta,1)}{\pi_1^\eta(\eta,3\varepsilon) \pi_4^\eta(3\varepsilon/2,\varepsilon^{\beta})}\right)\\
      &\leq C_2 \varepsilon^{-2} \exp\left( -C_4 \varepsilon^{2\varphi}\frac{\pi_1^\eta(3\varepsilon,\varepsilon^{\beta})}{\pi_4^\eta(3\varepsilon/2,\varepsilon^{\beta})}\pi_1^\eta(\varepsilon^{\beta},1)\right),
    \end{aligned}
  \end{multline}
  where, in the second inequality, we used Lemma \ref{lem:BCKS} and, in the last line, we
  used Lemma \ref{assu:q_mult} twice. Lemmas \ref{assu:1arm_exp_ubound} and
  \ref{assu:1arm_exp_less_4arm}, \eqref{eq:pf_lem_bound_err_6} and the choice of $\beta$ give that
  \begin{align} \label{eq:pf_lem_bound_err_4}
    \PP_\eta(\sup_{z\in \Ball_{\varepsilon^{-1}+1}\cap \ZZ[\imag]} ||\mu^\eta_{1,A(\varepsilon z;3\varepsilon/2,3\varepsilon)}||_{TV} > \varepsilon^{\varphi}/ 2a^\eta_\varepsilon)
    & \leq C_2 \varepsilon^{-2}\exp(-C_5\varepsilon^{2\varphi+\lambda(\beta-1)+\lambda_1\beta})\nonumber\\
    & = C_2 \varepsilon^{-2}\exp(-C_5\varepsilon^{2\varphi-\lambda/2})
  \end{align}
  with $C_5 > 0$. Computations similar to those above give the following upper bound for the second term in \eqref{eq:pf_lem_bound_err_2}:

  \begin{align}
     \PP_{\eta}(||\nu^{\eta}_{\varepsilon^{\beta}}||_{TV} \geq \frac{1}{2}\varepsilon^{\varphi})
     & \leq C_{6} \varepsilon^{-\beta} \exp{\left(-C_{7}  \varepsilon^{\varphi-\beta} \frac{\pi_{1}^{\eta}(\eta,1)}{\pi_{1}^{\eta}(\eta,\varepsilon^{\beta})}\right)} \nonumber \\
     & \leq C_6 \varepsilon^{-\beta} \exp{\left(-C_{8} \varepsilon^{\varphi-\beta+\beta\lambda_1}\right)} \label{eq:pf_lem_bound_err_5}
  \end{align}
  for suitable constants $C_6,C_7,C_8$.
  We set $\varphi = \frac{\lambda \wedge (\beta(1-\lambda_{1}))}{4} >0$. A combination of \eqref{eq:pf_lem_bound_err_2}, \eqref{eq:pf_lem_bound_err_3},
  \eqref{eq:pf_lem_bound_err_4} and \eqref{eq:pf_lem_bound_err_5} finishes the proof of Lemma \ref{lem:bound_err}.
\end{proof}

\section{Properties of the continuum clusters and their normalized counting measures} \label{sec:props}
We start with the connections between the clusters and their counting measures. The first 
result of the section shows, roughly speaking, that the scaling limit of the clusters as 
closed sets contains the same information as their normalized counting measures.  Then we 
show conformal invariance of the clusters and conformal covariance of their normalized counting 
measures.
\subsection{Basic properties} \label{ssec:basic_props}
Recall the notation $\clColl^\eta(\delta)$ from \eqref{eq:def_full_clColl}. We set $\clColl^0 = \bigcup_{n=1}^\infty \clColl^0(3^{-n})$.
For $\cl\in\clColl^0$ and $0<\psi<1/2$ we write 
\begin{align} \label{eq:mu_recov}
 \tilde{\mu}_{\calC,\psi}^0
  & := \frac{4\psi^2}{\pi^0_1(2\psi,1)}\sum_{z\in\ZZ[\imag]:\Ball_{\psi/2}(\psi z)\cap\calC\neq\emptyset} \delta_{\psi z}.
\end{align}

\begin{thm} \label{thm:clus-meas}
  Suppose that Assumptions \ref{assu:markov}-\ref{assu:existence_full_confInvLimit} hold.
  Then $\supp(\mu_{\calC}^0) = \calC$ for all $\calC\in\clColl^0$. Moreover,
  \begin{align} \label{eq:thm_clus-meas}
    \tilde\mu_{\calC,\psi}^0 \ra \mu_{\cl}^0 \text{ weakly in probability as } \psi\ra0
  \end{align}
  for all $\calC\in\clColl^0$.
\end{thm}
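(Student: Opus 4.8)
The plan is to prove the two assertions separately, both by leveraging the approximation machinery already developed. For the support statement $\supp(\mu^0_\calC) = \calC$, I would first show $\supp(\mu^0_\calC) \subseteq \calC$: by construction, $\mu^\eta_\calB$ is a limit (Observation \ref{obs:monotone_meas} and the discussion around \eqref{eq:def_mu_j}) of the measures $\mu^\eta_{\calB,n}$, which are supported on boxes meeting $\calB$; passing to the limit $\eta \to 0$ via Theorem \ref{thm:strong_meas} and then $n \to \infty$, the mass of $\mu^0_\calC$ outside any fixed neighborhood of $\calC$ vanishes, since $\calC = \bigcap_n U(H^0_{j,n})$ and the boxes meeting $U(H^0_{j,n})$ shrink to $\calC$. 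For the reverse inclusion $\calC \subseteq \supp(\mu^0_\calC)$, the key point is that every point of $\calC$ carries positive mass in every neighborhood; this follows from an RSW lower bound showing that a portion of cluster of macroscopic diameter has mass bounded below — concretely, for $x \in \calC$ and small $r>0$, the cluster $\calC$ restricted near $x$ contains (with the relevant conditioning) a subconfiguration forcing $\mu^0_\calC(\Ball_r(x)) > 0$ in probability, using positive association and the convergence of one-arm measures in Theorem \ref{assu:conv_measures}. This is essentially the content of the ``Lemma \ref{lem:big_then_thick}'' referred to in the remark after Theorem \ref{thm:strong_meas} (clusters of diameter $\geq \delta$ have nonzero mass), applied locally; I would cite or reprove that lemma here.

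For the convergence \eqref{eq:thm_clus-meas}, the strategy is to recognize $\tilde\mu^0_{\calC,\psi}$ as a ``coarse-grained'' version of $\mu^0_\calC$ at scale $\psi$ and to compare it with the already-controlled approximants $\mu^0_{\calC,n}$ with $3^{-n} \asymp \psi$. First I would establish the analogue of \eqref{eq:def_mu_j} at the continuum level: for fixed $\psi$, the discrete analogue $\tilde\mu^\eta_{\calC,\psi}$ (summing $\delta_{\psi z}$ weighted by $4\psi^2/\pi^\eta_1(2\psi,1)$ over boxes meeting $\calC$) converges in probability as $\eta \to 0$ to $\tilde\mu^0_{\calC,\psi}$, which is immediate because it is a finite sum and the set of boxes meeting $\calC$ is determined by $\omega_0$ (using Theorem \ref{thm:strong_Hausdorff} and the fact that $\calC$ has empty-interior-free boundary generically, so box membership is a.s.\ continuous). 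Then, at the lattice level, for $\eta \ll \psi$ the two measures $\tilde\mu^\eta_{\calC,\psi}$ and $\mu^\eta_{\calC,n}$ (with $3^{-n}$ comparable to $\psi$) differ by a small amount in Prokhorov distance: both count, up to the normalizing factor, the number of $\psi$-boxes meeting $\calC$, and the weight $4\psi^2/\pi^\eta_1(2\psi,1)$ is chosen precisely so that, by quasi-multiplicativity (Lemma \ref{assu:q_mult}) and the ratio-limit argument underlying Theorem \ref{assu:conv_measures}, each such box contributes the same asymptotic mass $\eta^2 |\calV^\eta_{\text{box}}| / \pi^\eta_1(\eta,1) \approx \text{const}$. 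The error is controlled by Lemma \ref{lem:bound_err} (pivotal boxes and boundary-effect terms), exactly as in the proof of Theorem \ref{thm:strong_meas}.

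Combining: $d_P(\tilde\mu^0_{\calC,\psi}, \mu^0_\calC) \le d_P(\tilde\mu^0_{\calC,\psi}, \mu^0_{\calC,n}) + d_P(\mu^0_{\calC,n}, \mu^0_\calC)$ with $3^{-n} \asymp \psi$; the second term tends to $0$ as $\psi \to 0$ (i.e.\ $n \to \infty$) by the definition of $\mu^0_\calC$ as the monotone limit of $\mu^0_{\calC,n}$; and the first term is bounded, after passing through the $\eta \to 0$ limits above and invoking Lemma \ref{lem:bound_err}, by $C\psi^\varphi \to 0$ in probability. The main obstacle is the first term: one must transfer the lattice-level comparison between $\tilde\mu^\eta_{\calC,\psi}$ and $\mu^\eta_{\calC,n}$ — which relies on the BCKS-type concentration of $|\calV^\eta_a|$ (Lemma \ref{lem:BCKS}) and the $L^3$ bound (Lemma \ref{lem:L3bound}) to pin down each box's contribution — through the simultaneous limits $\eta \to 0$ and then $\psi \to 0$, while keeping the exceptional-box contributions (near $\partial\Ball_1$ and near ``pivotal'' four-arm points) negligible. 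This is precisely the technical heart of Lemma \ref{lem:bound_err}, so the work is to check that its hypotheses apply with $\tilde\mu^\eta_{\calC,\psi}$ playing the role of $\mu^\eta_{S,n}$ for $S$ a neighborhood of $\calC$, which is straightforward given $d_H(\calC, U_\psi(\calC)) < \psi$.
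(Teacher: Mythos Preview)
Your treatment of the first statement, $\supp(\mu^0_\calC)=\calC$, is essentially the paper's argument: the inclusion $\supp(\mu^0_\calC)\subseteq\calC$ is immediate from the construction via \eqref{eq:def_mu_S,ve}--\eqref{eq:def_mu_j}, and the reverse inclusion is obtained from Lemma~\ref{lem:big_then_thick} applied locally on a covering of $\Ball_k$ by small squares, passed through the $\eta\to 0$ limit via Theorem~\ref{thm:strong_meas}.

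Your plan for \eqref{eq:thm_clus-meas}, however, has a genuine gap. You propose to take $3^{-n}\asymp\psi$ and compare $\tilde\mu^0_{\calC,\psi}$ directly with $\mu^0_{\calC,n}$, claiming this is controlled by Lemma~\ref{lem:bound_err}. But these objects are of a different nature: $\tilde\mu^0_{\calC,\psi}$ is a sum of Dirac masses at box centers with the \emph{deterministic} weight $4\psi^2/\pi^0_1(2\psi,1)$, whereas $\mu^0_{\calC,n}$ is a sum of \emph{random} one-arm measures over the same boxes. Lemma~\ref{lem:bound_err} compares $\mu^\eta_\calB$ (the vertex counting measure) with $\mu^\eta_{S,n}$; it says nothing about the box-counting object $\tilde\mu$. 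To bridge the gap you would need that the total mass of each one-arm measure $\mu^0_{1,A(\psi z;\psi/2,\delta')}$ is close to the constant $4\psi^2/\pi^0_1(2\psi,1)$, i.e.\ a two-sided concentration statement per box. Lemma~\ref{lem:BCKS} gives only an upper tail, and quasi-multiplicativity gives only the order of the mean; neither yields the required closeness. Moreover, coupling $3^{-n}\asymp\psi$ means the annuli shrink with $\psi$, so you cannot appeal to any fixed-annulus limit.

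The paper resolves this by \emph{decoupling} the two scales via an intermediate measure
\[
\bar\mu^0_{\calC,n,\psi}:=\sum_{z:\Ball_{3\ve/2}(\ve z)\cap\calC\neq\emptyset}\tilde\mu^0_{A(\ve z;\ve/2,\delta/2-\ve),\psi},\qquad \ve=3^{-n},
\]
and then arguing the chain $\tilde\mu^0_{\calC,\psi}\approx\bar\mu^0_{\calC,n,\psi}\to\mu^0_{\calC,n}\to\mu^0_\calC$. The step $\bar\mu^0_{\calC,n,\psi}\to\mu^0_{\calC,n}$ as $\psi\to 0$ for \emph{fixed} $n$ is exactly Lemma~\ref{lem:GPS4.13} (Proposition~4.13 of \cite{Garban2010}) applied to each of the finitely many fixed annuli; this is the key input you are missing. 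The step $\tilde\mu^0_{\calC,\psi}\approx\bar\mu^0_{\calC,n,\psi}$ (both box-counting measures, differing only near the boundary of the cluster at scale $\ve$) is then the one that genuinely mirrors the pivotal/boundary estimates of Lemma~\ref{lem:bound_err}. In short: you need to keep $n$ fixed while sending $\psi\to 0$, invoke Lemma~\ref{lem:GPS4.13}, and only afterwards let $n\to\infty$.
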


The proof of the theorem above relies on the following two lemmas.
\begin{lemma}\label{lem:big_then_thick}
  Suppose that Assumptions \ref{assu:markov} - \ref{assu:RSW} hold.
  Let $k,\delta > 0$. Then for all $\varphi > 0$ there is $x_\varphi = x_\varphi(k,\delta) > 0$ such that
  \begin{align} 
	\PP_\eta(\exists \calC\in\mathscr{B}_k^\eta(\delta) \text{ with } ||\mu_\calC^\eta||_{TV} < x_\varphi) < \varphi
  \end{align}
  for all $\eta\in(0,\delta)$.
\end{lemma}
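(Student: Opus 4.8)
The plan is to show that a cluster (or cluster portion) of diameter at least $\delta$ must, with high probability, have a large counting measure, i.e.\ must contain many vertices after normalization. The starting observation is that such a cluster, having diameter $\geq \delta$, must cross some dyadic box of a fixed mesoscopic size. More precisely, fix a small scale $\rho = \rho(\delta) > 0$ (say $\rho = \delta/100$) and tile $\Ball_k$ by boxes of side $\rho$; any cluster $\calC$ of diameter at least $\delta$ contains a point in some box $\Ball_{\rho/2}(u)$ together with a path leaving the concentric box of radius, say, $2\rho$, hence $\calC \cap \Ball_{\rho/2}(u)$ contains at least one vertex with a red (open) arm to $\partial\Ball_{2\rho}(u)$. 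So the counting measure $\mu_\calC^\eta$ dominates (restricted to $\Ball_{\rho/2}(u)$) the measure counting, with the normalization $\eta^2/\pi_1^\eta(\eta,1)$, those vertices of $\Ball_{\rho/2}(u)$ connected to $\partial\Ball_{2\rho}(u)$ \emph{inside the cluster $\calC$}. The subtlety is that we need those vertices to lie in $\calC$ itself, not merely to have \emph{some} long arm; this is where FKG/positive association (Assumption \ref{assu:strongPosAssoc}) enters, via a conditioning argument: on the event that there is a large cluster through the box, one can condition on (a deterministic function of) the red circuit forming the cluster's backbone near $\partial\Ball_{2\rho}(u)$ and use the Domain Markov property plus FKG to bound below the conditional expected number of such vertices by a constant multiple of the unconditional one.

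Concretely, first I would reduce to a single box: by Lemma \ref{lem:bound_cluster_count} (or Lemma \ref{lem:ubound_NA}) the number of clusters of diameter $\geq\delta$ in $\Ball_k$ is tight, so it suffices to bound, uniformly, the probability that a \emph{fixed} such cluster has small mass; a union bound over the $O(\rho^{-2})$ boxes then finishes it. Next, for a fixed mesoscopic box $\Ball_{\rho/2}(u)$, introduce the (unnormalized) random variable $N^\eta_u := \#\{v \in \Ball_{\rho/2}(u) \cap \eta V : v \xleftrightarrow{1} \partial\Ball_{2\rho}(u)\}$, whose normalized version is exactly $\mu^\eta_{1,A(u;\rho/2,2\rho)}(\Ball_{\rho/2}(u))$ in the notation of \eqref{eq:def_1armMeas}. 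By Lemma \ref{assu:q_mult} (quasi-multiplicativity) and RSW, $\EE_\eta[N^\eta_u] \asymp (\rho/\eta)^2 \pi_1^\eta(\eta,\rho) \asymp (\rho/\eta)^2 \pi_1^\eta(\eta,1)/\pi_1^\eta(\rho,1)$, and by Lemma \ref{lem:BCKS} (the BCKS exponential tail) the lower tail of $N^\eta_u$ is controlled: $\PP_\eta(N^\eta_u \leq t \,(\rho/\eta)^2\pi_1^\eta(\eta,\rho))$ is small for small $t$, uniformly in $\eta$. Thus with probability close to one, the \emph{unconditional} normalized one-arm mass in the box is at least some constant $c(\rho) > 0$.

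The main obstacle — and the heart of the argument — is passing from ``there are many long-arm vertices in the box'' to ``there are many long-arm vertices \emph{belonging to the large cluster} $\calC$''. I would handle this as in the standard second-moment/conditioning arguments for percolation cluster volumes: condition on the lowest red crossing (or on an innermost red circuit in the annulus $A(u;2\rho,4\rho)$ together with the cluster it bounds) that realizes the portion of $\calC$ near $\Ball_{2\rho}(u)$; this is a deterministic function of the configuration in an outer region, so by the Domain Markov property (Assumption \ref{assu:markov}) the configuration inside $\Ball_{2\rho}(u)$ is governed by $\PP^\xi_\eta$ with a boundary condition $\xi$ that has a wired (red) piece on the relevant arc. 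On this conditioned configuration, every vertex of $\Ball_{\rho/2}(u)$ that is connected by a red path to that wired arc is automatically in $\calC$; and by FKG the conditional law stochastically dominates the free-boundary one for the increasing event ``$v$ connected to $\partial\Ball_{2\rho}(u)$'', so the conditional expected number of cluster vertices in the box is $\geq$ a constant times the unconditional expectation, with a matching lower-tail bound obtained by applying Lemma \ref{lem:BCKS} under the boundary condition $\xi$ (the boundary-condition versions of the RSW-consequences in Lemmas \ref{assu:q_mult}--\ref{lem:BCKS} are exactly why those lemmas were stated with $\xi$). Taking $x_\varphi = x_\varphi(k,\delta)$ equal to a small enough constant multiple of $c(\rho)$, chosen so that all the (finitely many) exceptional probabilities sum to less than $\varphi$, completes the proof.
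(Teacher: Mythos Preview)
Your overall strategy---localise the cluster to a mesoscopic box, condition on the red backbone reaching that box, and use the Domain Markov property plus FKG to compare with an unconditional one-arm count---is indeed the \cite{BCKS01} route that the paper cites (the paper's own proof is only a list of references). However, there is a concrete misstatement and a real gap in the execution.

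The misstatement: you invoke Lemma~\ref{lem:BCKS} to control the \emph{lower} tail of $N^\eta_u$, but that lemma is an \emph{upper}-tail bound, $\PP_\eta(|\calV_a^\eta| \geq x \cdot (a/\eta)^2\pi_1^\eta(\eta,a)) \leq Ce^{-cx}$; it says nothing about $\PP_\eta(N^\eta_u \leq t\,\EE N^\eta_u)$ being small. What one actually obtains for a single box, via a second-moment (Paley--Zygmund) argument using quasi-multiplicativity to bound $\EE[(N^\eta_u)^2]$, is only that $\PP_\eta(N^\eta_u \geq c \,\EE N^\eta_u) \geq c'$ for some universal $c,c'>0$---a uniformly \emph{positive} probability, not one close to~$1$.

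The gap: even after the conditioning/FKG step (which is the correct mechanism for forcing the long-arm vertices to lie in $\calC$), a single box therefore only yields a positive conditional probability of large mass. To reach probability at least $1-\varphi$ you must amplify. A cluster of diameter $\geq\delta$ crosses of order $M\asymp \delta/\rho$ disjoint annuli at scale $\rho$; exploring the backbone from the outside and using the Domain Markov property, the configurations inside the successive inner boxes are conditionally independent with wired boundary on a red arc, and in each the second-moment bound gives probability $\geq c'$ of contributing mass $\geq c(\rho)$. The probability that \emph{all} of them fail is then at most $(1-c')^{M}$, which is made $<\varphi$ by choosing $\rho$ small (hence $M$ large), after which the tightness of $|\mathscr{B}_k^\eta(\delta)|$ handles the union over clusters. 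This iteration across scales is precisely the content of \cite[Theorems~3.1(i) and~3.3(i)]{BCKS01} cited in the paper, and is the missing ingredient in your sketch; without it, your union bound over the $O((k/\rho)^2)$ boxes cannot close, since the per-box failure probability is only bounded by $1-c'$, not by something summable.
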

\begin{proof}[Proof of Lemma \ref{lem:big_then_thick}]
  For critical percolation the proof of Lemma \ref{lem:big_then_thick} follows from the 
  proof of \cite[Theorem 1.2]{BC13}: (3.18) of \cite{BC13} with $x = 0$ can be shown in the same manner as for $x > 0$.
  Alternatively, Lemma \ref{lem:big_then_thick} can be deduced from a combination of
  \cite[Theorem 3.1 (i), Theorem 3.3 (i) and Lemma 4.4]{BCKS01}, using tightness of the number of clusters of
  diameter at least $\delta$.
  
  It is easy to verify that actually all these arguments just need Assumptions \ref{assu:markov} - \ref{assu:RSW}.
\end{proof}

The second is essentially \cite[Proposition 4.13]{Garban2010} see also \cite[Eqn. (4.39)]
{Garban2010}. Let $A$ be the annulus $A = A(a,b )$ with $0 < a < b$ and $\cl\in \clColl^0$. 
For $\eta\geq0$ and $0<\psi<1/2$ we set
\begin{align*}
 \tilde{\mu}_{A,\psi}^\eta & := \frac{4\psi^2}{\pi^\eta_1(2\psi,1)}\sum_{z\in\ZZ[\imag]\cap \Ball_{\psi^{-1}a}} \ind\{\Ball_{\psi/2}(\psi z)\xlra{1} \partial \Ball_b\} \delta_{\psi z}.
\end{align*}

\begin{lemma}[Proposition 4.13 of \cite{Garban2010}] \label{lem:GPS4.13}
  Suppose that Assumptions \ref{assu:markov}-\ref{assu:existence_full_confInvLimit} hold.
  Let $f:\CC\ra\RR$ be a continuous function with compact support, and let $A = A(a,b)$ be an annulus with $0 < a < b$. Then
  \begin{align} \label{eq:mu_to_mu_L2}
    \tilde \mu_{A,\psi}^0(f) \ra \mu^0_{A}(f) \text{ in } L^2 \text{ as } \psi\ra0.
  \end{align}
\end{lemma}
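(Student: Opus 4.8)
The plan is to prove Lemma~\ref{lem:GPS4.13} by reducing it to Theorem~\ref{assu:conv_measures} together with an $L^2$ approximation argument of exactly the same type as in the proof of Theorem~\ref{thm:strong_meas} (via Lemma~\ref{lem:bound_err} and Lemma~\ref{lem:BCKS}). First I would observe that the measure $\tilde\mu_{A,\psi}^\eta$ is, up to the replacement of a sum over individual lattice vertices by a sum over $\psi$-boxes, a discretized version of $\mu_{1,A}^\eta$. Concretely, each term $\ind\{\Ball_{\psi/2}(\psi z)\xlra{1}\partial\Ball_b\}\delta_{\psi z}$ counts a whole box as a single Dirac mass at its center $\psi z$ with weight $4\psi^2/\pi_1^\eta(2\psi,1)$, whereas $\mu_{1,A}^\eta$ puts mass $\eta^2/\pi_1^\eta(\eta,1)$ at each vertex $v$ with $v\lra\partial\Ball_b$. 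The key heuristic, going back to \cite{Garban2010}, is that the number of vertices in a $\psi$-box connected to $\partial\Ball_b$, when nonzero, is typically of order $\psi^2\eta^{-2}\pi_1^\eta(\psi,1)\cdot(\pi_1^\eta(\eta,1)/\pi_1^\eta(\psi,1))^{-1}$, i.e. of order $(\psi/\eta)^2\pi_1^\eta(\eta,\psi)$ by quasi-multiplicativity, so that the two normalizations match up to a bounded multiplicative constant; this is what makes the box count a good $L^2$ proxy for the vertex count.

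The main steps, in order, would be: (1) For fixed $\psi>0$, show that $\tilde\mu_{A,\psi}^\eta(f)\to\tilde\mu_{A,\psi}^0(f)$ in probability as $\eta\to0$. This is immediate from Lemma~\ref{lem:GPS}, since for each fixed box the one-arm event $\{\Ball_{\psi/2}(\psi z)\xlra{1}\partial\Ball_b\}$ is (up to null-measure issues) a continuous function of $\omega_\eta$, and there are finitely many boxes; moreover $\pi_1^\eta(2\psi,1)\to\pi_1^0(2\psi,1)$. (2) Show that, in $L^2(\PP_\eta)$ uniformly in small $\eta$, the discretized measure $\tilde\mu_{A,\psi}^\eta(f)$ is close to $\mu_{1,A}^\eta(f)$ as $\psi\to0$: the difference is controlled by the total mass of vertices lying in ``pivotal''-type boxes (those where the arm from the box to $\partial\Ball_b$ is accompanied by extra arms forcing the box count to be large relative to the box volume) plus boundary corrections near $\partial\Ball_a$, exactly as in the proof of Lemma~\ref{lem:bound_err}. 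Here one uses the $L^3$ bound of Lemma~\ref{lem:L3bound} (equivalently the exponential bound of Lemma~\ref{lem:BCKS}) to control the contribution of a single box, quasi-multiplicativity (Lemma~\ref{assu:q_mult}) to relate the normalizations at scales $\eta,\psi,1$, and Lemma~\ref{assu:1arm_exp_less_4arm} to get a genuine power saving in $\psi$. (3) Pass the $L^2$ bound in (2) through to $\eta=0$: since $\mu_{1,A}^\eta(f)\to\mu_{1,A}^0(f)=\mu_A^0(f)$ in probability by Theorem~\ref{assu:conv_measures} and $\tilde\mu_{A,\psi}^\eta(f)\to\tilde\mu_{A,\psi}^0(f)$ in probability by step (1), and the $L^2$ distances are uniformly bounded in $\eta$ (using Lemma~\ref{lem:L3bound} for uniform integrability of squares), the bound $\limsup_\eta \EE_\eta[(\tilde\mu_{A,\psi}^\eta(f)-\mu_{1,A}^\eta(f))^2]\le C\|f\|_\infty^2\psi^\varphi$ yields $\EE[(\tilde\mu_{A,\psi}^0(f)-\mu_A^0(f))^2]\le C\|f\|_\infty^2\psi^\varphi$. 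Letting $\psi\to0$ gives \eqref{eq:mu_to_mu_L2}.

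The hard part will be step (2): making precise the ``pivotal box'' decomposition of $\tilde\mu_{A,\psi}^\eta(f)-\mu_{1,A}^\eta(f)$ and obtaining the $L^2$ bound uniformly in $\eta$. The two measures do not agree on a box-by-box basis even for non-pivotal boxes --- within a single box the vertex count fluctuates --- so one needs the ratio-limit / concentration argument from \cite[Section 4]{Garban2010}: conditionally on a box being connected to $\partial\Ball_b$, the (normalized) vertex count inside it concentrates around a deterministic constant depending only on the box size, with the fluctuations and the non-pivotal/pivotal dichotomy handled exactly as in \cite[Proposition 4.13]{Garban2010}. Since the excerpt explicitly flags this lemma as ``essentially \cite[Proposition 4.13]{Garban2010}'' and notes that its proof extends to the present general setting under Assumptions~\ref{assu:markov}--\ref{assu:existence_full_confInvLimit}, I would present step (2) as an application of that argument, emphasizing only that all the inputs it requires --- quasi-multiplicativity (Lemma~\ref{assu:q_mult}), the $L^3$ bound (Lemma~\ref{lem:L3bound}), the one-arm exponent existence \eqref{eq:1arm_exp}, and the four-arm comparison (Lemma~\ref{assu:1arm_exp_less_4arm}) --- are available here, so no new ideas beyond those of \cite{Garban2010} are needed.
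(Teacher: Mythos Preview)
Your proposal is correct and matches the paper's treatment: the paper does not give its own proof here but simply cites \cite[Proposition~4.13]{Garban2010}, and you ultimately defer to the same source for the key step~(2), correctly listing the inputs (Lemmas~\ref{assu:q_mult}, \ref{lem:L3bound}, \ref{assu:1arm_exp_less_4arm}, and \eqref{eq:1arm_exp}) that make that argument go through under the paper's general assumptions. One small comment: your initial framing of step~(2) via ``pivotal-type boxes'' and the analogy with Lemma~\ref{lem:bound_err} is not quite apt --- the discrepancy between $\tilde\mu_{A,\psi}^\eta(f)$ and $\mu_{1,A}^\eta(f)$ is a per-box concentration statement (the normalized vertex count in a box connected to $\partial\Ball_b$ concentrates on a deterministic constant via the ratio-limit/coupling argument), not a pivotal-box count --- but you recognize this yourself a few lines later, so there is no actual gap.
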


\begin{rem}
  For the proof of Theorem \ref{thm:clus-meas}, convergence in probability is enough in \eqref{eq:mu_to_mu_L2}.
\end{rem}

\begin{proof}[Proof of Thm.\ref{thm:clus-meas}] Since $\clColl^0 = \bigcup_{n=1}^\infty \clColl^0(3^{-n})$ and
$\clColl^0(3^{-n}) = \bigcup_{k \in \NN} \clColl_{k}^0(3^{-n})$, to prove the first part of the theorem, it suffices to show
that $\supp(\mu_{\calC}^0)={\calC}$ with probability $1$ for all $\calC\in\clColl_k^0(\delta)$ for any fixed $\delta > 0$ 
and $k\in \NN$. We will work under a coupling $\PP$ such that $\om_\eta\ra \om_0$ a.s.

Equations \eqref{eq:def_mu_S,ve} and \eqref{eq:def_mu_j} show that, for all $\calC\in\clColl^0(\delta)$,
$\supp(\mu_{\calC}^0)$ is contained in the $(3^{-n})$-neighborhood of $\calC$ for every $n$, with probability $1$.
Hence, $\supp(\mu_{\calC}^0) \subseteq \calC$ for all $\calC\in\clColl^0(\delta)$ with probability $1$.

We turn to the proof of $\supp(\mu_{\calC}^0) \supseteq \calC$. Take $\varphi>0$ and $x_\varphi$ as in Lemma \ref{lem:big_then_thick}.
By covering $\Ball_k$ with at most $4(k/\ve)^{2}$ squares with side length $\ve$, we get
\begin{multline} \label{eq:pf_thm_clus-meas_1}
  \PP_\eta(\exists z\in \ZZ[\imag],\exists \calC\in\clColl^\eta(\delta) \text{ s.t. } \Ball_{\ve/2}(\ve z)\cap \calC \neq \emptyset \text{ and } \mu_\calC^\eta(\Ball_{\ve}(\ve z)) < x_\varphi)\\
  \begin{aligned}
    & \leq 4(k/\ve)^{2} \PP_\eta (\exists \calB\in\mathscr{B}_{\Ball_\ve}^\eta(\ve/2) \text{ with } ||\mu_\calB^\eta||_{TV} < x_\varphi)\\
    & \leq 4(k/\ve)^{2}\varphi.
  \end{aligned}
\end{multline}

By Theorem \ref{thm:strong_meas} we have that $\ul\mu^\eta(\delta) \to \ul\mu^0(\delta)$ in probability in the metric $d_l$
for all $\delta>0$ as $\eta\ra 0$. This, combined with the tightness of $|\clColl_k^0(\delta)|$, \eqref{eq:pf_thm_clus-meas_1}
and the Portmanteau theorem, gives that 
\begin{multline} \label{eq:pf_thm_clus-meas_2}
  \PP_0(\exists z\in \ZZ[\imag],\exists \calC\in\clColl_k^0(\delta) \text{ s.t. } \Ball_{\ve/2}(\ve z)\cap \calC \neq \emptyset \text{ and } \mu_\calC^0(\Ball_{\ve}(\ve z)) < x_\varphi)\\
  \leq 4(k/\ve)^{2}\varphi
\end{multline}
for all $\ve \in(0,\delta/10)$. We take the limit $\varphi\ra0$ in \eqref{eq:pf_thm_clus-meas_2} and get
\begin{align} \label{eq:pf_thm_clus-meas_3}
  \PP_0(\exists z\in \ZZ[\imag],\exists \calC\in\clColl_k^0(\delta) \text{ s.t. } \Ball_{\ve/2}(\ve z)\cap \calC \neq \emptyset \text{ and } \mu_\calC^0(\Ball_{\ve}(\ve z)) = 0) = 0,
\end{align}
which shows that $\supp(\mu_\calC^0)+\Ball_\varepsilon \supseteq\calC$ for all $\calC\in\clColl_k^0(\delta)$ with probability $1$ for each fixed $\ve > 0$. Thus
$\supp(\mu^0_\calC) \supseteq \calC$ for all $\calC\in\clColl^0$ with probability $1$, and finishes the proof of the first statement of Theorem \ref{thm:clus-meas}.

\medskip

Since the proof of \eqref{eq:thm_clus-meas} is analogous to that of Lemma \ref{lem:bound_err}, we only give a sketch. Let $\calC\in \clColl^0(\delta)$ with $\delta>0$,
and let $f:\CC\ra\RR$ be a continuous function with compact support. Recall the definition \eqref{eq:def_mu_S,ve} of $\mu_{\calC,n}^0$. We set  
\begin{align*}
 \bar{\mu}_{\calC,n,\psi}^0
 & := \sum_{z\in\ZZ[\imag]:\Ball_{3\ve/2}(\ve z)\cap\calC\neq\emptyset} \tilde\mu^0_{A(\ve z,\ve/2,\delta/2-\ve),\psi},
 \; \text{ with } \ve = 3^{-n}.
\end{align*}
Note that when we replace $\mu^0_{A(\ve z,\ve/2,\delta/2-\ve)}$ by $\tilde\mu^0_{A(\ve z,\ve/2,\delta/2-\ve),\psi}$
in the definition of $\mu_{\calC,n}^0$, we arrive at the measure $\bar{\mu}_{\calC,n,\psi}^0$. Thus, for any fixed $\ve>0$,
Lemma \ref{lem:GPS4.13} shows that $\bar{\mu}_{\calC,n,\psi}^0(f)$ and $\mu_{\calC,n}^0(f)$ are close to each other in
$L^2$ when $\psi$ is small. In particular, $\bar\mu_{\calC,n,\psi}^0 \ra \mu_{\calC,n}^0$ weakly in probability as $\psi\ra0$.

Arguments similar to those in the proof of Lemma \ref{lem:bound_err} give that
$\tilde{\mu}_{\calC,\psi}^0$ and $\bar{\mu}_{\calC,n,\psi}^0$ are close to each other in total variation distance (hence in Prokhorov distance as well) with high probability
when $\psi$ and $\ve=3^{-n}$ are both small.

By the proof of Theorem \ref{thm:strong_meas}, $\mu^0_{\calC,n}$ is close to  $\mu^0_\calC$ in Prokhorov distance with high probability
when $n$ is large. Thus
\begin{align*}
  \tilde{\mu}_{\cl,\psi}^0 \approx \bar{\mu}_{\cl,n,\psi}^0 \xra[\psi\ra0] \mu_{\cl,n}^0\xra[n\ra\infty]\mu_\cl^0,
\end{align*}
where the limits are in Prokhorov metric in probability, and $\tilde{\mu}_{\cl,\psi}^0 \approx \bar{\mu}_{\cl,n,\psi}^0$ means
that the Prokhorov distance between these measures is small with high probability when $\ve=3^{-n}$ and $\psi$ are both small.
Thus \eqref{eq:thm_clus-meas} follows, and Theorem \ref{thm:clus-meas} is proved.
\end{proof}

\subsection{Conformal invariance and covariance} \label{ssec:cinvar}
In this section we prove Theorem \ref{thm:invar} and the stronger conformal covariance
of Bernoulli percolation clusters as stated in Theorem \ref{thm:cinvar}.

Let us first restrict ourselves to critical site percolation on the triangular lattice.
At the end of this section we will show how to obtain the weaker invariance of Theorem \ref{thm:invar}
from our general assumptions. 

Recall Definition \ref{def:restr} of the restriction of a configuration to a bounded domain $D$.
\begin{thm}\label{thm:cinvar_perco}
 For $\eta \ge 0$, let $\PP_\eta$ denote the measure for critical site percolation on the triangular lattice.
 Let $D\subseteq \CC$ be a domain and $f: D \to \CC$ be a conformal map. The laws
 of $(f(\omega_{0,D}),f(L_{0,D}))$ and $(\omega_{0,f(D)},L_{0,f(D)})$ coincide.
\end{thm}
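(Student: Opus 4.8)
The plan is to reduce the statement about $(\omega_{0,D},L_{0,D})$ to the already-established conformal invariance of the full-plane scaling limit $L_0$ and the measurability of quad-crossings with respect to the loop process (Lemma~\ref{lem:convergenceQuadCrossings}). First I would recall that, by Assumption~\ref{assu:existence_full_confInvLimit} (whose validity for critical site percolation on $\TT$ is guaranteed by Theorem~\ref{thm:assu_perco}, via \cite{Camia2006}), the continuum nonsimple loop process $L_0$ in the full plane is conformally invariant; more precisely, for a conformal map $f$ defined on a domain $D$, one has the standard local restriction/invariance statement that the excursion decomposition of $L_0$ inside $D$, pushed forward by $f$, has the same law as the excursion decomposition of $L_0$ inside $f(D)$. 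This is exactly the content built into \cite[Section~5]{Camia2006} for SLE$_6$-type loop ensembles, and I would invoke it as the input for $f(L_{0,D}) \overset{d}{=} L_{0,f(D)}$.

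Next I would handle the quad-crossing part. By Lemma~\ref{lem:convergenceQuadCrossings}, the quad-crossing configuration $\omega_0$ is almost surely a measurable function of the loop collection $L_0$; restricting to $D$, $\omega_{0,D} = \omega_0|_D$ is a measurable function of $L_0|_D = L_{0,D}$ (the crossings of quads contained in $D$ are determined by the excursions of loops in $D$ — this is the locality of the crossing structure, and it follows from the same three RSW-type properties used in the proof of Lemma~\ref{lem:convergenceQuadCrossings}). Call this measurable map $\Psi_D$, so $\omega_{0,D} = \Psi_D(L_{0,D})$. The key compatibility I need is that $\Psi$ is natural under conformal maps: $f(\omega_{0,D}) = f(\Psi_D(L_{0,D})) = \Psi_{f(D)}(f(L_{0,D}))$ almost surely. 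This holds because a crossing of a quad $Q\subset D$ by the red set corresponds, at the level of loops, to a certain topological configuration of loop excursions relative to $Q$, and this topological relation is preserved by the homeomorphism $f$ (a quad $Q$ maps to the quad $f\circ Q$, and excursions map to excursions). One should state this carefully as: for $\PP_0$-a.e.\ realization, and for every quad $Q\subset D$, $Q$ is crossed in $\omega_{0,D}$ iff $f(Q)$ is crossed in $f(\omega_{0,D})$, which is precisely what measurability-with-respect-to-loops plus conformal covariance of loops gives.

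Putting these together: since $f(L_{0,D}) \overset{d}{=} L_{0,f(D)}$ and since both $f(\omega_{0,D})$ and $\omega_{0,f(D)}$ are obtained from the respective loop collections by the \emph{same} measurable map $\Psi_{f(D)}$, the pairs $(f(\omega_{0,D}),f(L_{0,D}))$ and $(\omega_{0,f(D)},L_{0,f(D)})$ have the same joint law. I would phrase the final step as: for any bounded continuous test function $G$ on $\mathscr{H}_{f(D)}\times(\text{loop space})$,
\[
\EE\big[G(f(\omega_{0,D}),f(L_{0,D}))\big] = \EE\big[G(\Psi_{f(D)}(f(L_{0,D})),f(L_{0,D}))\big] = \EE\big[G(\Psi_{f(D)}(L_{0,f(D)}),L_{0,f(D)})\big] = \EE\big[G(\omega_{0,f(D)},L_{0,f(D)})\big],
\]
using conformal invariance of the loop law in the middle equality and $\omega_{0,f(D)} = \Psi_{f(D)}(L_{0,f(D)})$ a.s.\ in the last.

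The main obstacle, and the step requiring the most care, is making the ``same measurable map $\Psi$'' statement rigorous and checking its naturality under $f$ — i.e.\ that the measurable function producing quad-crossings from loops commutes with conformal pushforward. This is intuitively clear (crossings are a topological notion, loops are mapped homeomorphically, so the crossing/non-crossing status of each quad is preserved), but one must verify that the exceptional null sets can be chosen consistently and that the construction in \cite[Section~2.3]{Garban2010}/\cite[Section~5.2]{Camia2006} genuinely only uses local (hence conformally natural) features of the loop picture. Everything else — the conformal invariance of $L_0$ and the restriction property — is imported directly from \cite{Camia2006} and Assumption~\ref{assu:existence_full_confInvLimit}.
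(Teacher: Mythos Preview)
Your proposal is correct and takes essentially the same approach as the paper: invoke the conformal invariance of the loop process $L_0$ from \cite{Camia2006}, and then deduce the conformal invariance of the quad-crossing configuration from the measurability of $\omega_0$ with respect to $L_0$ (Lemma~\ref{lem:convergenceQuadCrossings}). The paper's argument is terser---it simply cites \cite[Theorem~3, item~4]{Camia2006} for the loops and then asserts that the quad-crossing part ``follows immediately'' from measurability---whereas you have (helpfully) spelled out the naturality of the loops-to-crossings map $\Psi$ under conformal pushforward and identified this as the point requiring care.
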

The conformal invariance of the continuum loop process was proved in \cite[Theorem 3, item 4]{Camia2006}.
The conformal invariance of the quad crossings follows immediately because of the measurability
with respect to the loop process \cite{Garban2010,Smirnov2011}.

The construction of the continuum clusters and their measures was obtained in Sections
\ref{sec:approx} - \ref{sec:cont_meas} by approximating the cluster by boxes $\Lambda_{\varepsilon/2}(z)$.
In order to prove conformal invariance / covariance we would like to approximate the clusters
by conformally transformed boxes $f(\Lambda_{\varepsilon/2}(z))$.
More precisely, let $\phi>0$ and $f:\Ball_{1+\phi}\ra \hat{\CC}$ be a conformal map.
We set $D = f(\Ball_1)$ and $D'=f(\Ball_{1+\phi})$.
Let $d_f$ denote the push-forward of the $L^\infty$ metric on $\Ball_{1+\phi}$. That is,
\begin{align*}
  d_f(x,y) := ||f^{-1}(x) - f^{-1}(y)||_\infty
\end{align*}
for $x,y\in D'$.
Note that $f$ is defined in an open neighborhood of $\Ball_1$ because when we approximate the cluster measures
using one arm measures, we need to consider annuli whose inner square is contained in $\Ball_1$ but which are
not completely contained in $\Ball_1$.

Clearly, $(\Ball_{1+\phi},d_\infty)$ and $(D',d_f)$ are isomorphic as metric spaces. Thus all the
geometric constructions in Section \ref{sec:approx} can be repeated for the clusters in $D$ just by applying the
map $f$. We denote these analogues of the objects by an additional `$f$' subscript. Thus all the statements,
apart from those in Section \ref{ssec:error_bounds}, remain valid if we keep the constants such as $\ve,\delta$
unchanged, but add an additional subscript $f$ in the objects appearing in the claims.
Moreover, the bounds in Section \ref{ssec:error_bounds} remain valid asymptotically, as $\eta \to 0$, if we use
the transformed boxes $f(\Lambda_{\varepsilon/2}(z))$ to define the relevant events because of the conformal
invariance of the scaling limit.

Next note that there is a positive constant $K = K(f)$ such that $|f'(u)|\in[1/K,K]$ for $u\in \Ball_{1+\phi/2}$.
Thus $d_f$ and the $L^\infty$-metric are equivalent on $D$. As above, we add a
subscript `$f$' for the metrics built from $d_f$. Thus $d_{H,f}$ and $d_{P,f}$ are
equivalent to $d_H$ and $d_P$ respectively, where $d_{H,f}$ and $d_{P,f}$ are built on $d_f$.

We can obtain the clusters in $D$ in two ways: via the square boxes $\Lambda_{\varepsilon/2}(z)$, that is,
using the metric $L^\infty$ in $D$, or via the transformed boxes $f(\Lambda_{\varepsilon/2}(z))$, that is,
using the metric $d_f$. The equivalence of the metrics implies that these two approximations provide the
same continuum clusters in the scaling limit.

Now notice that the scaling limit in $D$ in terms of quad crossings is distributed like the image under $f$
of the scaling limit in $\Ball_1$, because of the conformal invariance of quad crossing configurations.
This implies that the construction in $D$, using the transformed boxes $f(\Lambda_{\varepsilon/2}(z))$, gives
clusters that have the same distribution as the images of the continuum clusters in $\Ball_1$. This proves the following theorem.

\begin{thm}\label{thm:cinvar_prec}
 For $\eta \ge 0$, let $\PP_\eta$ denote the measure for critical site percolation on the
 triangular lattice. Let $\phi>0$, $f:\Ball_{1+\phi}\ra \hat{\CC}$ be a conformal map, and $D:=f(\Ball_1)$.
    
  Then the laws of $\mathscr{B}^0_{D}$ and $f(\mathscr{B}^0_{\Lambda_1})$ are identical, where
  \begin{align*}
    f(\mathscr{B}^0_{\Lambda_1}) :=\{f(\calB)\,:\,\calB\in\mathscr{B}^0_{\Lambda_1}\}.
  \end{align*}
\end{thm}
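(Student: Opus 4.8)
The plan is to reduce the statement to the conformal invariance of the continuum loop process and of quad-crossings, both of which are already available (\cite[Theorem 3]{Camia2006} for the loops, \cite{Garban2010,Smirnov2011} for the measurability of quad crossings with respect to the loops, hence Theorem \ref{thm:cinvar_perco}), together with the observation that the construction of $\mathscr{B}^0_D$ in Sections \ref{sec:approx}--\ref{sec:clustersinSubset} is entirely a deterministic, topological/geometric function of the pair $(\omega_0,L_0)$ restricted to $D$. Concretely, fix $\phi>0$ and a conformal map $f:\Ball_{1+\phi}\to\hat\CC$, set $D=f(\Ball_1)$ and let $d_f(x,y):=\|f^{-1}(x)-f^{-1}(y)\|_\infty$ be the push-forward of the $L^\infty$-metric. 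First I would record that $(\Ball_{1+\phi},d_\infty)$ and $(f(\Ball_{1+\phi}),d_f)$ are isometric, so the entire box-approximation machinery of Section \ref{sec:approx} — the grid $B_\varepsilon$, the graph $G_\varepsilon$, good subgraphs, the events $\noClu,\NA,\calE$, and the definition \eqref{eq:def_contCl} of the continuum clusters — transports verbatim through $f$, producing objects I would decorate with a subscript $f$; Proposition \ref{prop:good_sub}, Lemma \ref{lem:good_sub} and Theorem \ref{thm:strong_Hausdorff} all hold with this subscript and the same constants. Second, since $|f'|$ is bounded above and below on the compact set $\Ball_{1+\phi/2}$, the metrics $d_f$ and $d_\infty$ are bi-Lipschitz equivalent on $D$; hence $d_{H,f}$ and $d_H$ (and, for the measures, $d_{P,f}$ and $d_P$) are equivalent on $D$, and the two box-approximations — via the square boxes $\Lambda_{\varepsilon/2}(z)$ inside $D$ versus via the transported boxes $f(\Lambda_{\varepsilon/2}(z))$ — converge, as $\eta\to0$ then $\varepsilon\to0$, to the \emph{same} continuum clusters $\mathscr{B}^0_D$.

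The third and decisive step is to identify the distribution. By Theorem \ref{thm:cinvar_perco}, the quad-crossing configuration $\omega_{0,D}$ (together with the loop process $L_{0,D}$) has the same law as $f(\omega_{0,\Ball_1})$ (resp. $f(L_{0,\Ball_1})$). The $f$-decorated construction above is precisely the image under $f$ of the ordinary construction run inside $\Ball_1$: a good subgraph of $G_{\varepsilon,f}(\omega_{0,D})$ corresponds, via $f$, to a good subgraph of $G_\varepsilon(f^{-1}(\omega_{0,D}))=G_\varepsilon(\omega_{0,\Ball_1})$ in distribution, and taking the decreasing intersection \eqref{eq:def_contCl} commutes with the continuous map $f$. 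Therefore the collection obtained by the $f$-decorated construction applied to $\omega_{0,D}$ has the same law as $\{f(\calB):\calB\in\mathscr{B}^0_{\Ball_1}\}$. Combining this with the second step — which says the $f$-decorated construction and the ordinary ($L^\infty$-based) construction in $D$ yield the same random set $\mathscr{B}^0_D$ — gives $\mathscr{B}^0_D \stackrel{d}{=} f(\mathscr{B}^0_{\Ball_1})$, which is the claim. One should also remark, as the text already does, that $f$ need only be defined on a neighbourhood $\Ball_{1+\phi}$ of $\overline{\Ball_1}$ because the one-arm measures used to build the cluster measures involve annuli whose inner square lies in $\Ball_1$ but whose outer square need not.

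The main obstacle is the second step: making precise that the two approximations (square boxes inside $D$ versus $f$-images of square boxes) really produce the \emph{same} limiting clusters, not merely clusters with the same distribution. This requires checking that on a high-probability event (controlled by the error bounds of Section \ref{ssec:error_bounds}, which by conformal invariance of the scaling limit remain valid asymptotically as $\eta\to0$ when one uses the transported boxes $f(\Lambda_{\varepsilon/2}(z))$ to define $\noClu,\NA,\calE$), the good subgraphs of the two grids encode the same clusters up to Hausdorff error $O(\varepsilon)+O(\varepsilon/K)$, where $K=K(f)$ bounds $|f'|$; one then sends $\varepsilon\to0$ using the equivalence of $d_{H,f}$ and $d_H$. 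The only subtlety here is uniformity of the bi-Lipschitz constant, which holds because everything takes place inside the fixed compact set $\Ball_{1+\phi/2}\subset\Ball_{1+\phi}$. Once this coincidence of limits is in hand, the distributional identity in the third step is immediate from Theorem \ref{thm:cinvar_perco} and functoriality of the (deterministic) construction under the continuous map $f$.
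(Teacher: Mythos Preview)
Your proposal is correct and follows essentially the same route as the paper: transport the box-approximation machinery through $f$ via the push-forward metric $d_f$, use the bi-Lipschitz equivalence of $d_f$ and $d_\infty$ on $D$ (from boundedness of $|f'|$ on a compact neighbourhood) to conclude that the $f$-transported and the native $L^\infty$-based constructions yield the same continuum clusters in $D$, and then invoke the conformal invariance of $(\omega_0,L_0)$ (Theorem~\ref{thm:cinvar_perco}) to identify the law of the $f$-transported construction with that of $f(\mathscr{B}^0_{\Lambda_1})$. Your write-up is in fact more explicit than the paper's own argument (which is given in the paragraphs preceding the theorem rather than as a formal proof), including your remark that the error bounds of Section~\ref{ssec:error_bounds} for the transported events hold asymptotically via conformal invariance of the scaling limit.
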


\medskip
In addition to the convergence of arm measures, 
\cite{Garban2010} contains a proof of the conformal covariance of these measures.
The relevant result is Theorem 6.7 in \cite{Garban2010}, stated below.
\begin{thm} \label{assu:covar} 
 For $\eta \ge 0$, let $\PP_\eta$ denote the measure for critical site percolation on the triangular lattice.
 Let $D\subseteq \CC$ be a domain and $f: D \to \CC$ be a conformal map. Let $A\subset \CC$ be a proper
 annulus with piecewise smooth boundary with $\ol A\subset D$. 
 For a Borel set $B\subseteq f(D)$, let
 \begin{align*}
  \mu_{1,A}^{0*}(B) := \int_{f^{-1}(B)} |f'(z)|^{2-\alpha_1}d\mu_{1,A}^{0}(z).
 \end{align*}
 Then the laws of
 $\mu^0_{1,f(A)} = \mu^0_{1,f(A)}(\omega_{0,f(D)})$ and $\mu^{0*}_{1,A} = \mu^{0*}_{1,A}(\omega_{0,D})$
 coincide.
\end{thm}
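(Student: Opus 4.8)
This is Theorem~6.7 of \cite{Garban2010}; below I outline the argument. The plan is to first reduce the statement, via the conformal invariance of the scaling limit of the quad-crossing configuration, to an identity of measures on a single probability space, and then to verify that identity by the same mesoscopic approximation of the one-arm measure that underlies Theorem \ref{assu:conv_measures}, in which the one-arm exponent $\alpha_1$ dictates how the masses of small boxes transform under $f$.

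For the reduction I would use Theorem \ref{thm:cinvar_perco}, so that $\omega_{0,f(D)}$ has the same law as $f(\omega_{0,D})$, together with the fact from Theorem \ref{assu:conv_measures} that $\mu^0_{1,f(A)}$ is a measurable function of $\omega_{0,f(D)}$. It then suffices to work on a probability space carrying $\omega_{\eta,D}\to\omega_{0,D}$ almost surely, to build the one-arm measure for the annulus $f(A)$ inside $f(D)$ out of the configuration $f(\omega_{0,D})$, and to show that the resulting measure agrees with $\mu^{0*}_{1,A}$, i.e.\ that $\mu^0_{1,f(A)}(B)=\int_{f^{-1}(B)}|f'(z)|^{2-\alpha_1}\,d\mu^0_{1,A}(z)$ for every Borel set $B\subseteq f(D)$.

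Next I would tile the inner disc of $A$ by disjoint boxes $\Lambda_{\rho/2}(z)$ with $z\in\rho\ZZ[\imag]$ and $\eta\ll\rho\ll\diam(A)$. As in the proof of Theorem \ref{assu:conv_measures}, quasi-multiplicativity (Lemma \ref{assu:q_mult}), the ratio estimate \eqref{eq:1arm_exp} for the one-arm probability, and the near-independence of far-apart boxes coming from the Domain Markov property (Assumption \ref{assu:markov}) together with RSW (Assumption \ref{assu:RSW}) and the moment bounds of Lemmas \ref{lem:BCKS}--\ref{lem:L3bound} show that $\mu^\eta_{1,A}(\Lambda_{\rho/2}(z))$ is, in $L^2$ and after letting first $\eta\to0$ and then $\rho\to0$, asymptotically $c\,\rho^{2-\alpha_1}$ times the indicator that $\Lambda_{\rho/2}(z)$ is connected to the outer boundary of $A$, with $c$ a universal constant (the exponent being forced since $\rho^2/\pi^0_1(\rho,1)=\rho^{2-\alpha_1+o(1)}$). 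Since $f(\Lambda_{\rho/2}(z))$ is, up to an error $o(\rho)$, a box of side $\rho\,|f'(z)|$ centred at $f(z)$, running the same approximation inside $f(A)\subset f(D)$ shows that $\mu^0_{1,f(A)}$ assigns to it mass asymptotically $c\,(\rho\,|f'(z)|)^{2-\alpha_1}$ times the indicator that $f(\Lambda_{\rho/2}(z))$ is connected to the outer boundary of $f(A)$ in $f(\omega_{0,D})$ --- which, by conformal invariance of connections, is the same indicator as above. Dividing, the ratio of the two box masses is $|f'(z)|^{2-\alpha_1}(1+o(1))$, uniformly in $z$ because $|f'|$ is bounded away from $0$ and $\infty$ on a neighbourhood of $\overline A$; summing over $z$ and letting $\rho\to0$ then yields the claimed identity of measures.

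The main obstacle is this mesoscopic step, i.e.\ the $L^2$ control of the error in replacing the one-arm measure of a box by $c\,\rho^{2-\alpha_1}$ times a connection indicator; this rests on quasi-multiplicativity and the decoupling estimates, and, as noted after Theorem \ref{assu:conv_measures}, only convergence in probability of the relevant arm-event indicators is available, so it must be combined with the $L^3$ bounds. By contrast the conformal distortion introduces only lower-order errors, since $f$ and $f^{-1}$ are Lipschitz on a neighbourhood of $\overline A$. Complete details can be found in \cite[Section~6]{Garban2010}.
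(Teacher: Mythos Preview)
Your proposal is appropriate: the paper does not give its own proof of this statement but simply quotes it as Theorem~6.7 of \cite{Garban2010}, and you correctly identify this. Your outline of the \cite{Garban2010} argument---reducing via conformal invariance of the quad-crossing limit to a single probability space, then using the mesoscopic box approximation of the one-arm measure together with the scaling $\rho^2/\pi_1^0(\rho,1)=\rho^{2-\alpha_1+o(1)}$ to extract the Jacobian factor $|f'(z)|^{2-\alpha_1}$---is a faithful sketch of that proof, and goes beyond what the present paper provides.
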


The boundedness of $f'$ discussed earlier implies that approximating the cluster measures
in $D$ by one-arm measures of annuli of the form $f(\Lambda_{\delta/2}\setminus \Lambda_{\varepsilon/2})$ provides
the same limit as approximating the same measures by one-arm measures of annuli of the form
$\Lambda_{\delta/2}\setminus \Lambda_{\varepsilon/2}$. Hence, one can carry out the arguments in the proof of
Lemma \ref{lem:bound_err} using one-arm measures of annuli of the form $f(\Lambda_{\delta/2}\setminus \Lambda_{\varepsilon/2})$.
This observation and Theorem \ref{assu:covar} imply the following result, where $\tilde\muColl^0_{D}$ denotes
the collection of measures of all clusters in $\mathscr{B}^0_{D}$, and $\mu^{0*}$ is defined in Theorem \ref{thm:cinvar}.
\begin{thm} \label{thm:covar_prec}
  For $\eta \ge 0$, let $\PP_\eta$ denote the measure for critical site percolation on the
  triangular lattice. Let $\phi>0$, $f:\Ball_{1+\phi}\ra \hat{\CC}$ be a
  conformal map, and $D:=f(\Ball_1)$. Then
  the laws of $\tilde\muColl^0_{D}$ and $f(\tilde\muColl^0_{\Lambda_1})$ 
  are identical, where 
  \begin{align*}
    f(\tilde\muColl^0_{\Lambda_1}) :=\{\mu^{0*}\,:\,\mu^0\in \tilde\muColl^0_{\Lambda_1}\}.
  \end{align*}
\end{thm}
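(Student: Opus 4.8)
The plan is to combine three ingredients that are already available: the conformal invariance of the continuum clusters (Theorem~\ref{thm:cinvar_prec}), the conformal covariance of the one-arm annulus measures (Theorem~\ref{assu:covar}), and the fact --- built into the construction of Section~\ref{sec:cont_meas} and quantified by Lemma~\ref{lem:bound_err} --- that each continuum cluster measure is the limit, in Prokhorov distance and in probability, of a sum of one-arm annulus measures indexed by the mesoscopic boxes meeting the cluster.

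First I would fix a coupling $\PP$ realizing the conformal invariance of Theorem~\ref{thm:cinvar_perco} pathwise, so that $(\omega_{0,D},L_{0,D})=(f(\omega_{0,\Lambda_1}),f(L_{0,\Lambda_1}))$ almost surely, where $D=f(\Lambda_1)$ and $f$ is the given conformal map on $\Lambda_{1+\phi}$. Under this coupling, the deterministic identity underlying Theorem~\ref{thm:cinvar_prec} --- namely that the cluster construction in $D$ through the transformed boxes $f(\Lambda_{\varepsilon/2}(z))$ coincides, by equivalence of $d_f$ and the $L^\infty$-metric, with the construction through square boxes, while $f$ intertwines the $\Lambda_1$-construction with the transformed-box $D$-construction --- shows that, in the canonical ordering, the $j$-th element of $\mathscr{B}^0_D$ is $f(\calB_j)$, with $\calB_j$ the $j$-th element of $\mathscr{B}^0_{\Lambda_1}$. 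It then suffices to establish, for each $\delta=3^{-m}$ and each $j$, the pathwise identity
\[
  \mu^0_{f(\calB_j)}(B)\;=\;\int_{f^{-1}(B)}|f'(z)|^{2-\alpha_1}\,d\mu^0_{\calB_j}(z)\qquad\text{for all Borel }B\subseteq D,
\]
recalling $2-\alpha_1=91/48$ for site percolation.

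To get there I would, on the $\Lambda_1$-side, use the construction of Section~\ref{sec:cont_meas}: $\mu^0_{\calB_j}=\lim_{n\to\infty}\mu^0_{\calB_j,n}$ in Prokhorov distance in probability, where $\mu^0_{\calB_j,n}=\sum_{z}\mu^0_{1,A(3^{-n}z;\,3^{-n}/2,\,\delta/2-3^{-n})}$ summed over $z\in\ZZ[\imag]$ with $\Lambda_{3\cdot3^{-n}/2}(3^{-n}z)\cap\calB_j\neq\emptyset$. On the $D$-side I would run the same construction with the transformed annuli $f(A(3^{-n}z;\,3^{-n}/2,\,\delta/2-3^{-n}))$: as explained just before the statement, boundedness of $f'$ on $\Lambda_{1+\phi/2}$ makes the arm-exponent estimates of Section~\ref{ssec:error_bounds} go through --- asymptotically as $\eta\to0$, by the conformal invariance of the scaling limit --- for the $f$-images of boxes, so the transported analogue of Lemma~\ref{lem:bound_err} and of the proof of Theorem~\ref{thm:strong_meas} give
\[
  \mu^0_{f(\calB_j)}\;=\;\lim_{n\to\infty}\ \sum_{z:\ \Lambda_{3\cdot3^{-n}/2}(3^{-n}z)\cap\calB_j\neq\emptyset}\ \mu^0_{1,\,f(A(3^{-n}z;\,3^{-n}/2,\,\delta/2-3^{-n}))},
\]
again in Prokhorov distance in probability, the index set being exactly that of $\mu^0_{\calB_j,n}$ since a transformed box meets $f(\calB_j)$ iff its $f$-preimage meets $\calB_j$. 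Next I would apply Theorem~\ref{assu:covar} to each annulus $A=A(3^{-n}z;\,3^{-n}/2,\,\delta/2-3^{-n})$ --- which lies within the domain of $f$, its outer radius being at most $\delta/2\le1/6$ and its inner box meeting $\overline{\Lambda_1}$ --- to obtain, under the above coupling and using that the $\mu^0_{1,A}$ are measurable functions of $\omega_0$ (Theorem~\ref{assu:conv_measures}) and that finitely many of them converge jointly, the pathwise identity $\mu^0_{1,f(A)}=\mu^{0*}_{1,A}$ in the notation of Theorem~\ref{assu:covar}. Summing over $z$ in the common index set, by linearity the sum above equals $\int_{f^{-1}(\cdot)}|f'(z)|^{2-\alpha_1}\,d\mu^0_{\calB_j,n}(z)$; and since $|f'|^{2-\alpha_1}$ is continuous and bounded away from $0$ and $\infty$ on the compact set $\overline{\Lambda_1}\subset\Lambda_{1+\phi/2}$, the map $\nu\mapsto f_{*}(|f'|^{2-\alpha_1}\nu)$ is continuous for the Prokhorov topology, so letting $n\to\infty$ delivers the displayed identity for $\mu^0_{f(\calB_j)}$. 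Collecting over $j$ and over $\delta=3^{-m}$ then identifies the laws of $\tilde{\muColl}^0_D$ and $f(\tilde{\muColl}^0_{\Lambda_1})$.

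The hard part will be the junction between the last two steps: showing that the covariance identity, which Theorem~\ref{assu:covar} supplies only for one fixed annulus, survives the $n\to\infty$ limit that defines the cluster measure. This is precisely the role of the transported version of Lemma~\ref{lem:bound_err} --- whose proof uses only the RSW and quasi-multiplicativity inputs of Section~\ref{sec:preliminaries}, which in the limit $\eta\to0$ are insensitive to the bi-Lipschitz distortion of boxes caused by $f$ --- together with the conformal invariance of the scaling limit, used to pass from $\eta>0$ to $\eta=0$. A secondary subtlety, used implicitly above, is the upgrade of Theorem~\ref{assu:covar} from an equality of laws to a pathwise identity valid under the conformal-invariance coupling; this is inherent in its proof by discrete approximation in~\cite{Garban2010}.
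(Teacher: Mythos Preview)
Your proposal is correct and follows essentially the same approach as the paper: approximate each cluster measure by a finite sum of one-arm annulus measures, run the approximation in $D$ with the transformed annuli $f(A)$ (justified by the bi-Lipschitz equivalence of $d_f$ and $d_\infty$ and a transported Lemma~\ref{lem:bound_err}), invoke Theorem~\ref{assu:covar} annulus-by-annulus, and pass to the limit. The paper compresses all of this into two sentences just before the statement, while you have unpacked the coupling argument and named the two genuine subtleties (the transported error bound and the pathwise upgrade of Theorem~\ref{assu:covar}) explicitly; both accounts rest on the same three ingredients.
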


We conclude this section with a brief discussion of the proof of Theorem~\ref{thm:invar}.

\begin{proof}[Proof of Theorem \ref{thm:invar}]
The theorem follows from a straightforward modification of the arguments above, using the rotation
and translation invariance and scaling covariance of the $1$-arm measures under
Assumptions \ref{assu:markov} - \ref{assu:existence_full_confInvLimit}, which follow easily from the proof
of Theorem \ref{assu:conv_measures} (see also \cite[Equation (6.1) and Proposition 6.4]{Garban2010}).
\end{proof}

\section{Proof of the convergence of the largest Bernoulli percolation clusters}\label{sec:pf_of_conv_largest_clusters}
Now we turn to the precise version and to the proof of Theorem \ref{thm:largest_Clusters}.
\begin{thm}\label{thm:conv_Largest_clusters_precise}
 Let $\PP$ be a coupling such that $(\omega_\eta,L_{\eta}) \rightarrow (\omega_0,L_0)$ a.s. as $\eta \rightarrow 0$.
 Then for all $i \in \NN$ the $i$-th largest cluster $\calM_{(i)}^{\eta}$ converges in $\PP$-probability to
 $\calM_{(i)}^{0}$ as $\eta \to 0$, where $\calM_{(i)}^{0}$ is a measurable function of $(\omega_{0}, L_0)$.
 In particular, $(\omega_{\eta},L_\eta,\calM_{(i)}^{\eta}) \to (\omega_{0},L_0,\calM_{(i)}^{0})$ in distribution.
 The same convergence holds for the measures $\mu_{\calM_{(i)}^{\eta}}^{\eta}$.
\end{thm}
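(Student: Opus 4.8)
The plan is to realize $\calM_{(i)}^{\eta}$ as the element of $\mathscr{B}_{\Ball_1}^{\eta}(\delta)$ carrying the $i$-th largest mass, for a small fixed $\delta>0$, and then to read off its convergence from Theorems \ref{thm:strong_Hausdorff_in_domain} and \ref{thm:strong_meas} (whose hypotheses hold for critical site percolation on $\TT$ by Theorem \ref{thm:assu_perco}) together with the continuity of order statistics. I would work under the given coupling $\PP$ with $(\omega_{\eta},L_{\eta})\to(\omega_{0},L_{0})$ a.s., and, since those two theorems only supply convergence in probability, reduce to a.s.\ convergence by the usual device: along any subsequence $\eta_l\to0$ extract a further subsequence along which $\underline{\calB}_{\Ball_1}^{\eta}(3^{-m})\to\underline{\calB}_{\Ball_1}^{0}(3^{-m})$ and $\underline{\mu}_{\Ball_1}^{\eta}(3^{-m})\to\underline{\mu}_{\Ball_1}^{0}(3^{-m})$ hold a.s.\ for all $m\ge1$ simultaneously (a diagonal extraction); it then suffices to identify the a.s.\ limit of $\calM_{(i)}^{\eta}$ and $\mu_{\calM_{(i)}^{\eta}}^{\eta}$ along such a subsequence, as that limit will not depend on the choice.

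\emph{Step 1: the $i$-th largest cluster is macroscopic.} Fix $\kappa>0$. Using Assumptions \ref{assu:strongPosAssoc}--\ref{assu:RSW} (tiling $\Ball_1$ into many small boxes and producing, via RSW and FKG, a crossing cluster in each box surrounded by a blue circuit, so that the Domain Markov property makes these distinct clusters conditionally independent), together with Lemma \ref{lem:big_then_thick}, I would obtain $\delta_0>0$ and $c_i>0$ such that $\PP_{\eta}$ assigns probability at least $1-\kappa/2$ to the event that $\Ball_1$ contains at least $i$ clusters of normalized mass $\ge c_i$, uniformly for small $\eta$; alternatively this follows from \cite[Theorems 3.1 and 3.3]{BCKS01}. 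In the opposite direction, a cluster of diameter $<\delta\le\delta_0$ lies inside some box $\Ball_{2\delta}(u)$, so by Lemma \ref{lem:BCKS} (and the RSW bound $\pi_1^{\eta}(\delta,1)\ge c\delta$) its normalized mass exceeds $C\delta x$ with probability at most $C\delta^{-2}e^{-c'x}$; covering $\Ball_1$ by $O(\delta^{-2})$ such boxes and choosing $\delta$ small, with probability at least $1-\kappa/2$ every cluster of diameter $<\delta$ has normalized mass $<c_i$. On the intersection of these events $\calM_{(i)}^{\eta}$ has diameter $\ge\delta$, hence lies in $\mathscr{B}_{\Ball_1}^{\eta}(\delta)$ (which, for Bernoulli percolation, is exactly the set of closed-boundary-condition clusters in $\Ball_1^{\eta}$ of diameter $\ge\delta$, as recalled in Section \ref{sec:clustersinSubset}); this collection is a.s.\ finite with cardinality tight in $\eta$ by Lemmas \ref{lem:bound_cluster_count} and \ref{lem:ubound_NA}.

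\emph{Step 2: passing to the limit.} On the good event of Step 1, $\calM_{(i)}^{\eta}$ equals the coordinate $\calB_j^{\eta}$ of $\underline{\calB}_{\Ball_1}^{\eta}(\delta)$ whose measure $\mu_j^{\eta}$ has the $i$-th largest total mass $\mu_j^{\eta}(\Ball_1)=||\mu_j^{\eta}||_{TV}$. Since only boundedly many coordinates matter, since $\underline{\calB}_{\Ball_1}^{\eta}(\delta)\to\underline{\calB}_{\Ball_1}^{0}(\delta)$ and $\underline{\mu}_{\Ball_1}^{\eta}(\delta)\to\underline{\mu}_{\Ball_1}^{0}(\delta)$ coordinatewise along the subsequence, and since Prokhorov convergence of finite measures entails convergence of total masses $\mu_j^{\eta}(\Ball_1)\to\mu_j^{0}(\Ball_1)$, continuity of order statistics immediately gives that $m_{(i)}^{\eta}:=\mu_{\calM_{(i)}^{\eta}}^{\eta}(\Ball_1)$ converges a.s.\ to $m_{(i)}^{0}$, the $i$-th largest of the numbers $\{\mu_j^{0}(\Ball_1)\}_j$. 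To upgrade this to convergence of the cluster $\calM_{(i)}^{\eta}$ in Hausdorff distance and of $\mu_{\calM_{(i)}^{\eta}}^{\eta}$ in Prokhorov distance, one needs the $i$-th largest limiting mass to be realized by a unique coordinate and strictly separated from the $(i-1)$-th and $(i+1)$-th, i.e.\ $\PP(m_{(k)}^{0}=m_{(k+1)}^{0})=0$ for $k\le i$; this I would deduce from the just-established convergence $m_{(k)}^{\eta}\to m_{(k)}^{0}$ together with the absence of an atom at $0$ in the scaling limit of the gaps $m_{(k)}^{\eta}-m_{(k+1)}^{\eta}$, established in \cite{BC13} independently of the present paper. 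Off this null event the index $j^{\ast}$ achieving the $i$-th largest limiting mass eventually achieves the $i$-th largest discrete mass, so $\calM_{(i)}^{\eta}=\calB_{j^{\ast}}^{\eta}\to\calB_{j^{\ast}}^{0}=:\calM_{(i)}^{0}$ and $\mu_{\calM_{(i)}^{\eta}}^{\eta}\to\mu_{\calM_{(i)}^{0}}^{0}$ a.s.\ along the subsequence, the limits being measurable functions of $(\omega_{0},L_{0})$ by Theorems \ref{thm:strong_Hausdorff_in_domain} and \ref{thm:strong_meas}. Since the subsequence was arbitrary, this gives convergence in $\PP$-probability, hence convergence in distribution of the triples.

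The routine parts here are the coupling and subsequence bookkeeping and the order-statistics continuity; the substance is Step 1, namely ruling out that the $i$-th largest cluster is one of small diameter, which relies on the BCKS-type volume estimates (Lemmas \ref{lem:BCKS} and \ref{lem:big_then_thick}) and on the existence of arbitrarily many macroscopic clusters in $\Ball_1$. The one genuinely external ingredient is the no-ties statement $\PP(m_{(k)}^{0}=m_{(k+1)}^{0})=0$ imported from \cite{BC13}, which is what pins down the particular limiting cluster to which $\calM_{(i)}^{\eta}$ converges; without it one still gets convergence of the normalized volumes $\mu_{\calM_{(i)}^{\eta}}^{\eta}(\Ball_1)$ but not of $\calM_{(i)}^{\eta}$ itself.
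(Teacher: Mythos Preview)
Your proposal is correct and follows essentially the same strategy as the paper: both pin down $\calM_{(i)}^{\eta}$ inside $\mathscr{B}_{\Ball_1}^{\eta}(\delta)$ via a BCKS-type estimate (your Step~1 is the content of the paper's Proposition~\ref{prop:Large_Volume_large_diam} and Lemma~\ref{lem:BCKS2}), invoke the no-ties result from \cite{BC13} (the paper's Proposition~\ref{prop:BC_gaps_cl}), and then read off convergence from Theorems~\ref{thm:strong_Hausdorff_in_domain} and~\ref{thm:strong_meas} by continuity of the $i$-th order statistic. The only organizational difference is that the paper defines $\calM_{(i)}^{0}$ intrinsically by showing, via Borel--Cantelli, that the $i$-th largest-mass element of $\mathscr{B}_{\Ball_1}^{0}(3^{-m})$ stabilizes as $m\to\infty$, whereas in your argument the cutoff $\delta$ depends on $\kappa$ and the $\delta$-independence of the limit $\calM_{(i)}^{0}$ is left implicit (it does follow, from uniqueness of limits in probability once you let $\kappa\downarrow 0$).
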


Let us start with some preliminary results. Recall the definition of
collections of (portions of) clusters $\mathscr{B}_{\Ball_1}^{\eta}(\delta)$
in Section \ref{sec:clustersinSubset}.
\begin{prop}\cite[Proposition 3.2]{BC13}\label{prop:BC_gaps_cl}
 Let $\delta \in (0,1)$. For all $\varphi > 0$ there exist $\eta_{0}, \alpha > 0$ such that, for all $\eta < \eta_{0}$,
 \[
  \PP_{\eta}(\exists \calB, \calB' \in \mathscr{B}_{\Ball_1}^{\eta}(\delta): \calB \neq \calB': |\mu_{\calB}^{\eta}(\Ball_1) - \mu_{\calB'}^{\eta}(\Ball_1)| < \alpha) < \varphi.
 \]
\end{prop}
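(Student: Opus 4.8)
The plan is to prove this purely at the discrete level (the scaling limit is \emph{not} needed; the argument parallels \cite[Proposition~3.2]{BC13}), combining a resampling argument with a concentration-function inequality. Abbreviate $M_\calC^\eta := \mu_\calC^\eta(\Ball_1)$; for Bernoulli site percolation on the triangular lattice this is $\eta^2/\pi_1^\eta(\eta,1)$ times the number of sites of $\calC$ in the (discretized) box, and $\mathscr{B}_{\Ball_1}^\eta(\delta)$ is the collection of clusters of diameter at least $\delta$ in that box with closed boundary condition. By Lemma~\ref{lem:bound_cluster_count} and Lemma~\ref{lem:ubound_NA} (and the analogous bound for portions of clusters) the number $N^\eta := |\mathscr{B}_{\Ball_1}^\eta(\delta)|$ is tight as $\eta\to 0$. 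Fixing $K$ with $\PP_\eta(N^\eta>K)<\varphi/2$ for all small $\eta$ and union-bounding over the at most $K^2$ pairs of macroscopic clusters (ordered canonically), it suffices to find $\rho,\alpha,\eta_0>0$ so that for any two distinct $\calC\ne\calC'$ in $\mathscr{B}_{\Ball_1}^\eta(\delta)$ and any $\eta<\eta_0$,
\[
  \PP_\eta\big(|M_\calC^\eta-M_{\calC'}^\eta|<\alpha\big)\;<\;\frac{\varphi}{2K^2}.
\]

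Fix a mesoscopic scale $\rho$ with $\eta\ll\rho\ll\delta$. The core is to exhibit, along a given macroscopic cluster $\calC$, many well-separated bulk boxes of side $\rho$ at which one can \emph{resample} the configuration so as to change the mass of $\calC$ by a definite amount while affecting nothing else. Concretely, I would find disjoint boxes $\Ball_\rho(w_1),\dots,\Ball_\rho(w_m)$, pairwise and from $\partial\Ball_1$ at distance $\ge 4\rho$, such that $\omega_\eta$ restricted to their common exterior already determines $\calC$ outside the boxes, $\calC$ is connected there with diameter $\ge\delta/2$, red arms of $\calC$ enter each $\Ball_\rho(w_k)$, and for \emph{any} configuration inside the boxes the cluster through $\calC$'s exterior part neither splits nor merges with another cluster --- only the masses $\calC\cap\Ball_\rho(w_k)$ vary. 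Then by the Domain Markov Property (Assumption~\ref{assu:markov}), conditionally on this exterior the pieces $\omega_\eta|_{\Ball_\rho(w_k)}$ are independent and $M_\calC^\eta = a_\calC + \sum_{k=1}^m\xi_k$, where $\xi_k$ is the normalized mass of $\calC\cap\Ball_\rho(w_k)$ and $a_\calC$ is exterior-measurable; and by RSW (Assumption~\ref{assu:RSW}), quasi-multiplicativity (Lemma~\ref{assu:q_mult}) and the rescaled version of Lemma~\ref{lem:big_then_thick} there are $c=c(\delta)>0$ and $h(\rho)>0$, independent of $\eta$ and of the conditioning, with $\PP(\xi_k\ge h(\rho)\mid\text{ext})\ge c$ (a red crossing of $\Ball_\rho(w_k)$ attached to an incoming arm, of normalized mass $\ge h(\rho)$) and $\PP(\xi_k\le h(\rho)/4\mid\text{ext})\ge c$ (the incoming arm endpoints surrounded by blue inside $\Ball_\rho(w_k)$); Lemma~\ref{lem:BCKS} controls the upper tails of the $\xi_k$ and of the error coming from boxes not fitting the pattern. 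One then checks that a cluster of diameter $\ge\delta$ admits $m\ge M(\rho)$ such boxes with probability $\ge 1-g(\rho)$, where $M(\rho)\to\infty$ and $g(\rho)\to 0$ as $\rho\to 0$, using that it touches of order $\rho^{-91/48}$ disjoint mesoscopic boxes, only a vanishing fraction of which lie on its backbone at scale $\rho$.

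On the complementary event, run this construction for both $\calC$ and $\calC'$ --- their good boxes can be taken disjoint, since each cluster has far more of them than the few used by the other --- and condition on $\omega_\eta$ off the union of all the chosen boxes. Then $M_{\calC'}^\eta$ and the $a$'s are frozen, the $\xi_k$ for $\calC$ are independent, and for $\alpha\le h(\rho)/4$ each $\xi_k$ has Lévy concentration function at level $2\alpha$ bounded by $1-c$ (its support contains values $\ge h(\rho)$ and values $\le h(\rho)/4$, too far apart to both meet a window of length $2\alpha$). The Kolmogorov--Rogozin inequality therefore gives, uniformly in the conditioning,
\[
  \PP_\eta\big(|M_\calC^\eta-M_{\calC'}^\eta|<\alpha \,\big|\, \text{conditioning}\big)\;\le\;\frac{C_0}{\sqrt{m\,c}}\;\le\;\frac{C_0}{\sqrt{M(\rho)\,c}}.
\]
Taking expectations, adding $g(\rho)$ for the bad event, then choosing $\rho$ so small that $C_0/\sqrt{M(\rho)c}+g(\rho)<\varphi/(2K^2)$, then $\alpha:=h(\rho)/4$, and finally $\eta_0$ small enough for the above to hold for $\eta<\eta_0$, yields the displayed per-pair bound and hence the proposition.

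The main obstacle is the construction in the second paragraph: producing, uniformly in $\eta$, a number $M(\rho)\to\infty$ of well-separated bulk boxes along an arbitrary macroscopic cluster at which the local surgery changes only that cluster's mass --- and does so by at least $h(\rho)$ with conditional probability bounded away from $0$ and $1$ --- without splitting or merging clusters. The tools it rests on (RSW, quasi-multiplicativity, Lemma~\ref{lem:BCKS}, Lemma~\ref{lem:big_then_thick}, and the Domain Markov Property) are all available under our assumptions, but assembling them into a clean multi-scale surgery is where the work lies. (One could alternatively first pass to the continuum via Theorems~\ref{thm:strong_Hausdorff_in_domain} and~\ref{thm:strong_meas} --- using that Prokhorov convergence of the cluster measures forces convergence of their total masses, so the finite mass vector converges in distribution --- and then show that the a.s.\ finitely many continuum cluster masses are pairwise distinct; but the essential step would be the same resampling/anti-concentration estimate.)
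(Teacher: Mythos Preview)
Your proposal is correct and follows the same resampling/Kolmogorov--Rogozin anti-concentration strategy as \cite{BC13}, which is exactly the paper's approach: its proof consists solely of a citation to \cite[Proposition~3.2]{BC13}, with the remark that the argument carries over from bond percolation on $\ZZ^2$ to site percolation on $\TT$. One minor point: in your conditioning step, keep only the good boxes for $\calC$ that $\calC'$ does not enter --- there is no need to also select boxes for $\calC'$, and if you resample those too then $M_{\calC'}^\eta$ is not frozen as you claim; but this is a write-up issue, not a gap in the argument.
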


\begin{proof}[Proof of Proposition \ref{prop:BC_gaps_cl}]
In \cite{BC13} a proof for Proposition \ref{prop:BC_gaps_cl} was given for bond percolation on the square lattice,
however the proof also works for other models, like site percolation on the triangular lattice, as noted in Remark (i)
after Theorem 1.1 in \cite{BC13}.
\end{proof}

\begin{lemma}[Lemma 4.4 of \cite{BCKS01}] \label{lem:BCKS2}
 There are positive constants $c,C$ such that for all $x,y>0$
   \[
    \PP_{\eta}(\exists \calB \in \mathscr{B}_{\Ball_1}^{\eta}: \mu_{\calB}^{\eta}(\Ball_1) > x \text{ and } \diam(\calB)<y) < Cy^{-1}\exp(-cx/\sqrt{y})
   \]
   for all $\eta < \eta_0 = \eta_0(x,y)$.
\end{lemma}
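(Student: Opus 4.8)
The claim is a version of \cite[Lemma 4.4]{BCKS01} adapted to our setting, and the plan is to reduce it to that source together with the tools already available in the excerpt. The starting observation is that if a cluster (or portion of a cluster) $\calB \in \mathscr{B}_{\Ball_1}^{\eta}$ has $\diam(\calB) < y$, then $\calB$ is contained in some box $\Ball_y(u)$ with $u$ ranging over an $O(y^{-2})$-sized grid covering $\Ball_1$ (say $u \in y\ZZ[\imag] \cap \Ball_2$). Hence a union bound reduces the statement to a single box: it suffices to show that for a fixed $u$,
\[
  \PP_{\eta}\bigl(\exists \calB \in \mathscr{B}_{\Ball_1}^{\eta}: \calB \subset \Ball_y(u),\ \mu_{\calB}^{\eta}(\Ball_1) > x\bigr) < C' \exp(-c' x / \sqrt{y}),
\]
and then summing over the $O(y^{-2})$ choices of $u$ absorbs the polynomial factor into the constant $C$ at the cost of shrinking $c$.

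\textbf{Key steps.} First I would bound $\mu_{\calB}^{\eta}(\Ball_1)$, for $\calB \subset \Ball_y(u)$, by the total normalized count of vertices in $\Ball_y(u)$ lying in clusters of diameter at least $\delta$; but more efficiently, since $\calB$ is connected and has diameter $<y$, every vertex of $\calB$ is connected within $\Ball_{y}(u)$ (or within a slightly larger concentric box) to $\partial \Ball_{y}(u)$ only if $\calB$ reaches the boundary — in the relevant regime one uses instead that $\calB$ being a macroscopic piece forces a one-arm crossing of the annulus $A(u; y, \delta/2)$ or similar, so that $\mu_{\calB}^{\eta}(\Ball_1) \le |\calV_{2y}^{\eta}(u)|\, \eta^2/\pi_1^\eta(\eta,1)$ in the notation of Definition \ref{def:V_a} (translated and rescaled to the box around $u$). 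Then Lemma \ref{lem:BCKS} gives the exponential tail $\PP_\eta(|\calV_{2y}^\eta(u)| \ge t (2y/\eta)^2 \pi_1^\eta(\eta,2y)) \le C e^{-ct}$. Converting the threshold $x$ in the $\mu$-normalization into a threshold $t$ in the $\calV$-normalization involves the ratio $\pi_1^\eta(\eta,1)/\pi_1^\eta(\eta,2y)$, which by quasi-multiplicativity (Lemma \ref{assu:q_mult}) is comparable to $\pi_1^\eta(2y,1)$; using the one-arm exponent bound $\pi_1^\eta(2y,1) \ge c\, y^{\lambda_1}$ from Lemma \ref{assu:1arm_exp_ubound} (or even the cruder RSW bound $\pi_1 \ge c\sqrt{y}$ that is classical for these models), one finds $t \gtrsim x/\sqrt{y}$, which is exactly the form of the exponent claimed. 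Finally the union bound over the $O(y^{-2})$ boxes $u$ yields the prefactor $Cy^{-1}$ (after absorbing one power of $y^{-1}$ into the exponential, using $e^{-cx/\sqrt{y}} \le C y^{-1} e^{-(c/2)x/\sqrt{y}}$ for the range of interest, or simply keeping $Cy^{-2}$ and noting it is $\le Cy^{-1}$ since $y<1$).

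\textbf{Main obstacle.} The only genuinely delicate point is the precise power of $y$ in the exponent: one must check that the correct scale for the one-arm probability entering the normalization is a box of radius comparable to $y$ (not $\delta$ or $1$), and that the quasi-multiplicative decomposition $\pi_1^\eta(\eta,1) \asymp \pi_1^\eta(\eta,y)\,\pi_1^\eta(y,1)$ combined with the lower bound $\pi_1^\eta(y,1) \ge c\sqrt{y}$ really does produce $x/\sqrt{y}$ rather than, say, $x y$ or $x/y$. Since the lemma is quoted verbatim from \cite[Lemma 4.4]{BCKS01}, the cleanest route is to cite that proof and remark that it uses only Lemmas \ref{assu:q_mult}--\ref{lem:BCKS}, hence only Assumptions \ref{assu:markov}--\ref{assu:RSW}, exactly as was done for Lemma \ref{lem:BCKS} and Lemma \ref{lem:big_then_thick} above; I would phrase the proof as "follows from \cite[Lemma 4.4]{BCKS01}, whose argument relies only on Assumptions \ref{assu:markov}--\ref{assu:RSW} via Lemmas \ref{assu:q_mult}--\ref{lem:BCKS}; the same reduction to a single box of radius $\asymp y$ combined with Lemma \ref{lem:BCKS} and quasi-multiplicativity gives the stated bound for all the models under consideration."
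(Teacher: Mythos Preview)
The paper gives no proof of this lemma: it is stated with the attribution ``[Lemma 4.4 of \cite{BCKS01}]'' and used directly in the Bernoulli-percolation section, so your final recommendation---to cite \cite[Lemma~4.4]{BCKS01} and note that its proof uses only the standing assumptions---is precisely what the paper does.

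Your preliminary sketch, however, contains a genuine gap. The claimed inequality $\mu_{\calB}^{\eta}(\Ball_1) \le |\calV_{2y}^{\eta}(u)|\,\eta^2/\pi_1^\eta(\eta,1)$ fails: by Definition~\ref{def:V_a}, $\calV_{2y}^{\eta}(u)$ counts vertices in $\Ball_y(u)$ with a red arm to $\partial\Ball_{2y}(u)$, but a cluster $\calB$ with $\diam(\calB)<y$ sitting inside $\Ball_y(u)$ has \emph{no} vertex connected to $\partial\Ball_{2y}(u)$, so $\calB$ contributes nothing to $\calV_{2y}^{\eta}(u)$ and the inequality goes the wrong way. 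The missing step is to first localize to the actual scale of the cluster: if $\diam(\calB)\in[y/2,y)$ then every $v\in\calB$ does have an arm of length $\ge y/4$ (to the farther of the two diameter-realizing points of $\calB$), after which a $\calV$-type bound and Lemma~\ref{lem:BCKS} apply in each of $O(1)$ sub-boxes; one then sums the resulting estimate over the dyadic diameter scales $y\cdot 2^{-k}$, $k\ge 0$, and the geometric series converges because the exponent $cx/\sqrt{y\cdot 2^{-k}}$ improves as $k$ grows. Also, your parenthetical ``keeping $Cy^{-2}$ and noting it is $\le Cy^{-1}$ since $y<1$'' has the inequality reversed; only the absorption of one factor of $y^{-1}$ into the exponential is a valid route to the stated prefactor.
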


The next proposition follows easily from a combination of Lemma \ref{lem:BCKS2} and \cite[Theorems 3.1, 3.3 and 3.6]{BCKS01}
(see also \cite{BC13}).
\begin{prop}\label{prop:Large_Volume_large_diam}
 Let $i\in \NN$ be fixed. For all $\varphi > 0$ there exist $\delta >0, \eta_{0} > 0$ such that, for all $\eta < \eta_{0}$,
 \[
  \PP_{\eta}(\exists j \le i: \calM_{(j)}^{\eta} \not\in \mathscr{B}_{\Ball_1}^{\eta}(\delta)) < \varphi.
 \]
\end{prop}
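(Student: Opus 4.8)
The plan is to argue by contradiction on the event that one of the $i$ largest clusters has small diameter but, by definition, a large normalized volume. First I would fix $i \in \NN$ and $\varphi > 0$. The key observation is that if $\calM_{(j)}^\eta \notin \mathscr{B}_{\Ball_1}^\eta(\delta)$ for some $j \le i$, then $\calM_{(j)}^\eta$ has diameter $< \delta$; since the clusters are ordered by volume, this forces $\mu_{\calM_{(1)}^\eta}^\eta(\Ball_1) \ge \cdots \ge \mu_{\calM_{(i)}^\eta}^\eta(\Ball_1) \ge \mu_{\calM_{(j)}^\eta}^\eta(\Ball_1)$, so in particular at least $i$ distinct clusters in $\mathscr{B}_{\Ball_1}^\eta$ of diameter $< \delta$ each have normalized volume at least $\mu_{\calM_{(i)}^\eta}^\eta(\Ball_1)$. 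Actually the cleaner route is: on this event, $\calM_{(j)}^\eta$ itself is a single cluster of diameter $< \delta$ whose normalized volume is at least that of the $i$-th largest cluster, which in turn is at least as large as the $i$-th largest volume among \emph{all} clusters of diameter $\ge \delta$ (if at least $i$ such exist), and comparable to a fixed positive quantity with high probability.

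The main steps, in order, are as follows. \textbf{Step 1:} Use \cite[Theorems 3.1, 3.3 and 3.6]{BCKS01} (as already cited in the statement) to obtain that, for a suitably small $\delta > 0$ and all $\eta < \eta_0$, with probability at least $1 - \varphi/2$ there exist at least $i$ clusters in $\mathscr{B}_{\Ball_1}^\eta(\delta)$ and the $i$-th largest normalized volume among them exceeds some threshold $x = x(i,\varphi) > 0$ that does not depend on $\eta$ (for $\eta$ small). Intuitively these results say the largest clusters have macroscopic volume and diameter, so restricting to diameter $\ge \delta$ loses nothing for the top $i$. \textbf{Step 2:} Apply Lemma \ref{lem:BCKS2} with this $x$ and with $y = \delta$ chosen small enough that $C\delta^{-1}\exp(-cx/\sqrt{\delta}) < \varphi/2$; this bounds, uniformly in $\eta < \eta_0(x,\delta)$, the probability that there exists any cluster $\calB \in \mathscr{B}_{\Ball_1}^\eta$ with $\mu_\calB^\eta(\Ball_1) > x$ and $\diam(\calB) < \delta$. \textbf{Step 3:} Combine the two events. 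On the complement of both bad events, every cluster with $\mu_\calB^\eta(\Ball_1) > x$ has diameter $\ge \delta$; since there are at least $i$ clusters of diameter $\ge \delta$ whose normalized volumes exceed $x$, the top $i$ clusters overall all have volume $> x$ and hence diameter $\ge \delta$, i.e. $\calM_{(j)}^\eta \in \mathscr{B}_{\Ball_1}^\eta(\delta)$ for all $j \le i$. Taking a union bound gives the claimed estimate with this choice of $\delta$ and $\eta_0$.

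The main obstacle, such as it is, lies in Step 1: one must extract from the somewhat involved statements of \cite[Theorems 3.1, 3.3, 3.6]{BCKS01} a clean, $\eta$-uniform lower bound on the $i$-th largest normalized volume, i.e.\ that the top $i$ clusters are genuinely macroscopic with high probability and that passing to diameter $\ge \delta$ does not discard any of them. The references \cite{BCKS01} and \cite{BC13} are precisely designed to supply this, so the argument should amount to quoting them with the right parameters; once Step 1 is in hand, Steps 2 and 3 are a direct application of Lemma \ref{lem:BCKS2} and a union bound. One should also note that, as in the rest of the paper, these results rely only on Assumptions \ref{assu:markov}--\ref{assu:RSW}, so the proposition holds in the stated generality.
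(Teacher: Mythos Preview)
Your proposal is correct and follows exactly the route the paper indicates: the paper's entire proof is the one-line remark that the proposition ``follows easily from a combination of Lemma \ref{lem:BCKS2} and \cite[Theorems 3.1, 3.3 and 3.6]{BCKS01} (see also \cite{BC13}),'' and your Steps 1--3 are precisely a fleshing-out of that combination. One small point of exposition: as written, you choose $\delta$ in Step~1 and then again in Step~2, which looks circular; the clean order is to first extract from \cite{BCKS01} a threshold $x=x(i,\varphi)>0$ with $\PP_\eta(\mu^\eta_{\calM_{(i)}^\eta}(\Ball_1)\le x)<\varphi/2$ (no $\delta$ involved), and only then pick $\delta$ via Lemma~\ref{lem:BCKS2}.
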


\begin{proof}[Proof of Theorem \ref{thm:conv_Largest_clusters_precise}]
 Let $i\in \NN$ be fixed and $\PP$ be a coupling such
 that $(\omega_{\eta},L_\eta) \to (\omega_{0},L_0)$ a.s. as $\eta\ra0$. First we
 show that the $i$-th largest clusters in the scaling limit can
 almost surely be defined as a function of the pair $(\omega_{0},L_{0})$. Then we show that the $i$-th largest cluster
 $\calM_{(i)}^{\eta}$ in the discrete configuration $\omega_{\eta}$
 converges to the $i$-th largest continuum cluster.
 
 Let $m\in \NN$. Theorems \ref{thm:strong_Hausdorff_in_domain} 
 and \ref{thm:strong_meas} show that the sequence of
 clusters $\underline\calB_{\Ball_1}^{0}(3^{-m})$ and their corresponding measures
 $\underline\mu^{0}(3^{-m})$ are a.s. well defined. 
 
 We define the \emph{volume} of a continuum cluster $\calB\in\mathscr{B}_{\Ball_1}^0$ as
 $\mu_{\calB}^{0}(\Ball_{1})$.
 Lemma \ref{lem:BCKS} shows that the volumes of the clusters
 $\calB\in\mathscr{B}_{\Ball_1}^0(3^{-m})$ are a.s. finite. Moreover, Lemma \ref{lem:n_0},
 together with the tightness of the number of excursions in $\Ball_{1}$ of diameter
 at least $3^{-m}$, gives that
 $h^{0}(3^{-m}) := |\mathscr{B}_{\Ball_1}^0(3^{-m})|$ is a.s. finite. 
 Thus we can reorder the sequence  of clusters $\underline\calB^{0}_{\Ball_{1}}(3^{-m})$ in decreasing order by their volume.
 We break ties in some deterministic way. However, we will see below that ties occur with probability $0$. Let $\calM_{(j)}^0(3^{-m})$ denote the $j$-th cluster in this new
 ordering.
  
 Let $\varphi>0$ be arbitrary and take $\alpha$ and $\eta_0$ as in Proposition \ref{prop:BC_gaps_cl}.
 Then, for $\eta < \eta_0$,
 \begin{multline}\label{eq:pf:largeCl}
  \PP(\exists \calB,\calB' \in \mathscr{B}_{\Ball_1}^0(3^{-m}): \calB\neq\calB', |\mu_{\calB}^{0}(\Ball_1) - \mu_{\calB'}^{0}(\Ball_1)| < \alpha/2)\\
  \begin{aligned}
    \le\, &\PP( \exists \calB, \calB' \in\mathscr{B}_{\Ball_1}^{\eta}(3^{-m}): \calB \neq \calB', |\mu_{\calB}^{\eta}(\Ball_1) - \mu_{\calB'}^{\eta}(\Ball_1)| < \alpha )\\
    &+ \PP(\exists j \le h^{0}(3^{-m}): |\mu_{\calB^\eta_{j}}^{\eta}(\Ball_1) - \mu_{\calB^0_{j}}^{0}(\Ball_1)| > \alpha/4)\\
    \le\, &\varphi+ \PP(\exists j \le h^{0}(3^{-m}): |\mu_{\calB^\eta_{j}}^{\eta}(\Ball_1) - \mu_{\calB^0_{j}}^{0}(\Ball_1)| > \alpha/4).  
  \end{aligned}
 \end{multline}
 The second term in the last line of \eqref{eq:pf:largeCl} tends to $0$ as $\eta\ra0$, since $h^{0}(3^{-m})$ is a.s. finite
 and $\underline\mu^{\eta}(3^{-m})\ra \underline\mu^{0}(3^{-m})$
 in probability by Theorem \ref{thm:strong_meas}. Since $\varphi>0$ was arbitrary, this shows that 
 \[
   \PP(\exists \calB,\calB' \in \mathscr{B}^0_{\Ball_1}(3^{-m}): \calB\neq\calB', |\mu_{\calB}^{0}(\Ball_1) - \mu_{\calB'}^{0}(\Ball_1)| =0)=0.
 \]
 That is, with probability $1$, there are no ties in the ordering of continuum clusters described above .
 
 Now we show that, for all $j \le i$,
 \begin{align} \label{eq:pf_LC_1}
   \PP(\exists m_0\in \NN \text{ s.t. } \calM_{(j)}^{0}(3^{-m_0}) = \calM_{(j)}^{0}(3^{-m}) \text{ for all } m\geq m_0) = 1. 
 \end{align}
Consider the event
 \begin{align*}
   E = \{\exists j_0 \leq i: \nexists m_0\in \NN \text{ s.t. } \calM_{(j_0)}^{0}(3^{-m_0}) = \calM_{(j_0)}^{0}(3^{-m}) \text{ for all } m\geq m_0\}
 \end{align*}
and the events
\begin{align*}
E^m_n = \{ \exists \calB \in \mathscr{B}^0_{\Ball_1}(3^{-m}) \text{ s.t. } \diam(\calB) < 3^{-m+1} \text{ and }
\mu^{0}_{\calB}(\Lambda_1) >1/n\}.
\end{align*}
Note that
\begin{align*}
E \subset \bigcup_{n=1}^{\infty} \{ E^m_n \text{ for infinitely many } m \in \NN \}.
\end{align*} 
Theorems \ref{thm:strong_Hausdorff} and \ref{thm:strong_meas} and Lemma \ref{lem:BCKS2} imply that, for each $m,n \geq 1$,
there is $\eta_0 = \eta_0(m,n)$ such that
\begin{eqnarray*}
\PP(E^m_n) & \leq & 2 \PP(\exists \calB \in \mathscr{B}^{\eta}_{\Lambda_1}: \mu^{\eta}_{\calB}(\Lambda_1) > 1/(2n) \text{ and } \diam(\calB)<6 \times 3^{-m}) \\
& \leq & C 3^{m-1} \exp\Big(-\frac{c}{2\sqrt{6}n}3^{m/2} \Big)
\end{eqnarray*}
for all $\eta \leq \eta_0$.
Since $\sum_{m=1}^{\infty} 3^{m} \exp\Big(-\frac{c}{2\sqrt{6}n}3^{m/2} \Big) < \infty$, it follows from the Borel-Cantelli Lemma that
$\PP(E^m_n \text{for infinitely many } m \in \NN) = 0$ for every $n \geq 1$. Hence $\PP(E)=0$, which proves \eqref{eq:pf_LC_1}.

For each $j\leq i$, we set $\calM_{(j)}^{0}:=\calM_{(j)}^{0}(3^{-m_0})$, where $m_0$ is as in the event on the left hand side
of \eqref{eq:pf_LC_1}. It remains to show that $\calM_{(i)}^{\eta}$ converges in probability to $\calM_{(i)}^{0}$, as well as the
analogous statement for their measures. Let $\varepsilon,\alpha > 0$ and $m>0$. First we check that 
 \begin{multline} \label{eq:pf_LC_2}
  \PP(d_{H}(\calM_{(i)}^{\eta},\calM_{(i)}^{0}) > \varepsilon)\\
  \begin{aligned}
    \leq & \,\, \PP(\exists j\leq i: \calM_{(j)}^{0}\neq \calM_{(j)}^{0}(3^{-m}))\\
    &+\PP(\exists j\leq i: \calM_{(j)}^{\eta}\neq \calM_{(j)}^{\eta}(3^{-m}))\\
    & + \PP(\exists \calB,\calB' \in \mathscr{B}^0_{\Ball_1}(3^{-m}): \calB\neq\calB', |\mu_{\calB}^{0}(\Ball_1) - \mu_{\calB'}^{0}(\Ball_1)| < \alpha)\\
    & +\PP(\exists \calB,\calB' \in \mathscr{B}^\eta_{\Ball_1}(3^{-m}): \calB\neq\calB', |\mu_{\calB}^{\eta}(\Ball_1) - \mu_{\calB'}^{\eta}(\Ball_1)| < \alpha)\\
    & + \PP(\exists k \le h^{0}(3^{-m}): |\mu_{\calB_{k}^{\eta}}^\eta(\Ball_1) - \mu_{\calB_{k}^{0}}^0(\Ball_1)|> \alpha/3)\\    
    &+ \PP(\exists k \le h^{0}(3^{-m}): d_{H}(\calB_{k}^{\eta},\calB_{k}^{0}) > \varepsilon),
  \end{aligned}
 \end{multline}
 where $\calB^\eta_{k}$ (resp. $\calB^0_{k}$) is the $k$-th cluster of $\calB^\eta_{\Ball_1}(3^{-m})$
 (resp. $\mathscr{B}^0_{\Ball_1}(3^{-m})$)
in the order used in the proofs of Theorems \ref{thm:strong_Hausdorff} and \ref{thm:strong_meas}.
 
 We justify \eqref{eq:pf_LC_2} as follows. On the complement of the first two events on the right
 hand side of \eqref{eq:pf_LC_2}, the $i$-th largest clusters at scale $\eta$ and $0$ (i.e., in the scaling limit)
 have diameter at least $3^{-m}$. On the complement of the third and fourth event on the right hand side of \eqref{eq:pf_LC_2},
 the normalized volumes of the different clusters with diameter at least $3^{-m}$ are at least $\alpha$ apart
 at both scales $\eta$ and $0$. Thus, on the complement of the first five events on the right hand side of
 \eqref{eq:pf_LC_2}, the ordering according to their volume of the $k$ largest clusters at scale $\eta$ and $0$ coincide;
 that is, for all $j\leq i$, there is a unique $k_j\leq h^0(3^{-m})$ such that $\calM_{(j)}^\eta = \calB^\eta_{k_j}$
 and $\calM_{(j)}^0 = \calB^0_{k_j}$. This, together with the last term in the right hand side of \eqref{eq:pf_LC_2},
 proves \eqref{eq:pf_LC_2}.
 
 Let $\varphi >0$ be arbitrary. By \eqref{eq:pf_LC_1} and Proposition \ref{prop:Large_Volume_large_diam}, we find $m$ and
 $\eta_0>0$ such that the first and second term on the right hand side of \eqref{eq:pf_LC_2} are less than $\varphi/6$ for
 all $\eta <\eta_0$. Then we use the bounds in \eqref{eq:pf:largeCl} and Proposition \ref{prop:BC_gaps_cl} and find $\alpha, \eta_1>0$
 so that the third and fourth term on the right hand side of \eqref{eq:pf_LC_2} are less than $\varphi/6$ for all $\eta <\eta_1$. 
 Finally, we apply Theorem \ref{thm:strong_meas} to control the fifth term and Theorem \ref{thm:strong_Hausdorff} to control
 the sixth term, and deduce that $\limsup_{\eta\ra0} \PP(d_{H}(\calM_{(i)}^{\eta},\calM_{(i)}^{0}) > \varepsilon) < \varphi$.
 Since $\varphi$ and $\ve$ were arbitrary, this shows that $\calM_{(i)}^{\eta}\ra\calM_{(i)}^{0}$ in probability as $\eta\ra0$.
 
 The proof for the convergence of normalized counting measures goes in a similar way: notice that if we replace the
 fifth term on the right hand side of \eqref{eq:pf_LC_2} with
 \begin{align*}
   \PP(\exists j \le h^{0}(3^{-m}): d_p(\mu_{\calB_{j}^{\eta}}^\eta, \mu_{\calB_{j}^{0}}^0)> \alpha/3),
 \end{align*}  
 then we get an upper bound for the probability $\PP(\exists j\leq i: d_P(\mu^\eta_{\calM^\eta_{(j)}},\mu^0_{\calM^0_{(j)}})>\alpha/3)$.
 This completes the proof of Theorem \ref{thm:conv_Largest_clusters_precise}.
\end{proof}

\newcommand{\etalchar}[1]{$^{#1}$}
\providecommand{\bysame}{\leavevmode\hbox to3em{\hrulefill}\thinspace}
\providecommand{\MR}{\relax\ifhmode\unskip\space\fi MR }
\providecommand{\MRhref}[2]{%
  \href{http://www.ams.org/mathscinet-getitem?mr=#1}{#2}
}
\providecommand{\href}[2]{#2}

\end{document}